\documentclass[a4paper]{amsart}

\usepackage{esint}
\usepackage[abbrev,backrefs]{amsrefs}
\usepackage{amssymb}
\usepackage{stmaryrd}
\usepackage{mathrsfs}
\usepackage{bbm}
\usepackage[svgnames]{xcolor}
\usepackage{enumitem}
\usepackage{amsthm}

\usepackage{tikz}

\usepackage{todonotes}

\usepackage[hidelinks]{hyperref}
\usepackage[nameinlink,capitalize]{cleveref}
\usepackage{titletoc}

\usepackage{quiver}

\crefname{enumi}{part}{parts}
\Crefname{enumi}{Part}{Parts}

\newtheorem{theorem}{Theorem}[section]
\newtheorem{lemma}[theorem]{Lemma}
\newtheorem{proposition}[theorem]{Proposition}
\newtheorem{corollary}[theorem]{Corollary}

\theoremstyle{definition}
\newtheorem{definition}[theorem]{Definition}

\theoremstyle{remark}
\newtheorem{remark}[theorem]{Remark}

\def\restrict#1{\raise-.5ex\hbox{\ensuremath|}_{#1}}

\newcommand{\mcal}[1]{{\mathcal{#1}}}
\newcommand{\Torus}{{\mathbb{T}}}

\DeclareMathOperator{\Div}{div}
\DeclareMathOperator{\curl}{curl}

\newcommand{\set}[1]{\left\{#1\right\}}
\newcommand{\abs}[1]{\left\vert#1\right\vert}
\newcommand{\norm}[1]{\left\Vert#1\right\Vert}
\newcommand{\floor}[1]{\left\lfloor#1\right\rfloor}
\newcommand{\vertiii}[1]{{\left\vert\kern-0.25ex\left\vert\kern-0.25ex\left\vert #1
		\right\vert\kern-0.25ex\right\vert\kern-0.25ex\right\vert}}

\newcommand{\shortset}[1]{\{#1\}}

\newcommand{\bignorm}[1]{\big\Vert#1\big\Vert}

\newcommand{\angles}[1]{\left\langle#1\right\rangle}

\newcommand{\Angles}[1]{\llangle#1\rrangle}

\makeatletter
\newsavebox{\@brx}
\newcommand{\llangle}[1][]{\savebox{\@brx}{\(\m@th{#1\langle}\)}%
  \mathopen{\copy\@brx\kern-0.5\wd\@brx\usebox{\@brx}}}
\newcommand{\rrangle}[1][]{\savebox{\@brx}{\(\m@th{#1\rangle}\)}%
  \mathclose{\copy\@brx\kern-0.5\wd\@brx\usebox{\@brx}}}
\makeatother

\newenvironment{theorembis}[1]
  {\renewcommand{\thetheorem}{\ref*{#1}$'$}
    \renewcommand{\theHtheorem}{theorembis.#1}
   \addtocounter{theorem}{-1}%
   \begin{theorem}}
  {\end{theorem}}

\let\decreasesto\searrow

\newcommand{\indicator}[1]{\mathbbm{1}_{\set{#1}}}
\newcommand{\indicatorofset}[1]{\mathbbm{1}_{#1}}

\newcommand{\ocinterval}[1]{\left(#1\right]} 
\newcommand{\cointerval}[1]{\left[#1\right)} 

\let\union\cup
\let\intersect\cap

\let\boundary\partial

\let\phi\varphi

\newcommand{\R}{\mathbb{R}}

\newcommand{\manifold}{\mathcal{M}}
\newcommand{\length}{\ell}
\newcommand{\totalmass}{\mathbb{M}}

\newcommand{\Dim}{n}
\newcommand{\dd}{\mathrm{d}} 
\newcommand{\ed}{\mathrm{d}} 

\newcommand{\maxr}{L} 

\newcommand{\Morrey}[1]{\norm{#1}_{\mathrm{Morrey}}}

\newcommand{\currents}{\mathbf{M}}

\newcommand{\InterpolationLemma}{\hyperref[Interpolation]{Interpolation \Cref*{Interpolation}}}

\numberwithin{equation}{section}

\numberwithin{equation}{section}

\begin{document}

\title[Potential Estimates and Hodge systems on Compact Manifolds]{Potential Estimates and Hodge Systems with $L^1$ data on compact manifolds}


\author{Jesse Goodman}
\address{Department of Statistics, University of Auckland, Private Bag 92019, Auckland 1142, New Zealand}
\email[J. Goodman]{jesse.goodman@auckland.ac.nz}
\thanks{}

\author{Felipe Hern\'andez}
\address{Department of Mathematics, Penn State University, State College, PA, USA}
\email{felipeh@psu.edu}

\author{Daniel Spector}
\address{Daniel Spector\hfill\break\indent
Department of Mathematics\hfill\break\indent
National Taiwan Normal University\hfill\break\indent
No. 88, Section 4, Tingzhou Road, Wenshan District, Taipei City, Taiwan 116, R.O.C.\hfill\break\indent
and \hfill\break\indent
National Center for Theoretical Sciences\hfill\break\indent
No. 1 Sec. 4 Roosevelt Rd., National Taiwan
University, Taipei, 106, Taiwan\hfill\break\indent
and\hfill\break\indent
Department of Mathematics\hfill\break\indent
University of Pittsburgh\hfill\break\indent
Pittsburgh, PA 15261 USA
}
\email[D. Spector]{spectda@gapps.ntnu.edu.tw}


\subjclass[2010]{Primary 58J, Secondary 35K08}

\date{}

\dedicatory{}

\commby{}

\begin{abstract}
In this paper we establish optimal Lorentz estimates for the Riesz potentials acting on closed or co-closed $k$-forms of finite mass on a smooth, compact Riemannian manifold of dimension $\Dim$:
For $\alpha \in (0,\Dim)$ and $k=1,\ldots,\Dim-1$, there exists a constant $C>0$ such that
  \begin{align*}
    \|  \mathcal{I}_{\alpha,k} F \|_{L^{\Dim/(\Dim-\alpha),1}(\Lambda^k)} \leq C \| F\|_{L^1(\Lambda^k)}
  \end{align*}
for all $k$-forms $F \in L^1(\Lambda^k)$ orthogonal to the space of harmonic $k$-forms and satisfying $\ed F=0$ or $\ed^* F=0$.
We show how this inequality implies analogous Lorentz bounds for solutions of the $k$-form Poisson equation and for the Hodge system with data having finite mass.
These results include as a special case the div--curl system on the $3$-dimensional torus, where we answer an open question originally posed by J.\ Bourgain and H.\ Brezis.
\end{abstract}

\maketitle

\setcounter{tocdepth}{3}
\tableofcontents

\section{Introduction}

\subsection{Motivation and overview}
In this paper we consider the Hodge system for $k$-forms $Z$ on a Riemannian manifold
$\mcal{M}$,
\begin{align}
  \begin{split}
    \label{eq:Hodge}
    \ed_k^* Z &= F, \\
    \ed_k Z &= G,
  \end{split}
\end{align}
where $F, G\in L^1$ satisfy suitable orthogonality and compatibility conditions.   When $F, G\in L^p$ for $p>1$, estimates for solutions of \eqref{eq:Hodge} which are analogous to the usual Sobolev inequality follow easily from the $L^p$ extension of Gaffney's inequality \cite[Theorem 4.11 on p.~59]{ISS} and the Sobolev embedding \cite[Theorem 3.5 on p.~23]{Hebey}.  The endpoint $p=1$ has so far proved intractable by such methods, as Gaffney's inequality fails in this case, while the use of known harmonic analysis techniques, e.g.\ \cite{Stein1970}, yields only weak-type estimates.  
Among the main results of this paper are novel and sharp estimates to this system for $F, G\in L^1$.  

A special case of \eqref{eq:Hodge} is to take $\manifold=\Torus^3$, $k=2$, and $G=0$, in which case the Hodge system reduces to a $\Div$--$\curl$ system for vector fields:
\begin{equation}
\begin{split}
\label{eq:Hodge_Torus}
  \curl Z &= F
  , \\
  \Div Z &= 0
  .
\end{split}
\end{equation}
These are Maxwell's equations in the static regime, where $Z$ models the magnetic field induced by a given electric current density $F$ subject to periodic boundary conditions.
In this model it is natural to assume $F\in L^1(\Torus^3;\R^3)$ and no more, because any regularization $F$ of the current carried by a thin wire of finite length stays bounded in $L^1$ and no better. 
In the Euclidean setting, Bourgain and Brezis proved that the solution of \eqref{eq:Hodge_Torus} satisfies the estimate
\begin{align}\label{BB_32}
  \|Z\|_{L^{3/2}} \leq C \|F\|_{L^1}
\end{align}
and asked~\cite{BourgainBrezis2007}*{Open Question~1 on p.~295} whether $Z$ belongs to the critical Lorentz space $L^{3/2,1}$.
This is the optimal result on this scale, and would match the known result for the gradient \cite{alvino}.
As we discuss further in \Cref{sss:Lorentzestimates}, Lorentz spaces in general, and Lorentz spaces of the form $L^{r,1}$ in particular, are natural objects of study: they arise directly from real interpolation of the couple $(L^1,L^\infty)$ \cite[Theorem 5.2.1]{BerghLofstrom1976} and have many useful applications, for instance in PDEs \cite{Danshen, KeelTao1998,Helein2002}. 

In this paper, we obtain optimal Lorentz and Besov-Lorentz bounds for the general Hodge system~\eqref{eq:Hodge} on compact Riemannian manifolds for $1\leq k\leq \Dim-1$ (see \Cref{bbq}, \Cref{BesovClosedCoclosed}, \Cref{BesovClosedCoclosedReformulated}).
As a consequence, we give an affirmative answer to the question of Bourgain and Brezis (see \Cref{bbq_t}).
As in~\cites{HS,HRS}, the key step is an optimal Lorentz space estimate for Riesz potentials, in this setting formulated in the space of differential forms (see \Cref{RieszClosedCoclosed_k}).  
From this more general estimate, we also obtain a result for solutions to the Poisson equation $\Delta_kZ=F$ on forms (see \Cref{Hodge_Laplacian_L1}).

A key technical innovation in this paper allows us to reduce fractional integration estimates for $k$-forms to those of $(n-1)$-forms.  
This is an analogue of the reduction found by Van Schaftingen~\cites{VS2,VS3} relating co-cancelling operators to divergence-like operators.
On a manifold, however, there are no global linear maps for such a reduction, and therefore this requires an adaptation of Van Schaftingen's algebraic argument.
The proof of the Lorentz space fractional integration estimate in the case of $(\Dim-1)$-forms follows a streamlined version of the argument in \cite{HRS}.  
A key difference is that we use a more general atomic decomposition for closed currents established in~\cite{ChenGoodmanHernandezSpectorPreprint}.

\subsection{Notation and main results}\label{ss:NotationAndMainResults}

To state our main result we briefly introduce some requisite notation, while we refer the reader to \Cref{s:Preliminaries} for a full accounting.
Throughout the paper we take $\manifold$ to be a compact $C^\infty$-smooth Riemannian manifold of dimension $\Dim\geq 2$.
For $k=0,\ldots,\Dim$, we write $C^\infty(\Lambda^k)$ to denote the space of smooth $k$-forms and
\begin{align*}
  \ed_k &:C^\infty(\Lambda^k) \to C^\infty(\Lambda^{k+1}) \\
   \ed_k^* &:C^\infty(\Lambda^k) \to C^\infty(\Lambda^{k-1})
\end{align*}
the exterior differential and co-differential, respectively.
For $\omega \in C^\infty(\Lambda^k)$ we define\footnote{Note that this definition is opposite to the geometer's sign convention.} the Hodge Laplacian by
\begin{equation}\label{HodgeLaplacian}
  -\Delta_k\omega = \ed^*_{k+1} \ed_k \omega + \ed_{k-1} \ed^*_k \omega.
\end{equation}
The operators $\ed,\ed^*,\Delta$ can be considered to act on forms of arbitrary degree, but for definiteness we will retain the subscript $k$.
We say that $\omega$ is harmonic and write $\omega \in \mathcal{H}(\Lambda^k)$ if $\Delta_k\omega=0$.
We write $e^{t\Delta_k}$ to denote the heat propagator on $k$-forms on $\manifold$, i.e.\ $e^{t\Delta_k}$ satisfies
\begin{align}
 \label{heat_propagator} (\partial_t - \Delta_k)e^{t\Delta_k}\omega&=0
  ,
  \\
 \label{approximate_identity} \lim_{t \to 0^+}e^{t\Delta_k}\omega &= \omega
  .
\end{align}
For $\omega,\tilde{\omega}\in C^\infty(\Lambda^k)$ we define
\begin{equation}\label{inner_product}
  \Angles{\omega,\tilde{\omega}} = \int_\manifold \angles{\omega(p),\tilde{\omega}(p)}_p \,\dd V(p)
  ,
\end{equation}
where $\angles{\omega(p),\tilde{\omega}(p)}_p$ is the inner product on $\wedge^k(T_p^*\manifold)$, the $k^\text{th}$ exterior power of the cotangent space at the point $p$, and $V$ denotes the volume measure associated to the Riemannian manifold.
We say that $\omega$ is orthogonal to the space of harmonic forms and write $\omega \in \mathcal{H}^\perp(\Lambda^k)$ if $\Angles{\omega,\tilde{\omega}}=0$ for all harmonic $\tilde{\omega}$.

We use the heat propagator to define the Riesz potentials associated to forms.
Given $F \in C^\infty\cap \mathcal{H}^\perp(\Lambda^k)$ and $\alpha>0$, we define
\begin{equation}\label{RieszPotential}
  \mathcal{I}_{\alpha,k} F := \frac{1}{\Gamma(\alpha/2)} \int_0^\infty t^{\alpha/2-1} e^{t\Delta_k} F\,\dd t
  .
\end{equation}
This formula agrees with the usual definition in Euclidean space \cite{HRS}*{formula (1.2) on p.~1924}, and is the typical definition in a more general setting \cites{Folland,GT,KrantzPelosoSpector}.
All of these operators can be extended to the natural Sobolev spaces of $k$-forms, as in \cite{ISS}, and we use the same notation for these generalized operators.
We also write $\norm{\cdot}_{L^1(\Lambda^k)}$ and $\norm{\cdot}_{L^{r,s}(\Lambda^k)}$ for the Lebesgue and Lorentz norms; see the further discussion in \Cref{ss:DiffForms} and \Cref{appendix_B}.

With these preparations, we now state our first main result, which is a Lorentz-space estimate for fractional integration on forms.
\begin{theorem}\label{RieszClosedCoclosed_k}
  Let $\manifold$ be a smooth, compact manifold of dimension $\Dim\geq 2$, let $\alpha \in (0,\Dim)$, and let $k \in \set{1,\dotsc,\Dim-1}$.
  There exists a constant $C=C(\alpha,\manifold)>0$ such that
  \begin{align}\label{potentialnodiracl1}
    \|  \mathcal{I}_{\alpha,k} F \|_{L^{\Dim/(\Dim-\alpha),1}(\Lambda^k)} \leq C \| F\|_{L^1(\Lambda^k)}
  \end{align}
  for all $F \in L^1\cap \mathcal{H}^\perp(\Lambda^k)$ such that $\ed_k F=0$ or $\ed_k^*F=0$.
\end{theorem}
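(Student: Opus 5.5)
We will reduce the statement to the case of co-closed $(\Dim-1)$-forms, equivalently closed $1$-currents, and then prove that case with an atomic decomposition.

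\emph{Reduction between closed and co-closed forms.} The Hodge star intertwines $e^{t\Delta_k}$ with $e^{t\Delta_{\Dim-k}}$. It also maps harmonic forms to harmonic forms, maps closed forms to co-closed forms, and preserves pointwise norms. Hence it suffices to treat $\ed_k^*F=0$ for every $k\in\set{1,\dotsc,\Dim-1}$.

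\emph{Reduction to degree $\Dim-1$.} We adapt Van Schaftingen's algebraic argument to the manifold setting. Fix a finite atlas with a subordinate partition of unity $\{\chi_j\}$, and in each chart a finite family of constant-coefficient (in coordinates) $(\Dim-1-k)$-forms $\beta_{j,i}$. These are chosen so that $\omega\mapsto(\omega\wedge\beta_{j,i})_i$ is injective with a pointwise bounded left inverse. The lower bound $\abs{\omega}\lesssim\sum_i\abs{\omega\wedge\beta_{j,i}}$ is pointwise linear algebra on $\wedge^k$.

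The natural form to work with is $\star F$, which is closed because $\ed^*F=0$. For $G_{j,i}:=\star\bigl(\chi_j\,\star F\wedge\beta_{j,i}\bigr)$, the defect $\ed^*G_{j,i}$ is of size $\abs{F}$ times smooth bounded functions. We therefore write $G_{j,i}=G^{\mathrm{cc}}_{j,i}+R_{j,i}$, where:
\begin{itemize}
\item $G^{\mathrm{cc}}_{j,i}$ is a co-closed, harmonic-orthogonal $1$-form obtained by Hodge projection;
\item $R_{j,i}=\ed\,\Delta^{-1}\ed^*G_{j,i}$, which has the smoothing of one order, so $\mathcal I_\alpha R_{j,i}$ behaves like $\mathcal I_{\alpha+1}$ applied to $L^1$ data.
\end{itemize}

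We also need to control the commutator between $\mathcal I_{\alpha,k}$ and the operation "multiply by $\chi_j$ and wedge with $\beta_{j,i}$". For this we use the heat kernel: on a compact manifold, $[e^{t\Delta},\chi\wedge\beta]$ has a kernel gaining a factor $t^{1/2}$ relative to $e^{t\Delta}$, with Gaussian bounds. Consequently all error terms are dominated by $\mathcal I_{\alpha+1}\abs{F}$ plus a smooth kernel term. Since $\mathcal I_{\alpha+1}:L^1\to L^{\Dim/(\Dim-\alpha-1),\infty}$ and the manifold is compact, these terms embed into $L^{\Dim/(\Dim-\alpha),1}$. When $\alpha+1\geq\Dim$ the kernel is bounded up to a logarithm, so the conclusion holds trivially.

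\emph{Main case: co-closed $(\Dim-1)$-forms, i.e.\ closed $1$-currents $T$ of finite mass.} Here we invoke the atomic decomposition of~\cite{ChenGoodmanHernandezSpectorPreprint}. It writes $T=\sum_m\lambda_m T_m$ with $\sum_m\lambda_m\lesssim\totalmass(T)$, where each $T_m$ is, up to harmonic correction, a normalized closed Lipschitz curve (or a bounded-multiplicity loop) of unit length. It then suffices to prove the uniform bound $\norm{\mathcal I_{\alpha}T_m}_{L^{\Dim/(\Dim-\alpha),1}}\le C$ and sum, since $L^{r,1}$ is normable.

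For a single curve $\gamma$ of length $\ell\le\maxr$, we follow the streamlined argument of~\cite{HRS}. The heat kernel bound $\abs{e^{t\Delta}(x,y)}\lesssim t^{-\Dim/2}e^{-c d(x,y)^2/t}$ gives
\[
\abs{\mathcal I_\alpha T_\gamma(x)}\lesssim \ell^{-1}\int_\gamma d(x,y)^{\alpha-\Dim}\,\dd\mathcal H^1(y).
\]
The $L^{r,1}$ norm is then estimated by interpolating, via the \InterpolationLemma, between the $L^{r_0}$ and $L^{r_1}$ bounds for exponents $r_0<r<r_1$ on both sides of $\Dim/(\Dim-\alpha)$. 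These bounds use the $1$-dimensional upper Ahlfors regularity of $\gamma$, namely $\mathcal H^1(\gamma\cap B_\rho)\lesssim\rho$ for loops in the decomposition. Closedness is essential only through the decomposition: it provides atoms whose Morrey norm $\Morrey{T_m}$ is uniformly bounded, which a generic $L^1$ function lacks.

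\textbf{Main obstacle.} We expect the hardest step to be the Van Schaftingen-type reduction on a manifold. The local wedge operations do not commute with $\ed^*$ or with the heat flow, so we must verify that every commutator and defect term genuinely gains one derivative in the Lorentz scale. We must also handle the harmonic projections and low-frequency parts (large $t$) uniformly. The large-$t$ part we will control by the spectral gap on $\mathcal H^\perp$, which gives exponential decay and hence bounded contributions.
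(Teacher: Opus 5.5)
The genuine gap is in the step you treat as routine: the uniform bound for a single atom. Your reductions are broadly workable, with one degree slip: $\star F$ is an $(\Dim-k)$-form, so to land in $(\Dim-1)$-forms you must wedge with $(k-1)$-forms. The paper also avoids your Hodge-projection correction entirely. It wedges with the globally exact forms $\ed y_r^{J^c}$, $y_r^i=\rho_r x_r^i$, which preserves closedness exactly and confines the partition of unity to the pointwise left inverse $\Psi_k$.

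The problem is your majorant $\length^{-1}\int_\gamma d_\manifold(x,y)^{\alpha-\Dim}\,\dd\mathcal H^1(y)$. It discards all cancellation, and for short loops it is too large. Take a simple loop $\gamma$ of length $\length\ll1$ through $y_0$. For $2\length<d_\manifold(x,y_0)<1$, every point of $\gamma$ lies within $2d_\manifold(x,y_0)$ of $x$, so the majorant is $\gtrsim d_\manifold(x,y_0)^{\alpha-\Dim}$ on that annulus. The rearrangement $f^*$ of this function satisfies $t^{(\Dim-\alpha)/\Dim}f^*(t)\gtrsim 1$ for $C\length^\Dim\le t\le c$, so its $L^{\Dim/(\Dim-\alpha),1}$ norm is $\gtrsim\log(1/\length)$. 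No interpolation between $L^{r_0}$ and $L^{r_1}$ can beat the norm of the majorant itself.

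The atomic decomposition produces atoms $[[\gamma]]/\length(\gamma)$ with arbitrarily small $\length(\gamma)$. They cannot be rescaled to unit length without destroying the Morrey bound. So your atom bounds are not uniform, and the sum is not controlled by $\totalmass(T)$. This logarithm is precisely the obstruction at the Lorentz endpoint $s=1$; Morrey regularity alone cannot remove it.

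Consequently, your remark that closedness matters ``only through the decomposition'' is the wrong diagnosis. The closedness of each individual short loop is what removes the logarithm: it turns the monopole far field $d^{\alpha-\Dim}$ into a dipole field $\length\, d^{\alpha-\Dim-1}$.

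The paper implements this with \Cref{SmallCurvesAreBoundaries}. A closed curve of length $\length\le\length_0$ bounds a $2$-current $S_\gamma$ with $\totalmass(S_\gamma)\lesssim\length^2$. Then $[[\gamma]](e^{t\Delta_1}\xi)=S_\gamma(\ed_1e^{t\Delta_1}\xi)$ puts a derivative on the heat kernel, which gives $\norm{e^{t\Delta_{1,c}}[[\gamma]]}_{L^1}\lesssim t^{-1/2}\length^2$. The argument then works at fixed $t$ in two regimes:
\begin{itemize}
\item for $t\gtrsim\length^2$, interpolate this against $\norm{e^{t\Delta_{1,c}}[[\gamma]]}_{L^\infty}\lesssim t^{-\Dim/2}\length$;
\item for $t\lesssim\length^2$, interpolate the mass bound $\length$ against the Morrey bound $t^{-(\Dim-1)/2}\Morrey{\gamma}$.
\end{itemize}
Integrating in $t$ yields $\int_0^1t^{\alpha/2-1}\norm{e^{t\Delta_{1,c}}[[\gamma]]}_{L^{\Dim/(\Dim-\alpha),1}}\,\dd t\lesssim\length$, which exactly offsets the normalization $\length^{-1}$. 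Loops with $\length>\length_0$ are handled by the crude bound $\length^{1-\alpha/\Dim}$.

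Without this step, or an equivalent cancellation argument, your proposal does not reach $L^{\Dim/(\Dim-\alpha),1}$. The step you flagged as the main obstacle, the reduction to degree $\Dim-1$, is in fact the comparatively routine part.
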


\noindent
We note that \Cref{RieszClosedCoclosed_k} and its proof remain valid when $k=0$, $\ed_k F=0$, and when $k=\Dim$, $\ed_k^* F=0$, but the result is then trivial: either assumption implies that $\ed_k F$ and $\ed_k^* F$ are both zero, so that $F$ is harmonic and hence necessarily zero.
On the other hand, the bound fails when $k=\Dim$, $\ed_k F=0$ or when $k=0$, $\ed_k^* F=0$.

For $\alpha=2$, the Riesz potential $\mathcal{I}_{2,k}$ corresponds to the inverse of the Hodge Laplacian, and so \Cref{RieszClosedCoclosed_k} yields an estimate for the solution of Poisson's equation for $k$-forms.

\begin{theorem}\label{Hodge_Laplacian_L1}
  Let $\Dim \geq 3$ and suppose $\manifold$ is a smooth, compact Riemannian manifold of dimension $\Dim$.
  There exists a constant $C=C(\manifold)>0$ such that the following holds.
  Let $k \in \set{1,\dotsc,\Dim-1}$, let $F \in L^1\cap \mathcal{H}^\perp(\Lambda^k)$, and suppose that $\ed_k F=0$ or $\ed^*_k F=0$.
  Let $Z$ be the unique solution in  $\mathcal{H}^\perp(\Lambda^k)$ of the Poisson equation
  \begin{equation}\label{PoissonPDEkForm}
    \Delta_{k}Z =F
    .
  \end{equation}
  Then $Z$ admits the estimate
  \begin{align}\label{estimate_Laplace}
    \|Z\|_{L^{\Dim/(\Dim-2),1}(\Lambda^k)} \leq C\|F\|_{L^1(\Lambda^k)}
    .
  \end{align}
\end{theorem}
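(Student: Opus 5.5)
We deduce \Cref{Hodge_Laplacian_L1} from \Cref{RieszClosedCoclosed_k} with $\alpha=2$. Since $n\ge 3$, $\alpha=2\in(0,n)$ and $n/(n-\alpha)=n/(n-2)$. The main task is to identify the solution $Z\in\mathcal{H}^\perp(\Lambda^k)$ of \eqref{PoissonPDEkForm} with $-\mathcal{I}_{2,k}F$ (the sign comes from the paper's convention \eqref{HodgeLaplacian}).

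First take $F$ smooth and in $\mathcal{H}^\perp(\Lambda^k)$. Because $\manifold$ is compact, $-\Delta_k$ has discrete spectrum. On $\mathcal{H}^\perp$ its first eigenvalue is some $\lambda_1>0$, so $\|e^{t\Delta_k}F\|_{C^m}\lesssim e^{-\lambda_1 t/2}$ for large $t$. Hence the integral \eqref{RieszPotential} with $\alpha=2$, namely $\int_0^\infty e^{t\Delta_k}F\,\dd t$, converges in every $C^m$ and lies in $\mathcal{H}^\perp$, since the heat flow preserves $\mathcal{H}^\perp$. Differentiating under the integral and using \eqref{heat_propagator} gives
\[\Delta_k\int_0^\infty e^{t\Delta_k}F\,\dd t=\int_0^\infty\partial_t e^{t\Delta_k}F\,\dd t=\lim_{T\to\infty}e^{T\Delta_k}F-F=-F,\]
where the boundary terms are handled by \eqref{approximate_identity} and the exponential decay. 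So $Z=-\mathcal{I}_{2,k}F$ solves \eqref{PoissonPDEkForm}. It is the unique solution in $\mathcal{H}^\perp$, because the difference of two solutions is harmonic and orthogonal to harmonic forms, hence zero. Equivalently, on each eigenform with eigenvalue $\lambda>0$ the operator $\mathcal{I}_{2,k}$ acts by $\int_0^\infty e^{-\lambda t}\,\dd t=\lambda^{-1}$.

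Next extend to $F\in L^1\cap\mathcal{H}^\perp(\Lambda^k)$ with $\ed_kF=0$, say. Approximate $F$ by $F_s=e^{s\Delta_k}F$ with $s\downarrow 0$. Each $F_s$ is smooth and lies in $\mathcal{H}^\perp$. Since $\ed$ commutes with $\Delta$ and hence with the heat semigroup, $\ed_kF_s=0$; in the co-closed case the same holds with $\ed^*_k$. We also have $\|F_s\|_{L^1}\le C\|F\|_{L^1}$ uniformly in $s$, because the heat kernel on forms has uniformly bounded $L^1$ operator norm for $s\le1$ on a compact manifold. Finally $F_s\to F$ in $L^1$.

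Applying \Cref{RieszClosedCoclosed_k} to $F_s$ gives $\|Z_s\|_{L^{n/(n-2),1}}\le C\|F\|_{L^1}$ for $Z_s=-\mathcal{I}_{2,k}F_s$. The inverse Laplacian on $\mathcal{H}^\perp$ is bounded from $L^1$ into weak $L^{n/(n-2)}$, and in particular into $L^1$. Therefore $Z_s\to Z$ in $L^1$, where $Z$ is the distributional solution in $\mathcal{H}^\perp$. Passing to an a.e.\ convergent subsequence, Fatou's property of the Lorentz quasi-norm (or lower semicontinuity of an equivalent norm) gives \eqref{estimate_Laplace}.

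The main obstacle is this limiting step: making precise what "the unique solution" means for $L^1$ data and that it coincides with the limit of $-\mathcal{I}_{2,k}F_s$. Alternatively, we can simply note that the extension of $\mathcal{I}_{2,k}$ to $L^1\cap\mathcal{H}^\perp$ already used in \Cref{RieszClosedCoclosed_k} is this limit, so the estimate transfers directly. The constant is independent of $k$ because there are finitely many values of $k$.
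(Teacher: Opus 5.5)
Your proof is correct and follows the paper's route: identify $Z=-\mathcal{I}_{2,k}F$ and apply \Cref{RieszClosedCoclosed_k} with $\alpha=2$. The only difference is that the paper checks the weak formulation $\Angles{Z,\Delta_k\omega}=\Angles{F,\omega}$ directly for $L^1$ data, using $\mathcal{I}_{2,k}(-\Delta_k\omega)=P_{\mathcal{H}^\perp}\omega$ for smooth $\omega$ together with the self-adjointness of $\mathcal{I}_{2,k}$. Your heat-flow approximation and Fatou step are therefore valid but unnecessary, as you note yourself at the end.
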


Likewise, for $\alpha=1$, the Riesz potential $\mathcal{I}_{1,k}$ corresponds to the inverse square root of the Laplacian.
Using standard bounds for singular integrals, \Cref{RieszClosedCoclosed_k} therefore implies an estimate for Hodge systems.
\begin{theorem}\label{bbq}
  Let $\Dim \geq 3$ and suppose $\manifold$ is a smooth, compact Riemannian manifold of dimension $\Dim$.
  There exists a constant $C=C(\manifold)>0$ such that the following holds.
  Let $k \in \set{1,\dotsc,\Dim-1}$, let $F \in L^1\cap\mathcal{H}^\perp(\Lambda^{k-1})$ and $G \in L^1\cap \mathcal{H}^\perp(\Lambda^{k+1})$, and suppose that
  \begin{align*}
    \ed_{k-1}^* F = 0, \quad \ed_{k+1} G=0
    .
  \end{align*}
  In addition, we impose the conditions $F\equiv 0$ if $k=1$ and $G\equiv 0$ if $k=\Dim-1$.
 Let $Z$ be the unique solution in $ \mathcal{H}^\perp(\Lambda^k)$ of the Hodge system
  \begin{equation}\label{HodgeSystem}
    \begin{aligned}
      \ed_k^* Z &= F,   \\
      \ed_k Z &= G
      .
    \end{aligned}
  \end{equation}
Then $Z$ admits the estimate
  \begin{align}\label{estimateHodge}
    \|Z\|_{L^{\Dim/(\Dim-1),1}(\Lambda^k)} \leq C\left(\|F\|_{L^1(\Lambda^{k-1})}+\|G\|_{L^1(\Lambda^{k+1})}\right)
    .
  \end{align}
\end{theorem}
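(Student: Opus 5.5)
The plan is to write $Z$ as a sum of two pieces, each produced by applying a bounded operator to a Riesz potential $\mathcal{I}_{1,\cdot}$ of closed or co-closed data, and then to invoke \Cref{RieszClosedCoclosed_k} with $\alpha=1$. We look for $Z$ of the form $Z=\ed_{k-1}(-\Delta_{k-1})^{-1}F+\ed_{k+1}^*(-\Delta_{k+1})^{-1}G$, with the inverses taken on $\mathcal{H}^\perp$.

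First we check that this $Z$ solves the system. Since $\ed_{k-1}^*F=0$ and $F\perp\mathcal{H}$, the Hodge decomposition together with the commutation of $\ed,\ed^*$ with $\Delta$ gives
\begin{align*}
\ed_k^*\ed_{k-1}(-\Delta_{k-1})^{-1}F=(-\Delta_{k-1})^{-1}\ed_k^*\ed_{k-1}F = F.
\end{align*}
The other cross terms vanish: $\ed_k\ed_{k-1}=0$, $\ed_k^*\ed_{k+1}^*=0$, and symmetrically $\ed_k\ed_{k+1}^*(-\Delta_{k+1})^{-1}G=G$ because $\ed_{k+1}G=0$. Both pieces lie in $\mathcal{H}^\perp(\Lambda^k)$, being exact and co-exact respectively. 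Uniqueness then identifies this $Z$ with the solution in the statement.

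Next we factor each piece as
\begin{align*}
\ed_{k-1}(-\Delta_{k-1})^{-1}F = \big[\ed_{k-1}(-\Delta_{k-1})^{-1/2}\big]\,\mathcal{I}_{1,k-1}F,
\end{align*}
using $\mathcal{I}_{1,k-1}=(-\Delta_{k-1})^{-1/2}$ on $\mathcal{H}^\perp$. This is justified by the subordination formula \eqref{RieszPotential} and the spectral theorem, since $\int_0^\infty t^{-1/2}e^{-t\lambda}\,\dd t/\Gamma(1/2)=\lambda^{-1/2}$. The $G$ term is treated the same way, with $\ed_{k+1}^*(-\Delta_{k+1})^{-1/2}$.

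The hypotheses of \Cref{RieszClosedCoclosed_k} hold for both data. $F$ is a $(k-1)$-form with $\ed^*F=0$, and $k-1\ge1$ is guaranteed because $F\equiv0$ when $k=1$. $G$ is a $(k+1)$-form with $\ed G=0$, and $k+1\le \Dim-1$ is guaranteed because $G\equiv0$ when $k=\Dim-1$. The theorem therefore gives
\begin{align*}
\|\mathcal{I}_{1,k-1}F\|_{L^{\Dim/(\Dim-1),1}}\le C\|F\|_{L^1},\qquad \|\mathcal{I}_{1,k+1}G\|_{L^{\Dim/(\Dim-1),1}}\le C\|G\|_{L^1}.
\end{align*}

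The remaining step, and the main obstacle, is showing that the Riesz-transform-type operators $R=\ed(-\Delta)^{-1/2}$ and $R^*=\ed^*(-\Delta)^{-1/2}$, extended by zero on harmonic forms, are bounded on $L^{\Dim/(\Dim-1),1}$. The plan is to prove that $R$ is bounded on $L^p(\Lambda)$ for every $1<p<\infty$. One route is to note that $(-\Delta)^{-1/2}$ restricted to $\mathcal{H}^\perp$ is a pseudodifferential operator of order $-1$ (Seeley's theorem on complex powers of elliptic operators on compact manifolds), so $R$ is a classical $\Psi$DO of order $0$. Another route is to derive $L^p$ bounds from $L^p$-Gaffney/elliptic estimates \cite{ISS} by duality. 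The harmonic projection is smoothing and is handled separately. Since $\Dim/(\Dim-1)\in(1,\infty)$, we then pick $p_0<\Dim/(\Dim-1)<p_1$ and apply real interpolation (Marcinkiewicz), which yields boundedness on $L^{\Dim/(\Dim-1),1}$. Combining this with the previous step gives \eqref{estimateHodge}.
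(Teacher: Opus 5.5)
Your proposal is correct and follows the paper's proof. The paper also writes $Z=\ed_{k-1}\mathcal{I}_{2,k-1}F+\ed^*_{k+1}\mathcal{I}_{2,k+1}G$ and uses $\mathcal{I}_{1,j}\mathcal{I}_{1,j}=\mathcal{I}_{2,j}$ to factor it through the Riesz transforms $\ed_{k-1}\mathcal{I}_{1,k-1}$ and $\ed^*_{k+1}\mathcal{I}_{1,k+1}$, then combines \Cref{RieszClosedCoclosed_k} with $\alpha=1$ and $L^p$-boundedness of these transforms, interpolated to $L^{\Dim/(\Dim-1),1}$; the only difference is that the paper cites Strichartz for the $L^p$ bounds where you sketch a pseudodifferential justification.
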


\Cref{RieszClosedCoclosed_k,Hodge_Laplacian_L1,bbq} involve only $L^1$ norms of their inputs, so it is natural to expect analogues for measure-like inputs too.
This notion will be formalized in terms of currents in \Cref{s:Preliminaries}, and the corresponding results are stated as \Cref{bbq_currents,Poisson_currents,Riesz_currents}.

In the special case $\manifold =\mathbb{T}^3$, the Hodge system can be interpreted as a div--curl system.
(For readers who find the vector field formulation more intuitive, details of this correspondence are given in \Cref{ss:TorusProof}.)
Then \Cref{bbq} yields an immediate answer to Bourgain and Brezis's \cite{BourgainBrezis2007}*{Open Question~1 on p.~295}.

\begin{corollary}\label{bbq_t}
  Suppose $F \in L^1(\mathbb{T}^3;\mathbb{R}^3)$ satisfies
  \begin{align*}
    \Div F&=0
    ,
    \\
    \int_{\mathbb{T}^3} F \, \dd x&=0
    .
  \end{align*}
  Then the function $Z=\curl (-\Delta_{\mathbb{T}^3})^{-1} F$ satisfies
  \begin{equation}\label{divcurlT3}
    \begin{aligned}
      \curl Z &= F
      ,
      \\
      \Div Z &= 0
      ,
    \end{aligned}
  \end{equation}
  and there exists a constant $C>0$ such that
  \begin{align}\label{estimateTorus}
    \| Z\|_{L^{3/2,1}(\mathbb{T}^3;\mathbb{R}^3)} \leq  C\| F\|_{L^1(\mathbb{T}^3;\mathbb{R}^3)}
    .
  \end{align}
  In particular,
  \begin{align}\label{BB_Kato}
    \text{the function}\quad x\mapsto \frac{Z(x)}{|x-a|} \quad\text{belongs to }L^1(\mathbb{T}^3;\mathbb{R}^3)
  \end{align}
  for every $a \in \mathbb{T}^3$.
\end{corollary}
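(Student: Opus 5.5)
Identify $\mathbb{T}^3$ with a flat compact Riemannian manifold and translate the div--curl system into the Hodge system of \Cref{bbq}. The dictionary (made precise in \Cref{ss:TorusProof}) sends a vector field $V=(V_1,V_2,V_3)$ to the $1$-form $V^\flat=\sum V_i\,\dd x_i$ and to the $2$-form $\star V^\flat$. Under this identification,
\begin{itemize}
\item $\ed_1 V^\flat=\star(\curl V)^\flat$,
\item $\ed_2 \star V^\flat=(\Div V)\,\dd x_1\wedge\dd x_2\wedge\dd x_3$,
\item $\ed_2^*\star V^\flat=\pm(\curl V)^\flat$.
\end{itemize}
On the flat torus the harmonic $k$-forms are exactly the constant-coefficient forms. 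Hence the hypothesis $\int F=0$ says $F^\flat\in\mathcal{H}^\perp(\Lambda^1)$, and $\Div F=0$ says $\ed_1^*F^\flat=0$.

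I take $k=2$, $n=3$, and unknown $\tilde Z=\star Z^\flat$ (up to sign). With data $\tilde F=\pm F^\flat\in L^1\cap\mathcal{H}^\perp(\Lambda^1)$ satisfying $\ed_1^*\tilde F=0$, and $G\equiv0$, the hypotheses of \Cref{bbq} hold. The requirement $G\equiv0$ when $k=n-1=2$ is satisfied. \Cref{bbq} then provides a unique $\tilde Z\in\mathcal{H}^\perp(\Lambda^2)$ with
\begin{align*}
\ed_2^*\tilde Z&=\tilde F, & \ed_2\tilde Z&=0,
\end{align*}
and the bound $\|\tilde Z\|_{L^{3/2,1}}\le C\|F\|_{L^1}$. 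Since the Hodge star is a pointwise isometry, this transfers to the vector field. So it remains to check that the explicit $Z=\curl(-\Delta)^{-1}F$ coincides with this solution.

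For the identification, note that $F$ has zero mean, so $(-\Delta)^{-1}F$ is well defined with zero mean. Then $Z$ has zero mean, since it is a derivative, and $\Div Z=0$ because $\Div\curl=0$. Using $\curl\curl=\nabla\Div-\Delta$ and the fact that $\Div$ commutes with $(-\Delta)^{-1}$ on the torus, we get
\[
\curl Z=\nabla(-\Delta)^{-1}\Div F+F=F.
\]
This can be checked on Fourier coefficients, where every step is exact; this is legitimate for $L^1$ data distributionally. Hence $\star Z^\flat$ solves the Hodge system in $\mathcal{H}^\perp$. By the uniqueness asserted in \Cref{bbq} (a difference of solutions is closed and co-closed, hence harmonic, and orthogonal to harmonics, hence zero), it equals $\tilde Z$. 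This gives \eqref{divcurlT3} and \eqref{estimateTorus}.

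For \eqref{BB_Kato}, use the Lorentz duality (H\"older) inequality $\int|fg|\le C\|f\|_{L^{3/2,1}}\|g\|_{L^{3,\infty}}$. Near $a$, the function $x\mapsto 1/|x-a|$, taken with the torus distance, lies in $L^{3,\infty}(\mathbb{T}^3)$, since its distribution function is $\lesssim\lambda^{-3}$, and it is bounded away from $a$. Combined with \eqref{estimateTorus}, this gives integrability.

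The only delicate point I foresee is the bookkeeping of signs and conventions in the form/vector-field dictionary and the sign convention for $\Delta$. These affect $Z$ only up to sign and do not affect the estimate. The hard analysis is entirely contained in \Cref{bbq}.
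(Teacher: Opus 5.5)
Your proposal is correct and takes essentially the same route as the paper: identify $Z$ with a $2$-form and $F$ with a co-closed $1$-form orthogonal to the (constant) harmonic forms, apply \Cref{bbq} with $k=2$ and $G\equiv 0$, transfer the bound through the pointwise isometries, and finish with the Lorentz H\"older inequality against $1/|x-a|\in L^{3,\infty}$. Your explicit Fourier/uniqueness check that $\curl(-\Delta)^{-1}F$ is the Hodge solution fills in a step the paper leaves implicit; there it is built into the formula $Z=\ed_{1}\mathcal{I}_{2,1}F$ from the proof of \Cref{bbq}.
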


Finally we state a result that, although stronger than \Cref{RieszClosedCoclosed_k}, relies more directly on the compactness of the manifold and is not preserved under dilation.
For a similar statement that holds in the Euclidean case without the assumption of compactness, see \cite{BVS}*{(3.3) on p.~375}.
\begin{theorem}\label{divinLresult}
  Let $\manifold$ be a smooth, compact Riemannian manifold of dimension $\Dim\geq 2$, and let $\alpha \in (0,\Dim)$.
  There exists a constant $C=C(\alpha,\manifold)<\infty$ such that for all $k \in \set{0,\dotsc,\Dim-1}$,
  \begin{align}\label{boundwithd}
    \|\mathcal{I}_{\alpha,k} F \|_{L^{\Dim/(\Dim-\alpha),1}(\Lambda^k)} \leq C \left( \|F\|_{L^1(\Lambda^k)}+\|\ed_k F\|_{L^1(\Lambda^{k+1})} \right)
  \end{align}
  for all $k$-forms $F \in L^1\cap \mathcal{H}^\perp(\Lambda^k)$ such that $\ed_k F\in L^1(\Lambda^{k+1})$; and for all $k\in\set{1,\dotsc,\Dim}$,
  \begin{equation}\label{boundwithdstar}
    \norm{ \mathcal{I}_{\alpha,k} F }_{L^{\Dim/(\Dim-\alpha),1}(\Lambda^k)} \leq C \left( \norm{F}_{L^1(\Lambda^k)} + \norm{\ed^*_k F}_{L^1(\Lambda^{k-1})} \right)
  \end{equation}
  for all $k$-forms $F \in L^1\cap \mathcal{H}^\perp(\Lambda^k)$ such that $\ed^*_k F\in L^1(\Lambda^{k-1})$.
\end{theorem}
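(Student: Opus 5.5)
We reduce \Cref{divinLresult} to \Cref{RieszClosedCoclosed_k} using the Hodge decomposition, exploiting the compactness of $\manifold$ to absorb the lower order terms. Take \eqref{boundwithd} first. Given $F\in L^1\cap\mathcal{H}^\perp(\Lambda^k)$ with $\ed_k F\in L^1$, we write
\begin{equation*}
F = \ed_{k-1}\ed^*_k\,\mathcal{G}_k F + \ed^*_{k+1}\ed_k\,\mathcal{G}_k F =: F_1+F_2 ,
\end{equation*}
where $\mathcal{G}_k=(-\Delta_k)^{-1}$ on $\mathcal{H}^\perp$ is the Green operator. Then $F_1$ is exact, hence closed and in $\mathcal{H}^\perp$. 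On the other hand, $F_2=\ed^*_{k+1}\mathcal{G}_{k+1}\ed_k F$ is co-closed and in $\mathcal{H}^\perp$, because $\mathcal{G}$ commutes with $\ed$. Since $\mathcal{I}_{\alpha,k}$ is linear and commutes with the projections, it suffices to bound $\mathcal{I}_{\alpha,k}F_1$ and $\mathcal{I}_{\alpha,k}F_2$ separately.

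\textbf{The co-exact part.} For $F_2$ we avoid estimating $\|F_2\|_{L^1}$, which may fail since the Riesz transforms are unbounded on $L^1$. Instead we use the commutation $\mathcal{I}_{\alpha,k}\ed^*_{k+1}=\ed^*_{k+1}\mathcal{I}_{\alpha,k+1}$ and write
\begin{equation*}
\mathcal{I}_{\alpha,k}F_2 = \ed^*_{k+1}\mathcal{G}_{k+1}\,\mathcal{I}_{\alpha,k+1}(\ed_kF) = \mathcal{I}_{\alpha,k}\bigl(\ed^*_{k+1}\mathcal{I}_{2,k+1}\bigr)\ed_k F .
\end{equation*}
The operator $\ed^*\mathcal{I}_{2}$ is of order $-1$, with kernel bounded by $C\,d(x,y)^{1-\Dim}$ on the compact manifold (heat kernel bounds). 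Hence $\mathcal{I}_{\alpha,k}\ed^*_{k+1}\mathcal{I}_{2,k+1}$ is a pseudodifferential operator of order $-(\alpha+1)$ with kernel $O(d(x,y)^{\alpha+1-\Dim})$ when $\alpha+1<\Dim$, and bounded by $C(1+|\log d|)$ or bounded otherwise. On a compact manifold such a kernel maps $L^1$ (indeed measures) into $L^{\Dim/(\Dim-\alpha-1),\infty}$, which embeds into $L^{\Dim/(\Dim-\alpha),1}$ because the volume is finite and $\Dim/(\Dim-\alpha-1)>\Dim/(\Dim-\alpha)$. This gives
\begin{equation*}
\|\mathcal{I}_{\alpha,k}F_2\|_{L^{\Dim/(\Dim-\alpha),1}}\le C\|\ed_kF\|_{L^1}.
\end{equation*}
This is exactly where compactness enters, and it explains why the estimate is not dilation invariant.

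\textbf{The exact part.} For $F_1=F-F_2$ we apply \Cref{RieszClosedCoclosed_k} (closed case, valid also for $k=0$ trivially, since then $F_1=0$). This needs $\|F_1\|_{L^1}\le \|F\|_{L^1}+\|F_2\|_{L^1}$, and $\|F_2\|_{L^1}$ is again problematic. The fix is to apply \Cref{RieszClosedCoclosed_k} not to $F_1$ but to a smoothed version. Write $F_2=\ed^*T$ with $T=\mathcal{G}_{k+1}\ed_kF$, and split $T=e^{\Delta}T+(T-e^{\Delta}T)$. Then:
\begin{itemize}
\item $\ed^* e^{\Delta}T$ is smooth, with $L^1$ (even $L^\infty$) norm bounded by $C\|\ed_kF\|_{L^1}$, since $e^{\Delta}\mathcal{G}$ is smoothing.
\item $\ed^*(T-e^{\Delta}T)=\ed^*\int_0^1 e^{s\Delta}\ed_kF\,\dd s$ has kernel $\int_0^1 s^{-1/2}\cdot$(heat kernel)$\,\dd s$, which is bounded by $d^{1-\Dim}$ and hence integrable. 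So its $L^1$ norm is bounded by $C\|\ed_kF\|_{L^1}$.
\end{itemize}
Thus in fact $\|F_2\|_{L^1}\le C\|\ed_kF\|_{L^1}$ holds directly on the compact manifold, because $\ed^*\mathcal{G}\ed$ applied to $L^1$ data gains the $L^1$ bound through the integrable $s^{-1/2}$ singularity. I expect verifying this kernel bound, namely the gradient heat kernel estimate $|\nabla_x p_s(x,y)|\lesssim s^{-(\Dim+1)/2}e^{-cd^2/s}$ together with the exponential decay for large $s$ on $\mathcal{H}^\perp$, to be the main technical step. If it holds, the second paragraph is even unnecessary: the decomposition gives $\|F_1\|_{L^1}+\|F_2\|_{L^1}\le C(\|F\|_{L^1}+\|\ed_kF\|_{L^1})$, and then \Cref{RieszClosedCoclosed_k} applied to each piece yields \eqref{boundwithd} for $1\le k\le\Dim-1$. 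In the case $k=0$, $F_1=0$ and $F_2$ is co-closed, so we use the kernel argument of the second paragraph.

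\textbf{The dual bound.} Estimate \eqref{boundwithdstar} follows by the same argument with the roles of $\ed$ and $\ed^*$ exchanged. Alternatively, it follows by conjugating with the Hodge star, which is an isometry intertwining $\ed^*_k$ with $\pm\ed_{\Dim-k}$, $\Delta_k$ with $\Delta_{\Dim-k}$, and the harmonic spaces.
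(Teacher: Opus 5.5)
Your argument is correct, but it takes a different route from the paper. The paper obtains \eqref{boundwithd} from \Cref{int1inftyBound} together with the small-time estimate \Cref{int01Bound_General_kform}. That estimate is proved by promoting $\omega$ to a vector of $(\Dim-1)$-forms via $\Phi_k$ and comparing heat kernels. The non-closed $(\Dim-1)$-form case \Cref{int01Bound_General} is handled by building a correcting $1$-current $R_F$, which transports $\phi^-$ to $\phi^+$ (where $\ed F=\phi\,\nu$) along curves $g_{p,q}$ of bounded length and Morrey norm. Then $T_F-R_F$ is boundaryless, so the atomic decomposition applies, while $R_F$ is controlled by the Morrey interpolation bound \Cref{etDeltaT_LorentzIntegrated}. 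Finally, \eqref{boundwithdstar} follows by Hodge duality.

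You instead use the Hodge decomposition together with the fact that the order $-1$ operator $\ed^*_{k+1}\mathcal{G}_{k+1}$ has kernel $O(d_\manifold^{1-\Dim})$ on a compact manifold and is therefore bounded on $L^1$. This follows from the $\ed^*$-analogue of \Cref{d1etDeltaxiAsIntegral} integrated over $t\in(0,1]$, plus \Cref{exp_decay} for $t\geq 1$. It gives $\norm{F_1}_{L^1}+\norm{F_2}_{L^1}\lesssim\norm{F}_{L^1}+\norm{\ed_k F}_{L^1}$. Applying \Cref{RieszClosedCoclosed_k} to the closed piece $F_1$ and the co-closed piece $F_2$ then settles $1\le k\le\Dim-1$; there is no circularity, since the paper proves that theorem via \Cref{int01Bound_dF=0} without using \Cref{divinLresult}. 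The direct kernel bound for $\mathcal{I}_{\alpha,0}\ed^*_1\mathcal{I}_{2,1}$ covers $k=0$.

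Your initial worry that $\norm{F_2}_{L^1}$ might be uncontrolled, and the resulting smoothing detour, are unnecessary. The operator acting on the $L^1$ datum $\ed_k F$ is $\ed^*\mathcal{G}$, not a Riesz transform, so the clean version of your proof is just the $L^1$ bound plus the main theorem.

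As for what each approach buys: yours is much shorter. It treats the main theorem as a black box and needs only standard heat-kernel facts, and running the decomposition with $\ed$ and $\ed^*$ exchanged gives \eqref{boundwithdstar} without orientability. Compactness enters through the spectral gap and the global integrability of $d_\manifold^{1-\Dim}$ rather than through finite diameter. The paper's route is more laborious but stays inside the current framework. It yields the stronger fixed-time (Besov--Lorentz) estimate \Cref{int01Bound_General_kform} uniformly for all $k$, including $k=0$, and in the form of an explicit boundary-correcting current.
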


\subsection{Discussion}

\subsubsection{Vector differential inequalities in the \texorpdfstring{$L^1$}{L1} regime}
\label{sss:Lorentzestimates}
The discovery of strong-type estimates in the $L^1$ regime for certain vector differential operators begins with the pioneering papers of Bourgain and Brezis, who, among other results, in their papers \cites{BourgainBrezis2004, BourgainBrezis2007} proved that for a given divergence free $F \in L^1(\mathbb{R}^3;\mathbb{R}^3)$, the function $Z:= \curl (-\Delta)^{-1}F$ satisfies
\begin{align}
  \curl Z &= F  \text{ in } \mathbb{R}^3, \label{curl}\\
  \Div Z &= 0 \text{ in } \mathbb{R}^3,\label{div}
\end{align}
and admits the estimate
\begin{align}\label{BB}
  \|Z\|_{L^{3/2}(\mathbb{R}^3;\mathbb{R}^{3})} \leq C\|F\|_{L^1(\mathbb{R}^3;\mathbb{R}^3)}
  .
\end{align}
\noindent
The inequality \eqref{BB} was the beginning of a new area of research on Sobolev inequalities beyond the gradient, and it led to questions of simpler proofs, extensions, applications, and refinements; see e.g.~\cites{AyoStoWoj2021,BousVSch2014, BousRussWangYung2019, ChanilloVSYung1, GRV2024, HS, HouniePicon2016, HouniePicon2021, LanzaniStein, Mazya2007, Mazya2010, RSS, Spector-VanSchaftingen-2018, Stolyarov2021, Stolyarov2022, Spector1,  VS, VS1, VS2, VS2010, VS3,Yung2010} and also the expository articles~\cites{Spector2020, VanSchaftingen2014, VanSchaftingen2024}.

It was a question of Bourgain and Brezis \cite{BourgainBrezis2007}*{Open Problem~1 on p.~295} whether for the analogue of \eqref{curl}--\eqref{div} on the torus, the estimate \eqref{BB} can be improved on the Lorentz scale to obtain a bound for $\| Z\|_{L^{3/2,1}}$.
This question is reiterated in various forms in \cite{VS2010}*{Open Problems~1--2}, \cite{VS3}*{Open Problem~8.3}, \cite{VanSchaftingen2014}*{Open Problem~2}, and \cite{BousVSch2014}*{top of p.~1423}.
The state of the art emerging from these papers is that there are bounds for $\| Z\|_{L^{3/2,s}}$ for all $s>1$, see \citelist{\cite{VS2010}*{Theorem~3} \cite{VS3}*{Theorem~8.5}}.
However, the optimal Lorentz exponent $s=1$ was unavailable.

Interest in Lorentz bounds in general, and optimal Lorentz bounds in particular, is natural from several perspectives.
The Lorentz spaces $L^{r,s}$ are precisely the spaces which arise in the real interpolation of the couple $(L^1, L^\infty)$ \cite{BerghLofstrom1976}*{p.~109}.
The use of the Lorentz scale, and particularly the optimal exponents, has useful applications in PDE \cite{Danshen, KeelTao1998}, and it has long been understood that these improvements in the ``microscopic'' parameter $s$ can become magnified to imply significant strengthened conclusions \cites{BrezisLorentz, Tartar1998}.
The parameter $s=1$ on the Lorentz scale plays a distinguished role, for example in embeddings into the space of continuous functions \cites{Helein2002,Stein1981} and in obtaining optimal rearrangement-invariant Sobolev inequalities \cites{BreitCianchi,BCS1,BCS2}.
Yet the historical tools used to establish such Lorentz estimates -- symmetrization and the co-area formula \cite{Mazya1960} or projections \cite{Spector-VanSchaftingen-2018} -- do not seem to be possible to adapt to the setting of the Hodge system, while the existing arguments by Littlewood-Paley theory \cites{BourgainBrezis2007,BousRussWangYung2019} or slicing \cites{LanzaniStein, VS} have not been sufficient to give a proof for $s=1$.

An answer to the Euclidean analogue of the question of Bourgain and Brezis was obtained by the second and third named authors in \cite{HS}, who proved that the solution of \eqref{curl}--\eqref{div} admits the Lorentz estimate
\begin{align}\label{BB_Lorentz}
  \| Z\|_{L^{3/2,1}(\mathbb{R}^3;\mathbb{R}^3)} &\leq C \|F\|_{L^1(\mathbb{R}^3;\mathbb{R}^3)}.
\end{align}
Two key technical ideas underlie both \cite{HS} and this manuscript, along with a third that is manifold-specific.
The first is to study the PDE problem \eqref{HodgeSystem} by means of the Riesz potential \eqref{RieszPotential}, with a link given by the Riesz transform.
As we outline in \Cref{sss:HodgeRieszDiscussion}, the Riesz transform can be estimated in the relevant norms, so it will suffice to bound the Riesz potentials, and specifically to bound the fixed-time heat propagators applied to forms, as in the integral \eqref{RieszPotential}.

The second is to use an atomic decomposition in terms of curves, which we outline in \Cref{sss:CurrentsAtomicDiscussion}.
This further reduces the question to establishing fixed-time bounds for heat propagators applied to curves.
These estimates are intricate, but are naturally suggested from the regularity properties guaranteed by the atomic decomposition, as we see below in \Cref{s:Closedd-1FormProof}.
These fixed-time bounds are easily seen to be sharp by simple examples, yet prove sufficient to obtain the optimal Lorentz exponent.

The third key technical idea is to prove the $k$-form estimates by means of $(\Dim-1)$-form estimates.
This step relies on an identity expressing $k$-forms and their exterior derivatives in terms of $(\Dim-1)$-forms and their corresponding exterior derivatives.
We discuss in \Cref{sss:ReductionToDivergence} how this reduction compares to similar constructions in the Euclidean case.
This identity is simple in the Euclidean context, but more subtle in the manifold context: see \Cref{s:d-1FormsTokForms}.

\subsubsection{Hodge systems and Riesz potentials}\label{sss:HodgeRieszDiscussion}

In the Euclidean context, the Riesz potential can be defined in various equivalent ways, including as a convolution against a dimension-dependent spatial kernel with a singularity along the diagonal.
The representation \eqref{RieszPotential} in terms of the heat propagator is equivalent \cite{HRS}*{formula (1.2) on p.~1924} and also eliminates the dimension-dependence and generalizes straightforwardly to the context of a Riemannian manifold.
The integral identity
\begin{equation*}
  \lambda^{-\alpha/2} = \frac{1}{\Gamma(\alpha/2)} \int_0^\infty t^{\alpha/2-1} e^{-t\lambda} \,\dd t
  ,\quad\lambda>0
  ,
\end{equation*}
shows that the Riesz potential corresponds to an inverse power of the Laplacian quite generally, using only that $-\Delta_k$ has non-negative spectrum.  Similar reasoning shows that the Riesz potentials form a semigroup: for $\omega \in L^1\cap \mathcal{H}^\perp(\Lambda^j)$, $\alpha,\beta>0$,  \begin{equation}\label{semigroup}
    \mathcal{I}_{\alpha,j} \mathcal{I}_{\beta,j} \omega = \mathcal{I}_{\alpha+\beta,j} \omega.
  \end{equation}

The link between the Riesz potential $\mathcal{I}_{1,k}$ and the Hodge system \eqref{HodgeSystem} is given by the form-valued Riesz transforms $\ed_k\mathcal{I}_{1,k}, \ed^*_k\mathcal{I}_{1,k}$, as mappings from $C^\infty\intersect\mathcal{H}^\perp(\Lambda^k)$ to $C^\infty(\Lambda^{k+1}), C^\infty(\Lambda^{k-1})$, respectively.
It requires some care to extend this definition to general spaces of forms.
However, our argument here relies only a single standard result for the Riesz transform, namely that it is a bounded operator with respect to the $L^p$ norm for all $1<p<\infty$.
The crucial estimates, including the interplay between the parameter $\alpha$ in the Riesz potential and the Lorentz space $L^{\Dim/(\Dim-\alpha), 1}(\Lambda^k)$, will all emerge from an analysis of the well-behaved integral representation \eqref{RieszPotential}.
The proof will obtain pointwise estimates for the integrand in \eqref{RieszPotential}: the argument rests on a delicate analysis across different spatial scales, but is ultimately natural and direct.
We show that the integral \eqref{RieszPotential} is absolutely convergent, and we discuss in \Cref{sss:BesovLorentzDiscussion} how to interpret the resulting bounds in terms of a Besov-Lorentz norm.

We remark that in the definition of the Riesz potential $\mathcal{I}_{\alpha,k}$, and similarly in \Cref{Hodge_Laplacian_L1,bbq}, we require the forms to be orthogonal to the space of harmonic forms, i.e., orthogonal to the eigenspace of the Laplacian for $\lambda=0$.
However, this is no loss of generality: it is readily verified that for \eqref{PoissonPDEkForm} and \eqref{HodgeSystem} to have a solution, it is necessary for $F$ and $G$ to be orthogonal to the appropriate space of harmonic forms.

\subsubsection{Currents and the atomic decomposition}\label{sss:CurrentsAtomicDiscussion}

A key step in this paper's examination of forms is to express them in terms of currents, whose formal definition will be given in \Cref{ss:Currents}.
Currents are natural analytical tools that generalize both integration along lower-dimensional oriented subspaces, particularly curves (\Cref{ss:CurrentsFromCurves}); and also smooth forms, particularly $(\Dim-1)$-forms (\Cref{ss:CurrentsFromForms}).

Central to the proof is the atomic decomposition for 1-currents without boundary.
First described in \cite{HS}*{Theorem~1.5} for Euclidean space, and generalized and simplified in \cite{ChenGoodmanHernandezSpectorPreprint}*{Theorem~1.1}, the atomic decomposition builds on results of Smirnov \cite{Smirnov} and Paolini and Stepanov \cite{PS1} to express a 1-current without boundary in terms of closed curves with a crucial regularity condition expressed in terms of the Morrey 1-norm.
The version stated here applies in the simpler special case of a Riemannian manifold; for notation and definitions, and for an explanation of why it follows from the general case, see \Cref{s:Preliminaries}.

\begin{theorem}[Adapted from \cite{ChenGoodmanHernandezSpectorPreprint}*{Theorem~1.1}]
  \label{AtomicDecompositionCited}
  ~
  There exists a universal constant $M<\infty$ such that for any Riemannian manifold $\manifold$ and any 1-current $T\in \currents_1$ with $\boundary_1 T=0$, there exist piecewise-geodesic closed curves $\gamma_{i,m}$ and scalars $\lambda_{i,m} \geq 0$, $m \in \mathbb{N}$, $1\leq i\leq m$, such that
  \begin{gather}
    T(\xi)= \lim_{m \to \infty} \sum_{i=1}^m \lambda_{i,m} \frac{[[\gamma_{i,m}]](\xi)}{\length(\gamma_{i,m})}
    \quad\text{for all $\xi\in C(\Lambda^1)$}, \label{limit}
    \\
    \sum_{i=1}^m\abs{\lambda_{i,m}} \leq 2 \mathbb{M}(T)
    \quad\text{for all $m$, and} \label{mass}
    \\
    \Morrey{\gamma_{i,m}} \leq M
    \quad\text{for all $i,m$}. \label{morrey_bound}
  \end{gather}
\end{theorem}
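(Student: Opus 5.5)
The statement should follow from the general metric-space version \cite{ChenGoodmanHernandezSpectorPreprint}*{Theorem~1.1} in three steps. First, identify a finite-mass boundaryless $1$-current on $\manifold$ with a metric normal $1$-current. Second, apply the cited theorem to obtain closed Lipschitz curves. Third, replace each curve by a piecewise-geodesic one without losing more than a universal factor in the Morrey bound, and then pass to a diagonal limit.

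\emph{Step 1 (identification).} Let $T\in\currents_1$ with $\boundary_1 T=0$. Since $\totalmass(T)<\infty$, $T$ extends to continuous $1$-forms and is represented by a vector-valued Radon measure. Consider the functional
\begin{equation*}
(f,g)\mapsto T(f\,\ed g), \qquad f,g \text{ Lipschitz on } \manifold,
\end{equation*}
where $\ed g$ exists a.e.\ and is handled by smooth approximation. This functional satisfies the multilinearity, continuity and locality axioms of an Ambrosio--Kirchheim metric current on the compact metric space $(\manifold,d)$. Its mass is comparable to $\totalmass(T)$, with equality for the comass norm. Its boundary $g\mapsto T(\ed g)$ vanishes, so it is normal. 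Conversely, any $\xi\in C(\Lambda^1)$ can be written locally as a sum of terms $f_j\,\ed g_j$ via coordinate charts and a partition of unity. Hence the metric current determines $T(\xi)$, and the two notions coincide.

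\emph{Step 2 (the cited decomposition).} Applying \cite{ChenGoodmanHernandezSpectorPreprint}*{Theorem~1.1} gives closed Lipschitz curves $\tilde\gamma_{i,m}$ and weights $\lambda_{i,m}\ge0$ with the mass bound \eqref{mass}. These satisfy $\Morrey{\tilde\gamma_{i,m}}\le M_0$ for a universal $M_0$, and
\begin{equation*}
\sum_i\lambda_{i,m}\,\frac{[[\tilde\gamma_{i,m}]]}{\length(\tilde\gamma_{i,m})}\to T
\end{equation*}
on every $f\,\ed g$, hence by Step~1 on every $\xi\in C(\Lambda^1)$.

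\emph{Step 3 (geodesic replacement; the main obstacle).} Fix a curve $\tilde\gamma$ and $\delta>0$ smaller than the injectivity radius of the compact manifold $\manifold$. Partition $\tilde\gamma$ into consecutive arcs of length at most $\delta\,\length(\tilde\gamma)$ and at most $\delta$. Replace each arc by the minimizing geodesic joining its endpoints, and call the result $\gamma$; it is piecewise geodesic and closed.

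Each chord is no longer than its arc, so $\length(\gamma)\le\length(\tilde\gamma)$. Each chord also stays within distance equal to its own length of the arc it replaces. For a continuous form $\xi$ with modulus of continuity $\omega_\xi$, comparing the two in normal coordinates should give
\begin{equation*}
\bigl|[[\gamma]](\xi)-[[\tilde\gamma]](\xi)\bigr|\le \length(\tilde\gamma)\,\omega_\xi(C\delta).
\end{equation*}
The remaining issue is the length defect $\length(\tilde\gamma)-\length(\gamma)$, which may not be small. I would avoid controlling it directly: instead define the new weight $\lambda'=\lambda\,\length(\gamma)/\length(\tilde\gamma)\le\lambda$, so that $\lambda'[[\gamma]]/\length(\gamma)$ is close to $\lambda[[\tilde\gamma]]/\length(\tilde\gamma)$. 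The mass bound \eqref{mass} is preserved because $\lambda'\le\lambda$.

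For the Morrey bound, fix a ball $B(x,r)$ and consider two cases.
\begin{itemize}
\item If $r$ is at most the length of the chords meeting the ball, then each chord is a minimizing geodesic and meets $B(x,r)$ in length at most $2r$. The number of such chords is controlled by the original Morrey bound on the comparable ball around their arcs, since each such arc has length at least $r$.
\item If $r$ is larger, then every chord meeting $B(x,r)$, together with its arc, lies in $B(x,3r)$. Hence the length of $\gamma$ in $B(x,r)$ is at most the length of $\tilde\gamma$ in $B(x,3r)$, which is at most $3M_0 r$.
\end{itemize}
This yields $\Morrey{\gamma}\le M$ with $M$ depending only on $M_0$. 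The point requiring care is that these comparisons take place inside the injectivity radius, where geodesic balls behave like Euclidean ones up to factors tending to $1$; this is what keeps $M$ universal rather than dependent on $\manifold$.

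Finally, choose $\delta=\delta_m\to0$ along the sequence $m$. By the estimate above and a diagonal argument over a countable dense set of forms, together with the uniform mass bound, the limit \eqref{limit} persists for all $\xi\in C(\Lambda^1)$, which completes the proof.
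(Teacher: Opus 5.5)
Your Steps~1 and~2 are essentially what the paper does. Its justification (the closing remark of the subsection on currents) is that normal currents in the paper's sense and Ambrosio--Kirchheim metric currents are canonically identified. It cites \cite{AK}*{Theorem~11.1} for this rather than checking the axioms by hand as you sketch, notes that $[[\gamma]]$ is compatible with the identification, and treats the statement as an application of \cite{ChenGoodmanHernandezSpectorPreprint}*{Theorem~1.1} with $\epsilon=1$.

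There are two differences from the paper. First, the paper needs no Step~3. The cited theorem is formulated for spaces in which any two points are joined by a length-minimizing geodesic, and it already delivers piecewise-geodesic closed curves with the Morrey bound. Second, you skip that hypothesis: the cited theorem assumes connectedness. If $\manifold$ is disconnected, $d_\manifold=+\infty$ between components, so you cannot apply the theorem to ``$(\manifold,d)$'' directly. The paper applies it on each component. In your terms, restrict $T$ to each component; the boundary condition is local, so each restriction is still boundaryless, and the masses add. Then decompose each piece separately and merge the finitely many families by reindexing and padding with zero weights.

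If you keep Step~3 as a safeguard (it would let you use merely Lipschitz curves from the cited result), the Morrey estimate is wrong as written. You allow arcs of different lengths, constrained only from above, and you split according to chord lengths. Both choices break the argument. In a flat chart, let $\tilde\gamma$ run through semicircles of radii $2^j r$, $j=0,\dots,J$, centred at $x$. Join them by segments of one fixed line through $x$, and close the curve up along that line.
\begin{itemize}
\item $\Morrey{\tilde\gamma}$ is bounded independently of $J$ and $r$.
\item Each semicircle with $2^j\pi r\le\min(\delta\length(\tilde\gamma),\delta)$ is an admissible arc, and there are about $J-\log_2(1/\delta)$ of them.
\item All their chords are diameters through $x$, so $\mu_\gamma(\bar B(x,r))\gtrsim (J-\log_2(1/\delta))\,r$, which is unbounded as $J\to\infty$.
\end{itemize}
Your second case also fails on its own terms: a chord shorter than $r$ can subtend an arc much longer than $r$ that leaves $B(x,3r)$.

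The fix is to subdivide into $N$ arcs of a common length $L\le\delta$ and split on $r$ versus $L$.
\begin{itemize}
\item If $r\ge L$, every chord meeting $\bar B(x,r)$, together with its arc, lies in $\bar B(x,r+2L)\subset\bar B(x,3r)$. Chords are no longer than their arcs, so $\mu_\gamma(\bar B(x,r))\le\mu_{\tilde\gamma}(\bar B(x,3r))\le 3M_0 r$.
\item If $r<L$, the arcs whose chords meet $\bar B(x,r)$ are disjoint, have length $L$, and lie in $\bar B(x,3L)$. So there are at most $3M_0$ of them. Each chord is minimizing, so it meets $\bar B(x,r)$ in length at most $2r$.
\end{itemize}
Hence $\Morrey{\gamma}\le 6M_0$. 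This is purely metric reasoning, so no comparison with Euclidean balls inside the injectivity radius is needed. With this correction, and discarding any degenerate $\gamma$ with $\length(\gamma)=0$ (its new weight is $0$ anyway), the rest of your Step~3 goes through.
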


The atomic decomposition will allow us to study the heat propagator on forms in terms of a heat propagator acting on closed curves, for which there is simpler geometric intuition.
However, because the atomic decomposition \eqref{limit} includes the lengths $\length(\gamma_{i,m})$ of the curves in the denominator, we will inherently need upper bounds that are linear in the length of the curves, at least for small curves.
The proof makes use of the regularity property \eqref{morrey_bound}, which is a crucial additional element of the atomic decomposition, via an intricate combination of bounds capturing the change in scaling behaviour across matching spatial and temporal scales.
We remark that the argument for a manifold carries additional complexities compared to the Euclidean version, where several quantities have simpler interpretations as integrals against an explicit kernel.

As we will explain later, there is a connection between $k$-forms and exterior derivatives on the one hand and $(\Dim-k)$-currents and their boundaries on the other.
Because the atomic decomposition applies only to $1$-currents, the resulting proof will be specific to $(\Dim-1)$-forms.

\subsubsection{Reducing to \texorpdfstring{$(\Dim-1)$}{(\Dim-1)}-forms}\label{sss:ReductionToDivergence}

Among other results, in \cite{VS3} Van Schaftingen observed that estimates for general first-order constant-coefficient co-cancelling operators can be reduced to estimates for the divergence.
The exterior derivatives for $\R^\Dim$ are examples of first-order co-cancelling operators, and so they can be written as divergences, modulo appropriate linear maps \cite{VS3}*{Proposition 3.3 and Lemma~2.5 and the remarks following}.
In the context of forms on a manifold, the analogue of the divergence is the exterior derivative on $(\Dim-1)$-forms.
In \Cref{s:d-1FormsTokForms}, we establish an extension of Van Schaftingen's result beyond the constant-coefficient case, showing that a $k$-form exterior derivative can be expressed in terms of $m$-form exterior derivatives for all $k\leq m\leq\Dim-1$: see \Cref{dPhiRelation,rem:dkAsdm}.
This provides a manifold analogue of Van Schaftingen's algebraic construction in \cite{VS3}*{Lemma~2.5 on p.~884} and \cite{VS2010}*{proof of Proposition~2.2 on p.~238}.

This idea allows the $k$-form estimate to be proved from the $(\Dim-1)$-form estimate.
Our proof takes a closed $k$-form and maps it to a vector of closed $(\Dim-1)$-forms; propagates the resulting closed $(\Dim-1)$-forms and bounds the result by the preceding current-based analysis; then maps back to recover a $k$-form.
A complication, specific to the manifold context, is that the heat propagator on forms is different depending on the degree, so the argument requires some analysis to handle the difference of these propagators.
However, the propagators can be shown to agree in their lowest-order terms.
This reflects the intuition that, for the purposes of analysis and integral bounds, the assumption that a $k$-form is closed is weakest when $k=\Dim-1$.

\subsubsection{Besov-Lorentz estimates}\label{sss:BesovLorentzDiscussion}

Our analysis is based on bounding heat propagators at fixed times, so that  \Cref{RieszClosedCoclosed_k} is proved as a consequence of the following slightly stronger estimate.

\begin{theorem}\label{BesovClosedCoclosed}
  Let $\manifold$ be a smooth, compact manifold of dimension $\Dim\geq 2$, let $\alpha \in (0,\Dim)$, and let $k \in \set{1,\dotsc,\Dim-1}$.
  There exists a constant $C=C(\alpha,\manifold)>0$ such that
  \begin{equation}\label{BesovIntegralBound}
    \frac{1}{\Gamma(\alpha/2)}\int_0^\infty t^{\alpha/2-1} \norm{e^{t\Delta_k}F}_{L^{\Dim/(\Dim-\alpha),1}(\Lambda^k)} \, \dd t \leq C \norm{F}_{L^1(\Lambda^k)}
  \end{equation}
  for all $F \in L^1\cap\mathcal{H}^\perp(\Lambda^k)$ such that $\ed_k F=0$ or $\ed_k^*F=0$.
\end{theorem}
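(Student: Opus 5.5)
First reduce to the closed case. Hodge star $*$ is an isometry that intertwines the propagators, $*e^{t\Delta_k}=e^{t\Delta_{\Dim-k}}*$, and sends $\ed^*$-closed $k$-forms to $\ed$-closed $(\Dim-k)$-forms in $\mathcal{H}^\perp$. It therefore suffices to treat $\ed_k F=0$ for $k\in\set{1,\dots,\Dim-1}$.

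Next split the time integral at $t=1$. For $t\geq 1$, spectral gap on $\mathcal{H}^\perp(\Lambda^k)$ combined with the $L^1\to L^\infty$ smoothing of $e^{\Delta_k}$ gives $\norm{e^{t\Delta_k}F}_{L^\infty}\lesssim e^{-\lambda_1 t}\norm{F}_{L^1}$. On a compact manifold this dominates the Lorentz norm, so the tail integral is bounded by $C\norm{F}_{L^1}$. The whole difficulty is $t\in(0,1)$.

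\textbf{Case $k=\Dim-1$.} Identify the closed $(\Dim-1)$-form $F$ with a $1$-current $T$ without boundary, with $\totalmass(T)=\norm{F}_{L^1}$. Apply \Cref{AtomicDecompositionCited} and use linearity and continuity of $F\mapsto e^{t\Delta}F$ together with lower semicontinuity of the norm. This reduces matters to a uniform estimate for a single normalized closed curve $\gamma$ with $\Morrey{\gamma}\le M$:
\[
\int_0^1 t^{\alpha/2-1}\norm{e^{t\Delta}[[\gamma]]}_{L^{\Dim/(\Dim-\alpha),1}}\,\dd t\lesssim \length(\gamma).
\]
For this I would use Gaussian-type heat kernel bounds on forms, $|K_t(x,y)|\lesssim t^{-\Dim/2}e^{-c\,d(x,y)^2/t}$, together with derivative bounds. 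Let $\ell=\length(\gamma)$ and compare $\sqrt t$ with $\ell$.

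For $\sqrt t\gtrsim \ell$, closedness gives cancellation. Write $[[\gamma]]$ as the boundary of a surface of area $\lesssim \ell^2$, or expand the kernel to first order along the curve. This yields $\norm{e^{t\Delta}[[\gamma]]}_{L^\infty}\lesssim \ell^2 t^{-(\Dim+1)/2}$ and $\norm{\cdot}_{L^1}\lesssim \ell^2 t^{-1/2}$. Interpolating via Hölder in Lorentz spaces, $\norm{\cdot}_{L^{p,1}}\lesssim\norm{\cdot}_{L^1}^{1/p}\norm{\cdot}_{L^\infty}^{1-1/p}$, then gives an integrable contribution of size $\ell$.

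For $\sqrt t\lesssim \ell$, use the Morrey bound: the curve has length $\lesssim Mr$ in every ball of radius $r$. Hence $e^{t\Delta}[[\gamma]]$ is pointwise $\lesssim t^{-(\Dim-1)/2}$ on a $\sqrt t$-tube around $\gamma$ of volume $\sim \ell\, t^{(\Dim-1)/2}$, with Gaussian decay off the tube. Computing the distribution function gives $\norm{\cdot}_{L^{p,1}}\lesssim \ell^{1/p}t^{-(\Dim-1)/2(1-1/p)}$ with $1/p=(\Dim-\alpha)/\Dim$. Integrated against $t^{\alpha/2-1}$ over $t<\ell^2$, this should give $\lesssim\ell$. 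The exponent bookkeeping is exactly where $p=\Dim/(\Dim-\alpha)$ is critical, so the estimate must be done with the $L^{p,1}$ norm directly rather than via $L^p$ norms.

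\textbf{General $k$.} Use the reduction of \Cref{s:d-1FormsTokForms}. Locally, and then globally via a partition of unity, write a closed $k$-form $F$ as $F=\sum_j B_j\Phi_j$, where $\Phi_j$ are $(\Dim-1)$-forms with $\ed\Phi_j$ controlled by lower-order terms of $F$ and $B_j$ are smooth bundle maps. The commutator errors $e^{t\Delta_k}B_j-B_je^{t\Delta_{\Dim-1}}$ and the non-closed parts are of lower order, carrying an extra factor $t^{1/2}$. These are absorbed by the weak-type estimate $\norm{e^{t\Delta}G}_{L^{p,1}}\lesssim t^{-\alpha'/2}\norm{G}_{L^1}$ with a gain, or by an application of \Cref{divinLresult}-type bounds with $\alpha$ slightly smaller.

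The main obstacle is the single-curve estimate in the small-time regime, particularly in the transition region $\sqrt t\sim \ell$, together with making the manifold heat kernel cancellation estimate (first-order expansion along a closed curve) rigorous. I would first attempt it in normal coordinates at scale $\ell\ll$ injectivity radius and treat large curves crudely.
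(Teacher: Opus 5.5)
Your time splitting, Hodge-star reduction and treatment of $k=\Dim-1$ follow the paper's proof. For $t\lesssim\length(\gamma)^2$ you interpolate $\norm{\cdot}_{L^1}\lesssim\length(\gamma)$ against the Morrey bound $\norm{\cdot}_{L^\infty}\lesssim t^{-(\Dim-1)/2}M$. For short curves and $t\gtrsim\length(\gamma)^2$ you use a spanning surface to get $\norm{\cdot}_{L^1}\lesssim t^{-1/2}\length(\gamma)^2$. Long curves are handled crudely. The transition $\sqrt t\sim\length(\gamma)$ is harmless (take the minimum of the two bounds), and \Cref{Interpolation} gives the $L^{\Dim/(\Dim-\alpha),1}$ norm directly, so no tube computation is needed.

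\textbf{The gap: the reduction to general $k$.} Your promoted $(\Dim-1)$-forms $\Phi_j$ are built with cutoffs, so $\ed\Phi_j\neq 0$. You then assert that ``the non-closed parts are of lower order, carrying an extra factor $t^{1/2}$.'' That does not follow. The curve argument needs $\boundary_1 T_{\Phi_j}=0$ exactly, and a non-closed $(\Dim-1)$-form has no automatic gain in $e^{t\Delta_{\Dim-1}}\Phi_j$. The information $\ed\Phi_j\in L^1$ must first be turned into a splitting of $\Phi_j$ into two pieces:
\begin{enumerate}
\item an exactly closed piece with $L^1$ norm controlled by $\norm{\Phi_j}_{L^1}+\norm{\ed\Phi_j}_{L^1}$;
\item a remainder whose propagation genuinely gains a power of $t$.
\end{enumerate}
Your sketch supplies neither. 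Appealing to a \Cref{divinLresult}-type bound only relocates the problem. For $(\Dim-1)$-forms that bound is \Cref{int01Bound_General}, which needs its own argument: the paper transports the negative part of $\ed\Phi$ to its positive part along curves of uniformly bounded mass and Morrey norm (\Cref{Fd-1HasR}).

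\textbf{How the paper avoids it.} The functions $y_r^i=\rho_r x_r^i$ are globally defined, so $\ed y_r^{J^c}$ is closed and $\ed(\omega\wedge\ed y_r^{J^c})=\ed\omega\wedge\ed y_r^{J^c}$ exactly (\Cref{dPhiRelation}). Every entry of $\Phi_k(\omega)$ is therefore closed whenever $\omega$ is. The partition of unity $\chi_r$ enters only through the left inverse $\Psi_k$, a pointwise bundle map applied after propagation. The only error is then $e^{t\Delta_k}\omega-\Psi_k(e^{t\vec{\Delta}_{\Dim-1}}\Phi_k(\omega))$. This carries the $t^{1/2}$ gain you anticipate, because both kernels have leading coefficient equal to the identity on the diagonal.

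\textbf{Alternative repair if you keep cutoffs.} Subtract $\ed^*_{\Dim}\mathcal{I}_{2,\Dim}\ed_{\Dim-1}\Phi_j$. This correction has $L^1$ norm $\lesssim\norm{\ed\Phi_j}_{L^1}$ and makes the remainder closed. Its propagated Lorentz norm is integrable against $t^{\alpha/2-1}$, since $\ed^*$ costs only $t^{-1/2}$.

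\textbf{Orientability.} Both the identification of $(\Dim-1)$-forms with $1$-currents and the Hodge star use an orientation. For non-orientable $\manifold$ you need the double-cover argument of \Cref{ss:NonorientableProof}.
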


To interpret \Cref{BesovClosedCoclosed}, note that the left-hand side of \eqref{BesovIntegralBound} defines a norm for the $k$-form $F$.
We will explain in \Cref{appendix_B} that this norm corresponds to a Besov-Lorentz space, under a generalized definition that applies in the setting of a Riemannian manifold.
We show that the norm from \eqref{BesovIntegralBound} is equivalent in the Euclidean case to the usual definition of Besov-Lorentz norm in terms of Littlewood-Paley projectors.
Moreover, in direct analogy with the Euclidean case, the Laplacian and Riesz potentials are homeomorphisms between different Besov-Lorentz spaces: see \Cref{Besov_homeomorphisms}.
It follows that \Cref{BesovClosedCoclosed} has the following equivalent formulation in terms of embeddings between Besov-Lorentz spaces.
\begin{theorembis}{BesovClosedCoclosed}\label{BesovClosedCoclosedReformulated}
  Let $\manifold$ be a smooth, compact manifold of dimension $\Dim\geq 2$, let $\alpha \in (0,\Dim)$, and let $k \in \set{1,\dotsc,\Dim-1}$.
  There exists a constant $C=C(\alpha,\manifold)>0$ such that
  \begin{equation}
    \norm{F}_{\dot{B}^{-\alpha,1}_{\Dim/(\Dim-\alpha),1}(\Lambda^k)} \leq C \norm{F}_{L^1(\Lambda^k)}
  \end{equation}
  for all $F \in L^1\cap\mathcal{H}^\perp(\Lambda^k)$ such that $\ed_k F=0$ or $\ed_k^*F=0$.
  In particular, there are embeddings
  \begin{equation}\label{EmbedIntoBesov}
    \begin{aligned}
      \left\{ F \in L^1\intersect\mathcal{H}^\perp(\Lambda^k) \colon \ed_k F=0 \right\} &\hookrightarrow \dot{B}^{-\alpha,1}_{\Dim/(\Dim-\alpha),1}(\Lambda^k)
      ,
      \\
      \left\{ F \in L^1\intersect\mathcal{H}^\perp(\Lambda^k) \colon \ed_k^* F=0 \right\} &\hookrightarrow \dot{B}^{-\alpha,1}_{\Dim/(\Dim-\alpha),1}(\Lambda^k)
      .
    \end{aligned}
  \end{equation}
\end{theorembis}
\noindent
In \Cref{Besov-nested} we show that our definition implies the same embeddings of Besov-Lorentz spaces that are known to hold in the Euclidean setting: for $0 \leq \alpha <\beta$,
\begin{align}
  \dot{B}^{0,1}_{1,1}(\Lambda^k) \hookrightarrow  \dot{B}^{-\alpha,1}_{\Dim/(\Dim-\alpha),1}(\Lambda^k) \hookrightarrow \dot{B}^{-\beta,1}_{\Dim/(\Dim-\beta),1}(\Lambda^k)
  .
\end{align}
Thus \Cref{BesovClosedCoclosedReformulated} shows that the closed forms of finite mass form a space that is intermediate between $\dot{B}^{0,1}_{1,1}(\Lambda^k)$ and $\dot{B}^{-\alpha,1}_{\Dim/(\Dim-\alpha),1}(\Lambda^k)$ for all $\alpha>0$:
\begin{align}
  \shortset{F\in\dot{B}^{0,1}_{1,1}(\Lambda^k)\colon \ed_k F=0} \hookrightarrow \left\{ F \in L^1\intersect\mathcal{H}^\perp(\Lambda^k) \colon \ed_k F=0 \right\}  \hookrightarrow \dot{B}^{-\alpha,1}_{\Dim/(\Dim-\alpha),1}(\Lambda^k)
  .
\end{align}
We refer the reader to \Cref{appendix_B} for the definitions, notation, proofs, and further properties of Lorentz and Besov-Lorentz spaces.

\section{Preliminaries}\label{s:Preliminaries}

Throughout the paper, we use $C=C(\alpha,\manifold)$ to denote an unspecified finite positive constant, whose value may change from occurrence to occurrence but which depends only on the choice of manifold $\manifold$ (including its dimension $\Dim$) and, where applicable, the parameter $\alpha$ from \Cref{divinLresult,RieszClosedCoclosed_k,BesovClosedCoclosed}.
Likewise, for positive $a,b$, we will write $a\lesssim b$ to mean that $a\leq C b$ for such a constant $C$, uniformly over any other quantities on which $a,b$ depend.

\subsection{A finite atlas for the manifold}\label{ss:FiniteAtlas}

Throughout the paper we take $\manifold$ to be a $C^\infty$-smooth, compact Riemannian manifold of dimension $\Dim$.
The length of a piecewise smooth curve $\gamma\colon [a,b]\to \manifold$ with finitely many corner points is given by
\begin{equation}\label{length_curve}
  \length(\gamma) = \int_a^b \norm{\gamma'(t)}_{\gamma(t)} \, \dd t
  ,
\end{equation}
where $\norm{v}_p$ means the norm of a tangent vector $v\in T_p\manifold$ at a point $p\in\manifold$, as induced by the Riemannian metric.
This in turn allows us to define the distance between points $p, q\in \manifold$:
\begin{equation}
  d_\manifold(p,q) = \inf\set{\length(\gamma)\colon \gamma\text{ is a curve in $\manifold$ from $p$ to $q$}}
  .
\end{equation}
We write $V$ for the (unsigned) measure on $\manifold$ induced by the Riemannian metric.

We recall that the manifold admits an atlas of charts $(x,U)$, where each $x \colon U \subset \manifold \mapsto x(U) \subset \mathbb{R}^\Dim$ is a homeomorphism between $U$ and an open subset of $\R^n$, and different charts are $C^\infty$-related: for any two charts $(x,U)$, $(\tilde{x},\tilde{U})$ with $U\intersect\tilde{U}\neq\emptyset$, the functions
\begin{align*}
  x \circ \tilde{x}^{-1} &\colon \tilde{x}(U \intersect \tilde{U}) \to x(U \intersect \tilde{U})
  ,
  \\
  \tilde{x} \circ x^{-1} &\colon x(U \intersect \tilde{U}) \to \tilde{x}(U \intersect \tilde{U})
  ,
\end{align*}
are $C^\infty$ maps between their corresponding domains and ranges in $\mathbb{R}^\Dim$.
Without loss of generality we may take the atlas to be maximal \cite{Spivak_DifferentialGeometry}*{Lemma~1 on p.~29}, and in particular the subsets $U\subset\manifold$ may be chosen to be small neighbourhoods as needed.

For later use, we will fix a particular finite atlas for the compact manifold $\manifold$.
Consider triples $(x,U,U')$ for which $(x,U)$ is a chart; the smooth maps $x,x^{-1}$ (considered as mappings between $U$ and a subset of $\R^\Dim$ with the usual Euclidean metric) have uniformly bounded derivatives; between any two points $p,q\in U$ there is a unique length-minimizing geodesic for $\manifold$ that lies inside $U$; and $U'$ is a non-empty open subset of a compact subset of $U$.
The collection of subsets $U'$, over all such triples $(x,U,U')$, forms an open cover of $\manifold$, and therefore by compactness we may choose finitely many triples $(x_r,U_r,U'_r)_{r=1,\dotsc,\maxr}$ such that $\cup_r U'_r=\manifold$.

Finally, fix a smooth partition of unity $(\chi_r)_{r=1,\dotsc,\maxr}$ subordinate to $(U'_r)_{r=1,\dotsc,\maxr}$, and fix smooth cutoff functions $\rho_r\colon\manifold\to[0,1]$ such that $\rho_r=1$ in a neighbourhood of the closure of $U'_r$ and $\rho_r$ vanishes outside a compact subset of $U_r$.

\subsection{Differential forms}\label{ss:DiffForms}

We next introduce the requisite notation for differential forms on $\manifold$; we refer the reader to \cite{Spivak_DifferentialGeometry} or \cite{ISS} for a more comprehensive development.
For $p \in \manifold$, we write $T_p^* \manifold$ for the cotangent space, with the inner product and norm induced by the Riemannian metric; in other words, $T_p^*\manifold$ is the dual of the tangent space, with the dual inner product and norm induced by the finite-dimensional inner product space space $T_p \manifold$.

We write $\Lambda^k$ for the space of $k$-forms on $\manifold$.
That is, a $k$-form is a section of the $k^\text{th}$ exterior power of the cotangent bundle, which we may interpret as a function $\zeta$ on $\manifold$ such that $\zeta(p) \in \wedge^k(T_p^* \manifold)$ for all $p \in \manifold$.
The value $\zeta(p)$ at a point $p$ is itself a function, and we will write $\zeta(p;v_1,\dotsc,v_k)\in\R$ for its value when we wish to apply it to tangent vectors $v_1,\dotsc,v_k\in T_p\manifold$.
The inner product and norm from $T_p \manifold$ induce an inner product $\angles{\zeta(p),\smash{\tilde{\zeta}}(p)}_p$ and a norm $\norm{\zeta(p)}_p$ on $\wedge^k(T_p^* \manifold)$.
We typically use the Greek letters $\omega, \zeta, \eta, \xi$ for forms, and also, as in the introduction, the Roman letters $F,G,Z$.

With the pairing \eqref{inner_product} (which omits the dependence on $k$), the exterior derivative and codifferential are adjoint to each other: for $\omega\in C^1(\Lambda^k)$ and $\zeta\in C^1(\Lambda^{k+1})$,
\begin{equation}
  \Angles{\ed_k\omega, \zeta} = \Angles{\omega, \ed^*_{k+1}\zeta}
  .
\end{equation}
From the definition of the Hodge Laplacian in \eqref{HodgeLaplacian}, it follows that $\Delta_k$ is self-adjoint: for all $\omega,\tilde{\omega}\in C^2(\Lambda^k)$,
\begin{equation}\label{HodgeLaplacianSelfAdjoint}
  \Angles{\Delta_k\omega,\tilde{\omega}} = \Angles{\omega,\Delta_k\tilde{\omega}}
  .
\end{equation}
Moreover the Laplacian commutes with exterior derivative and codifferential:
\begin{equation*}
  \ed_k \Delta_k\omega = \Delta_{k+1}\ed_k\omega
  , \quad
  \ed^*_k\Delta_k\omega = \Delta_{k-1}\ed^*_k\omega
  .
\end{equation*}
It follows that the same is true the heat propagator $e^{t\Delta_k}$ and hence for the Riesz potential, and in particular
\begin{gather}
  \qquad  \;\; \Angles{\mathcal{I}_{\alpha,k} \omega,\tilde{\omega}} = \Angles{\omega,\mathcal{I}_{\alpha,k} \tilde{\omega}}
  ,
  \label{Riesz_selfadjoint}
  \\
  \ed^*_{k+1} \ed_k \mathcal{I}_{\alpha,k} \omega = \mathcal{I}_{\alpha,k} \ed^*_{k+1} \ed_k \omega
  ,
  \quad \text{and} \quad
  \ed_{k-1} \ed^*_k \mathcal{I}_{\alpha,k} \omega = \mathcal{I}_{\alpha,k} \ed_{k-1} \ed^*_k \omega \label{IddstarCommute}
\end{gather}
for all $\omega, \tilde{\omega} \in C^2\cap\mathcal{H}^\perp(\Lambda^k)$.
Note that because of the compactness of the manifold and our sign conventions, the Laplacian is negative semi-definite but not strictly negative definite.
Thus the heat propagator does not decay toward zero in general, and indeed
\begin{equation}\label{LongTimeHeatPropagator}
  \lim_{t\to\infty} e^{t\Delta_k}\omega = P_{\mathcal{H}}\omega
\end{equation}
for all $\omega\in C(\Lambda^k)$, where $P_{\mathcal{H}}$ denotes orthogonal projection onto the space of harmonic forms.

These definitions can be extended to appropriate Sobolev spaces of $k$-forms, as in \cite{ISS}, and we use the same notation for these extended operators.

We write $X(\Lambda^k)$ to denote the $k$-forms with the smoothness or integrability described by $X$.
For smoothness, such notation has already appeared in the introduction with $C^\infty(\Lambda^k)$, the space of smooth $k$-forms, and we will also use the space $C(\Lambda^k)$ of continuous $k$-forms and the space $C^1(\Lambda^k)$ of continuously differentiable $k$-forms.
For integrability assumptions, we define norms on spaces of forms by reducing to scalar functions:
\begin{equation}\label{FormNormsFromScalarNorms}
  \norm{\omega}_{X(\Lambda^k)} = \norm{f}_{X} \quad\text{where}\quad f\colon U\to \cointerval{0,\infty}, \; f(p)=\norm{\omega(p)}_p
  .
\end{equation}
The corresponding spaces consist of those forms for which the norm is finite.
Thus writing $\omega\in L^1(\Lambda^k)$ or $\omega\in L^{\Dim/(\Dim-\alpha),1}(\Lambda^k)$, as in the introduction, means that $\omega$ is a $k$-form (on $\manifold$) and the scalar function $f(p)=\norm{\omega(p)}_p$ satisfies $f\in L^1(\manifold)$ or $f \in L^{\Dim/(\Dim-\alpha),1}(\manifold)$, where in all cases the associated measure is the volume measure $V$.
For simplicity of notation, we denote intersections such as $L^1(\Lambda^k)\intersect\mathcal{H}^\perp(\Lambda)$ by $L^1\intersect\mathcal{H}^\perp(\Lambda^k)$, and we will sometimes abbreviate the Lebesgue and Lorentz norms of a $k$-form as $\norm{\omega}_{L^r},\norm{\omega}_{L^{r,1}}$ instead of $\norm{\omega}_{L^r(\Lambda^k)}, \norm{\omega}_{L^{r,1}(\Lambda^k)}$.

Because norms on spaces of forms are defined in terms of norms of scalar functions, standard properties of norms carry over without change.
Notably, the Lorentz spaces of forms have a natural interpolation property.
\begin{lemma}[Interpolation]\label{Interpolation}
  \begin{equation*}
    \norm{F}_{L^{\Dim/(\Dim-\alpha),1}} \leq \frac{\Dim^2}{\alpha(\Dim-\alpha)} \norm{F}_{L^1}^{1-\alpha/\Dim} \norm{F}_{L^\infty}^{\alpha/\Dim}
    .
  \end{equation*}
\end{lemma}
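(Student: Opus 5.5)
Since the form norms in \eqref{FormNormsFromScalarNorms} are defined through the scalar function $f(p)=\norm{F(p)}_p$, it suffices to prove the inequality for a nonnegative measurable $f$ on $(\manifold,V)$. We use the Lorentz norm
\begin{equation*}
  \norm{f}_{L^{r,1}} = \int_0^\infty f^*(s)\, s^{1/r}\,\frac{\dd s}{s} = \frac{1}{r}\int_0^\infty s^{1/r-1} f^*(s)\,\dd s
\end{equation*}
with $r=\Dim/(\Dim-\alpha)$, so that $1/r = 1-\alpha/\Dim$. We should check this normalization against \Cref{appendix_B}. If the appendix uses a different but equivalent normalization (for instance via the distribution function, $\norm{f}_{L^{r,1}}= r\int_0^\infty V(\{f>\lambda\})^{1/r}\,\dd\lambda$), the same argument applies with the same splitting. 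The only inputs we need about the decreasing rearrangement are $f^*(s)\le \norm{f}_{L^\infty}$ for all $s$ and $f^*(s)\le \norm{f}_{L^1}/s$. The second follows from $s f^*(s)\le\int_0^s f^* \le \norm{f}_{L^1}$.

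Write $A=\norm{f}_{L^1}$ and $B=\norm{f}_{L^\infty}$. We may assume both are finite and nonzero, since otherwise the inequality is trivial. Fix a splitting parameter $\tau>0$ and use the $L^\infty$ bound on $(0,\tau)$ and the $L^1$ bound on $(\tau,\infty)$:
\begin{equation*}
  \int_0^\infty s^{-\alpha/\Dim} f^*(s)\,\dd s \le B\int_0^\tau s^{-\alpha/\Dim}\,\dd s + A\int_\tau^\infty s^{-\alpha/\Dim-1}\,\dd s = \frac{\Dim}{\Dim-\alpha}B\tau^{1-\alpha/\Dim} + \frac{\Dim}{\alpha}A\tau^{-\alpha/\Dim}.
\end{equation*}
Both integrals converge because $0<\alpha<\Dim$.

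Now take $\tau=A/B$. Each term becomes a multiple of $A^{1-\alpha/\Dim}B^{\alpha/\Dim}$, with total coefficient $\frac{\Dim}{\Dim-\alpha}+\frac{\Dim}{\alpha}=\frac{\Dim^2}{\alpha(\Dim-\alpha)}$. Multiplying by $1/r = (\Dim-\alpha)/\Dim\le 1$ then gives a constant no larger than the one claimed, so the stated bound holds, with room to spare if the normalization carries the factor $1/r$.

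There is no real obstacle here. The only point needing care is matching the exact constant to the normalization of the Lorentz norm fixed in \Cref{appendix_B}. With the convention above, or with the convention without the factor $1/r$, the splitting at $\tau=A/B$ yields exactly the stated constant $\Dim^2/(\alpha(\Dim-\alpha))$ or something smaller.
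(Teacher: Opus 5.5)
Your splitting is the same device the paper uses, but you apply it to the wrong functional, and under the paper's normalization it does not give the stated constant. In \Cref{LorentzDefinition} the norm is built from the maximal function $f^{**}(t)=\frac{1}{t}\int_0^t f^*$, namely $\norm{f}_{L^{r,1}}=\int_0^\infty t^{1/r-1}f^{**}(t)\,\dd t$. By Tonelli,
\[
  \int_0^\infty t^{1/r-2}\int_0^t f^*(u)\,\dd u\,\dd t=\frac{1}{1-1/r}\int_0^\infty u^{1/r-1}f^*(u)\,\dd u ,
\]
so $\norm{f}_{L^{r,1}}=r'\vertiii{f}_{L^{r,1}}$ exactly, with $r'=\Dim/\alpha$ (this is the equality case of \Cref{norm_qnorm_equivalence}).

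What you actually bounded, $\int_0^\infty s^{-\alpha/\Dim}f^*(s)\,\dd s$, is the quasi-norm $\vertiii{f}_{L^{r,1}}$. This is also the distribution-function expression you mention. Transferring your estimate to the paper's norm therefore costs a factor $\Dim/\alpha$ and yields $\Dim^3/(\alpha^2(\Dim-\alpha))$ instead of $\Dim^2/(\alpha(\Dim-\alpha))$. Your closing claim that every convention gives the stated constant or better fails for the one the paper uses. The discrepancy only affects the explicit constant, since the lemma is used with $\lesssim$ throughout, but the statement does assert that constant. Separately, the first equality in your displayed definition is off by the factor $1/r$.

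The repair is immediate and is exactly the paper's proof. Your two inputs hold verbatim for the maximal function: $f^{**}(t)\le\norm{f}_{L^\infty}$ and $f^{**}(t)\le\norm{f}_{L^1}/t$. Splitting $\int_0^\infty t^{-\alpha/\Dim}f^{**}(t)\,\dd t$ at $t_c=\norm{f}_{L^1}/\norm{f}_{L^\infty}$ then gives $\frac{\Dim}{\Dim-\alpha}+\frac{\Dim}{\alpha}=\frac{\Dim^2}{\alpha(\Dim-\alpha)}$ with no conversion.

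This is the right quantity to split because, for $f$ equal to a constant on a set and zero elsewhere, both bounds on $f^{**}$ are attained, so the argument is sharp for $\norm{\cdot}_{L^{r,1}}$. By contrast, $f^*(s)\le\norm{f}_{L^1}/s$ is lossy: the sharp constant for $\vertiii{\cdot}_{L^{r,1}}$ is only $\Dim/(\Dim-\alpha)$. Alternatively, you could stay with $f^*$ and prove that sharp bound by a bathtub argument, pushing all the mass of $f^*$ onto $[0,\norm{f}_{L^1}/\norm{f}_{L^\infty})$. Multiplying by $r'$ then recovers the stated constant.
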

\noindent
We refer the reader to \Cref{appendix_B} for a proof.

\subsection{Currents}\label{ss:Currents}

We next introduce some background concerning currents on $\manifold$.
For the purposes of this paper, a $j$-current $T \in \currents_j$ is a bounded linear functional on $C(\Lambda^j)$ equipped with the supremum norm: that is,
\begin{align*}
  T(\alpha \omega+\beta \xi)= \alpha T( \omega) +\beta T(\xi)
\end{align*}
for all $\alpha,\beta \in \mathbb{R}$ and $\omega, \xi \in C(\Lambda^j)$, and there exists $C=C(T)<\infty$ such that
\begin{align*}
  \left|T(\omega)\right| \leq C \|\omega\|_{L^\infty(\Lambda^j)}
\end{align*}
for all $\omega \in C(\Lambda^j)$.
A current under this definition is equivalent to a Federer-Fleming current of finite mass, and we remark in advance that the currents needed for our argument will all belong to the well-behaved subclass of normal currents.

The total mass of the $j$-current $T$ means the operator norm of $T$ with respect to the $L^\infty$ norm,
\begin{equation}\label{CurrentTotalMass}
  \totalmass(T) = \sup_{\xi\in C(\Lambda^k)\colon\norm{\xi}_{L^\infty}\leq 1} \abs{T(\xi)}
  .
\end{equation}
More generally, there is a unique measure $\mu_T$ on $\manifold$ such that
\begin{equation}
  \abs{T(\xi)} \leq \int_\manifold \norm{\xi(p)}_p \dd\mu_T(p) \quad\text{for all }\xi \in C(\Lambda^k)
\end{equation}
and such that $\mu_T$ is minimal among all measures on $\manifold$ with this property.
We call $\mu_T$ the mass measure of $T$ and note that
\begin{equation}
  \totalmass(T) = \mu_T(\manifold).
\end{equation}

For $T \in \currents_j$ we write $\boundary_j T$ to denote the boundary of a $j$-current $T$, defined in a distributional sense by
\begin{equation}\label{boundary}
  \boundary_j T(\xi) = T(\ed_{j-1} \xi) \quad\text{for all }\xi\in C^1(\Lambda^{j-1})
  .
\end{equation}
Similarly, the Laplacian on $j$-currents is defined by duality:
\begin{equation}\label{LaplacianOnCurrents}
  -\Delta_{j,c} T(\eta)= T(-\Delta_j\eta)
  \quad\text{for all }\eta \in C^2(\Lambda^{j})
  .
\end{equation}
Note that for some currents $T$, the functions $\xi\mapsto \boundary_j T(\xi)$ and $\eta\mapsto-\Delta_{j,c}T(\eta)$ need not have extensions as bounded linear functionals on $\xi\in C(\Lambda^{j-1})$ or $\eta\in C(\Lambda^j)$.
Thus the boundary and Laplacian of a current need not be currents.

Given a $j$-current $S$, we define the heat propagator applied to $S$ as the $j$-current
\begin{align}\label{PropagatorOnCurrentsInside}
  e^{t\Delta_{j,c}} S(\eta) = S(e^{t\Delta_{j}}\eta).
\end{align}
Note that, in contrast to the boundary and Laplacian, $e^{t\Delta_{j,c}}S$ is itself a $j$-current:
\begin{align*}
  \abs{e^{t\Delta_{j,c}}S(\eta)} = \abs{S(e^{t\Delta_{j}}\eta)} &\leq \totalmass(S) \|e^{t\Delta_{j}}\eta\|_{L^\infty(\Lambda^j)}
  \leq \totalmass(S) c(t) \|\eta\|_{L^\infty(\Lambda^j)}
  ,
\end{align*}
where the second inequality follows from the fact that the $j$-form heat propagator $e^{t\Delta_j}$ is a bounded operator from $L^\infty(\Lambda^j)$ to $L^\infty(\Lambda^j)$ for each $t>0$.

We remark that the operation $e^{t\Delta_{j,c}}$ as defined in \eqref{PropagatorOnCurrentsInside} is indeed the propagator corresponding to $\Delta_{j,c}$: if we write $S_t\equiv e^{t\Delta_{j,c}} S$, then it can be shown from the regularity properties of the $j$-form heat propagator $e^{t\Delta_j}$ that
\begin{equation*}
  \frac{\partial}{\partial t} S_t(\eta) = \Delta_{j,c}S_t(\eta)
  \quad\text{for all }t>0
\end{equation*}
and $S_t\to S_0=S$ in the weak-star topology as $t\to 0$ in the sense that
\begin{align*}
  \lim_{t\decreasesto 0} S_t(\eta) = S_0(\eta) = S(\eta)
  \quad\text{for all }\eta\in C(\Lambda^j)
  .
\end{align*}

For later reference, we also remark that if $T$ is a $j$-current on $\manifold$ and $f\colon\manifold\to \mathcal{N}$ is a smooth map with uniformly bounded derivatives, then the push-forward $f_\# T$ defined by $f_\# T(\xi)=T(f^\# \xi)$ is a $j$-current on $\mathcal{N}$.
Exterior differentiation commutes with form pull-back (see for instance \cite{Spivak_DifferentialGeometry}*{Proposition~7.10}) and it follows that the boundary operation commutes with current push-forward, $\boundary_j f_\# T = f_\# \boundary_j T$.

We remark that the definition of currents given here, in terms of differential forms arising from the differentiable structure of the manifold, differs from the definition used in \cite{ChenGoodmanHernandezSpectorPreprint}, which follows \cite{AK} in defining currents for a general metric space in terms of tuples of Lipschitz continuous functions.
For the case of normal currents we are interested in, however, the two have a canonical identification, see \cite{AK}*{Theorem~11.1 on p.~69}.
For similar reasons, our definition of $[[\gamma]]$, to be given in \Cref{ss:CurrentsFromCurves} below, is compatible.
Additionally, \cite{ChenGoodmanHernandezSpectorPreprint}*{Theorem~1.1} assumes that the space is connected (in particular, that each pair of points is joined by a length-minimizing geodesic) whereas \Cref{AtomicDecompositionCited} does not.
This does not cause problems: it suffices to apply \cite{ChenGoodmanHernandezSpectorPreprint}*{Theorem~1.1} to each connected component of $\manifold$, noting that each current $T$ is supported in at most a countable number of components of $\manifold$ since the mass measure $\mu_T$ is finite (and in any case a compact manifold can have only finitely many components).
Therefore \Cref{AtomicDecompositionCited} is a valid application of  \cite{ChenGoodmanHernandezSpectorPreprint}*{Theorem~1.1} (in the special case $\epsilon=1$).

\subsection{1-currents induced by curves}\label{ss:CurrentsFromCurves}

A key element of our analysis will be the 1-currents arising from curves.
Given a piecewise smooth curve $\gamma\colon[a,b]\to \manifold$ with tangent vectors $\gamma'(t)\in T_{\gamma(t)}\manifold$ for all $t\in[a,b]$ except possibly at finitely many corner points, we can define a $1$-current $[[\gamma]]$ as follows.
For $\xi \in C^\infty(\Lambda^1)$ with values denoted $\xi(p;v)\in\R$ for $p\in \manifold, v\in T_p \manifold$,
\begin{equation}
  [[\gamma]](\xi) = \int_\gamma \xi = \int_a^b \xi\left( \big. \gamma(t); \gamma'(t) \right) \dd t
  .
\end{equation}
This quantity is invariant under piecewise smooth orientation-preserving time-reparametrization of $\gamma$.
Note that if $b_\gamma=\gamma(a),e_\gamma=\gamma(b)$ denote the beginning and ending points of the curve, then the 0-current $\boundary_1[[\gamma]]$ is given by $\boundary_1[[\gamma]](f) = f(e_\gamma) - f(b_\gamma)$.
In particular, a curve is closed if and only if the associated 1-current has zero boundary.

Define $\mu_\gamma$, the length measure of $\gamma$ on $\manifold$, to be the Borel measure on $\manifold$ characterized by
\begin{equation}
  \int_\manifold f(p) \, \dd\mu_\gamma(p) = \int_a^b f(\gamma(t)) \norm{\gamma'(t)}_{\gamma(t)} \, \dd t
\end{equation}
for all measurable functions $f$ defined on $\manifold$ for which the right-hand side exists.
The total mass of $\mu_\gamma$ is $\length(\gamma)$.
If $\gamma\colon [0,\length(\gamma)]\to \manifold$ is parametrized by arc length then $\mu_\gamma$ is the image of Lebesgue measure on $[0,\length(\gamma)]$ under the mapping $\gamma$.

The current $[[\gamma]]$ also produces a Borel measure on $\manifold$, the mass measure $\mu_{[[\gamma]]}$.
These measures are related by $\mu_{[[\gamma]]} \leq \mu_\gamma$, see \cite{ChenGoodmanHernandezSpectorPreprint}*{Lemma~4.1}, and in particular $\totalmass([[\gamma]])\leq\length(\gamma)$.
We remark that strict inequality is possible, for instance when $\gamma$ traverses the same part of the curve in opposite directions.

Given any measure $\mu$ on $\manifold$, we define its Morrey norm to be
\begin{equation}
  \Morrey{\mu} = \sup_{r>0, \, p\in \manifold} \frac{\mu(\set{q\in\manifold\colon d_\manifold(p,q)\leq r})}{r}
  .
\end{equation}
We can then define Morrey norms on curves and currents by
\begin{align}
  \Morrey{\gamma} &= \Morrey{\mu_\gamma}
  ,
  &
  \Morrey{T} &= \Morrey{\mu_T}
  ,
\end{align}
and we note that $\Morrey{[[\gamma]]}\leq \Morrey{\gamma}$.
For further information on Morrey spaces, we refer the reader to \cite{Adams:Morrey} and the references therein.

If $\gamma$ is a sufficiently small closed curve in $\manifold$, we can think of $\gamma$ as the boundary of a two-dimensional surface, with area of order $\length(\gamma)^2$.
The following result uses the compactness of $\manifold$ to give a uniform quantitative version of this intuition.

\begin{lemma}\label{SmallCurvesAreBoundaries}
  Given a compact manifold $\manifold$, there exist constants $\length_0>0$ and $C<\infty$ such that, for all closed piecewise smooth curves $\gamma$ in $\manifold$ of length $\length(\gamma)\leq\length_0$, there exists a $2$-current $S_\gamma$ such that $\boundary_2 S_\gamma = [[\gamma]]$ and $\totalmass(S_\gamma)\leq C\length(\gamma)^2$.
  In particular,
  \begin{equation*}
    \abs{[[\gamma]](\xi)} = \abs{S_\gamma(\ed_1\xi)} \leq C\length(\gamma)^2\norm{\ed_1\xi}_{L^\infty}
    \quad\text{for all }\xi\in C^\infty(\Lambda^1)
    .
  \end{equation*}
\end{lemma}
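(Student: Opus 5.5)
The plan is to build $S_\gamma$ as a geodesic cone over $\gamma$ inside a single chart of the fixed finite atlas. Let $\delta>0$ be a Lebesgue number for the cover $\set{U'_r}$, and set $\length_0$ small compared with $\delta$. A closed curve $\gamma$ with $\length(\gamma)\leq\length_0$ lies in the ball $B(p_0,\length(\gamma)/2)$ about its starting point $p_0=\gamma(a)$, since every point of $\gamma$ is joined to $p_0$ by one of the two arcs, and one of these has length at most $\length(\gamma)/2$. This ball lies in some $U'_r$, so $\gamma\subset U_r$, and between any two points of $U_r$ there is a unique minimizing geodesic lying in $U_r$.

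\textbf{Construction of the cone.} Parametrize $\gamma$ by arc length on $[0,\length]$ with $\length=\length(\gamma)$, and define
\[
  \Phi\colon[0,1]\times[0,\length]\to U_r,\qquad \Phi(s,t)=\exp_{p_0}\bigl(s\,\exp_{p_0}^{-1}(\gamma(t))\bigr),
\]
the geodesic homotopy from the constant curve at $p_0$ to $\gamma$. Set $S_\gamma=\Phi_\#[[\,[0,1]\times[0,\length]\,]]$, so that $S_\gamma(\omega)=\int_{[0,1]\times[0,\length]}\Phi^\#\omega$. Since $\gamma$ is only piecewise smooth, $\Phi$ is piecewise smooth, and one can work on each smooth piece of $[0,\length]$ or argue via Lipschitz push-forward.

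\textbf{Boundary.} By Stokes on the rectangle, applied to $\Phi^\#\xi$ and using that pull-back commutes with $\ed$, $\boundary_2 S_\gamma$ is the push-forward of the boundary of the rectangle. The side $s=1$ gives $[[\gamma]]$. The side $s=0$ is mapped to the point $p_0$, so it contributes nothing. The sides $t=0$ and $t=\length$ give the same geodesic segment traversed in opposite directions, because $\gamma(0)=\gamma(\length)$, so they cancel. Hence $\boundary_2 S_\gamma=[[\gamma]]$.

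\textbf{Mass bound.} We need $\totalmass(S_\gamma)\leq\int\abs{\partial_s\Phi\wedge\partial_t\Phi}\,\dd s\,\dd t$. The speed $\abs{\partial_s\Phi}$ equals $d_\manifold(p_0,\gamma(t))\leq\length/2$. For $\abs{\partial_t\Phi}$ we use the Jacobi field bound
\[
  \abs{\partial_t\Phi}\leq C\abs{\gamma'(t)}=C,
\]
valid uniformly because $\exp_{p_0}$ and its inverse have derivatives bounded uniformly in $p_0$ on balls of radius smaller than the injectivity radius. This uniformity comes from compactness, or equivalently from the bounded chart derivatives fixed in \Cref{ss:FiniteAtlas}. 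Thus $\totalmass(S_\gamma)\leq C\length\cdot\length/2\leq C\length^2$. The displayed inequality then follows from $[[\gamma]](\xi)=S_\gamma(\ed_1\xi)$ and the definition of mass.

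The main obstacle is this uniform control of the derivative of the exponential map, and the need to choose $\length_0$ below the injectivity radius so that $\Phi$ is well defined. Both follow from compactness of $\manifold$. If intrinsic geodesics are inconvenient, an alternative is to do the straight-line cone in the chart coordinates $x_r$ and push forward by $x_r^{-1}$, whose derivatives are bounded. Because the chart maps are bi-Lipschitz, the image has length comparable to $\length$ and lies in a ball of comparable radius, so the same $O(\length^2)$ bound holds.
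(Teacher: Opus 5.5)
Your proof is correct, but it takes a different route from the paper. The paper localizes to one chart as you do, pushes $[[\gamma]]$ forward to a closed integral $1$-current $[[x_r\circ\gamma]]$ in $\R^\Dim$, and cites Federer's isoperimetric inequality (4.2.10). That gives a filling $\tilde S$ with $\totalmass(\tilde S)\lesssim\length(\tilde\gamma)^2$, supported within $C\length(\gamma)$ of the curve. The paper then shrinks $\length_0$ so this support stays in $x_r(U_r)$ and pulls back by $(x_r^{-1})_\#$.

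You instead build the filling explicitly as a cone. The key one-dimensional fact is that a closed curve lies in the ball of radius $\length(\gamma)/2$ about any of its points, so the cone's mass is at most radius times length. This makes the lemma self-contained with an explicit constant. Your geodesic version is also intrinsic: it needs only a positive injectivity radius and uniform bounds on $\ed\exp$ and $\ed\exp^{-1}$ over compact sets, and the chart $U_r$ plays no real role. The paper's citation is shorter, and it is the tool that still works for higher-dimensional cycles, where small mass no longer controls diameter and a cone would not give the right exponent.

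Two small points:
\begin{itemize}
\item Your vertex is $p_0=\gamma(0)=\gamma(\length)$, so the sides $t=0$ and $t=\length$ are constant maps to $p_0$. They vanish outright rather than cancelling as a segment traversed twice; the conclusion is the same.
\item In the coordinate variant, choose $\length_0$ so that the Euclidean ball of radius $C\length_0$ about $x_r(\gamma(0))$ stays inside $x_r(U_r)$. This is possible because $x_r(\overline{U'_r})$ is compact in the open set $x_r(U_r)$. It is exactly the paper's ``shrink $\length_0$'' step.
\end{itemize}
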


\begin{proof}
  Recall from \Cref{ss:FiniteAtlas} the finite collection of triples $(x_r,U_r,U'_r)_{r=1,\dotsc,\maxr}$ such that $U'_r$ is contained in a compact subset of $U_r$ and the smooth mappings $x_r,x_r^{-1}$ have uniformly bounded derivatives (as mappings between subsets of $\manifold$ and $\R^\Dim$).
  We begin by choosing $\length_0$ sufficiently small so that $\length(\gamma)\leq\length_0$ implies that $\gamma$ lies within a single such subset $U'_r$ for some $r$.

  Now let $\gamma$ be an arbitrary such closed curve of length $\length(\gamma)\leq\length_0$ and suppose that $\gamma$ lies within $U'_r$.
  The composition
  \begin{align*}
    \tilde{\gamma} = x_r \circ \gamma
  \end{align*}
  defines a piecewise smooth closed curve in $\R^\Dim$, and it is readily verified that the associated 1-currents are related by $[[\tilde{\gamma}]]=(x_r)_\# [[\gamma]]$, $[[\gamma]]=(x_r^{-1})_\# [[\tilde{\gamma}]]$.
  The closedness of the curves implies that
  \begin{align*}
    \boundary_1 [[\tilde{\gamma}]]=0
    .
  \end{align*}
  Using the notation of \cite{Federer}*{4.2.10 and pp.~406--408}, we have $[[\tilde{\gamma}]]\in I_1(\R^\Dim)$, and there exist $\tilde{C}<\infty$ and $\tilde{S} \in I_2(\mathbb{R}^\Dim)\subset \currents_2(\R^\Dim)$ (a generalized spanning surface) such that
  \begin{align*}
    \boundary_2 \tilde{S}&= [[\tilde{\gamma}]]
    ,
    \\
    \totalmass([[\tilde{S}]])&\leq  \tilde{C}\length( \tilde{\gamma})^2
    .
  \end{align*}
  Moreover that result guarantees that the support of $\tilde{S}$ lies within distance $C \length(\gamma)$ of the image of $\tilde{\gamma}$.
  In particular, by shrinking the value $\length_0$ further if necessary, we may make this distance sufficiently small to guarantee $\operatorname*{supp}\tilde{S} \subset x_r(U_r)$.

  Finally, the push-forward of $\tilde{S}$ defines a $2$-current
  \begin{align*}
    S_\gamma = (x_r^{-1})_\# \tilde{S} \in\currents_2
  \end{align*}
  satisfying the desired claims.
\end{proof}

\subsection{Currents induced by forms}\label{ss:CurrentsFromForms}

Now suppose that $\manifold$ is an orientable manifold, and fix an orientation of $\manifold$.
This induces a well-defined notion of integration of a $\Dim$-form over the manifold, $\int_\manifold \zeta$, as a linear functional on $\zeta\in L^1(\Lambda^\Dim)$.
Indeed, integration of $\Dim$-forms over an oriented (sub-)manifold is a basic example of an $\Dim$-current.

We can generalize this to produce currents from forms.
Given a $k$-form $\omega \in L^1(\Lambda^k)$, define the $(\Dim-k)$-current $T_\omega$ by
\begin{equation}\label{FormAsCurrentWedge}
  T_\omega(\xi) = \int_\manifold \xi \wedge \omega
\end{equation}
for each $\xi \in C(\Lambda^{\Dim-k})$.
We canonically write $T_{\omega}$ for this map
\begin{align*}
  \omega \mapsto T_\omega
\end{align*}
from $k$-forms to $(\Dim-k)$-currents and reserve $R,S$ and the unsubscripted $T$ for various other currents that arise in our analysis.
The particular case of 1-currents induced by $(\Dim-1)$-forms will play an important role in our analysis.

Under this identification of forms as currents, the boundary operator corresponds to the exterior derivative.
\begin{proposition}\label{BoundaryTomega}
Let $\omega\in C^1(\Lambda^k)$.
Then
  \begin{equation*}
    \boundary_{\Dim-k} T_\omega = (-1)^{\Dim-k} T_{\ed_k\omega}
    .
  \end{equation*}
In particular, a $k$-form $\omega$ is closed if and only if the associated $(\Dim-k)$-current $T_\omega$ has $\boundary_{\Dim-k} T_\omega=0$.
\end{proposition}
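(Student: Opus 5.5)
The plan is to verify the identity by testing both sides against an arbitrary $\xi\in C^1(\Lambda^{\Dim-k-1})$ and using Stokes' theorem on the closed manifold $\manifold$. By the definition \eqref{boundary} of the boundary and the definition \eqref{FormAsCurrentWedge} of $T_\omega$,
\begin{equation*}
  \boundary_{\Dim-k} T_\omega(\xi) = T_\omega(\ed_{\Dim-k-1}\xi) = \int_\manifold \ed_{\Dim-k-1}\xi\wedge\omega .
\end{equation*}
The graded Leibniz rule gives
\begin{equation*}
  \ed(\xi\wedge\omega) = \ed\xi\wedge\omega + (-1)^{\Dim-k-1}\,\xi\wedge\ed_k\omega .
\end{equation*}
Since $\xi\wedge\omega$ is a $C^1$ $(\Dim-1)$-form on a compact manifold without boundary, Stokes' theorem yields $\int_\manifold \ed(\xi\wedge\omega)=0$. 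Therefore
\begin{equation*}
  \int_\manifold \ed\xi\wedge\omega = (-1)^{\Dim-k}\int_\manifold \xi\wedge\ed_k\omega = (-1)^{\Dim-k}T_{\ed_k\omega}(\xi) ,
\end{equation*}
which is the claimed identity. Stokes' theorem for $C^1$ forms is standard; if one prefers smooth forms, one can mollify in the charts of \Cref{ss:FiniteAtlas} using the partition of unity $\chi_r$, apply the smooth version, and pass to the limit.

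For the ``in particular'' statement, it suffices to show that $T_\eta=0$ forces $\eta=0$ for a continuous $(k+1)$-form $\eta$; applying this with $\eta=\ed_k\omega$ gives the equivalence. One direction is immediate: $\ed_k\omega=0$ gives $\boundary T_\omega=0$. For the converse, suppose $\int_\manifold\xi\wedge\eta=0$ for all $\xi\in C^1(\Lambda^{\Dim-k-1})$. Take $\xi=\phi\,{\star}\eta$ in spirit, where $\phi\ge0$ is a smooth bump, or more precisely a smooth approximation of ${\star}\eta$ up to sign. Using $\xi\wedge\eta=\pm\langle\,{\star}\xi,\eta\rangle\,\dd V$, the pairing reduces to $\pm\Angles{{\star}\xi,\eta}$. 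The forms ${\star}\xi$ range over a dense subset of $C(\Lambda^{k+1})$, so $\eta$ vanishes almost everywhere, and by continuity everywhere.

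The main subtlety is the sign bookkeeping in the Leibniz rule, which is what produces the factor $(-1)^{\Dim-k}$. A second, minor point is the nondegeneracy of the wedge pairing, which uses the orientation fixed at the start of this subsection. Neither step is a substantial obstacle.
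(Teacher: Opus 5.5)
Your proof is correct and follows the paper's own argument: you test against $\xi\in C^1(\Lambda^{\Dim-k-1})$, apply the graded Leibniz rule to $\ed(\xi\wedge\omega)$, and use Stokes' theorem on the closed manifold to obtain the sign $(-1)^{\Dim-k}$. The paper leaves the ``in particular'' clause implicit, and your nondegeneracy argument via Hodge duality and density fills it in correctly; equivalently, since $C^1$ forms are dense, $\boundary_{\Dim-k}T_\omega=0$ gives $\totalmass(T_{\ed_k\omega})=\norm{\ed_k\omega}_{L^1}=0$ by \Cref{NormDualityCurrentsForms}.
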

\begin{proof}
  For all $(\Dim-k-1)$-forms $\zeta\in C^1(\Lambda^{\Dim-k-1})$, Stokes' theorem and $\ed_{\Dim-1}(\zeta\wedge\omega)=\ed_{\Dim-k-1}\zeta\wedge\omega + (-1)^{\Dim-k-1}\zeta\wedge \ed_k\omega$ give
  \begin{equation*}
    \boundary_{\Dim-k} T_\omega(\zeta) = \int_\manifold \ed_{\Dim-k-1}\zeta\wedge\omega = (-1)^{\Dim-k} \int_\manifold \zeta\wedge\ed_k\omega = (-1)^{\Dim-k} T_{\ed_k\omega}(\zeta)
    .
    \qedhere
  \end{equation*}
\end{proof}
\begin{remark}\label{AlternativeCurrentsFromForms}
  As an alternative approach to producing currents from forms, we could have started with a $j$-form $\eta\in\Lambda^j$ and constructed the $j$-current $\tilde{T}_\eta(\xi) = \Angles{\xi,\eta}$ for $\xi\in\Lambda^j$.
  This construction has the additional benefit that it does not rely on orientability of the manifold.
  If the manifold is orientable, then $\tilde{T}_\eta$ reduces to $T_\omega$ via Hodge duality, under the correspondence $k=\Dim-j$, $\omega=\star_j\eta$, where $\star_j\colon \Lambda^j\to\Lambda^{\Dim-j}$ is the Hodge star operator on $j$-forms as in \Cref{appendix_A}.

  In that approach, the analogue of \Cref{BoundaryTomega} is that $\boundary_j \tilde{T}_\eta = \tilde{T}_{\ed_j^*\eta}$.
  However, the co-differential is slightly awkward for our purposes, particularly in \Cref{s:d-1FormsTokForms}, where an identity between forms of different degree will be more conveniently expressed in terms of differentials of $(\Dim-1)$-forms, rather than co-differentials of 1-forms: see \Cref{dPhiRelation}.
  Meanwhile, the orientability assumption is mild and can be circumvented by a short argument, see \Cref{ss:NonorientableProof}.
\end{remark}

The identity
\begin{equation}\label{FormCurrentLaplaciansCompatible}
  \int_\manifold \Delta_{\Dim-k}\eta \wedge \omega = \int_\manifold \eta\wedge \Delta_k\omega
\end{equation}
for forms implies the following result for the currents $T_\omega$.
\begin{proposition}\label{TomegaPropagatorsCompatible}
  For all $\omega\in C^2(\Lambda^k)$ and $\eta\in C^2(\Lambda^{\Dim-k})$,
  \begin{equation}\label{FormCurrentLaplaciansCompatible2}
    \Delta_{\Dim-k,c}T_\omega(\eta) = T_{\Delta_k\omega}(\eta)
    ,
  \end{equation}
  and consequently
  \begin{equation}
    e^{t\Delta_{\Dim-k,c}}T_\omega = T_{e^{t\Delta_k}\omega}
    .
  \end{equation}
\end{proposition}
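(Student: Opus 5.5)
The plan is a direct duality computation, followed by a uniqueness argument for the heat flow.

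For the first identity, I unfold the definition \eqref{LaplacianOnCurrents}. For $\eta\in C^2(\Lambda^{\Dim-k})$ it gives $\Delta_{\Dim-k,c}T_\omega(\eta)=T_\omega(\Delta_{\Dim-k}\eta)=\int_\manifold \Delta_{\Dim-k}\eta\wedge\omega$. Applying \eqref{FormCurrentLaplaciansCompatible} turns this into $\int_\manifold\eta\wedge\Delta_k\omega=T_{\Delta_k\omega}(\eta)$, which is \eqref{FormCurrentLaplaciansCompatible2}.

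If I want to justify \eqref{FormCurrentLaplaciansCompatible} itself rather than take it as given, I would write $\int_\manifold \eta\wedge\omega=\pm\Angles{\eta,\star\omega}$. I would then use self-adjointness \eqref{HodgeLaplacianSelfAdjoint} together with the commutation $\star\Delta_k=\Delta_{\Dim-k}\star$, which follows from $\ed^*=\pm\star\ed\star$. Tracking these signs is the only fiddly point.

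For the propagator identity, I fix $\eta\in C(\Lambda^{\Dim-k})$, which I may take smooth by density since both sides are currents. I then compare $A(t)=e^{t\Delta_{\Dim-k,c}}T_\omega(\eta)=\int_\manifold e^{t\Delta_{\Dim-k}}\eta\wedge\omega$ with $B(t)=T_{e^{t\Delta_k}\omega}(\eta)=\int_\manifold\eta\wedge e^{t\Delta_k}\omega$. The cleanest route is the interpolation $\Phi(s)=\int_\manifold e^{(t-s)\Delta_{\Dim-k}}\eta\wedge e^{s\Delta_k}\omega$ for $s\in[0,t]$. Its derivative is
\[
\Phi'(s)=-\int_\manifold \Delta_{\Dim-k}e^{(t-s)\Delta_{\Dim-k}}\eta\wedge e^{s\Delta_k}\omega+\int_\manifold e^{(t-s)\Delta_{\Dim-k}}\eta\wedge \Delta_k e^{s\Delta_k}\omega,
\]
and this vanishes by \eqref{FormCurrentLaplaciansCompatible} applied to the $C^2$ forms $e^{(t-s)\Delta_{\Dim-k}}\eta$ and $e^{s\Delta_k}\omega$. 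Hence $A(t)=\Phi(0)=\Phi(t)=B(t)$.

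The main obstacle is regularity at the endpoints $s=0,t$, where one factor is only $C^2$ rather than smooth. I would handle this by proving $\Phi'=0$ on the open interval $(0,t)$, where both propagated forms are smooth by parabolic smoothing. I would then pass to the endpoints by continuity, using \eqref{approximate_identity} in the uniform norm for $C^2$ data. This continuity also identifies the limits with $A(t)$ and $B(t)$ and completes the argument.
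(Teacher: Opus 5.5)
Your proposal is correct and follows the paper's proof: the first identity rests on the same Hodge-star commutation $\star\Delta_k=\Delta_{\Dim-k}\star$ plus self-adjointness (the paper's \Cref{DeltaAndStar}), and your $\Phi(s)$ is the paper's $h(s',t')=\int_\manifold e^{s'\Delta_{\Dim-k}}\eta\wedge e^{t'\Delta_k}\omega$ restricted to the line $s'+t'=t$, so $\Phi'=0$ is exactly the paper's $\partial_{s'}h=\partial_{t'}h$. Your density reduction in $\eta$ and the open-interval-plus-continuity argument supply details the paper leaves implicit; once $\eta$ is smooth, only the endpoint $s=0$ actually needs that care.
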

\noindent
We defer the proofs to \Cref{appendix_A}.

\Cref{TomegaPropagatorsCompatible} means that the Laplacians for forms and currents, and therefore the corresponding heat propagators, are compatible.
In particular, the diagrams
\[\begin{tikzcd}
	{C^\infty(\Lambda^k)} & {\currents_{n-k}} \\
	{C^\infty(\Lambda^{k+1})} & {\currents_{n-k-1}}
	\arrow[from=1-1, to=1-2]
	\arrow["{\ed_k}"', from=1-1, to=2-1]
	\arrow["{(-1)^{\Dim-k}\partial_{\Dim-k}}"', from=1-2, to=2-2]
	\arrow[from=2-1, to=2-2]
\end{tikzcd}
\qquad\qquad\begin{tikzcd}
	{C^\infty(\Lambda^k)} & {\currents_{n-k}} \\
	{C^\infty(\Lambda^{k})} & {\currents_{n-k}}
	\arrow[from=1-1, to=1-2]
	\arrow["{e^{t\Delta_k}}"', from=1-1, to=2-1]
	\arrow["{e^{t\Delta_{\Dim-k,c}}}"', from=1-2, to=2-2]
	\arrow[from=2-1, to=2-2]
\end{tikzcd}\]
commute, where the horizontal arrows correspond to the map $\omega \mapsto T_\omega$.

The next result shows that we can recognize a current $T$ as having the form $T=T_\omega$ by duality arguments.

\begin{proposition}[Duality for form and current norms]\label{NormDualityCurrentsForms}
  Let $r,r'\in[1,\infty]$ satisfy $1/r+1/r'=1$, and let $\omega\in\Lambda^k$ denote a $k$-form.
  \begin{enumerate}[label=(\alph*),ref=(\alph*)]
    \item\label{item:FormLebesgueDuality}
    \begin{equation*}
      \begin{gathered}
        \norm{\omega}_{L^r} = \sup_{\tilde{\omega}\in\Lambda^k\colon \norm{\tilde{\omega}}_{L^{r'}} \leq 1} \Angles{\omega,\tilde{\omega}} = \sup_{\xi\in C(\Lambda^{\Dim-k})\colon \norm{\xi}_{L^{r'}}\leq 1} \abs{T_\omega(\xi)}
        \quad\text{and}
        \\
        \norm{\omega}_{L^{r,1}} \asymp \sup_{\tilde{\omega}\in\Lambda^k\colon \norm{\tilde{\omega}}_{L^{r',\infty}} \leq 1} \Angles{\omega,\tilde{\omega}} = \sup_{\xi\in C(\Lambda^{\Dim-k})\colon \norm{\xi}_{L^{r',\infty}}\leq 1} \abs{T_\omega(\xi)}
        ,
      \end{gathered}
    \end{equation*}
    where $a\asymp b$ means $a\lesssim b$ and $b\lesssim a$.
    In particular, $\totalmass(T_\omega) = \norm{\omega}_{L^1}$.

    \item\label{item:CurrentDuality}
    Let $T$ be an $(\Dim-k)$-current, let $\xi\in C(\Lambda^{\Dim-k})$ denote a continuous $(\Dim-k)$-form, and suppose that $r>1$.
    If
    \begin{equation*}
      \sup_{\norm{\xi}_{L^{r'}}\leq 1}\abs{T(\xi)} < \infty
      \quad\text{or}\quad
      \sup_{\norm{\xi}_{L^{r',\infty}}\leq 1}\abs{T(\xi)} < \infty
      ,
    \end{equation*}
    then there exists a $k$-form $\omega$, unique up to equality a.e.\ on $\manifold$, such that $T=T_\omega$.
    Moreover $\omega\in L^r(\Lambda^k)$ or $\omega\in L^{r,1}(\Lambda^k)$, respectively.
  \end{enumerate}
\end{proposition}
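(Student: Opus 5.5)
The plan is to reduce everything to pointwise linear algebra together with the scalar duality theory of Lebesgue and Lorentz spaces on $(\manifold,V)$. The key pointwise fact is that the Hodge star $\star_k$ from \Cref{appendix_A} is a fibrewise isometry $\wedge^k(T_p^*\manifold)\to\wedge^{\Dim-k}(T_p^*\manifold)$ satisfying
\begin{equation*}
  \xi\wedge\omega = \pm\angles{\xi,\star_k\omega}_p\,\dd V ,
\end{equation*}
with a sign depending only on $k,\Dim$. Consequently
\begin{equation*}
  T_\omega(\xi) = \pm\Angles{\xi,\star_k\omega}
  \quad\text{and}\quad
  \norm{\xi}_{X} = \norm{\star^{-1}\xi}_X .
\end{equation*}
Hence the suprema over $\xi\in C(\Lambda^{\Dim-k})$ and over $\tilde\omega\in\Lambda^k$ coincide, except that the former runs only over continuous forms. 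This gives the second equality in each line of part \ref{item:FormLebesgueDuality} once density is handled.

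For the first equalities in \ref{item:FormLebesgueDuality}, set $f(p)=\norm{\omega(p)}_p$ and take the extremal choice $\tilde\omega=g\,\omega/\abs{\omega}$ on $\set{\omega\neq0}$, where $g\geq0$ is the scalar extremizer. For $L^r$, take $g=f^{r-1}/\norm{f}_{L^r}^{r-1}$, with the usual modifications when $r=1$ or $r=\infty$. For $L^{r,1}$, use the scalar duality $\norm{f}_{L^{r,1}}\asymp\sup\set{\int fg\,\dd V : \norm{g}_{L^{r',\infty}}\leq1}$. The inequality $\Angles{\omega,\tilde\omega}\leq\int f\,\norm{\tilde\omega}$ follows from Cauchy--Schwarz in each fibre, and gives the reverse bounds. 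The identity $\totalmass(T_\omega)=\norm{\omega}_{L^1}$ is the case $r=1$, $r'=\infty$; together with \eqref{CurrentTotalMass} it only requires continuous $\xi$.

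The main obstacle is replacing measurable $\tilde\omega$ by continuous forms. When $r'<\infty$, continuous forms are dense in $L^{r'}$, using the finite atlas and the partition of unity $\chi_r$ to reduce to Euclidean mollification. For $r'=\infty$, and for $L^{r',\infty}$, which is not separable, I would argue instead by Lusin's theorem. Approximate $\tilde\omega$ by continuous $\xi_j$ that agree with it off sets of small measure, with $\norm{\xi_j(p)}\leq\norm{\tilde\omega(p)}^*$ controlled; the truncation $\min(\norm{\tilde\omega},N)$ keeps $\xi_j$ in the unit ball up to a constant. Then pass to the limit in $\int\angles{\xi_j,\omega}$ by dominated convergence, using $\omega\in L^1$ or $\omega\in L^{r,1}$. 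Losing a constant is harmless since the statement uses $\asymp$.

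For part \ref{item:CurrentDuality}, assume without loss of generality that $\manifold$ is oriented, or use $\tilde T$ from \Cref{AlternativeCurrentsFromForms}. The functional $\xi\mapsto T(\xi)$ is bounded in the $L^{r'}$ norm, respectively the $L^{r',\infty}$ norm, on the continuous forms. Since $r'<\infty$, it extends by density to $L^{r'}(\Lambda^{\Dim-k})$ in the first case. By Riesz representation, $L^r$ being the dual of $L^{r'}$ for $r>1$, it is then given by pairing against some $\eta\in L^r(\Lambda^{\Dim-k})$, and we set $\omega=\pm\star^{-1}\eta$.

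In the Lorentz case, $T$ is a finite measure-valued current, since it is bounded on $C(\Lambda^{\Dim-k})$ in sup norm. Testing $T$ against $\xi$ that are large on sets $E$ gives $\mu_T(E)\lesssim\norm{\mathbbm{1}_E}_{L^{r',\infty}}=V(E)^{1/r'}\to0$. Hence $\mu_T\ll V$, and Radon--Nikodym yields an $L^1$ density $\omega$ with $T=T_\omega$. Part \ref{item:FormLebesgueDuality} then upgrades this to $\omega\in L^{r,1}$, with the same bound. Uniqueness holds because $T_\omega=0$ for all continuous $\xi$ forces $\omega=0$ a.e., by the $L^1$ identity.
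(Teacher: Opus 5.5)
Your proposal is correct and follows essentially the paper's route: part (a) from the fibrewise extremizer $g\,\omega/\abs{\omega}$, scalar Lebesgue/Lorentz duality, the pointwise isometry $\star_k$ and an approximation by continuous forms, and part (b) from absolute continuity of the representing measure with respect to $V$, followed by a duality argument to identify the integrability of the density. The differences are cosmetic. You argue globally with $\mu_T$, obtaining $\mu_T(U)\lesssim V(U)^{1/r'}$ for open $U$ by testing against forms with $\norm{\xi}_{L^\infty}\leq 1$ supported in $U$ and then using outer regularity, rather than pulling $T$ back to Euclidean currents $\tilde{T}_j$ through the charts; and you handle the $L^r$ case by extension by density and $(L^{r'})^*=L^r$, which fills in details the paper leaves implicit.
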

\noindent
We defer the proof to \Cref{appendix_A}.

In view of \Cref{NormDualityCurrentsForms}, we define Lebesgue and Lorentz norms on currents by
\begin{equation}\label{TomegaNormIs_omegaNorm}
  \norm{T}_{L^r} = \norm{\omega}_{L^r}, \quad \norm{T}_{L^{r,1}}=\norm{\omega}_{L^{r,1}} \quad\text{if }T=T_\omega
  .
\end{equation}
In particular, the \InterpolationLemma\ applies without change when $F$ is a current of the form $F=T_\omega$.

We will use the definitions \eqref{TomegaNormIs_omegaNorm} only in contexts where we know that the current $T$ has the form $T=T_\omega$.
Notably, as the next result shows, we can apply these definitions when $T$ results from applying the heat propagator to an arbitrary current.

\begin{lemma}\label{CurrentEvolutionAsForm}
  Given a $j$-current $T$ and $t\in (0,1]$, there is a smooth $(\Dim-j)$-form $\omega_t$ such that $e^{t\Delta_{j,c}}T=T_{\omega_t}$ and
  \begin{equation}\label{CurrentEvolutionPointwise}
    \norm{\omega_t(p)}_p \leq C\int_\manifold K_t(p,q)\,\dd\mu_T(q)
    \quad\text{for all }p\in\manifold
    ,
  \end{equation}
  where $K_t\colon\manifold\times\manifold\to\cointerval{0,\infty}$ is the scalar function
  \begin{equation}\label{KFormula}
    K_t(p,q) = t^{-\Dim/2}\exp\left( -d_\manifold(p,q)^2/(4t) \right)+1
    .
  \end{equation}
\end{lemma}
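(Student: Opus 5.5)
The natural route is to represent $e^{t\Delta_{j,c}}T$ through the heat kernel on $j$-forms. For $t>0$, $e^{t\Delta_j}$ has a smooth kernel $H_t(p,q)$, a section over $\manifold\times\manifold$ of the bundle $\mathrm{Hom}(\wedge^j T_q^*\manifold,\wedge^j T_p^*\manifold)$, so that
\begin{equation*}
  e^{t\Delta_j}\eta(p)=\int_\manifold H_t(p,q)\,\eta(q)\,\dd V(q).
\end{equation*}
I would take as known, from standard parametrix constructions on compact manifolds (Minakshisundaram--Pleijel, or the bundle-valued Gaussian estimates in e.g.\ Berline--Getzler--Vergne), the bound
\begin{equation*}
  \norm{H_t(p,q)}\leq C\bigl(t^{-\Dim/2}e^{-d_\manifold(p,q)^2/(4t)}+1\bigr)=C K_t(p,q),\qquad t\in(0,1].
\end{equation*}
The term $+1$ absorbs the lower-order error terms. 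The constant $4$ in the exponent may need care: standard Gaussian bounds give $e^{-d^2/((4+\epsilon)t)}$. Either the exact constant $4$ follows from the leading term of the parametrix together with Varadhan-type asymptotics uniformly on the compact manifold, or one accepts $4+\epsilon$. The latter would change only harmless constants later, but as stated I would aim to justify the $4$ via the parametrix expansion $H_t=(4\pi t)^{-\Dim/2}e^{-d^2/4t}\bigl(\sum_{i\le N} t^i\Phi_i\bigr)+O(t^{N-\Dim/2})$ near the diagonal, with off-diagonal smallness handled by the $+1$.

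Next I would write $T$ via its polar decomposition. By Riesz representation for bounded functionals on $C(\Lambda^j)$, there is a $\mu_T$-measurable unit $j$-vector field $\vec T(q)\in\wedge^jT_q\manifold$, with norm at most $1$ in the dual norm, such that $T(\xi)=\int_\manifold\xi(q;\vec T(q))\,\dd\mu_T(q)$. Then
\begin{equation*}
  e^{t\Delta_{j,c}}T(\eta)=\int_\manifold\Bigl(\int_\manifold H_t(q,p)\eta(p)\,\dd V(p)\Bigr)(\vec T(q))\,\dd\mu_T(q).
\end{equation*}
Fubini applies because $H_t$ is bounded for fixed $t$ and $\mu_T$ is finite. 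This rewrites the functional as $\int_\manifold\angles{\eta(p),\tilde\eta_t(p)}_p\,\dd V(p)$, where
\begin{equation*}
  \tilde\eta_t(p)=\int_\manifold H_t(q,p)^{*}\,\vec T(q)^\flat\,\dd\mu_T(q).
\end{equation*}
Since $H_t$ is smooth in $p$, differentiating under the integral shows that $\tilde\eta_t$ is a smooth $j$-form. The kernel bound gives $\norm{\tilde\eta_t(p)}_p\leq C\int_\manifold K_t(p,q)\,\dd\mu_T(q)$, using symmetry of $K_t$ in $p,q$.

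Finally, to express this as $T_{\omega_t}$, I would set $\omega_t=\pm\star_j\tilde\eta_t$ on orientable $\manifold$, using $\xi\wedge\star\tilde\eta=\angles{\xi,\tilde\eta}\dd V$ and the pointwise isometry of $\star$ from \Cref{appendix_A}. This yields a smooth $(\Dim-j)$-form satisfying \eqref{CurrentEvolutionPointwise}.

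The main obstacle is the kernel estimate with the precise Gaussian constant and the uniform $+1$ correction for bundle-valued kernels. Everything after it is a routine Fubini and duality argument.
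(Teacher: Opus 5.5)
Your argument is correct, and its analytic core is the paper's: the bound $\norm{\mathrm{p}_t^j(p,q)}_{p,q}\lesssim K_t(p,q)$ from the Berline--Getzler--Vergne parametrix with $N\geq \Dim/2$.

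\begin{itemize}
\item The parametrix's leading factor is exactly $(4\pi t)^{-\Dim/2}e^{-d_\manifold(p,q)^2/(4t)}$, and its remainder is $O(t^{N-\Dim/2})=O(1)$ for $t\in\ocinterval{0,1}$.
\item So your hedge about $4+\epsilon$ is unnecessary; this is exactly how the paper obtains the constant $4$.
\end{itemize}

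Where you differ is in how $\omega_t$ is produced.

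\emph{The paper's route.} It never decomposes $T$ and uses only the scalar inequality $\abs{T(\xi)}\le\int\norm{\xi}\,\dd\mu_T$. This gives $\abs{e^{t\Delta_{j,c}}T(\xi)}\le C\int\norm{\xi(q)}_q\bigl(\int K_t(p,q)\,\dd\mu_T(p)\bigr)\dd V(q)$. It then proceeds in three steps:
\begin{itemize}
\item it invokes \Cref{NormDualityCurrentsForms}(b) to get a bounded density;
\item it upgrades this to a smooth density via $e^{t\Delta_{j,c}}=e^{(t/2)\Delta_{j,c}}e^{(t/2)\Delta_{j,c}}$ and the smoothing of the form propagator;
\item it obtains the pointwise bound almost everywhere by Lebesgue differentiation, then everywhere by continuity.
\end{itemize}

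\emph{Your route.} You use the polar decomposition $T(\xi)=\int\xi(q;\vec T(q))\,\dd\mu_T(q)$ and Fubini to write the density explicitly: $\omega_t=\star_j\tilde\eta_t$ with $\tilde\eta_t$ as in your formula. The sign is $+$, by $\xi\wedge\star_j\tilde\eta_t=\angles{\xi,\tilde\eta_t}\nu$. Smoothness then follows by differentiating under the integral, using joint smoothness of $H_t$ for fixed $t$ and finiteness of $\mu_T$. The bound holds at every point directly.

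\emph{Comparison.} Your argument is shorter and bypasses the duality proposition, the semigroup step and Lebesgue differentiation. The semigroup step tacitly extends \Cref{TomegaPropagatorsCompatible} from $C^2$ forms to merely bounded ones. The price is the Riesz representation theorem for bundle-valued measures: you need a $\mu_T$-measurable unit $j$-vector field $\vec T$, with $\mu_T$ identified as the total variation measure. The paper needs nothing about $T$ beyond the scalar mass inequality.

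Both arguments need an orientation to define $T_{\omega_t}$, so your restriction to orientable $\manifold$ matches the paper.
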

\Cref{CurrentEvolutionAsForm} shows that the heat propagator, which we know to be smoothing on forms, is also smoothing on currents.
The proof follows easily using \Cref{NormDualityCurrentsForms} and heat kernel estimates for the heat propagator on forms; we defer the details to \Cref{appendix_A}.

For later use, we collect several simple bounds for $K_t$ and related quantities.
\begin{lemma}\label{KernelBounds}
  Uniformly over $0<t\leq 1$,
  \begin{gather}
    \label{KLinftyBound}
    \sup_{p,q\in\manifold} K_t(p,q) \lesssim t^{-\Dim/2}
    ,
    \\
    \label{KdotqL1Bound}
    \sup_{q\in\manifold} \norm{K_t(\cdot, q)}_{L^1} \lesssim 1
    ,
    \quad
    \sup_{p\in\manifold} \norm{K_t(p, \cdot)}_{L^1} \lesssim 1
    ,
    \\
    \label{dplustTimesHeatKernelL1Bound}
    \sup_{q\in\manifold} \norm{\frac{d_\manifold(\cdot,q) + t}{t^{\Dim/2}} \exp(-d_\manifold(\cdot,q)^2/(4t))}_{L^1}
    \lesssim t^{1/2}
    ,
    \\
    \label{tildeKpdotL1Bound}
    \sup_{p\in\manifold} \norm{ \frac{d_\manifold(p,\cdot) + t}{t^{(\Dim+2)/2}} \exp\left( -d_\manifold(p,\cdot)^2/(4t) \right)+1}_{L^1} \lesssim t^{-1/2}
    .
  \end{gather}
\end{lemma}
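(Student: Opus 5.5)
The four bounds all follow from one estimate: on a compact manifold the volume of a small geodesic ball is uniformly comparable to the Euclidean volume, that is, $V(\set{q\colon d_\manifold(p,q)\le r}) \lesssim r^\Dim$ for all $p\in\manifold$ and $0<r\le \operatorname{diam}\manifold$. I would get this from the finite atlas of \Cref{ss:FiniteAtlas}. Each chart $x_r$ has uniformly bounded derivatives, together with its inverse, and $\manifold$ has finite total volume. For $r$ larger than a fixed threshold, the bound $V(\manifold)<\infty$ gives it directly.

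Next I would use this volume bound with a layer-cake decomposition over dyadic annuli $A_j=\set{q\colon 2^{j}\sqrt t\le d_\manifold(p,q)<2^{j+1}\sqrt t}$, $j\ge 0$, together with the ball $d_\manifold(p,q)<\sqrt t$. Since everything depends only on the distance, the same computation treats the integral in $p$ for fixed $q$ and the integral in $q$ for fixed $p$, and the constants are uniform in the base point. For any polynomial weight $d^a$, this yields
\begin{equation*}
  \int_\manifold d_\manifold(p,q)^a e^{-d_\manifold(p,q)^2/(4t)}\,\dd V(q) \lesssim \sum_{j\ge -1} (2^{j}\sqrt t)^{a+\Dim} e^{-4^{j}/4} \lesssim t^{(a+\Dim)/2},
\end{equation*}
with the $j=-1$ term standing for the inner ball. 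The sum converges because of the Gaussian factor.

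The individual estimates then follow.
\begin{itemize}
  \item \eqref{KLinftyBound}: use $e^{-(\cdot)}\le 1$ and $1\le t^{-\Dim/2}$ for $t\le 1$.
  \item \eqref{KdotqL1Bound}: take $a=0$, which gives $t^{-\Dim/2}\cdot t^{\Dim/2}=1$. The additive constant $1$ contributes $V(\manifold)$.
  \item \eqref{dplustTimesHeatKernelL1Bound}: take $a=1$, giving $t^{-\Dim/2}t^{(1+\Dim)/2}=t^{1/2}$. The $t$ term gives $t\cdot 1\le t^{1/2}$.
  \item \eqref{tildeKpdotL1Bound}: the same computation with the prefactor $t^{-(\Dim+2)/2}$ gives $t^{-1/2}+1$. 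The constant $1$ integrates to $V(\manifold)\lesssim t^{-1/2}$.
\end{itemize}

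The only step that is more than routine is the uniform volume bound for balls. Its one subtlety is that radii may exceed the chart size, and that case is handled by the trivial bound $V(\manifold)$ at scales $r\gtrsim 1$.
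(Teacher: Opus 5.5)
Your proposal is correct and follows the paper's proof. The paper likewise calls \eqref{KLinftyBound} immediate and obtains the other bounds from a dyadic decomposition over annuli with $d_\manifold \approx 2^j\sqrt{t}$, using $V(\set{p\colon d_\manifold(p,q)\le r})\lesssim r^\Dim$ and absorbing higher powers of $t$ into lower ones since $0<t\le 1$. Your version only adds detail, namely where the volume bound comes from and the term-by-term bookkeeping.
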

\begin{proof}
  The first bound \eqref{KLinftyBound} is immediate.
  The other bounds all follow by estimating the integrals with a dyadic expansion over annuli such that $d_\manifold(\cdot,q) \approx \sqrt{t} 2^j$, using the fact that $V(\set{p\colon d_\manifold(\cdot,q)\leq r})\lesssim r^\Dim$ and merging higher powers of $t$ into lower ones since $0<t\leq 1$.
\end{proof}

\section{Closed \texorpdfstring{$(\Dim-1)$}{(\Dim-1)}-forms via closed curves}\label{s:Closedd-1FormProof}

In this section, we prove the following small-$t$ analogue of \Cref{BesovClosedCoclosed} for $(\Dim-1)$-forms.
\begin{theorem}\label{int01Bound_dF=0}
  Let $\manifold$ be a compact manifold of dimension $\Dim\geq 2$ and let $\alpha \in (0,\Dim)$.
  There exist a constant $C=C(\alpha,\manifold)>0$ such that
  \begin{align}\label{Smallt_heat_kernel}
    \int_0^1 t^{\alpha/2-1} \norm{e^{t\Delta_{\Dim-1}} F}_{L^{\Dim/(\Dim-\alpha),1}(\Lambda^{\Dim-1})} \,\dd t \leq C \|F\|_{L^1(\Lambda^{\Dim-1})}
  \end{align}
  for all $F \in L^1(\Lambda^{\Dim-1})$ such that $\ed_{\Dim-1} F=0$.
\end{theorem}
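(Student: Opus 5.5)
The plan is to turn the closed $(\Dim-1)$-form into a boundaryless $1$-current, decompose it into closed curves with \Cref{AtomicDecompositionCited}, and prove the bound for one normalized curve, splitting the $t$-integral at the scale $t\approx\length(\gamma)^2$.

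\emph{Reduction to curves.} By \Cref{BoundaryTomega}, $\ed_{\Dim-1}F=0$ means the $1$-current $T=T_F$ satisfies $\boundary_1 T=0$. By \Cref{NormDualityCurrentsForms} it has $\totalmass(T)=\norm{F}_{L^1}$. \Cref{AtomicDecompositionCited} then gives closed piecewise-geodesic curves $\gamma_{i,m}$ with $\Morrey{\gamma_{i,m}}\leq M$ and weights satisfying $\sum_i\lambda_{i,m}\leq 2\norm{F}_{L^1}$.

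For fixed $t>0$ and continuous $\eta$, the form $e^{t\Delta_1}\eta$ is continuous. By \eqref{limit}, the current $e^{t\Delta_{1,c}}T$ is therefore the weak-star limit of $\sum_i \lambda_{i,m}\, e^{t\Delta_{1,c}}[[\gamma_{i,m}]]/\length(\gamma_{i,m})$. By \Cref{TomegaPropagatorsCompatible}, $e^{t\Delta_{1,c}}T=T_{e^{t\Delta_{\Dim-1}}F}$. The $L^{\Dim/(\Dim-\alpha),1}$ norm is, up to constants, a supremum over test forms (\Cref{NormDualityCurrentsForms}), so it is lower semicontinuous along this limit:
\[
\norm{e^{t\Delta_{\Dim-1}}F}_{L^{\Dim/(\Dim-\alpha),1}}\lesssim \liminf_{m}\sum_i \frac{\lambda_{i,m}}{\length(\gamma_{i,m})}\norm{e^{t\Delta_{1,c}}[[\gamma_{i,m}]]}_{L^{\Dim/(\Dim-\alpha),1}} .
\]
Integrating in $t$ and applying Fatou's lemma, it suffices to prove that for every closed curve $\gamma$ with $\Morrey{\gamma}\leq M$,
\[
\int_0^1 t^{\alpha/2-1}\norm{e^{t\Delta_{1,c}}[[\gamma]]}_{L^{\Dim/(\Dim-\alpha),1}}\,\dd t\lesssim \length(\gamma).
\]

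\emph{Short times, $t\leq\length(\gamma)^2$.} By \Cref{CurrentEvolutionAsForm} and $\mu_{[[\gamma]]}\leq\mu_\gamma$, the evolved current is a smooth form bounded pointwise by $\int K_t(p,q)\,\dd\mu_\gamma(q)$. Then \eqref{KdotqL1Bound} gives an $L^1$ bound $\lesssim\length(\gamma)$. For $L^\infty$, split $\mu_\gamma$ over dyadic annuli of radius $\approx\sqrt t\,2^j$ around $p$ and use the Morrey bound on each; this gives $\lesssim t^{-(\Dim-1)/2}$, the additive $1$ in $K_t$ being harmless. The \InterpolationLemma\ then yields
\[
\norm{e^{t\Delta_{1,c}}[[\gamma]]}_{L^{\Dim/(\Dim-\alpha),1}}\lesssim \length(\gamma)^{1-\alpha/\Dim}\,t^{-(\Dim-1)\alpha/(2\Dim)} .
\]
Multiplied by $t^{\alpha/2-1}$, the integrand is $t^{\alpha/(2\Dim)-1}$, which is integrable at $0$. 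Its integral over $[0,\length(\gamma)^2]$ is $\lesssim \length(\gamma)^{1-\alpha/\Dim}\length(\gamma)^{\alpha/\Dim}=\length(\gamma)$. If $\length(\gamma)>\length_0$, the same estimate integrated over all of $[0,1]$ gives $\lesssim\length(\gamma)^{1-\alpha/\Dim}\lesssim\length(\gamma)$, which settles large curves.

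\emph{Long times, $\length(\gamma)^2\leq t\leq1$, with $\length(\gamma)\leq\length_0$.} Here the closedness of $\gamma$ is essential. By \Cref{SmallCurvesAreBoundaries}, $[[\gamma]]=\boundary_2 S_\gamma$ with $\totalmass(S_\gamma)\lesssim\length(\gamma)^2$, and $S_\gamma$ is supported within $O(\length(\gamma))$ of $\gamma$. The heat propagator commutes with $\ed$, so $e^{t\Delta_{1,c}}[[\gamma]]=\boundary_2\, e^{t\Delta_{2,c}}S_\gamma$. By \Cref{CurrentEvolutionAsForm}, $e^{t\Delta_{2,c}}S_\gamma=T_{\omega'_t}$ for a smooth $(\Dim-2)$-form $\omega'_t$, and by \Cref{BoundaryTomega} the evolved curve current equals $\pm T_{\ed\omega'_t}$.

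Everything then rests on a gradient version of \eqref{CurrentEvolutionPointwise}:
\[
\norm{\ed\omega'_t(p)}_p\lesssim\int\Bigl(\tfrac{d_\manifold(p,q)+t}{t^{(\Dim+2)/2}}e^{-d_\manifold(p,q)^2/(4t)}+1\Bigr)\dd\mu_{S_\gamma}(q).
\]
I expect this to be the main obstacle. It requires first-derivative bounds for the heat kernel on forms, of the type implicit in \eqref{tildeKpdotL1Bound}, together with care that the duality in \Cref{NormDualityCurrentsForms} commutes with $\ed$. I would derive it from Gaussian gradient estimates for $\nabla_p$ of the form heat kernel in local charts of the fixed atlas.

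Granting it, \eqref{tildeKpdotL1Bound} gives an $L^1$ bound $\lesssim\length(\gamma)^2t^{-1/2}$, and the sup of the kernel gives an $L^\infty$ bound $\lesssim\length(\gamma)^2t^{-(\Dim+1)/2}$. Interpolation yields a Lorentz norm $\lesssim\length(\gamma)^2t^{-(1+\alpha)/2}$, so
\[
\int_{\length(\gamma)^2}^1 t^{\alpha/2-1}\,\length(\gamma)^2t^{-(1+\alpha)/2}\,\dd t\lesssim\length(\gamma)^2\,\length(\gamma)^{-1}=\length(\gamma).
\]
Combining the two ranges of $t$ completes the proof of the single-curve bound, and hence of \eqref{Smallt_heat_kernel}.
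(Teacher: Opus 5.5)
Your proposal is essentially the paper's proof. It uses the same reduction through $T_F$, \Cref{AtomicDecompositionCited} and weak-star lower semicontinuity (\Cref{FatouForCurrentNorms}), the same Morrey-based interpolation for $t\lesssim\length(\gamma)^2$ and for long curves, and the same spanning current $S_\gamma$ from \Cref{SmallCurvesAreBoundaries} for $t\gtrsim\length(\gamma)^2$.

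The only variation is that you pair the $L^1$ bound $\length(\gamma)^2t^{-1/2}$ with the sharper $L^\infty$ bound $\length(\gamma)^2t^{-(\Dim+1)/2}$. The pointwise estimate you flag as the main obstacle is in fact immediate from \Cref{d1etDeltaxiAsIntegral}: apply it to $e^{t\Delta_{1,c}}[[\gamma]](\xi)=S_\gamma(\ed_1e^{t\Delta_1}\xi)$, then use Fubini, the symmetry of $\tilde{K}_t$, and Lebesgue differentiation as in the proof of \Cref{CurrentEvolutionAsForm}. The paper avoids even this by using duality only for the $L^1$ norm (\Cref{etDeltaSmallgammaL1}) and pairing it with the crude bound $t^{-\Dim/2}\length(\gamma)$ of \Cref{etDeltaTLinftyL1}, which already suffices.

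The one omission is orientability. $T_F$ needs an orientation, so for non-orientable $\manifold$ you must pull back to the orientable double cover, as the paper does in \Cref{ss:NonorientableProof}.
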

The proof is based on the 1-current $T_F$ arising from the $(\Dim-1)$-form $F$, where the assumption $\ed_{\Dim-1} F=0$ implies that $\boundary_1 T_F=0$.
As discussed in \Cref{ss:CurrentsFromForms}, the construction of the 1-current $T_F$ requires the manifold to be orientable, so our proof of \Cref{int01Bound_dF=0} will be subject to that assumption for now.
A simple argument circumvents this orientability assumption, and since we will repeat this argument several times we defer the details until \Cref{ss:NonorientableProof}.

Otherwise, the argument proceeds solely in terms of 1-currents, and does not rely on orientability.
We will prove the following extension of \Cref{int01Bound_dF=0} to $1$-currents without boundary.
\begin{theorem}\label{int01Bound0Boundary}
  Let $\manifold$ be a compact manifold of dimension $\Dim\geq 2$ and let $\alpha \in (0,\Dim)$.
  There exists a constant $C=C(\alpha,\manifold)<\infty$ such that
  \begin{align}
    \int_0^1 t^{\alpha/2-1} \norm{e^{t\Delta_{1,c}} T}_{L^{\Dim/(\Dim-\alpha),1}} \,\dd t \leq C \, \totalmass(T)
  \end{align}
  for all 1-currents $T$ such that $\boundary_1 T=0$.
\end{theorem}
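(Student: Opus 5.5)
The plan is to reduce, via the atomic decomposition of \Cref{AtomicDecompositionCited}, to a single normalized closed curve, and then prove a fixed-time bound for the heat propagator applied to that curve.

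\textbf{Reduction to curves.} Fix $T$ with $\boundary_1 T=0$ and take the curves $\gamma_{i,m}$ and weights $\lambda_{i,m}$ from \Cref{AtomicDecompositionCited}. For fixed $t>0$ the propagated current $e^{t\Delta_{1,c}}T$ is a smooth form by \Cref{CurrentEvolutionAsForm}. Since $e^{t\Delta_1}\xi$ is continuous, \eqref{limit} gives $e^{t\Delta_{1,c}}T(\xi)=\lim_m\sum_i \lambda_{i,m}\, e^{t\Delta_{1,c}}[[\gamma_{i,m}]](\xi)/\length(\gamma_{i,m})$. Using the duality in \Cref{NormDualityCurrentsForms}, this passes to the Lorentz norm:
\[
\norm{e^{t\Delta_{1,c}}T}_{L^{\Dim/(\Dim-\alpha),1}}\lesssim \sup_m\sum_i \lambda_{i,m}\,\frac{\norm{e^{t\Delta_{1,c}}[[\gamma_{i,m}]]}_{L^{\Dim/(\Dim-\alpha),1}}}{\length(\gamma_{i,m})}.
\]
Fatou's lemma then lets us move the time integral inside. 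By \eqref{mass}, it suffices to prove, uniformly over closed piecewise-geodesic curves $\gamma$ with $\Morrey{\gamma}\le M$,
\[
\int_0^1 t^{\alpha/2-1}\norm{e^{t\Delta_{1,c}}[[\gamma]]}_{L^{\Dim/(\Dim-\alpha),1}}\,\dd t\lesssim \length(\gamma).
\]

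\textbf{Fixed-time bounds.} Set $\ell=\length(\gamma)$. I will combine two estimates, each fed into the \InterpolationLemma.

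\emph{Large-time/near-field bound.} By \Cref{CurrentEvolutionAsForm} and \Cref{KernelBounds}, $\norm{e^{t\Delta}[[\gamma]]}_{L^1}\lesssim \ell$. For $L^\infty$ I use the Morrey condition. Summing $K_t(p,\cdot)$ over dyadic annuli of radius $\sqrt t\,2^j$, each of $\mu_\gamma$-mass at most $M\sqrt t\,2^j$, gives $\norm{e^{t\Delta}[[\gamma]]}_{L^\infty}\lesssim t^{-(\Dim-1)/2}+\ell$. Interpolation yields
\[
\norm{e^{t\Delta}[[\gamma]]}_{L^{\Dim/(\Dim-\alpha),1}}\lesssim \ell^{1-\alpha/\Dim}\,t^{-\alpha(\Dim-1)/(2\Dim)}.
\]
Integrated against $t^{\alpha/2-1}$ over $t\ge \ell^2$, the exponent is $\alpha/(2\Dim)-1<-1+\alpha/(2\Dim)$, so the integral converges at infinity and gives $\ell^{1-\alpha/\Dim}\,\ell^{\alpha/\Dim}=\ell$. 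Over $t\le\ell^2$ it gives the same bound, since the exponent $\alpha/(2\Dim)-1>-1$ makes the integral converge at $0$. So this estimate alone already handles every $t$ up to $\ell^2$ and beyond, but the tail over $t\ge\ell^2$ needs care when $\ell\ge1$ is not small. I expect the issue is that the $+\ell$ term and the constant part of $K_t$ spoil things at large $t$.

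\emph{Cancellation bound for $t\ge \ell^2$ with $\ell\le\ell_0$.} Use \Cref{SmallCurvesAreBoundaries}: $[[\gamma]]=\boundary_2 S_\gamma$ with $\totalmass(S_\gamma)\lesssim\ell^2$. Then $e^{t\Delta_{1,c}}[[\gamma]](\xi)=S_\gamma(\ed_1 e^{t\Delta_1}\xi)$, and gradient heat kernel bounds of the type in \eqref{tildeKpdotL1Bound} give $L^1$ and $L^\infty$ norms of order $\ell^2 t^{-1/2}$ and $\ell^2t^{-(\Dim+1)/2}$. Interpolation gives $\ell^2t^{-1/2-\alpha/2}$, and $\int_{\ell^2}^1 t^{\alpha/2-1}\ell^2t^{-1/2-\alpha/2}\,\dd t\lesssim\ell$.

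\textbf{Main obstacle.} The estimates above are only heuristic for the $L^\infty$ bound. The delicate part, as the paper foreshadows, is getting bounds \emph{linear} in $\ell$ uniformly across scales, combining the Morrey bound with cancellation. The cancellation step needs heat-kernel derivative estimates for the $1$-form propagator on a manifold, with curvature error terms, which is where I expect most of the work to lie. For long curves, $\ell\ge\ell_0$, the first bound suffices, since $t\le1\lesssim\ell^2$.
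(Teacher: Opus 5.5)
Your proposal is correct and is essentially the paper's proof. The shared steps are the atomic decomposition with weak-star lower semicontinuity (write $\liminf_m$ rather than $\sup_m$ so that Fatou's lemma applies), the Morrey-interpolated bound on $(0,\ell^2]$ and for long curves, and the spanning-surface cancellation bound on $[\ell^2,1]$ for short curves. The only variant is that in the cancellation regime you bound the $L^\infty$ norm by $\ell^2 t^{-(\Dim+1)/2}$ via the gradient kernel, where the paper uses the mass bound $\ell\, t^{-\Dim/2}$ of \Cref{etDeltaTLinftyL1}; this works equally well.

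One correction to your commentary. The Morrey bound gives the integrand $\ell^{1-\alpha/\Dim}t^{\alpha/(2\Dim)-1}$, which is not integrable at infinity and on $[\ell^2,1]$ yields only $\ell^{1-\alpha/\Dim}$, so it does not handle $t\ge\ell^2$. That range is empty when $\ell\geq 1$, and it is exactly where small curves need your cancellation step. The $+\ell$ term is harmless, since $\ell\le\operatorname{diam}(\manifold)\,M$.
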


We begin with two simple estimates for the heat propagator on currents, measured in $L^1$ and in $L^\infty$, that follow from the pointwise bounds in \Cref{CurrentEvolutionAsForm}.
\begin{lemma}\label{etDeltaT_L1L1}
  For all $1$-currents $T$ and for all $0<t\leq 1$,
  \begin{equation*}
    \norm{ e^{t\Delta_{1,c}} T }_{L^1} \lesssim  \totalmass(T)
    .
  \end{equation*}
\end{lemma}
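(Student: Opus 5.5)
We prove the bound from the pointwise estimate of \Cref{CurrentEvolutionAsForm} followed by Fubini. Fix a $1$-current $T$ and $t\in(0,1]$. By \Cref{CurrentEvolutionAsForm} there is a smooth $(\Dim-1)$-form $\omega_t$ with $e^{t\Delta_{1,c}}T=T_{\omega_t}$. By the convention \eqref{TomegaNormIs_omegaNorm}, the quantity to bound is therefore $\norm{e^{t\Delta_{1,c}}T}_{L^1}=\norm{\omega_t}_{L^1}=\int_\manifold \norm{\omega_t(p)}_p\,\dd V(p)$.

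Inserting the pointwise bound \eqref{CurrentEvolutionPointwise} gives
\begin{equation*}
  \norm{\omega_t}_{L^1} \leq C\int_\manifold\int_\manifold K_t(p,q)\,\dd\mu_T(q)\,\dd V(p).
\end{equation*}
The integrand $K_t$ is nonnegative and continuous, and both $\mu_T$ and $V$ are finite measures, so Tonelli's theorem lets us exchange the order of integration. This yields
\begin{equation*}
  \norm{\omega_t}_{L^1} \leq C\int_\manifold \norm{K_t(\cdot,q)}_{L^1}\,\dd\mu_T(q) \leq C\sup_{q\in\manifold}\norm{K_t(\cdot,q)}_{L^1}\;\mu_T(\manifold).
\end{equation*}

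Two facts finish the argument. First, the first bound in \eqref{KdotqL1Bound} of \Cref{KernelBounds} gives $\sup_q\norm{K_t(\cdot,q)}_{L^1}\lesssim 1$ uniformly over $0<t\leq1$. Second, $\mu_T(\manifold)=\totalmass(T)$. Together these give $\norm{e^{t\Delta_{1,c}}T}_{L^1}\lesssim\totalmass(T)$, as claimed.

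There is no real obstacle. The only point needing care is that the $L^1$ norm of the current is well defined, and this is exactly what \Cref{CurrentEvolutionAsForm} provides by representing the evolved current as a form. The boundaryless hypothesis plays no role, so the lemma holds for all $1$-currents, consistent with the statement.
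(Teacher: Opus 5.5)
Your proposal is correct and is essentially the paper's own proof: represent $e^{t\Delta_{1,c}}T$ as $T_{\omega_t}$ via \Cref{CurrentEvolutionAsForm}, integrate the pointwise bound \eqref{CurrentEvolutionPointwise} over $\manifold$, and bound by $\sup_q\norm{K_t(\cdot,q)}_{L^1}\,\mu_T(\manifold)\lesssim\totalmass(T)$ using \eqref{KdotqL1Bound}. Your explicit appeal to Tonelli just makes precise the interchange of integrals that the paper performs implicitly.
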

\begin{lemma}\label{etDeltaTLinftyL1}
  For all $1$-currents $T$ and for all $0<t\leq 1$,
  \begin{equation*}
    \norm{ e^{t\Delta_{1,c}} T }_{L^\infty} \lesssim t^{-\Dim/2}\totalmass(T)
    .
  \end{equation*}
\end{lemma}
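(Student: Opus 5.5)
The bound follows directly from the pointwise estimate of \Cref{CurrentEvolutionAsForm} together with the uniform kernel bound \eqref{KLinftyBound} from \Cref{KernelBounds}.

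\textbf{Step 1: pointwise representation.} Fix a $1$-current $T$ and $t\in(0,1]$. By \Cref{CurrentEvolutionAsForm} there is a smooth $(\Dim-1)$-form $\omega_t$ with $e^{t\Delta_{1,c}}T=T_{\omega_t}$. By the convention \eqref{TomegaNormIs_omegaNorm},
\begin{equation*}
  \norm{e^{t\Delta_{1,c}}T}_{L^\infty}=\sup_{p\in\manifold}\norm{\omega_t(p)}_p .
\end{equation*}
Moreover, \eqref{CurrentEvolutionPointwise} gives, for every $p\in\manifold$,
\begin{equation*}
  \norm{\omega_t(p)}_p\le C\int_\manifold K_t(p,q)\,\dd\mu_T(q).
\end{equation*}

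\textbf{Step 2: bound the kernel uniformly.} By \eqref{KLinftyBound}, $K_t(p,q)\lesssim t^{-\Dim/2}$ uniformly in $p,q\in\manifold$ and $t\in(0,1]$. Pulling this supremum out of the integral gives
\begin{equation*}
  \norm{\omega_t(p)}_p\lesssim t^{-\Dim/2}\mu_T(\manifold)=t^{-\Dim/2}\totalmass(T)
\end{equation*}
for every $p$. Taking the supremum over $p$ proves the claim.

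The constant depends only on $\manifold$, through the constant in \Cref{CurrentEvolutionAsForm} and in \eqref{KLinftyBound}. The real content is in \Cref{CurrentEvolutionAsForm}, which the paper defers to \Cref{appendix_A}. Assuming it, the only point needing care is that the additive constant $1$ in \eqref{KFormula} is absorbed into $t^{-\Dim/2}$. This holds because $t^{-\Dim/2}\ge 1$ for $t\le 1$.
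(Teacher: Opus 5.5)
Your proof is correct and follows the paper's argument exactly: you combine the pointwise bound \eqref{CurrentEvolutionPointwise} from \Cref{CurrentEvolutionAsForm} with the uniform kernel bound \eqref{KLinftyBound} and the identity $\mu_T(\manifold)=\totalmass(T)$. Your remark about absorbing the additive constant $1$ in $K_t$ is already contained in \eqref{KLinftyBound}, so nothing further is needed.
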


\begin{proof}[Proofs]
  We recall the identification $e^{t\Delta_{1,c}} T=T_{\omega_t}$ established in \Cref{CurrentEvolutionAsForm}.
  In particular, recalling \eqref{TomegaNormIs_omegaNorm} and integrating \eqref{CurrentEvolutionPointwise} over the manifold, we have
  \begin{align*}
    \norm{ e^{t\Delta_{1,c}} T }_{L^1} &= \norm{\omega_t}_{L^1} = \int_{\manifold}
    \norm{\omega_t(p)}_p \, \dd V(p) \\
    &\leq C\int_{\manifold} \int_\manifold K_t(p,q)\,\dd\mu_T(q) \, \dd V(p)\\
    &\leq C \sup_{q \in \manifold} \norm{K_t(\cdot,q)}_{L^1}\totalmass(T)\\
    &\lesssim \totalmass(T)
    ,
  \end{align*}
  where the last step used \eqref{KdotqL1Bound}.
  Similarly, taking the supremum instead as in \eqref{KLinftyBound},
  \begin{align*}
    \norm{ e^{t\Delta_{1,c}} T }_{L^{\infty}} &= \norm{\omega_t}_{L^\infty} = \sup_{p \in \manifold}
    \norm{\omega_t(p)}_p  \\
    &\leq C \sup_{p \in \manifold}\int_\manifold K_t(p,q)\,\dd\mu_T(q)\\
    &\leq C \sup_{p,q \in \manifold} K_t(p,q) \, \mu_T(\manifold)\\
    &\lesssim t^{-\Dim/2}\totalmass(T)
    .
    \qedhere
  \end{align*}
\end{proof}
\noindent
Interpolating the bounds in \Cref{etDeltaT_L1L1,etDeltaTLinftyL1} gives an integral that just barely fails to converge: see also the discussion in \cite{HRS}*{p.~1926}.
To make progress, we will need two complementary bounds before we can interpolate usefully, based on the specific features of the 1-currents that arise in the atomic decomposition from \Cref{AtomicDecompositionCited}.
The first bound will use additional regularity in the form of the Morrey norm, and the second bound will be specific to the case of small closed curves.

The following general lemma shows how the Morrey norm can be used to bound distance-based integrals.
\begin{lemma}\label{MorreyBoundForDistanceIntegral}
  Let $\mu$ be a measure on $\manifold$, let $f\colon\manifold\to\R$ be measurable, and suppose that there exists $p\in\manifold$ and a differentiable, non-increasing function $g\colon\cointerval{0,\infty}\to\cointerval{0,\infty}$ such that $\lim_{r\to\infty}g(r)=0$ and $\abs{f(q)}\leq g(d_\manifold(p,q))$ for all $q\in\manifold$.
  Then
  \begin{equation}
    \int_\manifold \abs{f(q)}\dd\mu(q) \leq \Morrey{\mu} \int_0^\infty r (-g'(r))\,\dd r
    .
  \end{equation}
\end{lemma}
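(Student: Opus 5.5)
The plan is the layer-cake (Cavalieri) representation of $\abs{f}$ in terms of $g$, followed by Fubini and the Morrey bound applied ball by ball.

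For each $q$, since $g$ is differentiable, non-increasing and $g(r)\to 0$ as $r\to\infty$, the fundamental theorem of calculus gives
\begin{equation*}
  g(d_\manifold(p,q)) = \int_{d_\manifold(p,q)}^\infty (-g'(r))\,\dd r = \int_0^\infty (-g'(r))\,\indicatorofset{\set{d_\manifold(p,q)\leq r}}\,\dd r .
\end{equation*}
Here $-g'\geq 0$, so all integrands below are non-negative.

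Combining this with the hypothesis $\abs{f(q)}\le g(d_\manifold(p,q))$ and applying Tonelli's theorem (legitimate because the integrand is non-negative and jointly measurable), we obtain
\begin{equation*}
  \int_\manifold \abs{f}\,\dd\mu \le \int_0^\infty (-g'(r))\,\mu\bigl(\set{q\colon d_\manifold(p,q)\le r}\bigr)\,\dd r .
\end{equation*}

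By the definition of the Morrey norm, the ball measure satisfies $\mu(\set{q\colon d_\manifold(p,q)\le r})\le r\Morrey{\mu}$ for every $r>0$. Inserting this bound gives the claim immediately.

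The only point needing care is the fundamental theorem of calculus step. If $g'$ is merely assumed to exist and not to be integrable, we instead interpret the identity in the Lebesgue–Stieltjes sense: use $-\dd g$ as a non-negative measure, valid since $g$ is monotone. The same Tonelli argument then goes through verbatim. If the right-hand side is infinite there is nothing to prove, so no convergence issues arise.
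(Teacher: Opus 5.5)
Your proof is correct and is essentially the paper's argument: write $g(d_\manifold(p,q))=\int_0^\infty (-g'(r))\indicator{d_\manifold(p,q)\le r}\,\dd r$, interchange the integrals by Tonelli, and bound each ball measure by $r\Morrey{\mu}$. The paper uses open balls $\set{d_\manifold(p,q)<r}$ instead, which changes nothing. Your fallback caveat is unnecessary: a monotone function that is differentiable everywhere has locally integrable derivative and is therefore absolutely continuous, so the fundamental theorem of calculus applies directly (and it is only for this reason that a Lebesgue--Stieltjes version would reproduce $\int_0^\infty r(-g'(r))\,\dd r$).
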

\begin{proof}
  Let $h(r)=-g'(r)$ and note that the assumptions imply
  \begin{equation*}
    g(\rho)=\int_\rho^\infty h(r)\,\dd r=\int_0^\infty h(r)\indicator{r>\rho}\,\dd r
  \end{equation*}
  for all $\rho\in\cointerval{0,\infty}$.
  Interchange the order of integration to find
  \begin{align*}
    \int_\manifold \abs{f(q)} \dd\mu(q) &\leq \int_\manifold g(d_\manifold(p,q)) \, \dd\mu(q)
    \\&
    = \int_0^\infty \int_\manifold h(r)\indicator{r> d_\manifold(p,q)} \, \dd\mu(q) \, \dd r
    \\&
    = \int_0^\infty h(r) \mu(\set{q\in\manifold\colon d_\manifold(p,q)< r}) \, \dd r
    \\&
    \leq \int_0^\infty h(r) (r\Morrey{\mu}) \, \dd r
    .
    \qedhere
  \end{align*}
\end{proof}

\begin{lemma}\label{etDeltaT_LinftyMorrey}
  For all $1$-currents $T$ and for all $0<t\leq 1$,
  \begin{equation*}
    \norm{ e^{t\Delta_{1,c}} T }_{L^{\infty}} \lesssim t^{-(\Dim-1)/2} \Morrey{T}
    .
  \end{equation*}
\end{lemma}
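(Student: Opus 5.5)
We combine the pointwise bound of \Cref{CurrentEvolutionAsForm} with \Cref{MorreyBoundForDistanceIntegral}, arguing exactly as in \Cref{etDeltaTLinftyL1} but using the Morrey norm of $\mu_T$ in place of its total mass. By \Cref{CurrentEvolutionAsForm}, $e^{t\Delta_{1,c}}T=T_{\omega_t}$ with
\begin{equation*}
  \norm{\omega_t(p)}_p \leq C\int_\manifold t^{-\Dim/2}\exp\left(-d_\manifold(p,q)^2/(4t)\right)\dd\mu_T(q) + C\,\totalmass(T)
  .
\end{equation*}
By \eqref{TomegaNormIs_omegaNorm}, $\norm{e^{t\Delta_{1,c}}T}_{L^\infty}=\sup_p\norm{\omega_t(p)}_p$, so it suffices to bound both terms uniformly in $p$.

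For the Gaussian term, fix $p$ and apply \Cref{MorreyBoundForDistanceIntegral} with $\mu=\mu_T$ and $g(r)=t^{-\Dim/2}e^{-r^2/(4t)}$. This $g$ is differentiable, non-increasing, and tends to $0$, with $-g'(r)=t^{-\Dim/2}\frac{r}{2t}e^{-r^2/(4t)}$. Substituting $r=\sqrt{t}s$ gives
\begin{equation*}
  \int_0^\infty r(-g'(r))\,\dd r = t^{-\Dim/2}\int_0^\infty \frac{r^2}{2t}e^{-r^2/(4t)}\,\dd r = t^{-(\Dim-1)/2}\int_0^\infty \frac{s^2}{2}e^{-s^2/4}\,\dd s \lesssim t^{-(\Dim-1)/2}
  .
\end{equation*}
Hence the Gaussian term is at most $C t^{-(\Dim-1)/2}\Morrey{T}$, uniformly in $p$.

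For the constant term, we need $\totalmass(T)\lesssim\Morrey{T}$. Since $\manifold$ is compact, its diameter $D$ is finite, so the ball of radius $D$ about any point is all of $\manifold$. The definition of the Morrey norm then gives $\mu_T(\manifold)\leq D\,\Morrey{\mu_T}$, that is, $\totalmass(T)\leq D\,\Morrey{T}$. Because $0<t\leq1$ we have $1\leq t^{-(\Dim-1)/2}$, so this term is also $\lesssim t^{-(\Dim-1)/2}\Morrey{T}$. Adding the two bounds gives the claim.

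There is no real obstacle. The only points needing care are the additive constant $+1$ in $K_t$, which is why the compactness step $\totalmass\lesssim\Morrey{\cdot}$ is needed, and checking that the hypotheses of \Cref{MorreyBoundForDistanceIntegral} hold for $g$.
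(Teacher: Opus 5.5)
Your proof is correct and is essentially the paper's own argument. Both split $K_t$ into its Gaussian and constant parts, control the constant part by $\mu_T(\manifold)\leq\operatorname{diam}(\manifold)\Morrey{T}$, and apply \Cref{MorreyBoundForDistanceIntegral} with $g(r)=t^{-\Dim/2}e^{-r^2/(4t)}$, which gives exactly $\sqrt{\pi}\,t^{-(\Dim-1)/2}$. One cosmetic caveat, which the paper shares: if $\manifold$ is disconnected, apply the diameter bound on each of the finitely many components; this changes nothing.
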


\begin{proof}
  By \Cref{CurrentEvolutionAsForm},
  \begin{align*}
    \norm{ e^{t\Delta_{1,c}} T }_{L^{\infty}} &= \sup_{p\in\manifold} \norm{\omega_t}_p
    \\&
    \leq C\int_\manifold (t^{-\Dim/2}\exp\left( -d_\manifold(p,q)^2/(4t) \right)+1) \,\dd\mu_T(q)
    .
  \end{align*}
  The constant term in the integrand gives a contribution $C\mu_T(\manifold)$, which we can bound in terms of the Morrey norm by compactness:
  \begin{equation*}
    \mu_T(\manifold)\leq \operatorname{diam}(\manifold)\Morrey{\mu_T}
    .
  \end{equation*}
  For the rest of the integral, we can apply the special case of \Cref{MorreyBoundForDistanceIntegral} in which $g(r)=t^{-\Dim/2}e^{-r^2/(4t)}$.
  Then $h(r)=\frac{1}{2}rt^{-(\Dim+2)/2}e^{-r^2/(4t)}$ and we find $\int_0^\infty rh(r)\,\dd r=t^{-(\Dim-1)/2}\sqrt{\pi}$.
  Thus
  \begin{align*}
    \norm{ e^{t\Delta_{1,c}} T }_{L^{\infty}} &\leq C \Morrey{\mu_T} t^{-(\Dim-1)/2}\sqrt{\pi} + C \operatorname{diam}(\manifold)\Morrey{\mu_T}
    .
  \end{align*}
  By definition $\Morrey{\mu_T}=\Morrey{T}$, and the second term can be merged into the first because $t\leq 1$.
\end{proof}

The upper bound in \Cref{etDeltaT_LinftyMorrey} is less singular than the bound in \Cref{etDeltaTLinftyL1} when $t$ is small, so interpolating will now yield a convergent integral.

\begin{corollary}\label{etDeltaT_LorentzInterpolated}
  For all $1$-currents $T$ and for all $0<t\leq 1$,
  \begin{equation*}
    \norm{ e^{t\Delta_{1,c}} T}_{L^{\Dim/(\Dim-\alpha),1}} \lesssim t^{-\alpha/2+\alpha/(2\Dim)} \totalmass(T)^{1-\alpha/\Dim} \Morrey{T}^{\alpha/\Dim}
    .
  \end{equation*}
\end{corollary}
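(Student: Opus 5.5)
The plan is to apply the \InterpolationLemma\ to the form $\omega_t$ from \Cref{CurrentEvolutionAsForm}, which satisfies $e^{t\Delta_{1,c}}T=T_{\omega_t}$. By \eqref{TomegaNormIs_omegaNorm}, the Lorentz, $L^1$ and $L^\infty$ norms of $e^{t\Delta_{1,c}}T$ are by definition those of $\omega_t$, so the lemma applies directly and gives
\begin{equation*}
  \norm{e^{t\Delta_{1,c}}T}_{L^{\Dim/(\Dim-\alpha),1}} \leq \frac{\Dim^2}{\alpha(\Dim-\alpha)} \norm{e^{t\Delta_{1,c}}T}_{L^1}^{1-\alpha/\Dim} \norm{e^{t\Delta_{1,c}}T}_{L^\infty}^{\alpha/\Dim}
  .
\end{equation*}

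For the $L^1$ factor I will use \Cref{etDeltaT_L1L1}, which bounds it by a constant times $\totalmass(T)$. For the $L^\infty$ factor I will use the Morrey-based bound of \Cref{etDeltaT_LinftyMorrey}, namely $\lesssim t^{-(\Dim-1)/2}\Morrey{T}$, rather than the cruder \Cref{etDeltaTLinftyL1}.

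Substituting both bounds, the power of $t$ is $-\frac{\Dim-1}{2}\cdot\frac{\alpha}{\Dim} = -\frac{\alpha}{2}+\frac{\alpha}{2\Dim}$, which is exactly the claimed exponent. The remaining factors are $\totalmass(T)^{1-\alpha/\Dim}\Morrey{T}^{\alpha/\Dim}$. The constants depend only on $\alpha$ and $\manifold$, since $0<\alpha<\Dim$ keeps both exponents in $(0,1)$.

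There is essentially no obstacle. The only point needing care is that the \InterpolationLemma\ applies to the current, and this holds because \Cref{CurrentEvolutionAsForm} represents it as a (smooth) form. One also checks that if $\Morrey{T}=\infty$ the bound is vacuous, so there is nothing further to prove in that case.
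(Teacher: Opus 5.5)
Your proof is correct and matches the paper's argument: apply the \InterpolationLemma\ to $e^{t\Delta_{1,c}}T=T_{\omega_t}$, then bound the $L^1$ factor by \Cref{etDeltaT_L1L1} and the $L^\infty$ factor by the Morrey bound of \Cref{etDeltaT_LinftyMorrey}. Your exponent computation $-\frac{\Dim-1}{2}\cdot\frac{\alpha}{\Dim}=-\frac{\alpha}{2}+\frac{\alpha}{2\Dim}$ is also right.
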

\begin{corollary}\label{etDeltaT_LorentzIntegrated}
  For all $1$-currents $T$,
  \begin{align*}
    \int_0^1 t^{\alpha/2-1}\|  e^{t\Delta_{1,c}} T\|_{L^{\Dim/(\Dim-\alpha),1}} \, \dd t \lesssim \totalmass(T)^{1-\alpha/\Dim} \Morrey{T}^{\alpha/\Dim}
    .
  \end{align*}
\end{corollary}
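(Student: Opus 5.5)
The plan is to integrate the pointwise-in-time bound of \Cref{etDeltaT_LorentzInterpolated} directly over $t\in(0,1]$. That corollary gives, for each $0<t\leq 1$,
\begin{equation*}
  t^{\alpha/2-1}\norm{e^{t\Delta_{1,c}}T}_{L^{\Dim/(\Dim-\alpha),1}} \lesssim t^{\alpha/2-1}\, t^{-\alpha/2+\alpha/(2\Dim)}\, \totalmass(T)^{1-\alpha/\Dim}\Morrey{T}^{\alpha/\Dim}
  = t^{-1+\alpha/(2\Dim)}\, \totalmass(T)^{1-\alpha/\Dim}\Morrey{T}^{\alpha/\Dim}.
\end{equation*}
The implicit constant is uniform in $t$ and $T$, since it comes from \Cref{etDeltaT_L1L1,etDeltaT_LinftyMorrey} and the \InterpolationLemma. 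The factor depending on $T$ is independent of $t$, so it pulls out of the integral.

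Since $\alpha>0$, the exponent $-1+\alpha/(2\Dim)$ is strictly greater than $-1$, so the time integral converges:
\begin{equation*}
  \int_0^1 t^{-1+\alpha/(2\Dim)}\,\dd t = \frac{2\Dim}{\alpha}<\infty .
\end{equation*}
This constant depends only on $\alpha$ and $\Dim$, so it can be absorbed into the $\lesssim$. That yields the claimed bound.

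A minor technical point is measurability in $t$ of the integrand. This should follow from the smooth dependence of $e^{t\Delta_{1,c}}T=T_{\omega_t}$ on $t$ (\Cref{CurrentEvolutionAsForm}). Alternatively, one can read the integral as an upper (lower Lebesgue) integral, for which the pointwise bound still suffices. I expect no real obstacle, because all the substantive work sits in \Cref{etDeltaT_LorentzInterpolated}. The one thing to check is that its exponent $\alpha/(2\Dim)>0$ is exactly what removes the borderline divergence noted after \Cref{etDeltaTLinftyL1}.
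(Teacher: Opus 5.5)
Your proposal is correct and matches the paper's proof: it likewise inserts the $L^1$ bound of \Cref{etDeltaT_L1L1} and the Morrey-type $L^\infty$ bound of \Cref{etDeltaT_LinftyMorrey} into the \InterpolationLemma\ to get \Cref{etDeltaT_LorentzInterpolated}, then integrates the resulting factor $t^{\alpha/(2\Dim)-1}$ over $(0,1]$. Your explicit constant $2\Dim/\alpha$ and the remark on measurability are harmless additions.
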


\begin{proof}[Proofs]
By the \InterpolationLemma,
  \begin{align*}
    \norm{e^{t\Delta_{1,c}} T}_{L^{\Dim/(\Dim-\alpha),1}} \leq C \norm{e^{t\Delta_{1,c}} T}_{L^1}^{1-\alpha/\Dim} \norm{e^{t\Delta_{1,c}} T}_{L^\infty}^{\alpha/\Dim}
    .
  \end{align*}
The upper bound from \Cref{etDeltaT_LorentzInterpolated} then follows by using the $L^1$ bound from \Cref{etDeltaT_L1L1} and the $L^\infty$ bound from \Cref{etDeltaT_LinftyMorrey}.
Integrating as in \Cref{etDeltaT_LorentzIntegrated} then leads to a factor $t^{\alpha/(2\Dim)-1}$, which ensures that the integral is finite.
\end{proof}

If we apply \Cref{etDeltaT_LorentzIntegrated} to a current of the form $T=[[\gamma]]$, we obtain an upper bound in terms of $\length(\gamma)^{1-\alpha/\Dim}$, where the atomic decomposition allows us to assume that the factor $\Morrey{[[\gamma]]}^{\alpha/\Dim}$ is uniformly bounded.
However, even this uniform estimate is not sufficient for our purposes: it fails to offset the normalizing factor $\length(\gamma)^{-1}$ that appears in the atomic decomposition \eqref{limit}.
To proceed, we will need an upper bound of the form $O(\length(\gamma))$.

We will obtain such a bound for sufficiently small $\gamma$ by considering $[[\gamma]]$ as the boundary of a 2-current.
To that end, we will need the following $L^\infty$ bound, similar in spirit to \Cref{CurrentEvolutionAsForm}.

\begin{lemma}\label{d1etDeltaxiAsIntegral}
  For a smooth $1$-form $\xi$ and $t\in\ocinterval{0,1}$, write $\zeta_t=\ed_1(e^{t\Delta_1}\xi)$, i.e., $\zeta_t=\ed_1\xi_t$ where $\xi_t=e^{t\Delta_1}\xi$.
  Then
  \begin{equation}
    \norm{\zeta_t(p)}_p \leq C\int_\manifold \tilde{K}_t(p,q) \norm{\xi(q)}_q \,\dd V(q),
  \end{equation}
  where
  \begin{equation}\label{tildeKtpq}
    \tilde{K}_t(p,q) = t^{-(\Dim+2)/2} \left(d_\manifold(p,q)  +t\right) \exp\left( -d_\manifold(p,q)^2/(4t) \right)+1
    .
  \end{equation}
\end{lemma}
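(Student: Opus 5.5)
We prove the bound by writing $\zeta_t$ as an integral of the $1$-form heat kernel against $\xi$ and estimating the exterior derivative of that kernel pointwise. The heat propagator $e^{t\Delta_1}$ has a smooth kernel $H_t(p,q)\in \operatorname{Hom}(\wedge^1 T^*_q\manifold,\wedge^1 T^*_p\manifold)$, so that
\begin{equation*}
  \xi_t(p)=\int_\manifold H_t(p,q)\,\xi(q)\,\dd V(q).
\end{equation*}
Applying $\ed_1$ in the variable $p$ and differentiating under the integral sign (legitimate for $t>0$, where $H_t$ is smooth) gives
\begin{equation*}
  \zeta_t(p)=\int_\manifold (\ed_{1,p}H_t)(p,q)\,\xi(q)\,\dd V(q),
\end{equation*}
hence
\begin{equation*}
  \norm{\zeta_t(p)}_p\le \int_\manifold \abs{\nabla_p H_t(p,q)}\,\norm{\xi(q)}_q\,\dd V(q).
\end{equation*}
It therefore suffices to prove the pointwise gradient estimate
\begin{equation*}
  \abs{\nabla_p H_t(p,q)}\lesssim \tilde K_t(p,q)\quad\text{for } 0<t\le 1.
\end{equation*}
This is the same scheme as in the proof of \Cref{CurrentEvolutionAsForm}, where a Gaussian bound on $H_t$ gave $K_t$; here the kernel $K_t$ is simply replaced by $\tilde K_t$.

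The gradient estimate comes from the Minakshisundaram--Pleijel parametrix. We work in the charts $(x_r,U_r)$ of \Cref{ss:FiniteAtlas}, taking $\length_0$-small neighbourhoods of the diagonal. There the parametrix gives
\begin{equation*}
  H_t(p,q)=(4\pi t)^{-\Dim/2}e^{-d(p,q)^2/(4t)}\Bigl(\sum_{j=0}^N t^j\Phi_j(p,q)\Bigr)+R_N(t,p,q),
\end{equation*}
where the $\Phi_j$ are smooth sections, cut off near the diagonal, and $R_N$ is $C^1$ with $\abs{\nabla_p R_N}\lesssim t^{N-\Dim/2-1}\lesssim 1$ once $N$ is large.

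Differentiating the main term in $p$ produces two kinds of terms:
\begin{itemize}
  \item the derivative falling on the Gaussian gives $\tfrac{d}{2t}\abs{\nabla_p d}\,t^{-\Dim/2}e^{-d^2/(4t)}$, which is of size $t^{-(\Dim+2)/2}\,d\,e^{-d^2/(4t)}$;
  \item the derivative falling on the smooth coefficients $\Phi_j$ gives $t^{-\Dim/2}e^{-d^2/(4t)}$, which equals $t^{-(\Dim+2)/2}\,t\,e^{-d^2/(4t)}$.
\end{itemize}
Together these are exactly the first term of $\tilde K_t$. Away from the diagonal, where $d(p,q)\ge c>0$, standard off-diagonal estimates bound the kernel and its derivatives by $C\le C\tilde K_t$; this is where the additive constant $1$ in $\tilde K_t$ comes from. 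Alternatively, for the off-diagonal region one can use the Li--Yau / Cheng--Li--Yau type Gaussian gradient bound for form heat kernels on compact manifolds (as in Grigor'yan or Berline--Getzler--Vergne). That bound directly gives
\begin{equation*}
  \abs{\nabla_p H_t}\lesssim t^{-(\Dim+1)/2}e^{-d^2/(5t)}.
\end{equation*}
This shape has a slightly worse exponent constant, so I would rely on the parametrix in order to keep the $4t$ written in \eqref{tildeKtpq}.

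The main obstacle is keeping the precise Gaussian factor $\exp(-d^2/(4t))$ in the final bound, rather than $\exp(-d^2/(ct))$ with some $c>4$. The parametrix handles this near the diagonal. Away from the diagonal, for $t\le 1$ every such term is $O(1)$ because $e^{-c^2/(Ct)}t^{-m}$ is bounded, so any loss there is absorbed into the $+1$. Once the pointwise kernel gradient bound is in hand, the stated integral inequality follows immediately. As in \Cref{CurrentEvolutionAsForm}, the details are routine and can be placed in \Cref{appendix_A}.
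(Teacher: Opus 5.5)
Your proposal is correct and follows essentially the same route as the paper. Both arguments differentiate the kernel representation of $e^{t\Delta_1}$ under the integral sign. They then split the $1$-form heat kernel into the cut-off expansion $\mathrm{p}_t^{1,N}$ plus a remainder bounded in $C^1$ (the paper cites \cite{BerGetVer1992}*{Theorem~2.30}, with $N\geq(\Dim+1)/2$). Finally, the $p$-derivative of Gaussian times smooth coefficients is bounded by $t^{-(\Dim+2)/2}(d_\manifold(p,q)+t)e^{-d_\manifold(p,q)^2/(4t)}$, which is exactly \Cref{DerivOfGaussianTimesSmooth}.

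The differences are cosmetic. The paper reduces to scalar kernels in coordinates, and afterwards remarks that your coordinate-free version works equally well. Your separate off-diagonal discussion is unnecessary: the expansion terms are supported near the diagonal, and the global $C^1$ remainder bound already supplies the additive constant $1$.
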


We defer the proof to \Cref{appendix_A}.

\begin{lemma}\label{etDeltaSmallgammaL1}
  Let $\length_0$ be the constant from \Cref{SmallCurvesAreBoundaries}.
  Then, uniformly over all closed piecewise smooth curves $\gamma$ in $\manifold$ of length $\length(\gamma)\leq\length_0$ and over all $0<t\leq 1$,
  \begin{equation*}
    \norm{ e^{t\Delta_{1,c}} [[\gamma]] }_{L^1} \lesssim t^{-1/2} \length(\gamma)^2
    .
  \end{equation*}
\end{lemma}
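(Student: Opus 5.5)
The plan is to argue by duality, writing $[[\gamma]]=\boundary_2 S_\gamma$ via \Cref{SmallCurvesAreBoundaries} and moving the exterior derivative onto the propagated test form. By \Cref{CurrentEvolutionAsForm}, $e^{t\Delta_{1,c}}[[\gamma]]=T_{\omega_t}$ for a smooth form $\omega_t$. By \Cref{NormDualityCurrentsForms}\ref{item:FormLebesgueDuality} with $r=1$, $r'=\infty$,
\begin{equation*}
  \norm{e^{t\Delta_{1,c}}[[\gamma]]}_{L^1} = \sup_{\xi\in C(\Lambda^1)\colon \norm{\xi}_{L^\infty}\leq 1} \abs{[[\gamma]](e^{t\Delta_1}\xi)} .
\end{equation*}
By density, or by first using the smoothing of $e^{t/2\,\Delta_1}$, we may restrict to smooth $\xi$. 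Since $e^{t\Delta_1}\xi$ is smooth, \Cref{SmallCurvesAreBoundaries} gives
\begin{equation*}
  \abs{[[\gamma]](e^{t\Delta_1}\xi)} = \abs{S_\gamma(\ed_1 e^{t\Delta_1}\xi)} \leq C\length(\gamma)^2 \norm{\zeta_t}_{L^\infty},
\end{equation*}
where $\zeta_t=\ed_1 e^{t\Delta_1}\xi$ is as in \Cref{d1etDeltaxiAsIntegral}.

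Next apply \Cref{d1etDeltaxiAsIntegral} pointwise. This gives
\begin{equation*}
  \norm{\zeta_t(p)}_p \leq C\norm{\xi}_{L^\infty}\int_\manifold \tilde K_t(p,q)\,\dd V(q) \leq C \sup_{p}\norm{\tilde K_t(p,\cdot)}_{L^1},
\end{equation*}
and \eqref{tildeKpdotL1Bound} bounds the last quantity by $C t^{-1/2}$ uniformly in $p$ and $0<t\leq 1$. Taking the supremum over $\xi$ yields $\norm{e^{t\Delta_{1,c}}[[\gamma]]}_{L^1}\lesssim t^{-1/2}\length(\gamma)^2$.

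The main technical point is justifying the duality step. The $L^1$ norm of $T_{\omega_t}$ must be tested only against continuous $1$-forms, and the supremum over continuous $\xi$ must be reducible to smooth $\xi$, so that \Cref{SmallCurvesAreBoundaries} and \Cref{d1etDeltaxiAsIntegral} apply. The cleanest route is to note that for continuous $\xi$, the form $e^{t\Delta_1}\xi$ is already smooth by heat-kernel smoothing. One can then apply $S_\gamma$ directly, and prove \Cref{d1etDeltaxiAsIntegral} for continuous $\xi$, since its kernel bound only uses $\norm{\xi(q)}_q$. Alternatively, approximate $\xi$ uniformly by smooth forms and pass to the limit using the $L^\infty$ boundedness of $e^{t\Delta_1}$.
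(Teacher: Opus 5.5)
Your proposal is correct and follows the paper's proof essentially line for line: duality via \Cref{NormDualityCurrentsForms} with $r=1$, the spanning current $S_\gamma$ from \Cref{SmallCurvesAreBoundaries}, the pointwise kernel bound of \Cref{d1etDeltaxiAsIntegral}, and then \eqref{tildeKpdotL1Bound}. Your extra remark on passing from continuous to smooth test forms covers a point the paper leaves implicit, and either of your two justifications suffices.
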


\begin{proof}
By \Cref{NormDualityCurrentsForms}, we have
\begin{align*}
   \norm{ e^{t\Delta_{1,c}} [[\gamma]] }_{L^1} &= \sup_{\xi\in C(\Lambda^1), \|\xi\|_{L^\infty}\leq 1} e^{t\Delta_{1,c}} [[\gamma]](\xi)\\
   &= \sup_{\xi\in C(\Lambda^1), \|\xi\|_{L^\infty}\leq 1} [[\gamma]](e^{t\Delta_{1}} \xi)
   \end{align*}
By the smallness assumption on the curve, we may apply \Cref{SmallCurvesAreBoundaries} to obtain
\begin{align*}
 \abs{[[\gamma]](e^{t\Delta_{1}} \xi)} = \abs{S_\gamma(\ed_1 e^{t\Delta_{1}} \xi)} \leq C\length(\gamma)^2\norm{\ed_1 e^{t\Delta_{1}} \xi}_{L^\infty},
 \end{align*}
and then \Cref{d1etDeltaxiAsIntegral}, the assumption $\norm{\xi}_{L^\infty}\leq 1$, and the bound \eqref{tildeKpdotL1Bound} give
\begin{align*}
   \norm{ e^{t\Delta_{1,c}} [[\gamma]] }_{L^1} &\lesssim \length(\gamma)^2 \sup_{p \in \manifold} \int_\manifold \tilde{K}_t(p,q)  \,\dd V(q)\\
   &\lesssim t^{-1/2}\length(\gamma)^2
   .
   \qedhere
\end{align*}
\end{proof}

\begin{corollary}\label{etDeltagammaMediumt}
  Under the assumptions of \Cref{etDeltaSmallgammaL1},
  \begin{equation*}
    \norm{ e^{t\Delta_{1,c}} [[\gamma]] }_{L^{\Dim/(\Dim-\alpha),1}} \lesssim t^{-\alpha/2}t^{-(\Dim-\alpha)/(2\Dim)} \length(\gamma)^{2-\alpha/\Dim}
    .
  \end{equation*}
\end{corollary}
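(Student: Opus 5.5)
We apply the \InterpolationLemma\ to the current $e^{t\Delta_{1,c}}[[\gamma]]$, which has the form $T_{\omega_t}$ by \Cref{CurrentEvolutionAsForm}, so its Lebesgue and Lorentz norms are defined via \eqref{TomegaNormIs_omegaNorm}. This gives
\begin{equation*}
  \norm{e^{t\Delta_{1,c}}[[\gamma]]}_{L^{\Dim/(\Dim-\alpha),1}} \leq C \norm{e^{t\Delta_{1,c}}[[\gamma]]}_{L^1}^{1-\alpha/\Dim} \norm{e^{t\Delta_{1,c}}[[\gamma]]}_{L^\infty}^{\alpha/\Dim}
  .
\end{equation*}
For the $L^1$ factor we use the new small-curve bound of \Cref{etDeltaSmallgammaL1}, namely $\norm{e^{t\Delta_{1,c}}[[\gamma]]}_{L^1}\lesssim t^{-1/2}\length(\gamma)^2$. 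For the $L^\infty$ factor we use the crude total-mass bound of \Cref{etDeltaTLinftyL1}, namely $\norm{e^{t\Delta_{1,c}}[[\gamma]]}_{L^\infty}\lesssim t^{-\Dim/2}\totalmass([[\gamma]])$. Combined with $\totalmass([[\gamma]])\leq\length(\gamma)$ from \Cref{ss:CurrentsFromCurves}, the latter gives $\lesssim t^{-\Dim/2}\length(\gamma)$.

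Substituting these two bounds, the length powers combine as
\begin{equation*}
  \length(\gamma)^{2(1-\alpha/\Dim)}\,\length(\gamma)^{\alpha/\Dim} = \length(\gamma)^{2-\alpha/\Dim}
  .
\end{equation*}
The time powers combine as
\begin{equation*}
  t^{-(1-\alpha/\Dim)/2}\, t^{-\alpha/2} = t^{-\alpha/2}\, t^{-(\Dim-\alpha)/(2\Dim)}
  ,
\end{equation*}
which is exactly the claimed exponent. The constants depend only on $\alpha$ and $\manifold$, and the estimate holds uniformly over $0<t\leq1$ and over closed curves with $\length(\gamma)\leq\length_0$. These are precisely the hypotheses of \Cref{etDeltaSmallgammaL1}.

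No step is a real obstacle, since all the analytic work was done in \Cref{etDeltaSmallgammaL1}. The only care needed is to use the total-mass $L^\infty$ bound here rather than the Morrey bound \Cref{etDeltaT_LinftyMorrey}. The total-mass bound is what produces the power $2-\alpha/\Dim>1$ of the length, and a power of the length exceeding one is what is ultimately needed to offset the $\length(\gamma)^{-1}$ normalization in \eqref{limit}.
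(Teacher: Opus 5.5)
Your proposal is correct and follows the paper's proof exactly: the Interpolation Lemma, with the $L^1$ factor bounded by \Cref{etDeltaSmallgammaL1} and the $L^\infty$ factor by \Cref{etDeltaTLinftyL1} together with $\totalmass([[\gamma]])\leq\length(\gamma)$. Your exponent bookkeeping matches the stated bound.
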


\begin{proof}
As in the proof of \Cref{etDeltaT_LorentzInterpolated}, the \InterpolationLemma\ gives
  \begin{align*}
    \norm{e^{t\Delta_{1,c}} [[\gamma]] }_{L^{\Dim/(\Dim-\alpha),1}} \leq C \norm{e^{t\Delta_{1,c}} [[\gamma]] }_{L^1}^{1-\alpha/\Dim} \norm{e^{t\Delta_{1,c}} [[\gamma]] }_{L^\infty}^{\alpha/\Dim}
    .
  \end{align*}
Bound the $L^1$ norm using \Cref{etDeltaSmallgammaL1} and bound the $L^\infty$ norm using \Cref{etDeltaTLinftyL1} with $T=[[\gamma]]$, recalling that $\totalmass([[\gamma]])\leq\length(\gamma)$.
\end{proof}

\begin{corollary}\label{LorentzBoundetDeltagamma}
  Let $\length_0$ be the constant from \Cref{SmallCurvesAreBoundaries}.
  Then there exists a constant $C<\infty$ such that, for all curves $\gamma$ of length $\length(\gamma)\leq\length_0$ and all $0<t\leq 1$,
  \begin{equation*}
    \norm{e^{t\Delta_{1,c}}[[\gamma]]}_{L^{\Dim/(\Dim-\alpha),1}} \leq \frac{C\length(\gamma)}{t^{\alpha/2}} \min\left( \left(\frac{t \Morrey{\gamma}^2}{\length(\gamma)^2}\right)^{\alpha/(2\Dim)} , \left(\frac{t}{\length(\gamma)^2}\right)^{-(\Dim-\alpha)/(2\Dim)} \right)
    .
  \end{equation*}
\end{corollary}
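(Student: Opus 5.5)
The bound is a minimum of two estimates, and each one is already available. So the plan is to identify the two terms, rewrite each in the normalized form $C\length(\gamma)t^{-\alpha/2}(\cdots)$, and take the smaller.

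\emph{First term.} Apply \Cref{etDeltaT_LorentzInterpolated} with $T=[[\gamma]]$. We have $\totalmass([[\gamma]])\leq\length(\gamma)$ and $\Morrey{[[\gamma]]}\leq\Morrey{\gamma}$, so
\begin{equation*}
\norm{e^{t\Delta_{1,c}}[[\gamma]]}_{L^{\Dim/(\Dim-\alpha),1}}\lesssim t^{-\alpha/2+\alpha/(2\Dim)}\length(\gamma)^{1-\alpha/\Dim}\Morrey{\gamma}^{\alpha/\Dim}.
\end{equation*}
Factoring out $\length(\gamma)t^{-\alpha/2}$ leaves $t^{\alpha/(2\Dim)}\length(\gamma)^{-\alpha/\Dim}\Morrey{\gamma}^{\alpha/\Dim}$. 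This equals $\bigl(t\Morrey{\gamma}^2/\length(\gamma)^2\bigr)^{\alpha/(2\Dim)}$, which is the first entry of the minimum. This term does not need $\length(\gamma)\leq\length_0$.

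\emph{Second term.} For $\length(\gamma)\leq\length_0$, apply \Cref{etDeltagammaMediumt}:
\begin{equation*}
\norm{e^{t\Delta_{1,c}}[[\gamma]]}_{L^{\Dim/(\Dim-\alpha),1}}\lesssim t^{-\alpha/2}t^{-(\Dim-\alpha)/(2\Dim)}\length(\gamma)^{2-\alpha/\Dim}.
\end{equation*}
Factoring out $\length(\gamma)t^{-\alpha/2}$ leaves $t^{-(\Dim-\alpha)/(2\Dim)}\length(\gamma)^{(\Dim-\alpha)/\Dim}$. This equals $\bigl(t/\length(\gamma)^2\bigr)^{-(\Dim-\alpha)/(2\Dim)}$, which is the second entry.

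Since both estimates hold at once for every $0<t\leq1$, the norm is bounded by $C$ times the smaller of the two. Taking $C$ to be the larger of the two implicit constants gives the stated inequality.

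The only checks are the exponent bookkeeping and the fact that the curves in question are closed, so that \Cref{etDeltaSmallgammaL1} applies. I read the statement's ``curves'' as meaning closed piecewise smooth curves, as in \Cref{etDeltaSmallgammaL1}. I expect no genuine obstacle.
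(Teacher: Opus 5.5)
Your proposal is correct and is exactly the paper's argument: the paper states that this corollary follows immediately from \Cref{etDeltaT_LorentzInterpolated,etDeltagammaMediumt}, which is what you do. Your exponent bookkeeping, your use of $\totalmass([[\gamma]])\leq\length(\gamma)$ and $\Morrey{[[\gamma]]}\leq\Morrey{\gamma}$, and your reading of ``curves'' as closed curves (as required by \Cref{etDeltaSmallgammaL1}) all match.
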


\Cref{LorentzBoundetDeltagamma} follows immediately from \Cref{etDeltaT_LorentzInterpolated,etDeltagammaMediumt}.
Integrating now leads to the desired bound of the form $O(\length(\gamma))$.

\begin{corollary}\label{LorentzBoundetDeltagammaIntegrated}
  Let $\length_0$ be the constant from \Cref{SmallCurvesAreBoundaries}.
  Then there exists a constant $C<\infty$ such that, for all curves $\gamma$ of length $\length(\gamma)\leq\length_0$,
  \begin{equation*}
    \int_0^1 t^{\alpha/2 - 1} \norm{e^{t\Delta_{1,c}}[[\gamma]]}_{L^{\Dim/(\Dim-\alpha),1}} \dd t \leq C \length(\gamma) \Morrey{\gamma}^{\alpha(\Dim-\alpha)/\Dim^2}
    .
  \end{equation*}
\end{corollary}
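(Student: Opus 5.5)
We integrate the pointwise-in-$t$ bound of \Cref{LorentzBoundetDeltagamma} directly against $t^{\alpha/2-1}$. Write $\ell=\length(\gamma)$ and $M_\gamma=\Morrey{\gamma}$, and note first that $M_\gamma\gtrsim 1$ cannot be assumed. However, $\ell\leq M_\gamma\cdot\operatorname{diam}$-type bounds are not needed. Instead we use only that $M_\gamma>0$: since a ball of radius $r\geq \ell/2$ about a point of $\gamma$ contains the whole curve, we have $M_\gamma\geq \ell/(\ell)=1$ up to the factor from taking $r=\ell$. Hence $M_\gamma\gtrsim 1$ after all, which is harmless in any case.

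Multiplying the bound of \Cref{LorentzBoundetDeltagamma} by $t^{\alpha/2-1}$ cancels the factor $t^{-\alpha/2}$. The integrand is then bounded by
\begin{equation*}
  C\ell\, t^{-1}\min\left( \left(\frac{t M_\gamma^2}{\ell^2}\right)^{\alpha/(2\Dim)}, \left(\frac{t}{\ell^2}\right)^{-(\Dim-\alpha)/(2\Dim)}\right).
\end{equation*}
The crossover happens at the time $t_*$ where the two terms agree, namely
\begin{equation*}
  (t_*/\ell^2)^{1/2}=M_\gamma^{-\alpha/\Dim},
  \qquad\text{i.e.}\qquad
  t_*=\ell^2 M_\gamma^{-2\alpha/\Dim}.
\end{equation*}

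We split $\int_0^1$ at $t_*$, or use the first term throughout if $t_*>1$; bounding an integral of a minimum by integrals of either term only helps.

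On $(0,t_*)$ we use the first term. Its integral is
\begin{equation*}
  \ell\,(M_\gamma^2/\ell^2)^{\alpha/(2\Dim)}\int_0^{t_*} t^{\alpha/(2\Dim)-1}\,\dd t \lesssim \ell\,\bigl(t_*M_\gamma^2/\ell^2\bigr)^{\alpha/(2\Dim)} = \ell\, M_\gamma^{\alpha(\Dim-\alpha)/\Dim^2}.
\end{equation*}
On $(t_*,\infty)$ we use the second term. Its integral is
\begin{equation*}
  \ell\,\ell^{(\Dim-\alpha)/\Dim}\int_{t_*}^\infty t^{-(\Dim-\alpha)/(2\Dim)-1}\,\dd t \lesssim \ell\,\bigl(\ell^2/t_*\bigr)^{(\Dim-\alpha)/(2\Dim)} = \ell\,M_\gamma^{\alpha(\Dim-\alpha)/\Dim^2}.
\end{equation*}
Both integrals converge because $0<\alpha<\Dim$. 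Summing the two pieces gives the claim, with a constant depending only on $\alpha$ and the constant from \Cref{LorentzBoundetDeltagamma}.

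There is no real obstacle. The only care needed is the exponent bookkeeping at the crossover $t_*$, which the computation above handles, and the observation that truncating at $1$ only decreases the integral.
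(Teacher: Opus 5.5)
Your proposal is correct and is essentially the paper's proof. The paper substitutes $s=t/\length(\gamma)^2$ and then $u=s\Morrey{\gamma}^{2\alpha/\Dim}$, extends the integral to $(0,\infty)$, and splits at $u=1$; this is exactly your crossover time $t_*=\length(\gamma)^2\Morrey{\gamma}^{-2\alpha/\Dim}$, leading to the same two elementary integrals. Your opening paragraph about whether $\Morrey{\gamma}\gtrsim 1$ is muddled and can simply be deleted, since the computation never uses a lower bound on $\Morrey{\gamma}$.
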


\begin{proof}
  \Cref{LorentzBoundetDeltagamma} indicates a change in scaling behavior at time scales of order $\length(\gamma)^2$.
  With this in mind, make the substitutions $s=t/\length(\gamma)^2$ and later $u=s \Morrey{\gamma}^{2\alpha/\Dim}$, replacing the upper limit of integration by $\infty$, to find
  \begin{align*}
    &\int_0^1 t^{\alpha/2 - 1} \norm{e^{t\Delta_{\Dim-1}}[[\gamma]]}_{L^{\Dim/(\Dim-\alpha),1}} \dd t
    \\&\quad
    \leq C \length(\gamma) \int_0^1 \min\left( \left(\frac{t \Morrey{\gamma}^2}{\length(\gamma)^2}\right)^{\alpha/(2\Dim)} , \left(\frac{t}{\length(\gamma)^2}\right)^{-(\Dim-\alpha)/(2\Dim)} \right) \frac{\dd t}{t}
    \\&\quad
    \leq C \length(\gamma) \int_0^\infty \min\left( \left(s\Morrey{\gamma}^2\right)^{\alpha/(2\Dim)} , s^{-(\Dim-\alpha)/(2\Dim)} \right) \frac{\dd s}{s}
    \\&\quad
    = C \length(\gamma) \int_0^\infty \min\left( \left(u\Morrey{\gamma}^{2(\Dim-\alpha)/\Dim}\right)^{\alpha/(2\Dim)} , u^{-(\Dim-\alpha)/(2\Dim)}\Morrey{\gamma}^{\alpha(\Dim-\alpha)/\Dim^2} \right) \frac{\dd u}{u}
    \\&\quad
    = C \length(\gamma) \Morrey{\gamma}^{\alpha(\Dim-\alpha)/\Dim^2} \left( \int_0^1 u^{\alpha/(2\Dim)-1}\,\dd u + \int_1^\infty u^{-(1+(\Dim-\alpha)/(2\Dim))}\,\dd u \right)
    \\&\quad
    = C \length(\gamma) \Morrey{\gamma}^{\alpha(\Dim-\alpha)/\Dim^2} \left( \frac{2\Dim}{\alpha} + \frac{2\Dim}{\Dim-\alpha} \right)
    .
    \qedhere
  \end{align*}
\end{proof}

We will apply the uniform bound from \Cref{LorentzBoundetDeltagammaIntegrated} to the 1-currents $[[\gamma_{i,n}]]$ from the atomic decomposition quoted in \Cref{AtomicDecompositionCited}.
Before we proceed to the proof of \Cref{int01Bound0Boundary}, we state a technical result showing that norm upper bounds are preserved under weak-star convergence of currents.

\begin{lemma}\label{FatouForCurrentNorms}
  Suppose that $j$-currents $S_m$, $m\in\mathbb{N}$, converge weakly-star to $S$, i.e., $S_m(\xi)\to S(\xi)$ as $m\to\infty$ for all $\xi\in C(\Lambda^j)$.
  Then
  \begin{equation}\label{FatouCurrentNorms}
    \norm{S}_{L^{\Dim/(\Dim-\alpha),1}} \leq \liminf_{m\to\infty} \norm{S_m}_{L^{\Dim/(\Dim-\alpha),1}}
    .
  \end{equation}
  Moreover for all $t\geq 0$, $e^{t\Delta_{j,c}} S_m \to e^{t\Delta_{j,c}} S$ as $m\to\infty$, and in particular,
  \begin{equation}\label{WeakPropagated}
    \norm{e^{t\Delta_{j,c}}S}_{L^{\Dim/(\Dim-\alpha),1}} \leq \liminf_{m\to\infty} \norm{e^{t\Delta_{j,c}}S_m}_{L^{\Dim/(\Dim-\alpha),1}}
    .
  \end{equation}
\end{lemma}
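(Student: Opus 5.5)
The plan is to prove \eqref{FatouCurrentNorms} by the duality of \Cref{NormDualityCurrentsForms}, and then to obtain \eqref{WeakPropagated} by showing that the heat propagator preserves weak-star convergence.

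\textbf{Step 1: the lower semicontinuity \eqref{FatouCurrentNorms}.}

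Write $r=\Dim/(\Dim-\alpha)>1$ and let $r'$ be its conjugate exponent. We may assume the right-hand side of \eqref{FatouCurrentNorms} is finite; otherwise there is nothing to prove. Pass to a subsequence along which $\norm{S_m}_{L^{r,1}}$ converges to the $\liminf$; call this limit $A$. For each $m$ (along the subsequence), \Cref{NormDualityCurrentsForms} gives $S_m=T_{\omega_m}$ with $\omega_m\in L^{r,1}$. The same result \ref{item:FormLebesgueDuality} then gives, for every continuous $\xi$ with $\norm{\xi}_{L^{r',\infty}}\leq1$,
\begin{equation*}
  \abs{S_m(\xi)}\lesssim\norm{S_m}_{L^{r,1}}.
\end{equation*}
Letting $m\to\infty$ and using weak-star convergence gives $\abs{S(\xi)}\lesssim A$ for all such $\xi$.

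By \ref{item:CurrentDuality}, it follows that $S=T_\omega$ with $\omega\in L^{r,1}$. Applying the duality of \ref{item:FormLebesgueDuality} once more yields
\begin{equation*}
  \norm{S}_{L^{r,1}}\lesssim A.
\end{equation*}

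\textbf{Main obstacle.} The duality in \Cref{NormDualityCurrentsForms} holds only up to the constants hidden in $\asymp$, so this argument gives \eqref{FatouCurrentNorms} only with a constant. To get constant $1$, I would instead use a Lorentz norm that is exactly a supremum of linear functionals. Concretely, I would define the $L^{r,1}$ norm through its exact dual characterization, $\sup$ over $\norm{\tilde\omega}_{(L^{r,1})'}\leq1$; these dual norms are equivalent, as set up in \Cref{appendix_B}. A supremum over a fixed family of functionals of quantities that converge pointwise is lower semicontinuous, which gives the exact inequality.

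There is one further technical point: the supremum in the duality can be restricted to continuous test forms. For $\omega\in L^{r,1}$ this follows by density of continuous forms in the dual space in the relevant weak sense, via Lusin-type approximation. If needed, it suffices to note that all later uses of \eqref{FatouCurrentNorms} tolerate a constant factor, since they are upper bounds with unspecified $C$.

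\textbf{Step 2: the propagated convergence \eqref{WeakPropagated}.}

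First I show weak-star convergence of the propagated currents. For $\eta\in C(\Lambda^j)$ and $t>0$, the form $e^{t\Delta_j}\eta$ is smooth, and in particular continuous. Hence by \eqref{PropagatorOnCurrentsInside},
\begin{equation*}
  e^{t\Delta_{j,c}}S_m(\eta)=S_m(e^{t\Delta_j}\eta)\to S(e^{t\Delta_j}\eta)=e^{t\Delta_{j,c}}S(\eta).
\end{equation*}
For $t=0$ the propagator is the identity, so the claim is trivial.

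Then \eqref{WeakPropagated} follows by applying \eqref{FatouCurrentNorms} to the sequence $e^{t\Delta_{j,c}}S_m$.
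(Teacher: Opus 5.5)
Your proposal is correct and follows the same route as the paper. Both arguments bound $\abs{S_m(\xi)}$ by duality for continuous $\xi$ in the unit ball of the dual Lorentz space, pass to the weak-star limit, and take the supremum; both then observe that $e^{t\Delta_j}\xi$ is again continuous, so the propagated currents still converge weakly-star and \eqref{FatouCurrentNorms} applies to them.

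You also do two things the paper's short proof passes over silently. You invoke part~\ref{item:CurrentDuality} of \Cref{NormDualityCurrentsForms} to know that $S=T_\omega$ at all, and you note that the $\asymp$ there only yields \eqref{FatouCurrentNorms} up to a harmless constant. In addition, the paper tests against the unit ball of $L^{\Dim/\alpha,1}$, whereas the duality requires $L^{\Dim/\alpha,\infty}$, which is what you correctly use.
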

\begin{proof}
  Let $\xi\in C(\Lambda^j)$ with $\norm{\xi}_{L^{\Dim/\alpha,1}}\leq 1$.
  Then using \Cref{NormDualityCurrentsForms},
  \begin{equation*}
    \abs{S(\xi)} = \lim_{m\to\infty}\abs{S_m(\xi)} \leq \liminf_{m\to\infty} \norm{S_m}_{L^{\Dim/(\Dim-\alpha),1}}
  \end{equation*}
  and taking the supremum over such $\xi$ shows \eqref{FatouCurrentNorms}.
  For \eqref{WeakPropagated}, let $\xi\in C(\Lambda^j)$ be arbitrary.
  Then also $e^{t\Delta_j}\xi \in C(\Lambda^j)$, so
  \begin{equation*}
    e^{t\Delta_{j,c}} S_m(\xi) = S_m(e^{t\Delta_j}\xi) \to S(e^{t\Delta_j}\xi) = e^{t\Delta_{j,c}} S(\xi)
  \end{equation*}
  verifies the weak-star convergence, and the last statement follows by \eqref{FatouCurrentNorms}.
\end{proof}

\begin{proof}[Proof of \Cref{int01Bound0Boundary}]
Let $T$ be a 1-current with $\boundary_1 T=0$, and let $\gamma_{i,m}$, $\lambda_{i,m}$, and $M$ be the curves, scalars, and universal constant from \Cref{AtomicDecompositionCited}, so that the 1-currents
\begin{equation}
  S_m = \sum_{i=1}^m \lambda_{i,m} \frac{[[\gamma_{i,m}]]}{\length(\gamma_{i,m})}
\end{equation}
satisfy $S_m\to T$ weakly-star as $m\to\infty$, by \eqref{limit}.
By the triangle inequality,
\begin{multline}
  \int_0^1 t^{\alpha/2-1} \norm{e^{t\Delta_{1,c}} S_m}_{L^{\Dim/(\Dim-\alpha),1}} \dd t
  \\
  \leq \sum_{i=1}^m \abs{\lambda_{i,m}} \length(\gamma_{i,m})^{-1} \int_0^1  t^{\alpha/2-1} \norm{e^{t\Delta_{1,c}}[[\gamma_{i,m}]]}_{L^{\Dim/(\Dim-\alpha),1}} \dd t
  .
\end{multline}
To control the effect of the factor $\length(\gamma_{i,m})^{-1}$, split the sum.
For short curves, \Cref{LorentzBoundetDeltagammaIntegrated} gives a uniform bound that is linear in $\length(\gamma_{i,m})$, whereas for long curves, the sublinear bound from \Cref{etDeltaT_LorentzIntegrated} is sufficient:
\begin{align}
  &\int_0^1 t^{\alpha/2-1} \norm{e^{t\Delta_{1,c}} S_m}_{L^{\Dim/(\Dim-\alpha),1}} \dd t
  \notag\\&\quad
  \leq \sum_{\substack{i\in\set{1,\dotsc,m}\colon \\ \length(\gamma_{i,m})\leq \length_0}} \abs{\lambda_{i,m}} \length(\gamma_{i,m})^{-1} [C \length(\gamma_{i,m}) M^{\alpha(\Dim-\alpha)/\Dim^2}]
  \notag\\&\qquad
  + \sum_{\substack{i\in\set{1,\dotsc,m}\colon \\ \length(\gamma_{i,m}) > \length_0}} \abs{\lambda_{i,m}} \length(\gamma_{i,m})^{-1} [C\length(\gamma_{i,m})^{1-\alpha/\Dim} M^{\alpha/\Dim}]
  &\text{by \eqref{morrey_bound}}
  \notag\\&\quad
  \leq C \sum_{i=1}^m \abs{\lambda_{i,m}} \left[ M^{\alpha(\Dim-\alpha)/\Dim^2} + (M/\length_0)^{\alpha/\Dim} \right]
  \notag\\&\quad
  \leq 2C \left[ M^{\alpha(\Dim-\alpha)/\Dim^2} + (M/\length_0)^{\alpha/\Dim} \right] \totalmass(T)
  &\text{by \eqref{mass}}
  .
\end{align}
Since this bound is uniform in $m$, the result follows from \Cref{FatouForCurrentNorms}.
\end{proof}

\begin{proof}[Proof of \Cref{int01Bound_dF=0} for orientable manifolds]
  Let $F\in C^1(\Lambda^{\Dim-1})$ satisfy $\ed_{\Dim-1} F=0$.
  Since the manifold is orientable, we may form the corresponding $1$-current $T_F$, which satisfies $\boundary_1 T_F=0$ by \Cref{BoundaryTomega}.
  Thus \Cref{int01Bound0Boundary} applies, and along with \Cref{TomegaPropagatorsCompatible,NormDualityCurrentsForms} gives
  \begin{align*}
  &\int_0^1 t^{\alpha/2-1} \norm{e^{t\Delta_{\Dim-1}} F}_{L^{\Dim/(\Dim-\alpha),1}(\Lambda^{\Dim-1})} \dd t
  \\&\quad
  = \int_0^1 t^{\alpha/2-1} \norm{e^{t\Delta_{1,c}} T_F}_{L^{\Dim/(\Dim-\alpha),1}(\Lambda^{\Dim-1})} \dd t
  \\&\quad
    \leq C \totalmass(T_F) = C\norm{F}_{L^1(\Lambda^{\Dim-1})}
  .
  \end{align*}
  Since this inequality holds for all $F\in C^1(\Lambda^{\Dim-1})$ satisfying $\ed_{\Dim-1}F = 0$, we may argue as in \cite{ISS} to extend the inequality to $F\in L^1(\Lambda^{\Dim-1})$ satisfying $\ed_{\Dim-1}F = 0$.
\end{proof}

We note that \Cref{int01Bound_dF=0} remains true even if the manifold is non-orientable, but we will defer the details until \Cref{ss:NonorientableProof}.

\section{From closed \texorpdfstring{$(\Dim-1)$}{(\Dim-1)}-forms to general \texorpdfstring{$(\Dim-1)$}{(\Dim-1)}-forms}\label{s:Closedd-1ToGenerald-1}

For a compact manifold, we may drop the assumption from \Cref{int01Bound_dF=0} that the $(\Dim-1)$-form is closed, instead inserting $\ed_{\Dim-1}F$ into the upper bound.
\begin{theorem}\label{int01Bound_General}
  Let $\manifold$ be a compact manifold of dimension $\Dim\geq 2$ and let $\alpha \in (0,\Dim)$.
  There exists a constant $C=C(\alpha,\manifold)<\infty$ such that
  \begin{align}\label{inhomogeneous_estimate_n-1_forms_notclosed}
    \int_0^1 t^{\alpha/2-1} \norm{e^{t\Delta_{\Dim-1}} F}_{L^{\Dim/(\Dim-\alpha),1}(\Lambda^{\Dim-1})} \,\dd t \leq C (\|F\|_{L^1(\Lambda^{\Dim-1})}+\|\ed_{\Dim-1} F\|_{L^1(\Lambda^\Dim)})
  \end{align}
  for all $F \in L^1(\Lambda^{\Dim-1})$.
\end{theorem}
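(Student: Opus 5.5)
The plan is to decompose $F$ into a closed part plus a correction that is controlled by $\ed_{\Dim-1}F$, and to handle the correction by the crude $L^1$--$L^\infty$ bounds. For orientable $\manifold$ we work with the $1$-current $T=T_F$, so that $\boundary_1 T = -T_{\ed F}$ is a $0$-current (a signed measure, or $L^1$ function) $g$ of mass $\norm{\ed_{\Dim-1}F}_{L^1}$. Since $\boundary_1 T(1)=T(\ed 1)=0$, $g$ has zero total mass on each component. Our aim is to write $T = T_0 + R$ with $\boundary_1 T_0=0$, $\totalmass(T_0)\lesssim \totalmass(T)+\norm{g}$, and $R$ a $1$-current with $\boundary_1 R = \boundary_1 T$ and a good bound for its heat evolution. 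Then \Cref{int01Bound0Boundary} handles $T_0$, and the triangle inequality reduces everything to $R$.

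For $R$ we take a filling built from curves. Decompose the zero-mean measure $g=g^+-g^-$ and choose a transport plan $\pi$ between $g^+$ and $g^-$ (each of mass $\le\norm{g}$). Set $R=\int [[\sigma_{p,q}]]\,\dd\pi(p,q)$, where $\sigma_{p,q}$ is a minimizing geodesic from $q$ to $p$, chosen measurably. Then $\boundary_1 R = g$ (up to sign conventions), and $\totalmass(R)\le \operatorname{diam}(\manifold)\norm{g}$. The key gain is that each geodesic segment has Morrey norm bounded by an absolute constant, at least $2$, since a geodesic intersects a ball of radius $r$ in length at most about $2r$ for balls below the injectivity radius and trivially otherwise up to compactness constants. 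We apply \Cref{etDeltaT_LorentzIntegrated} to each segment $[[\sigma]]$, normalized by its length: $\int_0^1 t^{\alpha/2-1}\norm{e^{t\Delta_{1,c}}[[\sigma]]}\,\dd t\lesssim \length(\sigma)^{1-\alpha/\Dim}\lesssim 1$ (using $\length\le\operatorname{diam}\manifold$). Integrating against $\pi$ with Minkowski's inequality (justified by approximating $\pi$ by finite sums and using \Cref{FatouForCurrentNorms}) gives the bound $\lesssim \pi(\manifold\times\manifold)\le\norm{\ed_{\Dim-1}F}_{L^1}$. Finally $T_0=T-R$ has zero boundary and mass $\le \norm{F}_{L^1}+C\norm{\ed F}_{L^1}$, so \Cref{int01Bound0Boundary} finishes the estimate. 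By \Cref{TomegaPropagatorsCompatible} and \eqref{TomegaNormIs_omegaNorm}, this translates back to the form statement for smooth $F$, and density extends it to $L^1$ as in \Cref{int01Bound_dF=0}. The non-orientable case would be deferred to the same device as \Cref{ss:NonorientableProof}.

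The main obstacle is the uniform Morrey bound for the segments and the measurable selection of geodesics. If the geodesic Morrey bound is awkward, I would instead break each $\sigma_{p,q}$ into pieces inside the charts $U_r$, where minimizing geodesics are unique and comparable to Euclidean segments. Alternatively, one can avoid segments entirely: apply \Cref{etDeltaT_LorentzIntegrated} only needs $\totalmass$ and Morrey norm, and any rectifiable curve of length $\le\operatorname{diam}$ with bounded Morrey norm works, so a piecewise-geodesic path through the finite atlas also suffices. Note that no smallness or closedness is needed for $R$, because the sublinear bound $\length^{1-\alpha/\Dim}$ with bounded length is already $O(1)$ per unit of $\pi$-mass.
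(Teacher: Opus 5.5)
Your proposal is correct and follows essentially the paper's proof. The paper likewise splits $T_F=(T_F-R_F)+R_F$, where $R_F=\iint g_{p,q}K(p,q)\,\dd V(p)\,\dd V(q)$ superposes curves of uniformly bounded length and Morrey norm that transport $(\ed F)^-$ to $(\ed F)^+$ (\Cref{Fd-1HasR}); it then applies \Cref{int01Bound0Boundary} to the boundaryless part and \Cref{etDeltaT_LorentzIntegrated} to $R_F$. The differences are cosmetic: the paper uses the product transport plan and continuous atlas-based curves $g_{p,q}$ (\Cref{CurrentFromptoq}) to avoid measurable selection, and it bounds $\Morrey{R_F}\lesssim\norm{\ed F}_{L^1}$ globally instead of applying the corollary curve by curve and integrating as you do.
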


We remark that the that the proof of \Cref{int01Bound_General} uses compactness in a stronger way than elsewhere in the paper.
Indeed, the upper bound in \eqref{inhomogeneous_estimate_n-1_forms_notclosed} breaks scale invariance, and we would expect such a bound to fail in the non-compact setting in general.
For this reason, proofs later in the paper will be structured so as to avoid relying on \Cref{int01Bound_General} where possible, even though \Cref{int01Bound_General} includes \Cref{int01Bound_dF=0}.

\begin{lemma}\label{CurrentFromptoq}
  Let $\manifold$ be a compact connected manifold.
  There exists a continuous function $g\colon\manifold\times\manifold\to \currents_1$, $(p,q)\mapsto g_{p,q}$, and a constant $C$ such that, for all $p,q\in\manifold$,
  \begin{equation}
    \boundary_1 g_{p,q}(f) = f(q) - f(p)
  \end{equation}
  for all $f\in C(\manifold)$, and
  \begin{equation}
    \totalmass(g_{p,q}) \leq C
    ,
    \quad
    \Morrey{g_{p,q}}\leq C
    .
  \end{equation}
\end{lemma}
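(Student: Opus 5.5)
The plan is to avoid geodesics entirely, since minimizing geodesics need not be unique and the map $(p,q)\mapsto[[\text{geodesic}]]$ is not continuous across the cut locus. Instead I would build $g_{p,q}$ as a difference of ``radial flux'' currents coming from the Green's function of the scalar Laplacian. Since $\manifold$ is compact and connected, the harmonic $0$-forms are exactly the constants. Hence for each $p\in\manifold$ there is a unique Green's function $G_p$ with $\int_\manifold G_p\,\dd V=0$ solving
\begin{equation*}
  -\Delta_0 G_p = \delta_p - \frac{1}{V(\manifold)}
\end{equation*}
in the distributional sense. Connectedness is used exactly here: it makes the compatibility condition for this equation hold. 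I would then define the $1$-current $h_p(\xi) = \Angles{\xi, \ed_0 G_p}$, in the sense of \Cref{AlternativeCurrentsFromForms}, so orientability is not needed. Finally I set $g_{p,q} = h_p - h_q$, fixing the overall sign at the end.

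The first step is to compute the boundary. For $f\in C^1(\manifold)$ we have $\boundary_1 h_p(f) = \Angles{\ed_0 f, \ed_0 G_p} = \Angles{f, -\Delta_0 G_p} = f(p) - \frac{1}{V(\manifold)}\int_\manifold f\,\dd V$. The averaged terms cancel in the difference, so $\boundary_1 (h_p-h_q)(f) = f(p)-f(q)$, and $g_{p,q}=h_q-h_p$ then has the required boundary. This identity extends from $C^1$ to the formula stated for all $f\in C(\manifold)$ once we know the right-hand side is continuous in $f$.

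The second step, which I expect to be the main technical input, is the standard uniform gradient bound $\norm{\ed_0 G_p(x)}_x \lesssim d_\manifold(p,x)^{1-\Dim}$, uniformly in $p,x$. For this I would quote the classical Green's function estimates on compact manifolds (e.g.\ Aubin's parametrix construction). Alternatively, one can derive it from the heat kernel via $G_p=\int_0^\infty (e^{t\Delta_0}\delta_p - V(\manifold)^{-1})\,\dd t$, using Gaussian gradient bounds for short times, comparable to those behind \Cref{d1etDeltaxiAsIntegral}, and exponential convergence to the mean for long times. Given this bound, the mass measure of $h_p$ is $\norm{\ed_0 G_p}\,\dd V$. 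Its total mass is at most $\int d(p,x)^{1-\Dim}\,\dd V(x)\lesssim \operatorname{diam}(\manifold)$, estimated with the same dyadic annuli and volume bound $V(B_r)\lesssim r^\Dim$ as in \Cref{KernelBounds}. For the Morrey norm, fix a ball $B_r(x)$. If $d(p,x)\geq 2r$, the density is at most $r^{1-\Dim}$ on the ball, so the mass is $\lesssim r^{1-\Dim}r^\Dim = r$. Otherwise $B_r(x)\subset B_{3r}(p)$, and integrating over annuli around $p$ gives $\lesssim r$. Both bounds are uniform in $p$, and they pass to $g_{p,q}$ by the triangle inequality.

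Finally I would check continuity, in the weak-star topology of currents and in fact in mass norm. It suffices to show that $p\mapsto \ed_0 G_p$ is continuous into $L^1(\Lambda^1)$. Pointwise, for $x\neq p$, $\ed_0 G_{p'}(x)\to \ed_0 G_p(x)$ as $p'\to p$, by smoothness of the Green's function off the diagonal. Domination is also available: the integrable bound $d(p',x)^{1-\Dim}$ is uniformly integrable as $p'$ varies, because $\int_{B_\epsilon(p')} d(p',\cdot)^{1-\Dim}\lesssim\epsilon$ uniformly. Vitali's theorem then gives $L^1$ convergence, and hence continuity of $(p,q)\mapsto g_{p,q}$.
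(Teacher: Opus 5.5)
Your proof is correct, and it takes a genuinely different route from the paper's.

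The paper keeps geodesics and avoids the cut-locus discontinuity by localizing. It fixes base points $y_r\in U_r$ in the charts of \Cref{ss:FiniteAtlas} and minimizing geodesics $\tilde\gamma^{(r,r')}$ between them. It joins $p\in U_r$ to $q\in U_{r'}$ by the three-piece curve $p\to y_r\to y_{r'}\to q$; the outer pieces are the unique minimizing geodesics inside $U_r$ and $U_{r'}$, so they depend continuously on their endpoints. It then sets $g_{p,q}=\sum_{r,r'}\chi_r(p)\chi_{r'}(q)[[\gamma^{(r,r')}_{p,q}]]$. The boundary identity comes from $\sum_r\chi_r=1$, the mass bound from $\length\le 3\operatorname{diam}(\manifold)$, and the Morrey bound from the fact that each minimizing geodesic piece has Morrey norm at most $2$.

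That argument is elementary and needs no PDE input. It also keeps $g_{p,q}$ a convex combination of curve currents, in the spirit of the atomic decomposition. Its continuity, however, is only weak-star, since the currents $[[\gamma^{(r)}_{p,y_r}]]$ vary continuously in $p$ only weakly, not in mass.

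Your construction pays with an analytic input instead: existence of the Green's function and the bound $\norm{\ed_0 G_p(x)}_x\lesssim d_\manifold(p,x)^{1-\Dim}$. In exchange you get several things.
\begin{itemize}
\item It is canonical: no atlas, base points or partition of unity.
\item It is continuous in mass norm. The integral defining $R_F$ in \Cref{Fd-1HasR} is then an honest Bochner integral in $\currents_1$, and the bound on its mass measure is immediate.
\item It replaces the geometric Morrey fact by a direct annulus computation.
\item It is linear in the endpoint, because $\boundary_1 h_p(f)=f(p)-V(\manifold)^{-1}\int_\manifold f\,\dd V$. In \Cref{Fd-1HasR} one could then skip the transport kernel entirely and take, up to sign, $R_F=\int_\manifold h_p\,\phi(p)\,\dd V(p)$. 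This is simply the $1$-current of the exact form $\ed_0 u$ with $-\Delta_0 u=\phi$.
\end{itemize}
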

\begin{proof}
  Let $(x_r,U_r)_{r=1}^\maxr$ be the finite atlas constructed in \Cref{ss:FiniteAtlas} and $(\chi_r)_{r=1}^\maxr$ be the partition of unity subordinate to $(U'_r)_{r=1}^\maxr$.
  Fix points $y_r\in U_r$, $r\in\set{1,\dotsc,\maxr}$.
  By connectedness, for all $r,r'\in\set{1,\dotsc,\maxr}$, we may fix geodesics $\tilde{\gamma}^{(r,r')}$ in $\manifold$ from $y_r$ to $y_{r'}$, possibly of length 0 if $y_r=y_{r'}$.
  For $p\in U_r, p'\in U_{r'}$, define $\gamma^{(r,r')}_{p,p'}$ to be the concatenation of the geodesics $\gamma_{p,y_r}^{(r)}$, $\tilde{\gamma}^{(r,r')}$, and $\gamma_{y_{r'},p'}^{(r')}$.
  Thus $\gamma^{(r,r')}_{p,p'}$ is a piecewise smooth curve from $p$ to $p'$.
  In particular,
  \begin{equation}
    \boundary_1[[\gamma^{(r,r')}_{p,p'}]](f) = f(p')-f(p)
  \end{equation}
  for all $f\in C(\manifold)$.
  Furthermore, since each curve $\gamma^{(r,r')}_{p,p'}$ is a concatenation of three geodesics from a compact metric space,
  \begin{equation}
    \length(\gamma^{(r,r')}_{p,p'}) \leq 3\operatorname{diam}(\manifold)
    ,
    \quad
    \Morrey{\big.\smash{\gamma^{(r,r')}_{p,p'}}} \leq 6
    ;
  \end{equation}
  see \cite{ChenGoodmanHernandezSpectorPreprint}*{Corollary~2.3} for the second assertion.
  It is readily verified that $[[\gamma_{p,y_r}^{(r)}]]$ and $[[\gamma_{y_{r'},p'}^{(r')}]]$, and hence $[[\gamma^{(r,r')}_{p,p'}]]$, depend continuously on $(p,p')\in (U_r,U_{r'})$.
  Finally we define
  \begin{equation}
    g_{p,q} = \sum_{r,r'=1}^\maxr \chi^{(r)}(p) \chi^{(r')}(q) [[\gamma^{(r,r')}_{p,q}]]
    ,
  \end{equation}
  where in the sum we can restrict to $r,r'$ for which $p\in U_r$, $q\in U_{r'}$, without loss of generality.
  The assertions then follow from linearity and the triangle inequality.
\end{proof}

We note that both the statement and proof of \Cref{CurrentFromptoq} rely crucially on the finite diameter of $\manifold$, whereas elsewhere we have used the compactness assumption in milder ways.

\begin{lemma}\label{Fd-1HasR}
  Let $\manifold$ be a compact orientable manifold, and let $F \in C^\infty(\Lambda^{\Dim-1})$.
  Then there exists a $1$-current $R_F$ such that
  \begin{align*}
    \boundary_1 (T_F-R_F)=0
  \end{align*}
  and
  \begin{align*}
    \totalmass(R_F) &\lesssim \|\ed F\|_{L^1(\Lambda^{\Dim})}
    ,
    \\
    \Morrey{R_F} &\lesssim \norm{\ed F}_{L^1(\Lambda^{\Dim})}
    .
  \end{align*}
\end{lemma}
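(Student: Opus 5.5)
The plan is to build $R_F$ by averaging the currents $g_{p,q}$ from \Cref{CurrentFromptoq} against the signed measure $\ed F$, so that the boundary of $R_F$ reproduces the boundary of $T_F$. First, by \Cref{BoundaryTomega}, $\boundary_1 T_F = (-1)^{\Dim-1} T_{\ed F}$ as a $0$-current, i.e.\ $\boundary_1 T_F(f) = (-1)^{\Dim-1}\int_\manifold f\,\ed F$ for $f\in C^1(\manifold)$. Write $\ed F = h\,\mathrm{vol}$ with $h\in C^\infty(\manifold)$, so that $\norm{h}_{L^1}=\norm{\ed F}_{L^1(\Lambda^\Dim)}$. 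By Stokes' theorem, $\int_{\manifold_0} h\,\dd V=0$ on each connected component $\manifold_0$ of $\manifold$. Since there are finitely many components and \Cref{CurrentFromptoq} requires connectedness, I will treat each component separately and sum.

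On a connected component, set $a=\int h^+\,\dd V=\int h^-\,\dd V=\tfrac12\norm{h}_{L^1}$. If $a=0$, take $R_F=0$. Otherwise define the $1$-current
\begin{equation*}
R_F(\xi) = \frac{(-1)^{\Dim-1}}{a}\int_\manifold\int_\manifold h^-(p)\,h^+(q)\, g_{p,q}(\xi)\,\dd V(p)\,\dd V(q),\qquad \xi\in C(\Lambda^1).
\end{equation*}
This is well defined because $(p,q)\mapsto g_{p,q}(\xi)$ is continuous by \Cref{CurrentFromptoq}. Using $\boundary_1 g_{p,q}(f)=f(q)-f(p)$ and integrating, I get
\begin{equation*}
\boundary_1 R_F(f)=\frac{(-1)^{\Dim-1}}{a}\Big(a\int f h^+ - a\int f h^-\Big) = (-1)^{\Dim-1}\int f h\,\dd V = \boundary_1 T_F(f),
\end{equation*}
so $\boundary_1(T_F-R_F)=0$.

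For the mass bound, I use $\abs{g_{p,q}(\xi)}\le \totalmass(g_{p,q})\norm{\xi}_{L^\infty}\le C\norm{\xi}_{L^\infty}$, which gives $\totalmass(R_F)\le C a^{-1}\cdot a\cdot a = Ca\lesssim\norm{\ed F}_{L^1}$.

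For the Morrey bound, I first show $\mu_{R_F}\le a^{-1}\iint h^-(p)h^+(q)\,\mu_{g_{p,q}}\,\dd V\,\dd V$ as measures. This follows because the right-hand side is a measure $\nu$ satisfying $\abs{R_F(\xi)}\le\int\norm{\xi}\,\dd\nu$, and $\mu_{R_F}$ is minimal among such measures. Evaluating on a ball then gives $\mu_{R_F}(B_r(x))\le a^{-1}\cdot a^2\cdot Cr$, hence $\Morrey{R_F}\lesssim\norm{\ed F}_{L^1}$.

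The main technical point is justifying this measure inequality, in particular the measurability of $(p,q)\mapsto\mu_{g_{p,q}}(B)$. I would address it by testing against continuous $\xi$ supported near the ball and using continuity of $g$. Alternatively, the explicit construction of $g$ as a finite sum of curve currents with bounded Morrey norm gives the needed bound $\mu_{g_{p,q}}(B_r)\le Cr$ directly, which suffices.
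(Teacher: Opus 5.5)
Your construction is the paper's: the same transport kernel $h^-(p)h^+(q)/a$ averaged against the currents $g_{p,q}$ of \Cref{CurrentFromptoq}, applied component by component, with the mass and Morrey bounds coming from the uniform bounds on $g_{p,q}$. Your justification of the Morrey bound, by testing against continuous $\xi$ supported near a ball, is more careful than the paper's one-line appeal to the triangle inequality.

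One small slip: \Cref{BoundaryTomega} with $k=\Dim-1$ gives $\boundary_1 T_F = (-1)^{\Dim-k}T_{\ed F} = -T_{\ed F}$, not $(-1)^{\Dim-1}T_{\ed F}$. The prefactor in your $R_F$ should therefore be $-1$ (equivalently, swap the roles of $h^+$ and $h^-$), and nothing else in the argument changes.
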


\begin{proof}
  First suppose $\manifold$ is connected.
  Let $\nu\in\Lambda^\Dim$ denote the volume form of $\manifold$, and let $F\in C^\infty(\Lambda^{\Dim-1})$ be a given smooth $(\Dim-1)$-form.
  We may write the $\Dim$-form $\ed_{\Dim-1} F$ as $\ed_{\Dim-1} F = \phi \, \nu$, where $\phi=\star_\Dim \ed_{\Dim-1} F\in C^\infty(\manifold)$ is a smooth function.
  By Stokes' theorem,
  \begin{equation}
    \int_\manifold \ed_{\Dim-1} F = 0
    ,
    \quad\text{i.e.,}\quad
    \int_\manifold \phi(p)\,\dd V(p) = 0
    ,
  \end{equation}
  and consequently
  \begin{equation}
    \int_\manifold \phi^+(p)\,\dd V(p) = \int_\manifold \phi^-(p)\,\dd V(p) = \tfrac{1}{2} \norm{\phi}_{L^1} = \tfrac{1}{2}\norm{\ed_{\Dim-1} F}_{L^1}
    ,
  \end{equation}
  where $\phi^+$ and $\phi^-$ denote the positive and negative parts of $\phi$, respectively.

  If $\ed_{\Dim-1} F=0$ then $\boundary_1 T_F=0$ already and we may set $R_F=0$.
  Otherwise, define
  \begin{equation}
    K(p,q) = \frac{\phi^-(p)\phi^+(q)}{\frac{1}{2} \norm{\phi}_{L^1}}
    .
  \end{equation}
  Then $K$ is a transport kernel on $\manifold\times\manifold$ transporting $\phi^-$ to $\phi^+$ in the sense that $K(p,q)\geq 0$ for all $p,q\in\manifold$ and
  \begin{equation}
    \begin{aligned}
      \int_\manifold K(p,q)\,\dd V(q) &= \phi^-(p) &\text{for all }p\in\manifold
      ,
      \\
      \int_\manifold K(p,q)\,\dd V(p) &= \phi^+(q) &\text{for all }q\in\manifold
      .
    \end{aligned}
  \end{equation}
  Define
  \begin{equation}
    R_F = \iint_{\manifold\times\manifold} g_{p,q} K(p,q) \,\dd V(p)\,\dd V(q)
    .
  \end{equation}
  Then for $f\in C(\manifold)$,
  \begin{align*}
    \boundary_1 R_F(f) &= \iint_{\manifold\times\manifold} \boundary_1 g_{p,q}(f) K(p,q) \,\dd V(p)\,\dd V(q)
    \\&
    = \iint_{\manifold\times\manifold} [f(q)-f(p)] \frac{\phi^-(p)\phi^+(q)}{\frac{1}{2} \norm{\phi}_{L^1}} \,\dd V(p)\,\dd (q)
    \\&
    = \int_\manifold f(q)\phi^+(q) \frac{\norm{\phi^-}_{L^1}}{\frac{1}{2}\norm{\phi}_{L^1}} \,\dd V(q) - \int_\manifold f(p)\phi^-(p) \frac{\norm{\phi^+}_{L^1}}{\frac{1}{2}\norm{\phi}_{L^1}} \,\dd V(p)
    \\&
    = \int_\manifold f \, [\phi^- - \phi^+]\,\dd V
    \\&
    = -\int_\manifold f \phi \, \nu = -\int_\manifold f\,\ed_{\Dim-1} F = -T_{\ed_{\Dim-1} F}(f) = \boundary_1 T_F(f)
    ,
  \end{align*}
  where the last step used \Cref{BoundaryTomega}.
  The bounds on $\totalmass(R_F)$ and $\Morrey{R_F}$ follow from the triangle inequality, the uniform bounds on $\totalmass(g_{p,q})$ and $\Morrey{g_{p,q}}$, and the observation that
  \begin{align*}
    \iint_{\manifold\times\manifold} C K(p,q)\,\dd V(p)\,\dd V(q)
    &= C\frac{\norm{\phi^-}_{L^1} \norm{\phi^+}_{L^1}}{\frac{1}{2}\norm{\phi}_{L^1}}
    \\&
    = \tfrac{1}{2}C\norm{\phi}_{L_1} = \tfrac{1}{2}C\norm{\ed_1 F}_{L^1}
    .
  \end{align*}

  Finally if $\manifold$ is compact but not connected, then it consists of a finite number of connected components; apply the preceding construction to each component separately and take the sum.
\end{proof}

\begin{proof}[Proof of \Cref{int01Bound_General} for orientable manifolds]
  By \Cref{Fd-1HasR}, the 1-current $T_F-R_F$ satisfies $\boundary_1 (T_F-R_F)=0$ and
  \begin{align*}
    \totalmass(T_F-R_F) &\leq \totalmass(T_F) + \totalmass(R_F)
    \\&
    \lesssim \norm{F}_{L^1(\Lambda^{\Dim-1})} + \norm{\ed_{\Dim-1} F}_{L^1(\Lambda^\Dim)}
    .
  \end{align*}
  By \Cref{int01Bound0Boundary} we conclude that
  \begin{equation*}
    \int_0^1 t^{\alpha/2-1}\|  e^{t\Delta_{1,c}} (T_F-R_F)\|_{L^{\Dim/(\Dim-\alpha),1}(\Lambda^{\Dim-1})} \, \dd t \lesssim \|F\|_{L^1(\Lambda^{\Dim-1})}+\|\ed F\|_{L^1(\Lambda^\Dim)}
    .
  \end{equation*}
  On the other hand, \Cref{etDeltaT_LorentzIntegrated} implies that
  \begin{align*}
    \int_0^1 t^{\alpha/2-1}\|  e^{t\Delta_{1,c}} R_F\|_{L^{\Dim/(\Dim-\alpha),1}(\Lambda^{\Dim-1})} \, \dd t &\lesssim \totalmass(R_F)^{1-\alpha/\Dim} \Morrey{R_F}^{\alpha/\Dim}
    \\&
    \lesssim \norm{\ed F}_{L^1(\Lambda^{\Dim})}^{1-\alpha/\Dim+\alpha/\Dim}
  \end{align*}
  by the bounds in \Cref{Fd-1HasR}.
  Combining these two upper bounds,
  \begin{align*}
    \int_0^1 t^{\alpha/2-1}\|  e^{t\Delta_{1,c}} T_F \|_{L^{\Dim/(\Dim-\alpha),1}(\Lambda^{\Dim-1})} \, \dd t \lesssim \|F\|_{L^1(\Lambda^{\Dim-1})}+\|\ed F\|_{L^1(\Lambda^\Dim)}
  \end{align*}
  also.
  Then the identities
  \begin{align*}
    \norm{e^{t\Delta_{1,c}} T_F}_{L^{\Dim/(\Dim-\alpha),1}(\Lambda^{\Dim-1})}
    &= \norm{ T_{e^{t\Delta_{\Dim-1}}F} }_{L^{\Dim/(\Dim-\alpha),1}(\Lambda^{\Dim-1})}
    \\&
    =\norm{ e^{t\Delta_{\Dim-1}}F }_{L^{\Dim/(\Dim-\alpha),1}(\Lambda^{\Dim-1})}
  \end{align*}
  from \Cref{TomegaPropagatorsCompatible,NormDualityCurrentsForms} complete the proof.
\end{proof}

We note that \Cref{int01Bound_General} remains true even if the manifold is non-orientable, but we will defer the details until \Cref{ss:NonorientableProof}.

\section{From \texorpdfstring{$(\Dim-1)$}{(\Dim-1)}-forms to \texorpdfstring{$k$}{k}-forms}\label{s:d-1FormsTokForms}

In this section we extend \Cref{int01Bound_dF=0,int01Bound_General} from $(\Dim-1)$-forms to $k$-forms, $k=0,\dotsc,\Dim-1$.
\begin{theorem}\label{int01Bound_Closed_kform}
Let $\manifold$ be a compact manifold of dimension $\Dim\geq 2$ and let $\alpha \in (0,\Dim)$.
There exists a constant $C$ such that, for all $k=0,\dotsc,\Dim-1$ and for all $k$-forms $\omega$ such that $\ed_k\omega = 0$,
 \begin{equation*}
    \int_0^1 t^{\alpha/2-1} \norm{ e^{t\Delta_k} \omega }_{L^{\Dim/(\Dim-\alpha),1}(\Lambda^k)} \, \dd t \leq C \norm{\omega}_{L^1(\Lambda^k)}
    .
  \end{equation*}
\end{theorem}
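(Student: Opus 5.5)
The plan is to reduce to the $(\Dim-1)$-form results \Cref{int01Bound_dF=0,int01Bound_General} using a Van Schaftingen-type algebraic construction, localized with the finite atlas of \Cref{ss:FiniteAtlas}.

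First, partition $\omega=\sum_r \chi_r\omega$ using the partition of unity. Each piece $\chi_r\omega$ is no longer closed, but
\begin{equation*}
  \ed_k(\chi_r\omega)=\ed\chi_r\wedge\omega .
\end{equation*}
This is an error term of zeroth order in $\omega$, so it is bounded in $L^1$ by $C\norm{\omega}_{L^1}$. This is exactly the kind of error that \Cref{int01Bound_General} absorbs, which is why the general (non-closed) $(\Dim-1)$-form estimate is useful here.

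Second, within the chart $U_r$, fix constant-coefficient (in the coordinates $x_r$) forms $\theta_I$ of degree $\Dim-1-k$, for multi-indices $I$. Using the cutoff $\rho_r$, define $(\Dim-1)$-forms
\begin{equation*}
  \Phi_I(\eta)=\rho_r\,\theta_I\wedge\eta .
\end{equation*}
Since $\ed\theta_I=0$ in coordinates, we get
\begin{equation*}
  \ed_{\Dim-1}\Phi_I(\eta)=\pm\rho_r\theta_I\wedge\ed_k\eta+\ed\rho_r\wedge\theta_I\wedge\eta .
\end{equation*}
Hence $\norm{\Phi_I(\chi_r\omega)}_{L^1}+\norm{\ed\Phi_I(\chi_r\omega)}_{L^1}\lesssim\norm{\omega}_{L^1}$. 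Choose the family $\theta_I$ so that $\eta$ is recovered pointwise and linearly from $(\theta_I\wedge\eta)_I$. For example, take $\theta_I=dx^{J}$ over $(\Dim-1-k)$-subsets $J$. Since $k\le\Dim-1$, every component $dx^{K}$ of $\eta$, $|K|=k$, can be read off from $dx^{J}\wedge\eta$ for some $J$ disjoint from $K$. This gives a smooth bundle map $\Psi$ with $\Psi\big((\Phi_I\eta)_I\big)=\eta$ on $\operatorname{supp}\chi_r$. It is cut off by $\rho_r$ and has bounded coefficients. This is the manifold version of \cite{VS3}*{Lemma~2.5}. The case $k=0$ is allowed, and then $\theta_I$ has degree $\Dim-1$.

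Third, apply \Cref{int01Bound_General} to each $\Phi_I(\chi_r\omega)$. This bounds
\begin{equation*}
  \int_0^1 t^{\alpha/2-1}\norm{e^{t\Delta_{\Dim-1}}\Phi_I(\chi_r\omega)}_{L^{\Dim/(\Dim-\alpha),1}}\,\dd t\lesssim\norm{\omega}_{L^1}.
\end{equation*}
Then write
\begin{equation*}
  e^{t\Delta_k}(\chi_r\omega)=\Psi\Big(\big(e^{t\Delta_{\Dim-1}}\Phi_I(\chi_r\omega)\big)_I\Big)+E_t ,
\end{equation*}
where the commutator error is
\begin{equation*}
  E_t=e^{t\Delta_k}\Psi\Phi-\Psi e^{t\Delta_{\Dim-1}}\Phi .
\end{equation*}
Since $\Psi$ is a bounded smooth bundle map, it preserves Lorentz norms up to constants, so the main term is controlled.

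The main obstacle is bounding $E_t$. I would use Duhamel:
\begin{equation*}
  E_t=\int_0^t e^{(t-s)\Delta_k}\big(\Delta_k\Psi-\Psi\Delta_{\Dim-1}\big)e^{s\Delta_{\Dim-1}}\Phi\,\dd s .
\end{equation*}
Both Laplacians have principal symbol $|\xi|^2\,\mathrm{Id}$, and $\Psi$ has smooth coefficients, so the commutator $\Delta_k\Psi-\Psi\Delta_{\Dim-1}$ is a first-order operator with bounded coefficients. Combining with heat-kernel gradient bounds in the spirit of \Cref{d1etDeltaxiAsIntegral} and \Cref{KernelBounds}, I expect a pointwise kernel bound for $E_t$ that is of the form $C\int K_t(p,q)\,\dd\mu(q)$ times a gain of $t^{1/2}$. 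Concretely, the plan is to target
\begin{equation*}
  \norm{E_t}_{L^1}\lesssim t^{1/2}\norm{\omega}_{L^1},\qquad \norm{E_t}_{L^\infty}\lesssim t^{1/2-\Dim/2}\norm{\omega}_{L^1}.
\end{equation*}
The \InterpolationLemma\ then gives
\begin{equation*}
  \norm{E_t}_{L^{\Dim/(\Dim-\alpha),1}}\lesssim t^{1/2-\alpha/2}\norm{\omega}_{L^1},
\end{equation*}
and the integrand $t^{\alpha/2-1}\cdot t^{1/2-\alpha/2}=t^{-1/2}$ is integrable on $(0,1]$. If the Duhamel pointwise estimate proves awkward, a fallback is a parametrix comparison of the two heat kernels in local coordinates: they agree to leading order, with the error $O(t^{1/2})$ relative to the Gaussian.

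Finally, summing over $r$ gives the theorem. Non-orientability is handled as in \Cref{ss:NonorientableProof}.
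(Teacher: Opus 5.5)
Your proof is correct, but it follows a different route from the paper's, and the difference is deliberate on the paper's side.

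\textbf{The promotion step.} You cut $\omega$ off by $\chi_r$ before promoting it, and you use $\rho_r\,\ed x^J$, which is not closed. Both choices create zeroth-order non-closedness errors, so you must invoke \Cref{int01Bound_General}. That is legitimate here, since it is proved for compact $\manifold$ before this point.

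The paper avoids any such error. It promotes the global form $\omega$ using the exact $1$-forms $\ed y_r^i$, where $y_r^i=\rho_r x_r^i$; these coincide with $\ed x_r^i$ on $U'_r$. It sets $\Phi_k(\omega)=(\omega\wedge \ed y_r^{J^c})_{r,\abs{J}=k+1}$ and puts the partition of unity $\chi_r$ inside the left inverse $\Psi_k$, which is applied only after propagation. Then $\vec{\ed}_{\Dim-1}\Phi_k(\omega)=\Phi_{k+1}(\ed_k\omega)$ holds exactly (\Cref{dPhiRelation}). So a closed $\omega$ becomes a vector of closed $(\Dim-1)$-forms, and only \Cref{int01Bound_dF=0} is needed.

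The authors explicitly avoid \Cref{int01Bound_General} for the closed case. It breaks scale invariance, and it depends on the finite diameter of $\manifold$ through \Cref{CurrentFromptoq}. The paper's closed-case argument does not, which is what lets it transfer to settings like $\R^\Dim$. In effect, you have reproduced the paper's proof of \Cref{int01Bound_General_kform} specialized to $\ed_k\omega=0$, rather than its independent proof of the closed case. Your version is a bit easier to set up chart by chart, but it gives the closed case no better than the general one.

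\textbf{The error term.} Your fallback is exactly what the paper does. It compares parametrix leading terms: from $\widetilde V_{k,0}(p,p)=V_{k,0}(p,p)=\mathrm{id}$, the kernel difference is of size $(d_\manifold(p,q)+t)$ times the Gaussian, plus $O(1)$.

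Your primary Duhamel route also works. The operator $\Delta_k\Psi-\Psi\Delta_{\Dim-1}$ is indeed first order. First-derivative heat-kernel bounds in either variable, obtained as in \Cref{d1etDeltaxiAsIntegral}, give $\norm{E_t}_{L^1}\lesssim t^{1/2}\norm{\omega}_{L^1}$. For the $L^\infty$ bound, split the $s$-integral at $t/2$ and put the derivative on whichever propagator has time at least $t/2$; this gives $\norm{E_t}_{L^\infty}\lesssim t^{(1-\Dim)/2}\norm{\omega}_{L^1}$. This approach trades the diagonal identity for the leading coefficients for gradient estimates alone.
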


\begin{theorem}\label{int01Bound_General_kform}
  Let $\manifold$ be a compact manifold of dimension $\Dim\geq 2$ and let $\alpha \in (0,\Dim)$.
  There exists a constant $C$ such that, for all $k=0,\dotsc,\Dim-1$,
  \begin{equation*}
    \int_0^1 t^{\alpha/2-1} \norm{ e^{t\Delta_k} \omega }_{L^{\Dim/(\Dim-\alpha),1}(\Lambda^k)} \, \dd t \leq C \left( \norm{\omega}_{L^1(\Lambda^k)}+\norm{\ed_k \omega}_{L^1(\Lambda^{k+1})} \right)
  \end{equation*}
  for all $\omega \in L^1(\Lambda^{k})$ such that $\ed_k \omega \in L^1(\Lambda^{k+1})$.
\end{theorem}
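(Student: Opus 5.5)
Our approach is to reduce to \Cref{int01Bound_General} for $(\Dim-1)$-forms by means of a Van Schaftingen-type algebraic identity. Localize first, using the finite atlas $(x_r,U_r,U'_r)$ and the partition of unity $\chi_r$ from \Cref{ss:FiniteAtlas}. For each $r$ we work in coordinates on $U_r$. There we write $\omega\chi_r$ in terms of the coordinate coframe $\ed x^{i}$, and wedge it with constant-coefficient coordinate $(\Dim-1-k)$-forms $\ed x^{J}$, where $|J|=\Dim-1-k$, multiplied by the cutoff $\rho_r$. This produces finitely many $(\Dim-1)$-forms
\begin{equation*}
  \Phi_{r,J}\omega := \rho_r\, \ed x_r^{J}\wedge (\chi_r\omega).
\end{equation*}
Because $\ed(\ed x^J)=0$, these satisfy $\ed_{\Dim-1}\Phi_{r,J}\omega = \pm\,\ed x^J\wedge\ed_k(\chi_r\omega)+(\text{zeroth order in }\omega)$. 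Hence
\begin{equation*}
  \norm{\Phi_{r,J}\omega}_{L^1}+\norm{\ed_{\Dim-1}\Phi_{r,J}\omega}_{L^1}\lesssim \norm{\omega}_{L^1}+\norm{\ed_k\omega}_{L^1}.
\end{equation*}
Conversely, the linear-algebra fact behind Van Schaftingen's lemma says that a $k$-form $\eta$ at a point is determined by the collection $(\ed x^J\wedge\eta)_J$: contract with the dual coordinate vector fields. So there are bounded smooth maps $\Psi_{r,J}\colon\Lambda^{\Dim-1}\to\Lambda^k$, supported in $U_r$, such that $\chi_r\omega=\sum_J\Psi_{r,J}\Phi_{r,J}\omega$. (This is what \Cref{dPhiRelation} presumably formalizes.)

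Next we compare propagators. We write
\begin{equation*}
  e^{t\Delta_k}\omega=\sum_{r,J}\Psi_{r,J}e^{t\Delta_{\Dim-1}}\Phi_{r,J}\omega+E_t\omega,
\end{equation*}
where the error is
\begin{equation*}
  E_t=\sum_{r,J}\bigl(e^{t\Delta_k}\Psi_{r,J}-\Psi_{r,J}e^{t\Delta_{\Dim-1}}\bigr)\Phi_{r,J}.
\end{equation*}
Since $\Psi_{r,J}$ is a pointwise bundle map, the main terms satisfy
\begin{equation*}
  \norm{\Psi e^{t\Delta_{\Dim-1}}\Phi\omega}_{L^{\Dim/(\Dim-\alpha),1}}\lesssim\norm{e^{t\Delta_{\Dim-1}}\Phi\omega}_{L^{\Dim/(\Dim-\alpha),1}}.
\end{equation*}
\Cref{int01Bound_General} then bounds their time integral by $\norm{\omega}_{L^1}+\norm{\ed_k\omega}_{L^1}$; this handles the orientable case, and the nonorientable case is deferred as in \Cref{ss:NonorientableProof}. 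For the error we use Duhamel:
\begin{equation*}
  e^{t\Delta_k}\Psi-\Psi e^{t\Delta_{\Dim-1}}=\int_0^t e^{(t-s)\Delta_k}\,[\Delta_k\Psi-\Psi\Delta_{\Dim-1}]\,e^{s\Delta_{\Dim-1}}\,\dd s.
\end{equation*}
The second-order symbols of the two Hodge Laplacians agree: both are $g^{ij}\partial_i\partial_j$ times the identity. So the commutator $[\Delta\Psi-\Psi\Delta]$ is a first-order operator with smooth coefficients.

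The main obstacle is bounding this error by $\norm{\omega}_{L^1}$ alone, with a time-integrable rate. Heat kernel gradient bounds (as in \Cref{d1etDeltaxiAsIntegral} and \Cref{KernelBounds}) give that $e^{(t-s)\Delta}D\,e^{s\Delta}$ maps $L^1\to L^1$ with norm $\lesssim (t-s)^{-1/2}$ or $s^{-1/2}$. We split the $s$-integral at $t/2$ and put the derivative on the factor with the larger time. This yields $\norm{E_t\omega}_{L^1}\lesssim t^{1/2}\norm{\omega}_{L^1}$ and similarly $\norm{E_t\omega}_{L^\infty}\lesssim t^{1/2-\Dim/2}\norm{\omega}_{L^1}$. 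The \InterpolationLemma\ then gives $\norm{E_t\omega}_{L^{\Dim/(\Dim-\alpha),1}}\lesssim t^{1/2-\alpha/2}\norm{\omega}_{L^1}$, and $\int_0^1 t^{\alpha/2-1}t^{1/2-\alpha/2}\,\dd t<\infty$. If the kernel estimates for the composed operator are awkward to get directly, we instead dualize via \Cref{NormDualityCurrentsForms} and apply the adjoint estimates on continuous forms. Finally, we extend from smooth $\omega$ to $L^1$ by density, as in \cite{ISS}.
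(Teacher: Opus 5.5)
Your proposal is correct, but it bounds the error term by a different mechanism than the paper. The paper does not use Duhamel. It writes $\Psi_k e^{t\vec{\Delta}_{\Dim-1}}\Phi_k$ as an integral operator with kernel $\Psi_k\vert_p\circ\mathbf{p}^{\Dim-1}_t(p,q)\circ\Phi_k\vert_q$, and inserts the short-time parametrix expansions of $\mathrm{p}^k_t$ and $\mathrm{p}^{\Dim-1}_t$. The leading coefficients coincide on the diagonal ($\widetilde{V}_{k,0}(p,p)=V_{k,0}(p,p)=\mathrm{id}$, because $\Psi_k\circ\Phi_k=\mathrm{id}$), so $V_{k,0}-\widetilde{V}_{k,0}=O(d_\manifold(p,q))$. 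Hence the difference kernel is bounded pointwise by $t^{-\Dim/2}e^{-d_\manifold^2/(4t)}(d_\manifold+t)+1$. Its $L^1$ and $L^\infty$ sizes, $t^{1/2}$ and $t^{1/2-\Dim/2}$, interpolate to the same $t^{1/2-\alpha/2}$ you obtain.

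Your version trades ``leading parametrix coefficients agree on the diagonal'' for ``both Hodge Laplacians have scalar principal symbol, so $\Delta_k\Psi-\Psi\Delta_{\Dim-1}$ is first order.'' It needs only Gaussian bounds on the heat kernels and their first derivatives, with the left factor handled by duality, at the price of the $s$-integral split at $t/2$. The paper needs the asymptotic expansion but gets a pointwise kernel estimate in one step.

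The promotion maps also differ. The paper wedges $\omega$ itself with the globally closed forms $\ed y_r^{J^c}$, $y_r^i=\rho_r x_r^i$, and puts the partition of unity into the left inverse, so that $\vec{\ed}_{\Dim-1}\Phi_k(\omega)=\Phi_{k+1}(\ed_k\omega)$ holds exactly (\Cref{dPhiRelation}). Your $\ed x_r^J\wedge\chi_r\omega$ produces the extra term $\pm\ed x_r^J\wedge\ed\chi_r\wedge\omega$. That is harmless here, since $\norm{\omega}_{L^1}$ is on the right-hand side, but it destroys closedness. The paper's choice is what lets the same argument prove \Cref{int01Bound_Closed_kform} from \Cref{int01Bound_dF=0} alone, avoiding the compactness-dependent \Cref{int01Bound_General}.
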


We note that \Cref{int01Bound_General_kform}, like \Cref{int01Bound_General}, relies strongly on compactness.
We will therefore give an independent proof of \Cref{int01Bound_Closed_kform}, even though it is a special case of \Cref{int01Bound_General_kform}.

The proofs of \Cref{int01Bound_Closed_kform,int01Bound_General_kform} are based on ``promoting'' a $k$-form to a vector of forms of higher degree.
This mapping will use coordinates, and to simplify the notation, we will abbreviate $[\Dim]=\set{1,\dotsc,\Dim}$, and we will use $J$ to denote a subset of $[\Dim]$ of cardinality $\abs{J}=j$.
For integers $m$ to be specified, we identify $(\Lambda^m)^{\binom{\Dim}{j}}$ with collection of vectors $\vec{\eta}=(\eta_J)_{\abs{J}=j}$ whose entries are $m$-forms $\eta_J\in\Lambda^m$ indexed by $J\subset [\Dim]$ with $\abs{J}=j$.
Later, we will similarly use $K$ to denote a subset of $[\Dim]$ of cardinality $\abs{K}=k$.

Recall from \Cref{ss:FiniteAtlas} the triples $(x_r,U_r,U'_r)_{r=1\dotsc,\maxr}$, the partition of unity $(\chi_r)_{r=1,\dotsc,\maxr}$ subordinate to $(U'_r)_{r=1\dotsc,\maxr}$, and the cutoff functions $\rho_r$ supported in $U_r$ with $\rho_r=1$ on a neighbourhood of $U'_r$.
Define the scalar functions
\begin{equation}
  y_r^i = \rho_r x_r^i
\end{equation}
for $i=1,\dotsc,\Dim$ and $r=1,\dotsc,\maxr$, where $x^1_r,\dotsc,x^\Dim_r$ denote the entries of the coordinate mapping $x_r\colon U_r\to\R^\Dim$.
By the choice of the cutoff function $\rho_r$, the functions $y_r^i$ may be zero-extended outside of $U_r$ and thus define smooth functions on $\manifold$.
For any subset $A\subset [\Dim]$, let $\ed y_r^A$ denote the wedge product of all 1-forms $\ed y^i_r$ over $i\in A$, arranged in increasing order.

For each $k,j\in\set{0,1,\dotsc,\Dim}$ and $r\in\set{1,\dotsc,\maxr}$, define the ``promotion'' mappings
\begin{equation}
  \begin{gathered}
    \phi_{k,j,r}\colon \Lambda^k \to (\Lambda^{\Dim-j+k})^{\binom{\Dim}{j}}
    ,
    \\
    \phi_{k,j,r}(\omega) = (\omega \wedge \ed y_r^{J^c})_{\abs{J}=j}
    .
  \end{gathered}
\end{equation}
These mappings can be defined for all $0\leq j\leq n$ but are identically zero if $j<k$, and we restrict our attention to the interval $k\leq j\leq n$.
The endpoint $j=\Dim$ gives the identity mapping.
The other endpoint, $j=k$, corresponds to writing $\omega$ in coordinates, $\omega=\sum_{\abs{K}=k}\omega_K \, \ed x_r^K$, except that each coordinate function $\omega_K$ is stored as an $\Dim$-form $\phi_{k,k,r,K}(\omega) = \omega_K \, \ed x_r^K \wedge \ed y_r^{K^c}$ rather than a scalar function.
Our interest will be in the case $j=k+1$ and the mappings $\phi_{k,k+1,r}$ and $\phi_{k+1,k+1,r}$.

As we will see, the mappings $\phi_{k,j,r}$ are locally injective when $k\leq j$.
To define the left inverse, define the smooth vector fields
\begin{equation}
  Y_{r,i} = \rho_r \frac{\partial}{\partial x_r^i}
\end{equation}
on $\manifold$.
When $A\subset[\Dim]$, abbreviate $(Y_r)_A$ for the vector fields $Y_{r,i}$ over $i\in A$, arranged in increasing order.
When $\zeta$ is an $m$-form and $X_1,\dotsc,X_m$ are vector fields, by $\zeta(X_1,\dotsc,X_m)$ we mean the scalar function $p\mapsto \zeta(p;X_1(p),\dotsc,X_m(p))$ on $\manifold$.
We note for later reference that by construction, the 1-forms $(\ed x_r^j)_{j\in[n]}$ and the vector fields $(\frac{\partial}{\partial x_r^i})_{i\in[n]}$ are dual bases with $\ed x_r^j(\frac{\partial}{\partial x_r^i}) = \indicator{i=j}$ on $U_r$.
Hence $(\ed y_r^j)_{j\in[n]}$ and $(Y_{r,i})_{i\in[n]}$ have the same property on $U'_r$, and more generally
\begin{equation}\label{yJcyA}
  \ed y_r^{J^c}((Y_r)_A) = \begin{cases}
    1 &\text{on $U'_r$ if $A=J^c$}
    ,
    \\
    0 &\text{on $U'_r$ otherwise}
    .
  \end{cases}
\end{equation}

Define $\psi_{k,j,r} \colon (\Lambda^{\Dim-j+k})^{\binom{\Dim}{j}} \to \Lambda^k$ by
\begin{equation}
  \psi_{k,j,r}(\vec{\eta})(X_1,\dotsc,X_k)
  = \frac{1}{\binom{\Dim-k}{j-k}} \sum_{\abs{J}=j} \eta_J \! \left(X_1,\dotsc,X_k,\left( Y_r \right)_{J^c} \right)
\end{equation}
for arbitrary vector fields $X_1,\dotsc,X_k$.
(In other words, $\tilde{\omega}=\psi_{k,j,r}(\vec{\eta})$ is the $k$-form with values
\begin{equation*}
  \tilde{\omega}(p;v_1,\dotsc,v_k) = \frac{1}{\binom{\Dim-k}{j-k}} \sum_{\substack{J^c=\set{i_1,\dotsc,i_{\Dim-j}} \\ i_1 < \dotsb < i_{\Dim-j}}} \eta_J \! \left(p;v_1,\dotsc,v_k,Y_{r,i_1}(p),\dotsc,Y_{r,i_{\Dim-j}}(p) \right)
\end{equation*}
for all $p\in\manifold$ and for all $v_1,\dotsc,v_k\in T_p\manifold$, where $i_1<\dotsb<i_{\Dim-j}$ are the elements of $J^c$ in increasing order.
We remark that this relation could instead be written in coordinates as
\begin{equation*}
  \tilde{\omega}_K = \frac{1}{\binom{\Dim-k}{j-k}} \sum_{\abs{J}=j, K\subset J} \epsilon^{K,J^c}_{J^c\union K} \, \eta_{J,J^c\union K}
  ,
\end{equation*}
but that approach would involve tracking the signs $\epsilon^{A,B}_{A\union B}\in\set{-1,1}$ defined by $\ed x_r^{A\union B} = \epsilon^{A,B}_{A\union B} \, \ed x_r^A \wedge \ed x_r^B$ for $A\intersect B=\emptyset$.)

\begin{lemma}\label{phipsiIdentityLocal}
  For all $\omega\in\Lambda^k$, all $j\geq k$, and all $r$, we have $\psi_{k,j,r}(\phi_{k,j,r}(\omega)) = \omega$ on $U'_r$.
\end{lemma}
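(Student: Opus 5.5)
Since the claim is pointwise and linear in $\omega$, I will fix $p\in U'_r$ and tangent vectors $v_1,\dotsc,v_k\in T_pM$. By multilinearity in the $v_i$, it suffices to take each $v_i$ to be a coordinate vector $\partial/\partial x_r^{a_i}$. On $U'_r$ we have $\rho_r=1$, so $Y_{r,i}=\partial/\partial x_r^i$ and $\ed y_r^i=\ed x_r^i$ there. Thus at $p$ we are working in the coordinate frame $(\partial/\partial x_r^i)$ with its dual coframe $(\ed x_r^i)$. If the $a_i$ are not distinct, both sides vanish. So I will take $v_i=Y_{r,a_i}$ with $A=\set{a_1,\dotsc,a_k}$ of cardinality $k$.

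The next step is to compute each summand
\begin{equation*}
  (\omega\wedge \ed y_r^{J^c})\bigl(v_1,\dotsc,v_k,(Y_r)_{J^c}\bigr)
\end{equation*}
using the shuffle formula for a wedge product of a $k$-form with an $(\Dim-j)$-form. This expands the value as a signed sum over ways of splitting the $k+\Dim-j$ vectors into a group of $k$ fed to $\omega$ and a group of $\Dim-j$ fed to $\ed y_r^{J^c}$. By \eqref{yJcyA}, $\ed y_r^{J^c}$ evaluated on coordinate vectors is nonzero only when its arguments are exactly $(Y_r)_{J^c}$, in increasing order, where it equals $1$.

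I then split into cases. If $A\intersect J^c\neq\emptyset$, the list of arguments has a repeated vector and the whole expression is zero. If $A\subset J$, the only surviving shuffle is the one that gives $v_1,\dotsc,v_k$ to $\omega$ and $(Y_r)_{J^c}$ to $\ed y_r^{J^c}$. This is the identity shuffle, whose sign is $+1$, so the summand equals $\omega(v_1,\dotsc,v_k)$. Any other split would hand some $v_i$ to $\ed y_r^{J^c}$, and that term vanishes by \eqref{yJcyA} because $a_i\notin J^c$.

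Summing over $J$, the surviving terms are exactly the $J\supset A$ with $\abs{J}=j$. There are $\binom{\Dim-k}{j-k}$ of these, and each contributes $\omega(v_1,\dotsc,v_k)$. Dividing by the normalisation in $\psi_{k,j,r}$ gives the claim.

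The only point needing care is the sign convention for the wedge product and evaluation. I expect no genuine obstacle, because the identity shuffle carries sign $+1$ and the vectors $(Y_r)_{J^c}$ are ordered increasingly, to match $\ed y_r^{J^c}$.
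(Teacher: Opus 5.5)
Your proof is correct and follows the paper's route. You evaluate on the coordinate fields $Y_{r,i}$ on $U'_r$, discard each $J$ with $K\not\subset J$ because of a repeated argument, and use \eqref{yJcyA} to show each remaining term contributes $\omega_K$, counted $\binom{\Dim-k}{j-k}$ times. Your shuffle expansion simply spells out the step $\eta_J((Y_r)_K,(Y_r)_{J^c})=\omega((Y_r)_K)\,\ed y_r^{J^c}((Y_r)_{J^c})$, which the paper asserts directly.
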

\begin{proof}
  Note that if $\omega$ is written in coordinates on $U_r$ as $\omega=\sum_{\abs{K}} \omega_K \, \ed x_r^K$, then $\omega_K=\omega\bigl( (\frac{\partial}{\partial x_r^i} \bigr)_{i\in K})$ recovers the coordinate functions.
  Thus, abbreviating $\eta_J=\omega\wedge \ed y_r^{J^c}$ for $\abs{J}=j$ and $\tilde{\omega}=\psi_{k,j,r}(\phi_{k,j,r}(\omega))=\psi_{k,j,r}(\vec{\eta})$, it suffices to show that $\tilde{\omega}_K=\omega_K$ on $U'_r$, for all $\abs{K}=k$.
  We have
  \begin{align*}
    \tilde{\omega}_K = \tilde{\omega}((Y_r)_K) &= \frac{1}{\binom{\Dim-k}{j-k}} \sum_{\abs{J}=j} \eta_J((Y_r)_K, (Y_r)_{J^c})
    \\&
    = \frac{1}{\binom{\Dim-k}{j-k}} \sum_{\abs{J}=j, K\subset J} \eta_J((Y_r)_K, (Y_r)_{J^c})
  \end{align*}
  because, in any term for which $K$ is not a subset of $J$, we will have $K\intersect J^c\neq\emptyset$ and hence the arguments to $\eta_J$ will contain at least one repeated vector field $Y_{r,i}$.
  By \eqref{yJcyA} and the fact that $\eta_J=\omega\wedge \ed y_r^{J^c}$, it follows that
  \begin{align*}
    \eta_J((Y_r)_K, (Y_r)_{J^c}) = \omega((Y_r)_K) \, \ed y_r^{J^c}((Y_r)_{J^c}) = \omega((Y_r)_K) = \omega_K
  \end{align*}
  on $U'_r$.
  Hence
  \begin{align*}
    \tilde{\omega}_K = \frac{1}{\binom{\Dim-k}{j-k}} \sum_{\abs{J}=j, K\subset J} \omega_K = \omega_{K}
  \end{align*}
  on $U'_r$, since the choice of $J$ corresponds to choosing the $j-k$ elements of $J\setminus K$ out of the $\Dim-k$ choices in $[\Dim]\setminus K$.
\end{proof}

We obtain a globally injective function $\Phi_{k,j}\colon \Lambda^k\to (\Lambda^{\Dim-j+k})^{\binom{\Dim}{j} \maxr}$ with left inverse $\Psi_{k,j}\colon (\Lambda^{\Dim-j+k})^{\binom{\Dim}{j} \maxr}\to\Lambda^k$ by aggregating across all charts,
\begin{equation}
  \begin{gathered}
    \Phi_{k,j}(\omega) = (\phi_{k,j,r}(\omega))_{r=1,\dotsc,\maxr} = (\omega\wedge \ed y_r^{J^c})_{r=1,\dotsc,\maxr; \, \abs{J}=j}
    ,
    \\
    \Psi_{k,j}(\vec{\eta}) = \Psi_{k,j}((\eta_{r,J})_{r=1,\dotsc,\maxr; \, \abs{J}=j}) = \sum_{r=1}^\maxr \psi_{k,j,r}((\chi_r\eta_{r,J})_{\abs{J}=k+1})
    .
  \end{gathered}
\end{equation}

\begin{lemma}\label{PsiPhiIdentity}
  For all $k\leq j\leq \Dim$, $\Psi_{k,j}\circ \Phi_{k,j} = \mathrm{identity}_{\Lambda^k}$.
\end{lemma}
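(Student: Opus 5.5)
The plan is to compute $\Psi_{k,j}(\Phi_{k,j}(\omega))$ pointwise and reduce it to \Cref{phipsiIdentityLocal} combined with the partition of unity. By definition,
\begin{equation*}
  \Psi_{k,j}(\Phi_{k,j}(\omega)) = \sum_{r=1}^\maxr \psi_{k,j,r}\bigl((\chi_r\,\omega\wedge \ed y_r^{J^c})_{\abs{J}=j}\bigr),
\end{equation*}
where I read the index $\abs{J}=k+1$ in the displayed definition of $\Psi_{k,j}$ as $\abs{J}=j$, which is what the domain requires. The first step is to observe that $\psi_{k,j,r}$ is $C^\infty(\manifold)$-linear. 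Each $\eta_J$ is evaluated pointwise on vector fields, so multiplying every entry by the scalar function $\chi_r$ multiplies the output by $\chi_r$. This gives
\begin{equation*}
  \psi_{k,j,r}\bigl((\chi_r\eta_J)_J\bigr) = \chi_r\,\psi_{k,j,r}(\phi_{k,j,r}(\omega)).
\end{equation*}

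The second step uses the support of $\chi_r$. Since $\chi_r$ is supported in $U'_r$, it suffices to know $\psi_{k,j,r}(\phi_{k,j,r}(\omega))$ on $U'_r$, and there it equals $\omega$ by \Cref{phipsiIdentityLocal}. Hence $\chi_r\,\psi_{k,j,r}(\phi_{k,j,r}(\omega)) = \chi_r\,\omega$ on all of $\manifold$: on $U'_r$ by the lemma, and off $U'_r$ both sides vanish.

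Summing over $r$ and using $\sum_r\chi_r = 1$ yields $\omega$, which completes the argument.

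The only point needing care is the support statement. Since the partition of unity is subordinate to $(U'_r)$, I take $\operatorname{supp}\chi_r\subset U'_r$, so $\chi_r$ vanishes identically outside $U'_r$. The pointwise identity then holds everywhere, not merely almost everywhere. No analytic obstacle arises, because the whole argument is algebraic and pointwise.
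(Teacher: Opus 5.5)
Your proposal is correct and follows the paper's own proof: pull $\chi_r$ out of $\psi_{k,j,r}$ by $C^\infty(\manifold)$-linearity, apply \Cref{phipsiIdentityLocal} on the support of $\chi_r$ (contained in $U'_r$), and sum the partition of unity. Your reading of the index $\abs{J}=k+1$ in the definition of $\Psi_{k,j}$ as $\abs{J}=j$ is also the right correction of that typo.
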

\begin{proof}
  Both $\phi_{k,r}$ and $\psi_{k,r}$ are linear over scalar-valued functions, so
  \begin{equation*}
    \Psi_k(\Phi_k(\omega)) = \sum_{r=1}^\maxr \psi_{k,r}( \chi_r \phi_{k,r}(\omega) ) = \sum_{r=1}^\maxr \chi_r \psi_{k,r}(\phi_{k,r}(\omega)) = \sum_{r=1}^\maxr \chi_r \omega = \omega
  \end{equation*}
  where we used \Cref{phipsiIdentityLocal} and the fact that $\chi_r$ is supported in $U'_r$.
\end{proof}

Write $\vec{\ed}_k$ for the operator that applies $\ed_k$ to each element of a vector (of unspecified size) whose entries are $k$-forms.
\begin{lemma}\label{dPhiRelation}
  For all values of $k,j$,
  \begin{equation*}
    \vec{\ed}_{\Dim-j+k} \Phi_{k,j}(\omega) = \Phi_{k+1,j}(\ed_k\omega)
    ,
  \end{equation*}
  and for $k+1\leq j$,
  \begin{equation*}
    \ed_k\omega = \Psi_{k+1,j}(\vec{\ed}_{\Dim-j+k}\Phi_{k,j}(\omega))
    .
  \end{equation*}
\end{lemma}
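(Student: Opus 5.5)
The plan is to verify the first identity componentwise and then derive the second one from \Cref{PsiPhiIdentity}.

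For the first identity, fix $r$ and $J$ with $\abs{J}=j$. The corresponding entry of $\vec{\ed}_{\Dim-j+k}\Phi_{k,j}(\omega)$ is $\ed(\omega\wedge \ed y_r^{J^c})$. By the Leibniz rule,
\begin{equation*}
  \ed(\omega\wedge \ed y_r^{J^c}) = \ed_k\omega\wedge \ed y_r^{J^c} + (-1)^k\,\omega\wedge \ed(\ed y_r^{J^c})
  .
\end{equation*}
The form $\ed y_r^{J^c}$ is a wedge product of exact $1$-forms $\ed y_r^i$. Since $\ed\circ\ed=0$ and the Leibniz rule applies to each factor, $\ed(\ed y_r^{J^c})=0$. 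This step is global: the functions $y_r^i$ are smooth on all of $\manifold$, so no restriction to $U'_r$ is needed. Hence the entry equals $\ed_k\omega\wedge\ed y_r^{J^c}$, which is exactly the $(r,J)$ entry of $\Phi_{k+1,j}(\ed_k\omega)$. The identity therefore holds for all $k,j$, including the degenerate ranges where both sides vanish.

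For the second identity, assume $k+1\leq j$. Apply \Cref{PsiPhiIdentity} with $k+1$ in place of $k$; this is legitimate precisely because $k+1\leq j$. It gives $\Psi_{k+1,j}(\Phi_{k+1,j}(\ed_k\omega))=\ed_k\omega$. Substituting the first identity for $\Phi_{k+1,j}(\ed_k\omega)$ then gives $\ed_k\omega=\Psi_{k+1,j}(\vec{\ed}_{\Dim-j+k}\Phi_{k,j}(\omega))$.

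The computation is routine; the only point needing care is bookkeeping. I would check that the degrees match: $\Phi_{k,j}(\omega)$ has entries of degree $\Dim-j+k$, and after differentiation they have degree $\Dim-j+(k+1)$, which is the codomain degree of $\Phi_{k+1,j}$. I would also make sure that \Cref{PsiPhiIdentity} is used as stated for general $j$. In the displayed definition of $\Psi_{k,j}$ the index condition reads $\abs{J}=k+1$; this should be read as $\abs{J}=j$. For non-smooth $\omega$, I would interpret both sides distributionally. Every operation involved is either multiplication or wedging by smooth forms or exterior differentiation, so the smooth case extends by duality or density.
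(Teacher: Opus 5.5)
Your proposal is correct and matches the paper's proof: the first identity is checked entrywise via $\ed_{\Dim-j+k}(\omega\wedge\ed y_r^{J^c})=\ed_k\omega\wedge\ed y_r^{J^c}$, and the second follows from \Cref{PsiPhiIdentity} applied in degree $k+1$, which is where the hypothesis $k+1\leq j$ is used. You also correctly read the index condition $\abs{J}=k+1$ in the definition of $\Psi_{k,j}$ as a typo for $\abs{J}=j$.
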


\begin{proof}
  The first statement follows immediately by considering vector entries separately:
  \begin{equation*}
    \ed_{\Dim-j+k} (\omega\wedge \ed y_r^{J^c}) = \ed_k\omega \wedge \ed y_r^{J^c}
  \end{equation*}
  for each $r,J$.
  When $j\geq k+1$, the second statement follows from the first because of \Cref{PsiPhiIdentity}.
\end{proof}

\begin{remark}\label{rem:dkAsdm}
  Noting that $\Dim-j+k$ runs between $k$ and $\Dim-1$ as $j$ ranges over all admissible values $j\in\set{k+1,\dotsc,\Dim}$, we see that a $k$-form exterior derivative can be expressed in terms of vectors of $m$-form exterior derivatives for any $m\in\set{k,\dotsc,\Dim-1}$.
  In particular, exterior derivatives of all form degrees can be expressed in terms of $(\Dim-1)$-form exterior derivatives.
  This is analogous to the assertion in \Cref{sss:CurrentsAtomicDiscussion} that the divergence is the most generic first-order co-cancelling annihilator.
\end{remark}

Henceforth, we will use only the mappings $\Phi_{k,k+1}$, $\Phi_{k+1,k+1}$, $\Psi_{k,k+1}$, all of which have $j=k+1$.
For simplicity, we omit the subscript $j$ and write $\Phi_k=\Phi_{k,k+1}$, $\Phi_{k+1}=\Phi_{k,k+1}$, $\Psi_k=\Psi_{k,k+1}$.
Thus $\Phi_k$ maps $k$-forms into vectors of $(\Dim-1)$-forms; $\Psi_k$ does the same in reverse; and $\Phi_{k+1}$ maps $(k+1)$-forms into vectors of $\Dim$-forms.

\Cref{dPhiRelation} shows that a closed $k$-form can be promoted to a vector of closed $(\Dim-1)$-forms, and this will allow us to prove \Cref{int01Bound_Closed_kform} using \Cref{int01Bound_dF=0}.
Even if the $k$-form $\omega$ is not closed, we will still be able to express $\vec{\ed}_{\Dim-1}\Phi_k(\omega)$ in terms of $\ed_k\omega$, which will allow us to prove \Cref{int01Bound_General_kform} using \Cref{int01Bound_General}.

We will need bounds on the effect of the functions $\Phi_k$, $\Phi_{k+1}$, $\Psi_k$.
To state these bounds, we make the following convention for vector norms.
If $\vec{\eta}=(\eta_s)_{s\in\mathcal{S}}\in(\Lambda^m)^\mathcal{S}$ is a vector of $m$-forms indexed by a finite set $\mathcal{S}$, with values at a point $p\in\manifold$ denoted $\vec{\eta}(p)=(\eta_s(p))_{s\in\mathcal{S}}\in (\wedge^m T_p^*\manifold)^\mathcal{S}$, define
\begin{equation*}
  \norm{\vec{\eta}(p)}_p = \sum_{s\in\mathcal{S}} \norm{\eta_s(p)}_p
  .
\end{equation*}
(Here we have used an $\ell^1$-type norm as far as the vector aspect is concerned, but because we consider finite vectors this is unimportant and, modulo constants, the sum over $s\in\mathcal{S}$ could be replaced by a maximum.)

\begin{lemma}\label{PhiPsiBounded}
  The linear operators $\Phi_k, \Phi_{k+1}, \Psi_k$ are uniformly pointwise bounded,
  \begin{equation*}
    \norm{\Phi_{k}(\omega)(p)}_p \lesssim \norm{\omega(p)}_p
    ,\quad
    \norm{\Phi_{k+1}(\zeta)(p)}_p \lesssim \norm{\zeta(p)}_p
    ,\quad
    \norm{\Psi_{k}(\vec{\eta})(p)}_p \lesssim \norm{\vec{\eta}(p)}_p
    ,
  \end{equation*}
  and in particular
  \begin{align*}
    \norm{\Phi_{k}(\omega)}_{L^1\bigl( (\Lambda^{\Dim-1})^{\binom{\Dim}{k+1}\maxr} \bigr)}
    &\lesssim \norm{\omega}_{L^1(\Lambda^k)}
    ,
    \\
    \norm{\Phi_{k+1}(\zeta)}_{L^1\bigl( (\Lambda^\Dim)^{\binom{\Dim}{k+1}\maxr} \bigr)}
    &\lesssim \norm{\zeta}_{L^1(\Lambda^k)}
    ,
    \\
    \norm{\Psi_{k,j}(\vec{\eta})}_{L^{\Dim/(\Dim-\alpha),1}(\Lambda^k)}
    &\lesssim \norm{\vec{\eta}}_{ L^{\Dim/(\Dim-\alpha),1}\bigl( (\Lambda^{\Dim-1})^{\binom{\Dim}{k+1}\maxr} \bigr) }
    ,
  \end{align*}
  uniformly over all $p\in\manifold$, $\omega\in\Lambda^k$, $\zeta\in\Lambda^{k+1}$, $\vec{\eta}\in(\Lambda^{\Dim-1})^{\binom{\Dim}{k+1}\maxr}$.
\end{lemma}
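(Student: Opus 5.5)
The plan is to prove the pointwise bounds first and deduce the integral bounds from them. For $\Phi_k$ and $\Phi_{k+1}$, each entry has the form $\omega\wedge \ed y_r^{J^c}$. The wedge product on $\wedge^\bullet T_p^*\manifold$ satisfies $\norm{\omega\wedge\theta}_p\le C_\Dim\norm{\omega}_p\norm{\theta}_p$ with a purely dimensional constant. It therefore suffices to bound $\norm{\ed y_r^{J^c}(p)}_p$ uniformly in $p$. Since $y_r^i=\rho_r x_r^i$ is smooth on $\manifold$, extended by zero outside $U_r$, each $\ed y_r^i$ is a continuous $1$-form on the compact manifold $\manifold$. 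Hence $\sup_p\norm{\ed y_r^i(p)}_p<\infty$, and repeating the wedge estimate bounds $\norm{\ed y_r^{J^c}(p)}_p$. Summing over the finitely many indices $r$ and $J$ gives the first two pointwise bounds.

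For $\Psi_k$, fix $p$ and a unit-norm-compatible set of arguments. Each summand $\chi_r\eta_{r,J}(v_1,\dots,v_k,(Y_r)_{J^c})$ is evaluated on tangent vectors $Y_{r,i}(p)=\rho_r\,\partial/\partial x_r^i$. These vector fields are smooth on $\manifold$, hence uniformly bounded in the Riemannian norm by compactness. Using $\abs{\eta(p;w_1,\dots,w_m)}\le C\norm{\eta(p)}_p\prod\norm{w_i}_p$, which is equivalence of the comass-type and Euclidean norms on the finite-dimensional space $\wedge^m T_p^*\manifold$, with a constant depending only on $m$ and $\Dim$, we obtain
\[
\norm{\Psi_k(\vec\eta)(p)}_p\lesssim \sum_{r,J}\abs{\chi_r(p)}\norm{\eta_{r,J}(p)}_p\le\norm{\vec\eta(p)}_p .
\]
To pass from the evaluation bound to the norm of the $k$-form, I will again use equivalence of norms on $\wedge^kT_p^*\manifold$.

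The integral bounds then follow. By the definition \eqref{FormNormsFromScalarNorms}, the norms of forms are norms of the scalar functions $p\mapsto\norm{\cdot(p)}_p$, and vector norms are taken pointwise in the $\ell^1$ sense. The $L^1$ bounds follow by integrating the pointwise inequalities. For the Lorentz bound on $\Psi_k$, I will use that $L^{\Dim/(\Dim-\alpha),1}$ is a rearrangement-invariant lattice norm, or at least a quasinorm equivalent to a norm. So $0\le f\le Cg$ pointwise implies $\norm{f}\le C\norm{g}$, which is immediate from the decreasing rearrangement.

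The only point that needs care is that the constants are uniform. Uniformity in $p$ holds because each constant comes either from a compactness supremum of a smooth object or from a purely algebraic inequality on exterior powers of a $\Dim$-dimensional inner product space. Uniformity in $k$ holds because there are finitely many $k$.
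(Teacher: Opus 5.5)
Your proposal is correct and follows essentially the same route as the paper. The paper likewise gets the pointwise bounds from the uniform boundedness of the finitely many smooth $1$-forms $\ed y_r^i$ and vector fields $Y_{r,i}$ on the compact manifold, and the integral bounds from monotonicity of the $L^1$ and $L^{\Dim/(\Dim-\alpha),1}$ norms in the pointwise norms; you simply make explicit the dimension-only algebraic constants (wedge products, evaluation, equivalence of norms on $\wedge^k T_p^*\manifold$) that the paper leaves implicit.
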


\begin{proof}
  The first assertion follows because the $\ed y_r^i$ and $Y_{r,i}$ form a finite collection of smooth 1-forms and vector fields on a compact manifold, and therefore have uniformly bounded norms.
  The remaining assertions follow because the $L^1$ and $L^{\Dim/(\Dim-\alpha),1}$ norms depend monotonically on the pointwise norms $\norm{\cdot}_p$.
\end{proof}

With these preparations, we can now prove \Cref{int01Bound_Closed_kform,int01Bound_General_kform}.
The proofs use \Cref{int01Bound_dF=0,int01Bound_General}, whose proofs in the non-orientable case we have deferred until \Cref{ss:NonorientableProof}.
Subject to that caveat, the argument that follows does not use orientability.

\begin{proof}[Proof of \Cref{int01Bound_General_kform}]
The argument works by comparing the $k$-form heat propagator $e^{t\Delta_k}\omega$ to the quantity $\Psi_k(e^{t\vec{\Delta}_{\Dim-1}}\Phi_k(\omega))$ involving an $(\Dim-1)$-form propagator:
\begin{align}
  \int_0^1& t^{\alpha/2-1} \bignorm{ e^{t\Delta_k}\omega  }_{L^{\Dim/(\Dim-\alpha),1}} \,\dd t
  \notag\\&
  \leq \int_0^1 t^{\alpha/2-1} \bignorm{\Psi_k(e^{t\vec{\Delta}_{\Dim-1}}\Phi_k(\omega)) }_{L^{d/(d-\alpha),1}} \,\dd t
  \notag\\&\quad
  + \int_0^1 t^{\alpha/2-1} \bignorm{ e^{t\Delta_k}\omega - \Psi_k(e^{t\vec{\Delta}_{\Dim-1}}\Phi_k(\omega)) }_{L^{d/(d-\alpha),1}} \,\dd t
  \notag\\&
  =: I+\mathit{II}
  .
  \label{IandII}
\end{align}
In particular, the estimate for $I$ follows directly from $(\Dim-1)$-form bounds:
\begin{align}
  \label{PromotedPropagatedBound}
  I&= \int_0^1 t^{\alpha/2-1} \bignorm{ \Psi_{k}(e^{t\vec{\Delta}_{\Dim-1}}\Phi_{k}(\omega)) }_{L^{\Dim/(\Dim-\alpha),1}} \,\dd t
  \\&
  \lesssim \int_0^1 t^{\alpha/2-1} \bignorm{ e^{t\vec{\Delta}_{\Dim-1}}\Phi_{k}(\omega) }_{L^{\Dim/(\Dim-\alpha),1}} \,\dd t
  &&\text{by \Cref{PhiPsiBounded}}
  \notag\\&
  \lesssim \norm{\Phi_{k}(\omega)}_{L^1} + \bignorm{\vec{\ed}_{\Dim-1}\Phi_{k}(\omega)}_{L^1}
  &&\text{by \Cref{int01Bound_General}}
  \notag\\&
  \lesssim \norm{\Phi_{k}(\omega)}_{L^1} + \bignorm{\Phi_{k+1}(\ed_k \omega)}_{L^1}
  &&\text{by \Cref{dPhiRelation}}
  \notag\\&
  \lesssim \norm{\omega}_{L^1} + \norm{\ed_k\omega}_{L^1}
  &&\text{by \Cref{PhiPsiBounded}}
  .
  \notag
\end{align}

To estimate $\mathit{II}$, we recall several results concerning the heat propagator on forms, see for instance \cite{BerGetVer1992}*{Theorem~2.30}.
It can be shown that the heat propagator is an integral operator, and we write $\mathrm{p}_{t}^k(p,q)(\cdot) \in\mathrm{Hom}(\wedge^k(T^*_q\manifold), \wedge^k(T^*_p\manifold))$ for the kernel corresponding to $e^{t\Delta_k}$, i.e.,
\begin{equation}\label{k_heat_kernel}
  e^{t\Delta_k}\omega(p) = \int_\manifold \mathrm{p}_{t}^k(p,q)(\omega(q)) \,\dd V(q)
  .
\end{equation}
Since $\Psi_k$ and $\Phi_k$ operate pointwise, we have similarly
\begin{align}
  \Psi_k(e^{t\vec{\Delta}_{\Dim-1}}\Phi_k(\omega)) &= \int_\manifold \widetilde{\mathrm{p}}_{t}^{k}(p,q) (\omega(q))\,\dd V(q)
  ,
\end{align}
where
\begin{equation}\label{Psi_Phi_kernel}
  \widetilde{\mathrm{p}}_{t}^{k}(p,q) = \Psi_k\vert_p \circ \mathbf{p}_{t}^{\Dim-1}(p,q) \circ \Phi_k\vert_q
  .
\end{equation}
In other words, the kernel for $\Psi_k \circ e^{t\vec{\Delta}_{\Dim-1}} \circ \Phi_k$ is the composition of the linear maps $\Psi_k\vert_p$, $\mathbf{p}_{t}^{\Dim-1}(p,q)$, and $\Phi_k\vert_q$, where $\mathbf{p}_{t}^{\Dim-1}(p,q)$ means the linear map $\mathrm{p}_{t}^{\Dim-1}(p,q)$ applied elementwise to a vector with entries in $\wedge^{\Dim-1}(T^*_q\manifold)$.

The kernels $\mathrm{p}_t^k,\mathrm{p}_t^{\Dim-1}$ are smooth for $t>0$, and for $t\to 0$ behave like scalar heat kernels times an asymptotic series in powers of $t$.
Namely, there are functions $V_{k,i}(p,q)$ with values in $\mathrm{Hom}(\wedge^{k}(T^*_q\manifold), \wedge^{k}(T^*_p\manifold))$ and depending smoothly on $p,q$ such that, defining
\begin{align}
  \mathrm{p}_t^{k,N}(p,q) &= \frac{\exp(-d_\manifold(p,q)^2/(4t))}{(4\pi t)^{\Dim/2}} \sum_{i=0}^N t^i V_{k,i}(p,q)
  ,
  \label{k_kernel_approx}
  \\
  \mathrm{p}_t^{\Dim-1,N}(p,q) &= \frac{\exp(-d_\manifold(p,q)^2/(4t))}{(4\pi t)^{\Dim/2}} \sum_{i=0}^N t^i V_{\Dim-1,i}(p,q)
  ,
  \label{n-1_kernel_approx}
\end{align}
and by fixing $N$ large enough (specifically, $N\geq\Dim/2$ suffices) we can ensure that
\begin{align}
  \|\mathrm{p}_t^k(p,q) - \mathrm{p}_t^{k,N}(p,q)\|_{p,q} &\lesssim 1
  ,
  \label{k_kernel_error}
  \\
  \|\mathrm{p}_t^{\Dim-1}(p,q) - \mathrm{p}_t^{\Dim-1,N}(p,q)\|_{p,q} &\lesssim 1
  ,
  \label{n-1_kernel_error}
\end{align}
uniformly over $0<t\leq 1$ and $p,q\in\manifold$, where $\norm{\cdot}_{p,q}$ means the operator norm on $\mathrm{Hom}(\wedge^{k}(T^*_q\manifold), \wedge^{k}(T^*_p\manifold))$.
Moreover
\begin{equation}\label{base_term_diagonal}
  V_{k,0}(p,p)=\mathrm{identity}_{\wedge^{k}(T^*_p\manifold)}
  ,\quad
  V_{\Dim-1,0}(p,p)=\mathrm{identity}_{\wedge^{\Dim-1}(T^*_p\manifold)}
  .
\end{equation}

By linearity, the kernel $\widetilde{\mathrm{p}}_t^k$ from \eqref{Psi_Phi_kernel} has a similar approximation
\begin{align}
  \widetilde{\mathrm{p}}_t^{k,N}(p,q)
  &= \frac{\exp(-d_\manifold(p,q)^2/(4t))}{(4\pi t)^{\Dim/2}} \sum_{i=0}^N t^i \widetilde{V}_{k,i}(p,q)
  \label{n-1_kernel_approx_promoted}
\end{align}
where $\widetilde{V}_{k,i}(p,q)=\Psi_k\vert_p \circ \mathbf{V}_{\Dim-1,i}(p,q) \circ \Phi_k\vert_q$ for all $i$, and where similarly $\mathbf{V}_{\Dim-1,i}(p,q)$ means $V_{\Dim-1,i}(p,q)$ applied entrywise to a vector with entries in $\wedge^{\Dim-1}(T_q^*\manifold)$.
The linear mappings $\Psi_k\vert_p, \Phi_k\vert_q$ vary smoothly as functions of $p,q\in\manifold$, and since $\manifold$ is compact it follows that they have uniformly bounded operator norms.
From the bound \eqref{n-1_kernel_error} it therefore follows that
\begin{equation}\label{kernel_error_promoted}
  \|\widetilde{\mathrm{p}}_t^k(p,q) - \widetilde{\mathrm{p}}_t^{k,N}(p,q)\|_{p,q} \lesssim 1
  ,
\end{equation}
again uniformly over $0<t\leq 1$ and $p,q\in\manifold$.

By \Cref{PsiPhiIdentity} and \eqref{base_term_diagonal},
\begin{align}
  \widetilde{V}_{k,0}(p,p) &= \Psi_k\vert_p \circ \mathrm{identity}_{(\wedge^{\Dim-1}(T^*_p\manifold))^{\maxr\binom{\Dim}{k+1}}} \circ \Phi_k\vert_p
  \notag\\&
  = \mathrm{identity}_{\wedge^k(T^*_p\manifold)}
  ,
\end{align}
and in particular $\widetilde{V}_{k,0}(p,p)=V_{k,0}(p,p)$ for all $p\in\manifold$.
Since both $V_{k,0}$ and $\widetilde{V}_{k,0}$ are smooth, it follows that
\begin{equation}
  \bignorm{V_{k,0}(p,q) - \widetilde{V}_{k,0}(p,q)}_{p,q} = O(d_\manifold(p,q))
  ,
\end{equation}
uniformly over $p,q\in\manifold$.
Meanwhile, the functions $V_{k,i},\widetilde{V}_{k,i}$ are continuous with compact support and therefore have uniformly bounded norms.
Combining,
\begin{align}
  &\norm{ \mathrm{p}_t^{k,N}(p,q) - \widetilde{\mathrm{p}}_t^{k,N}(p,q) }_{p,q} \notag\\&\quad
  \leq \frac{\exp(-d_\manifold(p,q)^2/(4t))}{(4\pi t)^{\Dim/2}} \left[ \bignorm{V_{k,0}(p,q) - \widetilde{V}_{k,0}(p,q)}_{p,q}
  \Bigg.\right.
  \notag\\&\qquad\qquad\qquad\qquad\qquad\qquad
  \left.
  + \sum_{i=1}^N t^i \bigl( \bignorm{V_{k,i}(p,q)}_{p,q} + \bignorm{\widetilde{V}_{k,i}(p,q)}_{p,q} \bigr) \right]
  \notag\\&\quad
  \lesssim \frac{\exp(-d_\manifold(p,q)^2/(4t))}{(4\pi t)^{\Dim/2}} [ d_\manifold(p,q) + t ]
  ,
  \label{promotion_error_N}
\end{align}
uniformly over $0<t\leq 1$ and $p,q\in\manifold$, where the last bound has used $t^i\leq t$ for all $i\geq 1$.
Further combining \eqref{k_kernel_error}, \eqref{kernel_error_promoted}, and \eqref{promotion_error_N},
\begin{equation}\label{promotion_error_total}
  \norm{\mathrm{p}_t^k(p,q) - \widetilde{\mathrm{p}}_t^k(p,q)}_{p,q} \lesssim \frac{\exp(-d_\manifold(p,q)^2/(4t))}{(4\pi t)^{\Dim/2}} [ d_\manifold(p,q) + t ] + 1
  .
\end{equation}

Considered as a function of $p$ with $q$ fixed, the upper bound in \eqref{promotion_error_total} has Lorentz norm bounded by
\begin{equation}\label{UpperBoundLorentz}
  \left\|\frac{\exp(-d_\manifold(\cdot,q)^2/(4t))}{(4\pi t)^{\Dim/2}}  \left(d_\manifold(\cdot,q) + t \right)+1\right\|_{L^{\Dim/(\Dim-\alpha),1}}
  \lesssim t^{-\alpha/2}t^{1/2} + 1
  .
\end{equation}
We can verify \eqref{UpperBoundLorentz} using the \InterpolationLemma\ as follows.
Recall from \eqref{dplustTimesHeatKernelL1Bound} that the non-constant part of \eqref{UpperBoundLorentz} has $L^1$ norm of order $t^{1/2}$, while it is easy to check that its $L^\infty$ norm is of order $t^{1/2-\Dim/2}$.
Thus
\begin{equation*}
  \norm{ \frac{\exp(-d_\manifold(\cdot,q)^2/(4t))}{(4\pi t)^{\Dim/2}}  \left(d_\manifold(\cdot,q) + t \right) }_{L^{\Dim/(\Dim-\alpha),1}} \lesssim (t^{1/2})^{1-\alpha/\Dim} (t^{1/2-\Dim/2})^{\alpha/\Dim}
  ,
\end{equation*}
giving the term $t^{-\alpha/2+1/2}$ as required.
Meanwhile the constant term gives a trivial contribution by the compactness of the manifold.

Thus applying Minkowski's inequality for integrals to the pointwise upper bound
\begin{equation*}
  \norm{ e^{t\Delta_k}\omega(p) - \Psi_k(e^{t\vec{\Delta}_{\Dim-1}}\Phi_k(\omega)) (p)}_{p} \leq \int_\manifold \norm{\mathrm{p}_t^k(p,q) - \widetilde{\mathrm{p}}_t^k(p,q)}_{p,q} \norm{\omega(q)}_q \, \dd V(q)
\end{equation*}
yields
\begin{align*}
  &\bignorm{ e^{t\Delta_k}\omega - \Psi_k(e^{t\vec{\Delta}_{\Dim-1}}\Phi_k(\omega)) }_{L^{\Dim/(\Dim-\alpha),1}} \\&\quad
  \lesssim \int_\manifold \left\|\frac{\exp(-d_\manifold(\cdot,q)^2/(4t))}{(4\pi t)^{\Dim/2}}  \left(d_\manifold(\cdot,q) + t^{1/2} \right)+1\right\|_{L^{\Dim/(\Dim-\alpha),1}}\norm{\omega(q)}_q \,\dd V(q)
  \\&\quad
  \lesssim \left(t^{-\alpha/2}t^{1/2} +1\right)\norm{\omega}_{L^1}
  .
\end{align*}
Finally, returning to \eqref{IandII}, we have
\begin{align*}
  \mathit{II} &= \int_0^1 t^{\alpha/2} \bignorm{ e^{t\Delta_k}\omega - \Psi_k(e^{t\vec{\Delta}_{\Dim-1}}\Phi_k(\omega)) }_{L^{\Dim/(\Dim-\alpha),1}} \frac{\dd t}{t}
  \\&
  \lesssim \int_0^1 \left( t^{1/2} + t^{\alpha/2} \right)\norm{\omega}_{L^1} \frac{\dd t}{t}
  \\&
  \lesssim \norm{\omega}_{L^1}
\end{align*}
which along with \eqref{PromotedPropagatedBound} completes the proof.
\end{proof}

With the additional assumption that $\ed_k \omega=0$, we can prove \Cref{int01Bound_Closed_kform} without appealing to \Cref{int01Bound_General}.

\begin{proof}[Proof of \Cref{int01Bound_Closed_kform}]
  Now suppose $\ed_k\omega=0$ is given.
  Then \Cref{dPhiRelation} implies that $\vec{\ed}_{\Dim-1}\Phi_k(\omega)=0$, i.e., every entry of $\Phi_k(\omega)$ is a closed $(\Dim-1)$-form to which \Cref{int01Bound_dF=0} applies.
  Then the bound \eqref{PromotedPropagatedBound} can be replaced by
  \begin{align}
    \label{PromotedPropagatedBoundClosed}
    I&= \int_0^1 t^{\alpha/2-1} \bignorm{ \Psi_{k}(e^{t\vec{\Delta}_{\Dim-1}}\Phi_{k}(\omega)) }_{L^{\Dim/(\Dim-\alpha),1}} \,\dd t
    \\&
    \lesssim \int_0^1 t^{\alpha/2-1} \bignorm{ e^{t\vec{\Delta}_{\Dim-1}}\Phi_{k}(\omega) }_{L^{\Dim/(\Dim-\alpha),1}} \,\dd t
    &&\text{by \Cref{PhiPsiBounded}}
    \notag\\&
    \lesssim \norm{\Phi_{k}(\omega)}_{L^1}
    &&\text{by \Cref{int01Bound_dF=0}}
    \notag\\&
    \lesssim \norm{\omega}_{L^1}
    &&\text{by \Cref{PhiPsiBounded}}
    ,
    \notag
  \end{align}
  using \Cref{int01Bound_dF=0} instead of \Cref{int01Bound_General}.
  The rest of the proof is identical, and we find that $\mathit{II}\lesssim \norm{\omega}_{L^1}$ and thus $I+\mathit{II}\lesssim \norm{\omega}_{L^1}$, as required.
\end{proof}

\section{Proofs of the main results}\label{s:MainProofs}

\subsection{Riesz potential bounds}\label{ss:RieszProofs}

Up to this point, all our bounds and integrals rely ultimately on regularity and growth properties of the heat kernel in the interval $0<t\leq 1$.
To handle $t>1$, we will need different bounds powered by the assumption of orthogonality to the space of harmonic forms.

\begin{lemma}\label{exp_decay}
  Let $t_1>0$ be given.
  Then there exist constants $C_2<\infty$, $c_3>0$ such that, for all $k\in\set{0,\dotsc,n}$, for all $k$-forms $\xi \in \mathcal{H}^\perp(\Lambda^k)$, and for all $t\geq t_1$,
  \begin{align*}
    \|e^{t\Delta_k} \xi\|_{L^\infty(\Lambda^k)}  \leq C_2 \exp(-c_3t) \|\xi\|_{L^1(\Lambda^k)}
    .
  \end{align*}
\end{lemma}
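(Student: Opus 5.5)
We factor the propagator at time $t$ into three pieces and bound each separately:
\[
e^{t\Delta_k} = e^{(t_1/2)\Delta_k}\, e^{(t-t_1)\Delta_k}\, e^{(t_1/2)\Delta_k}.
\]
The first factor $e^{(t_1/2)\Delta_k}$, applied first, smooths from $L^1$ into $L^2$. The middle factor $e^{(t-t_1)\Delta_k}$ gives exponential decay in $L^2$, using the spectral gap on $\mathcal{H}^\perp$. The last factor $e^{(t_1/2)\Delta_k}$ maps $L^2$ back into $L^\infty$.

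\textbf{Spectral gap.} Since $\manifold$ is compact, $-\Delta_k$ is a nonnegative self-adjoint elliptic operator with discrete spectrum and an $L^2$-orthonormal eigenbasis of smooth forms. Its kernel is $\mathcal{H}(\Lambda^k)$. Let $\lambda_1^{(k)}>0$ be the smallest nonzero eigenvalue, and set $c_3=\min_{0\le k\le n}\lambda_1^{(k)}>0$; the minimum is over finitely many $k$, so it is positive.

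Orthogonality to harmonic forms is preserved by the propagator. By self-adjointness \eqref{HodgeLaplacianSelfAdjoint}, for harmonic $h$ we have
\[
\Angles{e^{s\Delta_k}\xi, h} = \Angles{\xi, e^{s\Delta_k}h} = \Angles{\xi,h} = 0 .
\]
Expanding in eigenforms then gives
\[
\|e^{s\Delta_k}\eta\|_{L^2}\le e^{-c_3 s}\|\eta\|_{L^2} \quad \text{for } \eta\in L^2\cap\mathcal{H}^\perp(\Lambda^k),\ s\ge 0.
\]

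\textbf{Smoothing at the fixed time $t_1/2$.} We need that $e^{(t_1/2)\Delta_k}$ is bounded $L^1\to L^2$ and $L^2\to L^\infty$. The heat kernel $\mathrm{p}^k_{t_1/2}(p,q)$ from \eqref{k_heat_kernel} is smooth on the compact space $\manifold\times\manifold$, hence uniformly bounded in operator norm. So $\|e^{(t_1/2)\Delta_k}\xi\|_{L^\infty}\lesssim\|\xi\|_{L^1}$, and since $V(\manifold)<\infty$ both required bounds follow. Alternatively, we can use the pointwise bound \eqref{CurrentEvolutionPointwise} together with \eqref{KLinftyBound} at the fixed time $\min(t_1/2,1)$.

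\textbf{Assembling the bound.} For $t\ge t_1$, let $\eta=e^{(t_1/2)\Delta_k}\xi$. Then $\eta\in L^2\cap\mathcal{H}^\perp$ with $\|\eta\|_{L^2}\lesssim \|\xi\|_{L^1}$. Applying the semigroup property and the three bounds in turn,
\[
\|e^{t\Delta_k}\xi\|_{L^\infty}\lesssim \|e^{(t-t_1)\Delta_k}\eta\|_{L^2}\lesssim e^{-c_3(t-t_1)}\|\xi\|_{L^1}.
\]
The factor $e^{c_3t_1}$ is absorbed into $C_2$. Taking the maximum of the constants over the finitely many $k$ gives uniformity in $k$.

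The only point needing care is justifying the $L^1$ input when $\xi$ is merely in $L^1$ rather than $L^2$. This is handled by applying the first smoothing step before using the spectral decomposition, which is why the factorization puts $e^{(t_1/2)\Delta_k}$ first. No step is a serious obstacle: the spectral gap and the smoothness of the heat kernel at a positive time are standard facts for elliptic operators on compact manifolds.
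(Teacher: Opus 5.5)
Your proposal is correct and follows the paper's proof step for step. Both use the factorization $e^{(t_1/2)\Delta_k}e^{(t-t_1)\Delta_k}e^{(t_1/2)\Delta_k}$, the $L^1\to L^2$ and $L^2\to L^\infty$ smoothing at the fixed time $t_1/2$, and the spectral-gap decay in $L^2$ on $\mathcal{H}^\perp(\Lambda^k)$. Your appeal to smoothness of the heat kernel at an arbitrary fixed positive time is also slightly more careful than the paper's citation of \Cref{CurrentEvolutionAsForm}, which is only stated for $t\le 1$.
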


\begin{proof}
Define $\epsilon=t_1/2$, so that for any $t\geq t_1$ we may express
\begin{align*}
  t= \epsilon  +(t-t_1)+ \epsilon
\end{align*}
and use the semigroup property of the heat propagator to obtain
\begin{align*}
  e^{t\Delta_k} \xi = e^{\epsilon\Delta_k} e^{(t-t_1)\Delta_k} e^{\epsilon\Delta_k} \xi
  .
\end{align*}
By \Cref{CurrentEvolutionAsForm} and H\"older's inequality we deduce
\begin{align}
  \sup_{p\in \manifold} \norm{e^{t\Delta_k} \xi(p)}_p  &\leq C \sup_{p\in \manifold} \int_\manifold K_\epsilon(p,q) \norm{ e^{(t-t_1)\Delta_k} e^{\epsilon\Delta_k} \xi(q)}_q \,\dd V(q)
  \notag\\
  &\lesssim \|e^{(t-t_1)\Delta_k} e^{\epsilon\Delta_k}\xi\|_{L^2(\Lambda^k)}
  \label{LargetLinftyFromL2}
  .
\end{align}
A similar argument shows that
\begin{equation}\label{LargetL2FromL1}
  \|e^{\epsilon\Delta_k}\xi\|_{L^2(\Lambda^k)}  \lesssim \|\xi\|_{L^1(\Lambda^k)}
\end{equation}
and therefore it remains to bound $\|e^{(t-t_1)\Delta_k} e^{\epsilon\Delta_k}\xi\|_{L^2(\Lambda^k)}$ in terms of $\|e^{\epsilon\Delta_k}\xi\|_{L^2(\Lambda^k)}$.
The proof is similar to \cite{BerGetVer1992}*{p.~88}.
Let $\{\phi_i\}_{i=1}^\infty$ be an orthonormal basis for $L^2(\Lambda^k)$ consisting of eigenforms of $\Delta_k$ with corresponding eigenvalues $\{\lambda_i\}_{i=1}^\infty$.
The Laplacian $\Delta_k$ is positive semi-definite with a purely discrete spectrum, so finitely many eigenvalues $\lambda_i$ may be zero but the remaining eigenvalues satisfy $\min\set{\lambda_i\colon \lambda_i\neq 0}=\delta>0$.
Using the relation
\begin{align*}
  e^{(t-t_1)\Delta_k}\phi_i = \exp(-\lambda_i (t-t_1)) \phi_i
\end{align*}
along with Parseval's identity,
\begin{align*}
  \|e^{(t-t_1)\Delta_k} e^{\epsilon\Delta_k}\xi\|_{L^2(\Lambda^k)} &=\left(\sum_{i=1}^\infty \Angles{e^{(t-t_1)\Delta_k} e^{\epsilon\Delta_k}\xi,\phi_i}^2 \right)^{1/2}\\
  &= \left(\sum_{i=1}^\infty \exp(-2\lambda_i (t-t_1)) \Angles{e^{\epsilon\Delta_k}\xi,\phi_i}^2 \right)^{1/2}.
\end{align*}
By assumption, $\xi$ is orthogonal to the space of harmonic forms, so $e^{\epsilon\Delta_k}\xi$ is as well and therefore all of the terms with $\lambda_i=0$ vanish.
This implies
\begin{align*}
  \|e^{(t-t_1)\Delta_k} e^{\epsilon\Delta_k}\xi\|_{L^2(\Lambda^k)}  \leq e^{-\delta(t-t_1)}  \|e^{\epsilon\Delta_k}\xi\|_{L^2(\Lambda^k)}
  ,
\end{align*}
and together with \eqref{LargetLinftyFromL2} and \eqref{LargetL2FromL1} this completes the proof.
\end{proof}

\begin{corollary}\label{int1inftyBound}
  For all $k\in\set{0,\dotsc,n}$ and uniformly over all $k$-forms $F \in \mathcal{H}^\perp(\Lambda^k)$,
  \begin{equation*}
    \int_1^\infty t^{\alpha/2-1} \norm{e^{t\Delta_k} F}_{L^{\Dim/(\Dim-\alpha),1}(\Lambda^k)} \,\dd t
    \lesssim \norm{F}_{L^1(\Lambda^k)}
    .
  \end{equation*}
\end{corollary}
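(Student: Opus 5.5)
The plan is to bound the Lorentz norm pointwise in $t$ using the \InterpolationLemma\ and \Cref{exp_decay}, and then integrate the resulting exponentially decaying factor.

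Fix $t_1=1$ in \Cref{exp_decay}. This gives constants $C_2,c_3$ such that for all $t\geq 1$ and all $F\in\mathcal{H}^\perp(\Lambda^k)$,
\begin{equation*}
  \norm{e^{t\Delta_k}F}_{L^\infty}\leq C_2 e^{-c_3 t}\norm{F}_{L^1}.
\end{equation*}

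We also need an $L^1$ bound at large times. Since $\manifold$ is compact and $V(\manifold)<\infty$, we have
\begin{equation*}
  \norm{e^{t\Delta_k}F}_{L^1}\leq V(\manifold)\norm{e^{t\Delta_k}F}_{L^\infty}\leq C e^{-c_3t}\norm{F}_{L^1}.
\end{equation*}
This avoids needing any uniform-in-$t$ $L^1$ contractivity of the form heat semigroup, which could fail because of curvature terms.

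Next apply the \InterpolationLemma:
\begin{equation*}
  \norm{e^{t\Delta_k}F}_{L^{\Dim/(\Dim-\alpha),1}}\lesssim \norm{e^{t\Delta_k}F}_{L^1}^{1-\alpha/\Dim}\norm{e^{t\Delta_k}F}_{L^\infty}^{\alpha/\Dim}\lesssim e^{-c_3t}\norm{F}_{L^1}.
\end{equation*}
Alternatively, one can observe directly that on a finite measure space the $L^{r,1}$ norm is dominated by a constant times the $L^\infty$ norm.

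Finally, integrate against $t^{\alpha/2-1}$:
\begin{equation*}
  \int_1^\infty t^{\alpha/2-1}e^{-c_3t}\,\dd t<\infty,
\end{equation*}
which gives the claimed bound with a constant depending only on $\alpha$ and $\manifold$. Since $k$ ranges over finitely many values, we take the maximum of the constants over $k$ to get uniformity.

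There is no serious obstacle. The only point requiring care is that \Cref{exp_decay} is stated for $\xi\in\mathcal{H}^\perp(\Lambda^k)$ with $L^1$ data, whereas its proof uses smoothing through $L^2$. For general $F\in L^1\cap\mathcal{H}^\perp$, either the lemma applies directly as stated, or one approximates by smooth forms and passes to the limit using Fatou-type lower semicontinuity as in \Cref{FatouForCurrentNorms}.
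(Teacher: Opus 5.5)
Your proposal is correct and follows essentially the same route as the paper: compactness turns the $L^\infty$ bound into an $L^1$ bound, the \InterpolationLemma\ then controls the $L^{\Dim/(\Dim-\alpha),1}$ norm by $V(\manifold)^{1-\alpha/\Dim}\norm{e^{t\Delta_k}F}_{L^\infty}$, and \Cref{exp_decay} with $t_1=1$ leaves the integrable factor $t^{\alpha/2-1}e^{-c_3t}$. Your closing worry about approximation is unnecessary, since \Cref{exp_decay} is stated for arbitrary $\xi\in\mathcal{H}^\perp(\Lambda^k)$ and applies directly.
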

\begin{proof}
  Because of the compactness of the manifold, $L^\infty$ bounds immediately give $L^1$ bounds, whereupon the \InterpolationLemma\ gives the Lorentz bound
  \begin{equation*}
    \| e^{t\Delta_k} F\|_{L^{\Dim/(\Dim-\alpha),1}(\Lambda^k)}\lesssim V(\manifold)^{1-\alpha/\Dim} \| e^{t\Delta_k} F \|_{L^{\infty}(\Lambda^k)}
    .
  \end{equation*}
  Thus \Cref{exp_decay} with $t_1=1$ gives
  \begin{align*}
    \int_1^\infty t^{\alpha/2-1} \|e^{t\Delta_k}  F \|_{L^{\Dim/(\Dim-\alpha),1}(\Lambda^k)}\,\dd t &\lesssim \int_1^\infty t^{\alpha/2-1} \|e^{t\Delta_k}  F \|_{L^{\infty}(\Lambda^k)}\,\dd t \\
    &\lesssim \int_1^\infty t^{\alpha/2-1}  \exp(-c_3t) \|F\|_{L^1(\Lambda^k)}\,\dd t \\
    &\lesssim \|F\|_{L^1(\Lambda^k)}
    .
    \qedhere
  \end{align*}
\end{proof}

\begin{proof}[Proof of \Cref{BesovClosedCoclosed} for orientable manifolds]
  Throughout, let the $k$-form $F\in L^1\cap\mathcal{H}^\perp(\Lambda^k)$ be orthogonal to the space of harmonic $k$-forms.
  If $\ed_k F=0$, the result follows immediately from \Cref{int01Bound_Closed_kform,int1inftyBound}.

  To handle $\ed^*_k F$ when $k\in\set{1,\dotsc,\Dim}$, apply Hodge duality.
  Define the $(\Dim-k)$-form
  \begin{equation}
    \tilde{F} = \star_k F
    .
  \end{equation}
  The Hodge star is an isometry; preserves the Hodge Laplacian and hence also the heat propagator, Riesz transform, and orthogonality to harmonic forms (apart from the degree of the forms concerned); and interchanges $\ed$ with $\ed^*$ modulo signs, see \Cref{DeltaAndStar} and \eqref{dAndStar}.
  Thus
  \begin{equation}\label{starInterchanges}
    \begin{aligned}
      \bignorm{\tilde{F}(p)}_p &= \bignorm{F(p)}_p \quad\text{for all }p\in\manifold
      ,
      \\
      \bignorm{\tilde{F}}_{L^1(\Lambda^{\Dim-k})} &= \bignorm{F}_{L^1(\Lambda^k)}
      \\
      e^{t\Delta_{\Dim-k}}\tilde{F} &= \star_k e^{t\Delta_k}F
      ,
      \\
      \bignorm{e^{t\Delta_{\Dim-k}}\tilde{F}}_{L^{\Dim/(\Dim-\alpha),1}(\Lambda^{\Dim-k})} &= \bignorm{e^{t\Delta_k}F}_{L^{\Dim/(\Dim-\alpha),1}(\Lambda^k)}
      ,
      \\
      \ed_{\Dim-k}\tilde{F} &= (-1)^k \star_{k-1} \ed^*_k F
      ,
      \\
      \bignorm{\ed_{\Dim-k}\tilde{F}}_{L^1(\Lambda^{\Dim-k+1})} &= \bignorm{\ed^*_k F}_{L^1(\Lambda^{k-1})}
      .
    \end{aligned}
  \end{equation}
  In particular, the assumption $\ed^*_k F=0$ gives $\ed_{\Dim-k}\tilde{F}=0$, so we may apply our preceding result to the closed $\tilde{k}$-form $\tilde{F}$, where $\tilde{k}=\Dim-k\in\set{1,\dotsc,\Dim-1}$, to find
  \begin{align*}
    \int_0^\infty t^{\alpha/2-1} \bignorm{e^{t\Delta_k}F}_{L^{\Dim/(\Dim-\alpha),1}(\Lambda^k)} \, \dd t
    &= \int_0^\infty t^{\alpha/2-1} \bignorm{e^{t\Delta_{\Dim-k}}\tilde{F}}_{L^{\Dim/(\Dim-\alpha),1}(\Lambda^{\Dim-k})} \, \dd t
    \\&
    \lesssim \bignorm{\tilde{F}}_{L^1(\Lambda^{\Dim-k})} = \bignorm{F}_{L^1(\Lambda^k)}
    .
    \qedhere
  \end{align*}
\end{proof}

\begin{proof}[Proof of \Cref{RieszClosedCoclosed_k} for orientable manifolds]
  The bound \eqref{potentialnodiracl1} follows immediately from \Cref{BesovClosedCoclosed} by Minkowski's inequality for integrals.
\end{proof}

\begin{proof}[Proof of \Cref{divinLresult} for orientable manifolds]
  The bound \eqref{boundwithd} follows immediately from \Cref{int01Bound_General_kform} and \Cref{int1inftyBound}.
  The bound \eqref{boundwithdstar} follows from \eqref{boundwithd} by the same Hodge duality argument as in the proof of \Cref{BesovClosedCoclosed}, noting from \eqref{starInterchanges} that the term $\bignorm{\ed_{\Dim-k}\tilde{F}}_{L^1(\Lambda^{\Dim-k+1})}$ in the resulting upper bound reduces to the term $\norm{\ed^*_k F}_{L^1(\Lambda^{k-1})}$ in \eqref{boundwithdstar}.
\end{proof}

\subsection{Removing the orientability assumption}\label{ss:NonorientableProof}

\begin{proof}[Proofs for non-orientable manifolds]
  Now let $\manifold$ be a general compact manifold, not necessarily orientable, and let $\widehat{\manifold}$ be the orientable double cover of $\manifold$.
  That is, $\widehat{\manifold}$ is an orientable Riemannian manifold and there is a natural local isometry $\pi\colon\widehat{\manifold}\to\manifold$ such that $\widehat{\pi}$ is two-to-one, i.e., $\widehat{\pi}^{-1}(\set{p})$ contains exactly two elements of $\widehat{\manifold}$ for each $p\in\manifold$; see \cite{LeeSmoothManifolds}*{Chapter 15}.
  Since $\manifold$ is compact, so is $\widehat{\manifold}$.
  The conclusions of \Cref{RieszClosedCoclosed_k,divinLresult,BesovClosedCoclosed,int01Bound_dF=0,int01Bound_General,int01Bound_Closed_kform,int01Bound_General_kform} therefore apply to $\widehat{\manifold}$.

  Let $F\in C^\infty(\Lambda^k\manifold)$ be a smooth $k$-form on $\manifold$, and write $\widehat{F}=\widehat{\pi}^\# F\in C^\infty(\Lambda^k\widehat{\manifold})$ for its pull-back to $\widehat{\manifold}$.
  Here we write $\Lambda^k\manifold,\Lambda^k\widehat{\manifold}$ to distinguish forms on $\manifold$ or $\widehat{\manifold}$.
  By general properties of pull-backs and exterior derivatives, $\ed_k \widehat{F} = \widehat{\pi}^\# \ed_k F$, where on the left-hand side the exterior derivative is taken in $\widehat{\manifold}$.
  Moreover, since $\widehat{\pi}$ is a local isometry, it follows that $\widehat{\Delta}_k \widehat{F} = \widehat{\pi}^\# \Delta_k F$, where $\widehat{\Delta}_k$ means the Hodge Laplacian on $\widehat{\manifold}$, and consequently $e^{t\widehat{\Delta}_k}\widehat{F} = \widehat{\pi}^\# e^{t\Delta_k} F$.

  Note that in any compact Riemannian manifold, a $k$-form $\omega$ is orthogonal to the space of harmonic $k$-forms for that manifold if and only if its heat propagator tends to 0 as $t\to\infty$.
  Since $e^{t\widehat{\Delta}_k}\widehat{F} = \widehat{\pi}^\# e^{t\Delta_k} F$, it follows that if $F$ is orthogonal to the space of harmonic $k$-forms for $\manifold$, then $\widehat{F}$ is orthogonal to the space of harmonic $k$-forms for $\widehat{\manifold}$.

  The local isometry property also implies that $\ed^*_k \widehat{F} = \widehat{\pi}^\# \ed^*_k F$.
  For the same reason, pointwise norms coincide: for all $\widehat{p}\in\widehat{\manifold}$,
  \begin{align*}
    \bignorm{\widehat{F}(\widehat{p})}_{\widehat{p}} &= \bignorm{F(\widehat{\pi}(\widehat{p}))}_{\widehat{\pi}(\widehat{p})}
    ,
    \\
    \bignorm{\ed_k\widehat{F}(\widehat{p})}_{\widehat{p}} &= \bignorm{\ed_k F(\widehat{\pi}(\widehat{p}))}_{\widehat{\pi}(\widehat{p})}
    ,
    \\
    \bignorm{\ed^*_k\widehat{F}(\widehat{p})}_{\widehat{p}} &= \bignorm{\ed^*_k F(\widehat{\pi}(\widehat{p}))}_{\widehat{\pi}(\widehat{p})}
    ,
    \\
    \bignorm{e^{t\widehat{\Delta}_k}\widehat{F}(\widehat{p})}_{\widehat{p}} &= \norm{e^{t\Delta_k}F(\widehat{\pi}(\widehat{p}))}_{\widehat{\pi}(\widehat{p})}
    .
  \end{align*}
  Integrate over $\widehat{p}$, noting that because of the double cover, the push-forward under $\widehat{\pi}$ of volume measure on $\widehat{\manifold}$ is twice the volume measure on $\manifold$.
  Hence
  \begin{align*}
    \bignorm{\widehat{F}}_{L^1(\Lambda^k\widehat{\manifold})} &= 2\bignorm{F}_{L^1(\Lambda^k\manifold)}
    ,
    \\
    \bignorm{\ed_k\widehat{F}}_{L^1(\Lambda^{k+1}\widehat{\manifold})} &= 2\bignorm{\ed_k F}_{L^1(\Lambda^{k+1}\manifold)}
    ,
    \\
    \bignorm{\ed^*_k\widehat{F}}_{L^1(\Lambda^{k-1}\widehat{\manifold})} &= 2\bignorm{\ed^*_k F}_{L^1(\Lambda^{k-1}\manifold)}
    .
  \end{align*}
  Similar reasoning shows that Lorentz norms in $\manifold$ and $\widehat{\manifold}$ are also related by a constant factor,
  \begin{align*}
    \bignorm{e^{t\widehat{\Delta}_k}\widehat{F}}_{L^{\Dim/(\Dim-\alpha),1}(\Lambda^{\Dim-1}\widehat{\manifold})} &= C\norm{e^{t\Delta_{\Dim-1}}F}_{L^{\Dim/(\Dim-\alpha),1}(\Lambda^{\Dim-1}\manifold)}
    ,
  \end{align*}
  where careful calculation shows that $C=2^{1-\alpha/\Dim}$.

  Consequently, the bounds in \Cref{RieszClosedCoclosed_k,divinLresult,BesovClosedCoclosed,int01Bound_dF=0,int01Bound_General,int01Bound_Closed_kform,int01Bound_General_kform} for $\manifold$ all follow from the corresponding bounds for $\widehat{\manifold}$.
\end{proof}

\subsection{PDE bounds}\label{ss:PDEproofs}

The proofs of the other main results now follow by expressing the solutions of PDEs in terms of the Riesz potentials for $\alpha=2$ and $\alpha=1$.

\begin{proof}[Proof of \Cref{Hodge_Laplacian_L1}]
  If $\omega \in C^\infty(\Lambda^k)$ then, by \eqref{heat_propagator} and \eqref{LongTimeHeatPropagator},
  \begin{align}
    \mathcal{I}_{2,k} (-\Delta_k\omega) &= \int_0^\infty e^{t\Delta_k}(-\Delta_k\omega)\,\dd t
    \label{left_inverse}\\
    &= \lim_{\epsilon \to 0}\int_{\epsilon}
    ^{1/\epsilon} -\Delta_ke^{t\Delta_k}\omega\,\dd t
    \notag\\
    &= \lim_{\epsilon \to 0}\int_{\epsilon}
    ^{1/\epsilon} -\frac{\partial}{\partial t} e^{t\Delta_k}\omega\,\dd t
    \notag\\
    &= \lim_{\epsilon \to 0}\left[e^{\epsilon\Delta_k}\omega -e^{\epsilon^{-1}\Delta_k}\omega\right]
    \notag\\
    &=P_{\mathcal{H}^\perp}\omega
    ,
    \notag
  \end{align}
  where $P_{\mathcal{H}^\perp}\omega=\omega-P_{\mathcal{H}}\omega$ denotes orthogonal projection onto $\mathcal{H}^\perp(\Lambda^k)$.

  Now consider $F\in L^1\cap\mathcal{H}^\perp(\Lambda^k)$.
  Then
  \begin{align*}
    Z=-\mathcal{I}_{2,k} F
  \end{align*}
  is well-defined, and we claim that it is a weak solution of \eqref{PoissonPDEkForm} in the sense that
  \begin{align*}
    \Angles{Z, \Delta_k\omega}  =   \Angles{F ,\omega}
  \end{align*}
  for all $\omega \in C^\infty(\Lambda^k)$.
  This follows from \eqref{left_inverse} and the self-adjoint property  \eqref{Riesz_selfadjoint} of the Riesz potentials:
  \begin{align*}
    \Angles{Z, \Delta_k \omega}  &=   \Angles{-\mathcal{I}_{2,k} F ,\Delta_k\omega} \\
    &=\Angles{ F ,-\mathcal{I}_{2,k}\Delta_k\omega} \\
    &=\Angles{ F ,P_{\mathcal{H}^\perp}\omega}\\
    &=\Angles{ P_{\mathcal{H}^\perp}F ,\omega}\\
    &=\Angles{F ,\omega}.
  \end{align*}
  Thus $Z$ is the required solution, readily verified to be unique since any two solutions of \eqref{PoissonPDEkForm} differ by a harmonic $k$-form.
  Either of the assumptions $\ed_k F=0$ or $\ed^*_k F=0$ allows us to apply \Cref{RieszClosedCoclosed_k} and deduce the desired bound.
\end{proof}

The proof of \Cref{bbq} follows a similar scheme, with the connection to the Hodge system given by means of Riesz transforms.

\begin{proof}[Proof of \Cref{bbq}]
  For $F,G$ smooth and orthogonal to the spaces of harmonic forms, we may define
  \begin{equation}\label{hodge_solution}
    Z=\ed_{k-1} \mathcal{I}_{2,k-1} F + \ed^*_{k+1} \mathcal{I}_{2,k+1} G
    .
  \end{equation}
  Then, recalling \eqref{IddstarCommute} and \eqref{left_inverse} and using $\ed^*_{k-1} F=0$, we have
  \begin{align*}
    \ed^*_k Z &= \ed^*_k \ed_{k-1} \mathcal{I}_{2,k-1} F + \ed^*_k \ed^*_{k+1} \mathcal{I}_{2,k+1} G \\
    &= \ed^*_k \ed_{k-1} \mathcal{I}_{2,k-1} F  \\
    &= \mathcal{I}_{2,k-1} \ed^*_k \ed_{k-1}  F  \\
    &= \mathcal{I}_{2,k-1} \left(\ed^*_k \ed_{k-1} + \ed_{k-2} \ed^*_{k-1}\right) F  \\
    &= \mathcal{I}_{2,k-1}(- \Delta_{k-1} F) = F
    ,
  \end{align*}
  and similarly $\ed_k Z=G$.
  When $F,G$ are not smooth, it can be shown that \eqref{hodge_solution} still defines a weak solution to the Hodge system, via a duality argument similar to the proof of \Cref{Hodge_Laplacian_L1}.

For the purpose of estimation, it is useful to introduce the Riesz transforms, which can be defined by
\begin{equation}\label{Rdef}
  \begin{aligned}
    R_j \omega &:= \frac{1}{\Gamma(1/2)}\int_0^\infty t^{1/2-1}\ed_j e^{t\Delta_j} \omega\, \dd t
    ,
    \\
    R^*_j \omega &:=  \frac{1}{\Gamma(1/2)}\int_0^\infty t^{1/2-1} \ed^*_j e^{t\Delta_j} \omega \,\dd t
  \end{aligned}
  \end{equation}
whenever $\omega \in L^r(\Lambda^k)$ or $\omega\in L^{r,1}(\Lambda^k)$ for some $r>1$.
If in addition $\omega \in \mathcal{H}^\perp(\Lambda^j)$, this definition is equivalent to
    \begin{equation}\label{R}
    \begin{aligned}
   R_j \omega &\equiv \ed_j \mathcal{I}_{1,j} \omega,\\
       R^*_j \omega &\equiv  \ed^*_j \mathcal{I}_{1,j} \omega,
         \end{aligned}
         \end{equation}
so recalling from \eqref{semigroup} that $ \mathcal{I}_{1,j}  \mathcal{I}_{1,j} = \mathcal{I}_{2,j}$, we can express the solution $Z$ as
  \begin{equation}\label{Z=RIF+RIG}
    Z = R_{k-1} \mathcal{I}_{1,k-1} F + R^*_{k+1} \mathcal{I}_{1,k+1} G
    .
  \end{equation}
 As noted by Strichartz \cite{Strichartz}, the Riesz transforms are bounded as linear operators $R_{k-1} \colon L^r(\Lambda^{k-1}) \to L^r(\Lambda^{k})$ and $R^*_{k+1} \colon L^r(\Lambda^{k+1}) \to L^r(\Lambda^{k})$ for all $1<r<\infty$.
  By interpolation, it follows that they are also bounded as linear operators $R_{k-1} \colon L^{r,s}(\Lambda^{k-1}) \to L^{r,s}(\Lambda^{k})$
  and $R^*_{k+1} \colon L^{r,s}(\Lambda^{k+1}) \to L^{r,s}(\Lambda^{k})$ on Lorentz spaces, for all $1<r<\infty$ and $1\leq s \leq \infty$.

  Combining the observations above, \Cref{RieszClosedCoclosed_k} gives
  \begin{align*}
    \|Z\|_{L^{\Dim/(\Dim-1),1}(\Lambda^k)}
    &\leq  \| R_{k-1} \mathcal{I}_{1,k-1} F \|_{L^{\Dim/(\Dim-1),1}(\Lambda^k)} + \| R^*_{k+1} \mathcal{I}_{1,k+1} G \|_{L^{\Dim/(\Dim-1),1}(\Lambda^k)}
    \\&
    \lesssim \|\mathcal{I}_{1,k-1} F \|_{L^{\Dim/(\Dim-1),1}(\Lambda^{k-1})} + \| \mathcal{I}_{1,k+1} G \|_{L^{\Dim/(\Dim-1),1}(\Lambda^{k+1})}
    \\&
    \lesssim \|F\|_{L^1(\Lambda^{k-1})}+\|G\|_{L^1(\Lambda^{k+1})}
    .
    \qedhere
  \end{align*}
\end{proof}

\subsection{Analogues of the main results for currents}\label{ss:AnaloguesCurrents}

In both the preceding proofs, the PDEs are interpreted in weak form, and this allows us to consider non-smooth forms.
More generally, the Hodge system in \Cref{bbq} (or for the Euclidean case, in \cite{HRS}*{Theorem 1.3}) can also be interpreted when the given data are currents rather than forms.
To state this explicitly, first define
\begin{equation}
  \partial^*_j T(\zeta) = T(\ed^*_{j+1}\zeta) \quad\text{for all }\zeta\in C^1(\Lambda^{j+1})
\end{equation}
for a $j$-current $T$, where as with \eqref{boundary} this definition is interpreted in a distributional sense and there is no guarantee in general that $\partial^*_j T$ is itself a current.

\begin{theorem}\label{bbq_currents}
  Let $\Dim\geq 3$ and suppose $\manifold$ is a smooth, compact Riemannian manifold of dimension $\Dim$, or $\manifold=\R^\Dim$.
  There exists a constant $C=C(\manifold)>0$ such that the following holds.
  Let $k \in \set{1,\dotsc,\Dim-1}$, let $S \in \currents_{k-1}$ and $T\in\currents_{k+1}$, and suppose that
  \begin{equation}\label{Orthogonality_currents}
    \begin{aligned}
      S(\xi)&=0 &&\text{for all }\xi\in\mathcal{H}(\Lambda^{k-1})
      ,
      \\
      T(\zeta)&=0 &&\text{for all }\zeta\in\mathcal{H}(\Lambda^{k+1})
      ,
    \end{aligned}
  \end{equation}
  and
  \begin{equation}
    \partial_{k-1} S = 0, \quad \partial^*_{k+1} T=0
    .
  \end{equation}
  In addition, we impose the conditions $S\equiv 0$ if $k=1$ and $T\equiv 0$ if $k=\Dim-1$.
  Then there is a unique solution $Z$ in $\mathcal{H}^\perp(\Lambda^k)$ of the Hodge system
  \begin{equation}\label{HodgeSystem_currents}
    \begin{aligned}
      \Angles{Z, \ed_{k-1}\xi} &= S(\xi) &&\text{for all }\xi\in C^1(\Lambda^{k-1})
      ,
      \\
      \Angles{Z, \ed^*_{k+1}\zeta} &= T(\zeta) &&\text{for all }\zeta\in C^1(\Lambda^{k+1})
      ,
    \end{aligned}
  \end{equation}
  and
  \begin{align}\label{estimateHodge_currents}
    \norm{Z}_{L^{\Dim/(\Dim-1),1}(\Lambda^k)} \leq C\left(\big. \totalmass(S) + \totalmass(T) \right)
    .
  \end{align}
\end{theorem}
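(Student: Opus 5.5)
The plan is to reduce \Cref{bbq_currents} to \Cref{bbq} and the current-level potential bound by duality and approximation. We define the solution directly by the current analogue of \eqref{Z=RIF+RIG}. The key input is a version of \Cref{RieszClosedCoclosed_k} for currents: if $S\in\currents_{k-1}$ annihilates harmonic forms and $\partial_{k-1}S=0$, then the functional $\xi\mapsto S(\mathcal{I}_{1,k-1}\xi)$ is represented by a form $\omega_S\in L^{\Dim/(\Dim-1),1}(\Lambda^{k-1})$ with norm $\lesssim\totalmass(S)$, and likewise for $T$ with $\partial^*_{k+1}T=0$.

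\textbf{Step 1: the current-level potential bound.} Apply the integrated estimates for $e^{t\Delta_{j,c}}$ directly to currents. For $t\le 1$, use \Cref{CurrentEvolutionAsForm} to write $e^{t\Delta_{j,c}}S=T_{\omega_t}$; for $t\ge1$, use \Cref{exp_decay} together with the orthogonality \eqref{Orthogonality_currents}. Near $t=0$, the $(\Dim-1)$-form case is exactly \Cref{int01Bound0Boundary}. For general degree, mollify: $S_\epsilon=e^{\epsilon\Delta_{j,c}}S$ is a smooth form with $L^1$ norm $\lesssim\totalmass(S)$ (\Cref{etDeltaT_L1L1}), it stays closed (respectively co-closed) because the propagator commutes with $\partial$ and $\partial^*$ by duality, and it stays orthogonal to harmonic forms. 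Then \Cref{BesovClosedCoclosed} applies to $S_\epsilon$ with constants uniform in $\epsilon$, so the potentials $\mathcal{I}_{1}S_\epsilon$ are bounded in $L^{\Dim/(\Dim-1),1}$. Since $S_\epsilon\to S$ weakly-star, \Cref{FatouForCurrentNorms} passes the bound to the limit, and \Cref{NormDualityCurrentsForms}\ref{item:CurrentDuality} shows that the limit is a form $\omega_S$.

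\textbf{Step 2: existence and the estimate.} Set $Z_\epsilon$ to be the solution from \Cref{bbq} with smooth data $S_\epsilon,T_\epsilon$ (in form language, identify $(k\mp1)$-currents with forms through $\tilde{T}_\eta$ as in \Cref{AlternativeCurrentsFromForms}, to avoid orientability). Then \eqref{estimateHodge} gives $\norm{Z_\epsilon}_{L^{\Dim/(\Dim-1),1}}\lesssim\totalmass(S)+\totalmass(T)$ uniformly in $\epsilon$. Extract a weak-star limit $Z$ using duality with $L^{\Dim,\infty}$, exactly as in \Cref{FatouForCurrentNorms}. The weak equations \eqref{HodgeSystem_currents} hold for $Z_\epsilon$ with $S_\epsilon,T_\epsilon$ in place of $S,T$, and $S_\epsilon(\xi)=S(e^{\epsilon\Delta}\xi)\to S(\xi)$ for $\xi\in C^1$, so they pass to the limit. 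Orthogonality to $\mathcal{H}$ is also preserved in the limit. The Lorentz bound survives by lower semicontinuity. For the case $\manifold=\R^\Dim$, the same scheme uses the Euclidean result of \cite{HRS} in place of \Cref{bbq}.

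\textbf{Step 3: uniqueness.} If $Z\in\mathcal{H}^\perp$ solves the homogeneous system, then $\Angles{Z,\ed\xi}=\Angles{Z,\ed^*\zeta}=0$ for all test forms. Hence $Z$ is weakly closed and co-closed, so $\Delta_k Z=0$ weakly and $Z$ is harmonic by elliptic regularity. Therefore $Z=0$.

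\textbf{Main obstacle.} The delicate point is the limit in Step 2, namely verifying that the smoothed data satisfy all hypotheses of \Cref{bbq}. In particular, one must check that $\partial^*_{k+1}T=0$ transfers to $\ed^*$-closedness of the approximating form, and that the signs in the form/current identification are consistent. This rests on the commutation of $e^{t\Delta}$ with $\ed$ and $\ed^*$ and on the duality identities \eqref{boundary}.
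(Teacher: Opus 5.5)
Your Step 2, together with the uniqueness argument in Step 3, is essentially the paper's own (sketched) proof: regularize $S,T$ by the heat propagator and represent the results via $\tilde{T}_\eta$ as smooth forms. These forms are co-closed and closed respectively, orthogonal to harmonic forms, and have $L^1$ norm $\lesssim \totalmass(S)+\totalmass(T)$; you then apply \Cref{bbq} uniformly in $\epsilon$ and pass to the weak-star limit. Step 1 is never actually used (it is the content of \Cref{Riesz_currents}), and in your closing remark note that under $T=\tilde{T}_G$ the hypothesis $\partial^*_{k+1}T=0$ becomes $\ed_{k+1}G=0$, i.e.\ closedness rather than $\ed^*$-closedness, which is exactly what \Cref{bbq} requires.
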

\noindent
The conditions \eqref{HodgeSystem_currents} are analogous to the conditions for $Z$ to be a weak solution of the Hodge system \eqref{HodgeSystem}, and similarly \eqref{Orthogonality_currents} is the analogue of the requirement for the forms $F,G$ to be orthogonal the spaces of harmonic forms in \Cref{bbq}.

The existence and uniqueness solution $Z$ and the uniform bound \eqref{estimateHodge_currents} can be proved from \Cref{bbq} using standard regularization arguments.
For instance, \Cref{NormDualityCurrentsForms} can be used to show that for any positive $t$ the currents $e^{t\Delta_{k-1,c}}S,e^{t\Delta_{k+1,c}}T$ correspond to smooth bounded forms.
For this purpose it is more convenient to use the alternative mapping from forms to currents denoted by $\tilde{T}_\eta$ in \Cref{AlternativeCurrentsFromForms}; indeed, with that notation the Hodge system \eqref{HodgeSystem_currents} can be written compactly as $\partial_k\tilde{T}_Z = S$, $\partial^*_k\tilde{T}_Z=T$.
Whichever mapping is used, it is readily verified that the forms corresponding to the currents $e^{t\Delta_{k-1,c}}S,e^{t\Delta_{k+1,c}}T$ are orthogonal to the appropriate spaces of harmonic forms if and only if \eqref{Orthogonality_currents} holds.

Similar arguments yield analogues of Theorems~\ref{Hodge_Laplacian_L1}, \ref{RieszClosedCoclosed_k}, and their Euclidean counterparts \cite{HS}*{Theorem 1.4} and \cite{HRS}*{Theorem 1.1}.
\begin{theorem}\label{Poisson_currents}
  Under the assumptions of \Cref{Hodge_Laplacian_L1}, there exists a constant $C=C(\manifold)>0$ such that, for all $k$-currents $T\in\currents_k$ vanishing on $\mathcal{H}(\Lambda^k)$ and satisfying $\partial_k T=0$ or $\partial^*_k T=0$, there is a unique solution $Z$ in $\mathcal{H}^\perp(\Lambda^k)$ of the Poisson equation
  \begin{equation}\label{PoissonPDE_currents}
    \Angles{Z, \Delta_k\omega} = T(\omega) \quad\text{for all }\omega\in C^2(\Lambda^k)
    ,
  \end{equation}
  and
  \begin{align}\label{estimate_Laplace_currents}
    \norm{Z}_{L^{\Dim/(\Dim-2),1}(\Lambda^k)} \leq C\totalmass(T)
    .
  \end{align}
  The same holds when $\manifold=\R^\Dim$.
\end{theorem}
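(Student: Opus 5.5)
We regularize $T$ by the heat propagator, apply \Cref{Hodge_Laplacian_L1} to the resulting smooth forms, and pass to the limit $t\to 0$ using uniform Lorentz bounds and weak-star compactness. We work with the alternative identification $\tilde{T}_\eta(\xi)=\Angles{\xi,\eta}$ of \Cref{AlternativeCurrentsFromForms}, which needs no orientability.

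For $t\in(0,1]$, set $T_t=e^{t\Delta_{k,c}}T$. By \Cref{CurrentEvolutionAsForm}, transported to the $\tilde{T}$ convention via Hodge duality (or by repeating its proof, which only uses heat kernel bounds), there is a smooth $k$-form $F_t$ with $T_t=\tilde{T}_{F_t}$. By \eqref{KdotqL1Bound}, $\norm{F_t}_{L^1}\lesssim\totalmass(T)$ uniformly in $t$.

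We then check that $F_t$ satisfies the hypotheses of \Cref{Hodge_Laplacian_L1}:
\begin{itemize}
\item \emph{Orthogonality.} For harmonic $\omega$ we have $e^{t\Delta_k}\omega=\omega$, so $\Angles{F_t,\omega}=T(e^{t\Delta_k}\omega)=T(\omega)=0$.
\item \emph{Closedness condition.} The heat propagator commutes with $\ed$ and $\ed^*$, so $\partial_k T=0$ gives $\Angles{F_t,\ed_{k-1}\xi}=T(e^{t\Delta_k}\ed_{k-1}\xi)=T(\ed_{k-1}e^{t\Delta_{k-1}}\xi)=0$. Hence $\ed^*_kF_t=0$ weakly, and smoothly since $F_t$ is smooth. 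Similarly $\partial^*_kT=0$ gives $\ed_kF_t=0$.
\end{itemize}
\Cref{Hodge_Laplacian_L1} then yields a solution $Z_t=-\mathcal{I}_{2,k}F_t\in\mathcal{H}^\perp(\Lambda^k)$ with $\norm{Z_t}_{L^{\Dim/(\Dim-2),1}}\lesssim\totalmass(T)$, uniformly in $t$.

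To pass to the limit, note that $L^{\Dim/(\Dim-2),1}$ is the dual of a separable space, or alternatively that \Cref{NormDualityCurrentsForms} applies. Either way we can extract a subsequence $t_m\to0$ along which $\Angles{Z_{t_m},\eta}\to\Lambda(\eta)$ for all continuous $\eta$, with $\abs{\Lambda(\eta)}\lesssim\totalmass(T)\norm{\eta}_{L^{\Dim/2,\infty}}$. \Cref{NormDualityCurrentsForms}\ref{item:CurrentDuality} with $r=\Dim/(\Dim-2)>1$ represents $\Lambda$ by a form $Z\in L^{\Dim/(\Dim-2),1}$ satisfying the bound \eqref{estimate_Laplace_currents}; this is the lower semicontinuity argument of \Cref{FatouForCurrentNorms}.

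For $\omega\in C^2(\Lambda^k)$, $\Delta_k\omega$ is continuous, so
\[
\Angles{Z,\Delta_k\omega}=\lim_m\Angles{Z_{t_m},\Delta_k\omega}=\lim_m T(e^{t_m\Delta_k}\omega)=T(\omega),
\]
where the last step uses that $e^{t\Delta_k}\omega\to\omega$ uniformly and $T$ is a bounded functional. Testing against harmonic $\omega$ shows $Z\in\mathcal{H}^\perp$. For uniqueness, the difference of two solutions annihilates $\Delta_k C^\infty(\Lambda^k)$, which is dense in $\mathcal{H}^\perp$ in the appropriate sense; so the difference is harmonic and orthogonal to harmonics, hence zero. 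This also shows that the whole family converges, not just the subsequence.

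The Euclidean case $\manifold=\R^\Dim$ follows the same way, with \cite{HS}*{Theorem 1.4} in place of \Cref{Hodge_Laplacian_L1} and no harmonic constraint.

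The main obstacle is the regularity bookkeeping. One must ensure that the smooth form representing $e^{t\Delta_{k,c}}T$ has uniformly bounded $L^1$ norm and inherits both the co-closed or closed condition and orthogonality to harmonics. The delicate point is the commutation of the propagator with $\ed$ and $\ed^*$ at the level of currents, and I would verify it by duality on smooth test forms as above.
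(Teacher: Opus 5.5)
Your proposal is correct and follows the route the paper indicates for \Cref{bbq_currents}, and by analogy for this theorem. That route is: regularize $T$ by $e^{t\Delta_{k,c}}$ using the $\tilde{T}_\eta$ identification, check that the resulting smooth forms are uniformly bounded in $L^1$, orthogonal to harmonic forms, and closed or co-closed, apply \Cref{Hodge_Laplacian_L1}, and pass to the limit by Lorentz duality and weak-star compactness; you supply more detail than the paper, which only invokes ``standard regularization arguments'', and the one phrase to tighten is that $Z\in\mathcal{H}^\perp$ follows from testing the weak limit of the $Z_{t_m}\in\mathcal{H}^\perp$ against (continuous) harmonic forms, since testing the equation itself against harmonic $\omega$ only returns $0=T(\omega)=0$.
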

\begin{theorem}\label{Riesz_currents}
  Under the assumptions of \Cref{RieszClosedCoclosed_k}, there exists a constant $C=C(\alpha,\manifold)>0$ such that
  \begin{align}\label{potentialnodiracl1_currents}
    \norm{\mathcal{I}_{\alpha,k,c} T}_{L^{\Dim/(\Dim-\alpha),1}(\Lambda^k)} \leq C \totalmass(T)
  \end{align}
  for all $k$-currents $T \in \currents_k$ vanishing on $\mathcal{H}(\Lambda^k)$ and satisfying $\partial_k T=0$ or $\partial^*_k T=0$.
  The same holds when $\manifold=\R^\Dim$.
\end{theorem}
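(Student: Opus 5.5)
We will obtain \Cref{Riesz_currents} from \Cref{BesovClosedCoclosed} by regularizing the current with the heat flow. First we define $\mathcal{I}_{\alpha,k,c}T$ by duality: for $T\in\currents_k$ vanishing on $\mathcal{H}(\Lambda^k)$, set
\begin{equation*}
  \mathcal{I}_{\alpha,k,c}T(\eta) = \frac{1}{\Gamma(\alpha/2)}\int_0^\infty t^{\alpha/2-1}\, T(e^{t\Delta_k}\eta)\,\dd t .
\end{equation*}
We then show that this $k$-current is represented by a $k$-form in $L^{\Dim/(\Dim-\alpha),1}(\Lambda^k)$ with the claimed bound, using the identification of currents with forms from \Cref{NormDualityCurrentsForms}. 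We work with the non-orientation-dependent identification $\tilde{T}_\eta$ of \Cref{AlternativeCurrentsFromForms}, under which $\partial_k$ corresponds to $\ed^*_k$ and $\partial^*_k$ to $\ed_k$. If orientability is needed, we pass to the double cover as in \Cref{ss:NonorientableProof}.

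\emph{Step 1 (regularization).} For $s\in\ocinterval{0,1}$, \Cref{CurrentEvolutionAsForm} (transcribed to $\tilde{T}$ via Hodge star) gives a smooth $k$-form $F_s$ with $e^{s\Delta_{k,c}}T=\tilde{T}_{F_s}$. The key bound is $\norm{F_s}_{L^1}\lesssim\totalmass(T)$ uniformly in $s$, which follows from integrating \eqref{CurrentEvolutionPointwise} and using \eqref{KdotqL1Bound}. Since the heat propagator commutes with $\ed,\ed^*$, the hypothesis $\partial_kT=0$ (resp.\ $\partial^*_kT=0$) gives $\ed^*_kF_s=0$ (resp.\ $\ed_kF_s=0$). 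Self-adjointness and invariance of harmonic forms under $e^{s\Delta_k}$ give $F_s\in\mathcal{H}^\perp(\Lambda^k)$.

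\emph{Step 2 (apply the form result).} \Cref{BesovClosedCoclosed} applied to $F_s$ and Minkowski's inequality give
\begin{equation*}
  \norm{\mathcal{I}_{\alpha,k}F_s}_{L^{\Dim/(\Dim-\alpha),1}}\le C\norm{F_s}_{L^1}\le C'\totalmass(T).
\end{equation*}

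\emph{Step 3 (pass to the limit).} As $s\to0$, $e^{s\Delta_{k,c}}T\to T$ weakly-star. Moreover, for continuous $\eta$ the function $\mathcal{I}_{\alpha,k}P_{\mathcal{H}^\perp}\eta$ is continuous: the integral converges absolutely in $L^\infty$ at $t\to\infty$ by \Cref{exp_decay}, and near $t=0$ because $\alpha>0$ and $e^{t\Delta}$ is uniformly bounded on $L^\infty$. Hence
\begin{equation*}
  \tilde{T}_{\mathcal{I}_{\alpha,k}F_s}(\eta)=e^{s\Delta_{k,c}}T(\mathcal{I}_{\alpha,k}P_{\mathcal{H}^\perp}\eta)\to \mathcal{I}_{\alpha,k,c}T(\eta).
\end{equation*}
Here we use \eqref{Riesz_selfadjoint} and the vanishing of $T$ on harmonic forms to insert $P_{\mathcal{H}^\perp}$. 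By \Cref{FatouForCurrentNorms}, or directly by the duality in \Cref{NormDualityCurrentsForms}\ref{item:CurrentDuality} with $r=\Dim/(\Dim-\alpha)>1$, the limit current is $\tilde{T}_\omega$ for some $\omega\in L^{\Dim/(\Dim-\alpha),1}(\Lambda^k)$ whose norm is bounded by the liminf, which is at most $C\totalmass(T)$.

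\emph{Case $\manifold=\R^\Dim$.} The same scheme works with the Euclidean heat semigroup, where the harmonic-form condition disappears, invoking the Euclidean result \cite{HRS}*{Theorem 1.1} and its $k$-form version in place of \Cref{BesovClosedCoclosed}.

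\emph{Main obstacle.} The delicate point is Step 3: justifying that the duality pairing commutes with the $t$-integral and that $\mathcal{I}_{\alpha,k}$ maps continuous forms to continuous forms. This requires separate control of small $t$ (uniform $L^\infty$ bounds for $e^{t\Delta}$) and large $t$ (exponential decay on $\mathcal{H}^\perp$); on $\R^\Dim$ the large-$t$ control must be replaced by decay of test functions.
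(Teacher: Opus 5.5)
Your proposal is correct and is essentially the regularization argument the paper sketches for this result. You represent $e^{s\Delta_{k,c}}T$ by smooth forms $F_s$ under the orientation-free identification $\tilde T_\eta$, check that the $F_s$ are closed or co-closed, orthogonal to $\mathcal{H}(\Lambda^k)$ and uniformly bounded in $L^1$, apply \Cref{BesovClosedCoclosed}, and recover $\mathcal{I}_{\alpha,k,c}T$ as a weak-star limit controlled by the duality of \Cref{NormDualityCurrentsForms}. The double-cover hedge is harmless but unnecessary: with $\tilde T_\eta$ the heat-kernel bounds and duality are local, and \Cref{BesovClosedCoclosed} is already established for non-orientable $\manifold$.
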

\noindent
Here the Riesz potential acts on a current by the same integral as in \eqref{RieszPotential} with $\Delta_k$ replaced by $\Delta_{k,c}$.
We remark that \Cref{int01Bound0Boundary}, which was already stated in terms of currents, can be used along with \Cref{int1inftyBound} to give a direct proof for the case $k=1$, $\partial_1 T=0$.

\subsection{Special case: the torus and identification with vector fields}\label{ss:TorusProof}

\begin{proof}[Proof of \Cref{bbq_t}]
Let $\manifold = \mathbb{T}^3$.
In this setting, we can identify both 2-forms and 1-forms with vector-valued functions, and both 3-forms and 0-forms with scalar functions,
\begin{align*}
  C^\infty(\Lambda^2\mathbb{T}^3) \cong C^\infty(\Lambda^1\mathbb{T}^3) &\cong C^\infty(\mathbb{T}^3;\mathbb{R}^3)
  ,
  \\
  C^\infty(\Lambda^3\mathbb{T}^3) \cong C^\infty(\Lambda^0\mathbb{T}^3) &\cong C^\infty(\mathbb{T}^3;\R)
  .
\end{align*}
Under these identifications, the co-exterior derivative and exterior derivative on 2-forms correspond to the curl and divergence, respectively:
\begin{align*}
  \curl &\cong \ed_2^* \colon C^\infty(\Lambda^2\mathbb{T}^3) \to C^\infty(\Lambda^1\mathbb{T}^3)
  ,
  \\
  \Div &\cong \ed_2 \colon C^\infty(\Lambda^2\mathbb{T}^3) \to C^\infty(\Lambda^3\mathbb{T}^3)
  .
\end{align*}
Thus, identifying $Z$ as a 2-form and $F$ as a 1-form, the div-curl system \eqref{divcurlT3} reads
\begin{equation}
  \begin{aligned}
  \ed_2^* Z&= F
  ,
  \\
  \ed_2 Z &= 0
  .
\end{aligned}
\end{equation}
For 1-forms, the identification of curl and divergence with exterior derivative and co-exterior derivative is reversed,
\begin{align*}
  \curl &\cong \ed_1 \colon C^\infty(\Lambda^1\mathbb{T}^3) \to C^\infty(\Lambda^2\mathbb{T}^3)
  ,
  \\
  \Div &\cong \ed^*_1 \colon C^\infty(\Lambda^1\mathbb{T}^3) \to C^\infty(\Lambda^0\mathbb{T}^3)
  ,
\end{align*}
so the compatibility condition $\Div F=0$ can be expressed as $\ed^*_1 F=0$.
The harmonic 1-forms on $\mathbb{T}^3$ correspond to the constant vector fields, so the orthogonality condition from \Cref{bbq} reduces to the vector integral equality $\int_{\mathbb{T}^3} F\,\dd x = 0$.
We conclude that \Cref{bbq} applies with $k=2$, and yields
\begin{align*}
  \|Z\|_{L^{3/2,1}(\Lambda^2)} \lesssim \|F\|_{L^1(\Lambda^1)}
  .
\end{align*}
The identifications between forms and vector-valued functions are isometries, and therefore this inequality can be expressed in terms of $Z,F$ as vectors:
\begin{align*}
  \|Z\|_{L^{3/2,1}(\mathbb{T}^3;\mathbb{R}^3)} \lesssim \|F\|_{L^1(\mathbb{T}^3;\mathbb{R}^3)}
  .
\end{align*}
Finally, H\"older's inequality on the Lorentz scale (see \Cref{holder}) gives, for $a \in \mathbb{T}^3$,
\begin{align*}
  \int_{\mathbb{T}^3} \frac{|Z(x)|}{|x-a|}\,dx &\lesssim \|Z\|_{L^{3/2,1}(\mathbb{T}^3;\mathbb{R}^3)} \left\|\frac{1}{|\cdot-a|} \right\|_{L^{3,\infty}(\mathbb{T}^3;\mathbb{R}^3)}
  \\
  &\lesssim \|F\|_{L^1(\mathbb{T}^3;\mathbb{R}^3)}
\end{align*}
where the last inequality follows by checking that the weak-$L^3$ norm (or quasi-norm) of the scalar function $f(x)=1/\abs{x-a}$ is finite and independent of $a$.
\end{proof}

\section*{Acknowledgements}
J.~Goodman is supported by the Marsden Fund grants 20-UOO-079, 22-UOA-052, and 23-UOA-148, administered by the Royal Society Te Ap\=arangi, New Zealand.
D.~Spector is supported by the National Science and Technology Council of Taiwan under research grant number 113-2115-M-003-017-MY3 and the Taiwan Ministry of Education under the Yushan Fellow Program.

\appendix

\section{Supplementary facts about forms and currents}\label{appendix_A}

The choice of orientation on $\manifold$ can be expressed as a fixed choice of volume form $\nu\in\Lambda^\Dim$ and the corresponding Hodge star operator on $k$-forms, $\star_k\colon\Lambda^k\to\Lambda^{\Dim-k}$, defined by the requirement
\begin{equation}
  \tilde{\omega}(p) \wedge \star_k\omega(p) = \angles{\tilde{\omega}(p),\omega(p)}_p \nu(p)
\end{equation}
for all $\omega,\tilde{\omega}\in\Lambda^k$ and all $p\in\manifold$.
We note the identities
\begin{equation}\label{starInverse}
  \star_{\Dim-k}\star_k \omega = (-1)^{k(\Dim-k)}\omega
  \text{ for all }\omega\in\Lambda^k
  ,
  \quad\text{i.e.,}\quad
  \star_k^{-1} = (-1)^{k(\Dim-k)}\star_{\Dim-k}
  .
\end{equation}
Note that the inverse function $\star_k^{-1}\colon\Lambda^{\Dim-k}\to\Lambda^k$ accepts an $(\Dim-k)$-form, not a $k$-form as in our labelling elsewhere.

With this notation, the integral of a $\Dim$-form $\zeta\in\Lambda^\Dim$ over $\manifold$ can be expressed as
\begin{equation}
  \int_\manifold \zeta = \int_\manifold \star_\Dim\zeta \,\dd V
  ,
\end{equation}
with the $0$-form $\star_\Dim\zeta$ being interpreted unambiguously as a scalar function on $\manifold$, and with this convention
\begin{equation}
  \Angles{\omega,\tilde{\omega}} = \int_\manifold \omega\wedge \star_k\tilde{\omega}
\end{equation}
for $\omega,\tilde{\omega}\in\Lambda^k$.
Thus the $(\Dim-k)$-current $T_\omega$ from \eqref{FormAsCurrentWedge} can be expressed equivalently as
\begin{equation}\label{FormAsCurrentAngleStar}
  T_\omega(\xi) = \int_\manifold \xi \wedge \star_{\Dim-k}\star_{\Dim-k}^{-1}\omega
  = \int_\manifold \angles{\xi(p), \star_{\Dim-k}^{-1}\omega(p)}_p \dd V(p)
  = \Angles{\xi, \star_{\Dim-k}^{-1}\omega}
  .
\end{equation}

\begin{lemma}\label{DeltaAndStar}
  \begin{equation}
    \star_k \Delta_k \star^{-1}_k = \Delta_{\Dim-k}
    .
  \end{equation}
\end{lemma}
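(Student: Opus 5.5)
The identity $\star_k \Delta_k \star_k^{-1} = \Delta_{\Dim-k}$ follows from the standard expression of the codifferential through the Hodge star. Specifically, I would first record, or derive via Stokes' theorem, the formula
\[
  \ed^*_k = (-1)^{\Dim(k+1)+1}\star_{\Dim-k+1}\ed_{\Dim-k}\star_k
  \quad\text{on }\Lambda^k .
\]
The derivation goes as follows. For $\omega\in C^1(\Lambda^{k-1})$ and $\eta\in C^1(\Lambda^k)$, Stokes gives
\[
  0=\int_\manifold \ed_{\Dim-1}(\omega\wedge\star_k\eta)=\int_\manifold \ed_{k-1}\omega\wedge\star_k\eta+(-1)^{k-1}\int_\manifold\omega\wedge \ed_{\Dim-k}\star_k\eta .
\]
Then I rewrite $\ed_{\Dim-k}\star_k\eta=\star_{k-1}\star_{k-1}^{-1}\ed_{\Dim-k}\star_k\eta$ using \eqref{starInverse}, and identify $\Angles{\ed_{k-1}\omega,\eta}=\Angles{\omega,\ed^*_k\eta}$. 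This yields $\ed^*_k = (-1)^{k}\star_{k-1}^{-1}\ed_{\Dim-k}\star_k$. Equivalently, the companion relation is $\star_{k-1}\ed^*_k=(-1)^k\ed_{\Dim-k}\star_k$, which is presumably the identity \eqref{dAndStar} used later in the proof of \Cref{BesovClosedCoclosed}. Applying the same computation in degree $\Dim-k$ and inverting gives the dual relation $\star_{k+1}\ed_k = \pm\,\ed^*_{\Dim-k}\star_k$.

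Next I conjugate the two pieces of $-\Delta_k=\ed^*_{k+1}\ed_k+\ed_{k-1}\ed^*_k$ separately. For the first term,
\[
  \star_k\ed^*_{k+1}\ed_k\star_k^{-1}=(\star_k\ed^*_{k+1}\star_{k+1}^{-1})(\star_{k+1}\ed_k\star_k^{-1}),
\]
and each factor is $\pm$ a differential: the first equals $\pm\ed_{\Dim-k-1}$ and the second equals $\pm\ed^*_{\Dim-k}$. Hence this term equals $\pm\,\ed_{\Dim-k-1}\ed^*_{\Dim-k}$. By the same argument, the second term $\star_k\ed_{k-1}\ed^*_k\star_k^{-1}$ equals $\pm\,\ed^*_{\Dim-k+1}\ed_{\Dim-k}$. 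Summing, the two terms of $-\Delta_{\Dim-k}$ appear with the roles of $\ed$ and $\ed^*$ interchanged, which is exactly what is needed.

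The only real obstacle is the sign bookkeeping. I must check that in each composite the two signs multiply to $+1$. Each composite involves one relation of the form $\star\ed^*\star^{-1}=(-1)^{a}\ed$ and one of the form $\star\ed\star^{-1}=(-1)^{b}\ed^*$. These come from the same Stokes identity applied at complementary degrees, together with $\star^{-1}=(-1)^{k(\Dim-k)}\star$. I would track the exponents mod $2$ explicitly, using that $k(\Dim-k)+(k+1)(\Dim-k-1)\equiv \Dim+1 \pmod 2$ and similar parities, and verify that $a+b$ is even in both composites.

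As a fallback to avoid sign errors, I can argue by duality instead. Since $\star$ is a pointwise isometry, $\Angles{\star_k\alpha,\star_k\beta}=\Angles{\alpha,\beta}$, and each term $\ed^*\ed$ or $\ed\ed^*$ is a nonnegative self-adjoint operator. Conjugating $\ed^*_{k+1}\ed_k$ by $\star_k$ gives the operator with quadratic form $\|\ed_k\star_k^{-1}\zeta\|^2=\|\star_{k+1}\ed_k\star_k^{-1}\zeta\|^2=\|\ed^*_{\Dim-k}\zeta\|^2$. This quadratic form has no sign ambiguity, because the squared norm absorbs any $\pm$. By polarization, this identifies the conjugated operator as $\ed_{\Dim-k-1}\ed^*_{\Dim-k}$, and the other term is handled the same way. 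The argument works first on smooth forms and then on the extended domains.
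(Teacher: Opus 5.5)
Your main route is the paper's: it takes \eqref{dAndStar}, $\ed^*_j=(-1)^j\star_{j-1}^{-1}\ed_{\Dim-j}\star_j$, which your Stokes derivation reproduces correctly. It substitutes this into both terms of $-\Delta_k$, converts with \eqref{starInverse}, and checks that the resulting signs $(-1)^{(2k+1)(\Dim-k)}(-1)^{\Dim-k}$ and $(-1)^{(2k-1)(\Dim-k+1)}(-1)^{\Dim-k+1}$ are both $+1$; you left that bookkeeping undone, but it closes as you predict. Your fallback is also valid and genuinely sign-free: since $\star_k^{-1}$ is the $L^2$-adjoint of $\star_k$, the factor $\star_k\ed^*_{k+1}\star_{k+1}^{-1}$ is the adjoint of $\star_{k+1}\ed_k\star_k^{-1}=\pm\ed^*_{\Dim-k}$, so the unknown sign appears squared.
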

\begin{proof}
  The codifferential can be expressed in terms of the star operator by
  \begin{equation}\label{dAndStar}
    \ed^*_j = (-1)^j \star_{j-1}^{-1} \ed_{\Dim-j} \star_j
  \end{equation}
  so that \eqref{HodgeLaplacian} and \eqref{starInverse} give
  \begin{align*}
    -\star_k \Delta_k \star^{-1}_k
    &=
    \star_k \ed^*_{k+1} \ed_k \star^{-1}_k + \star_k \ed_{k-1} \ed^*_k \star^{-1}_k
    \\&
    = (-1)^{k+1} \ed_{\Dim-k-1}\star_{k+1} \ed_k \star^{-1}_k + (-1)^k \star_k \ed_{k-1} \star^{-1}_{k-1} \ed_{\Dim-k}
    \\&
    = (-1)^{k+1} (-1)^{(k+1)(\Dim-k-1)} (-1)^{k(\Dim-k)} \ed_{\Dim-k-1} \star^{-1}_{\Dim-k-1} \ed_k \star_{\Dim-k}
    \\&\quad
    + (-1)^k (-1)^{k(\Dim-k)} (-1)^{(k-1)(\Dim-k+1)} \star^{-1}_{\Dim-k} \ed_{k-1} \star_{\Dim-k+1} \ed_{\Dim-k}
    \\&
    = (-1)^{(2k+1)(\Dim-k)} (-1)^{\Dim-k} \ed_{\Dim-k-1} \ed^*_{\Dim-k}
    \\&\quad
    + (-1)^{(2k-1)(\Dim-k+1)} (-1)^{\Dim-k+1} \ed^*_{\Dim-k+1} \ed_{\Dim-k}
    \\&
    = -\Delta_{\Dim-k}
    .
    \qedhere
  \end{align*}
\end{proof}

We next prove \eqref{FormCurrentLaplaciansCompatible} and \Cref{TomegaPropagatorsCompatible}, which justifies the commutative diagrams from \Cref{ss:CurrentsFromForms}.
\begin{proof}[Proof of \eqref{FormCurrentLaplaciansCompatible} and \Cref{TomegaPropagatorsCompatible}]
  For \eqref{FormCurrentLaplaciansCompatible},
  \begin{align*}
    \Delta_{\Dim-k,c}T_\omega(\eta) &= T_\omega(\Delta_{\Dim-k}\eta)
    &&\text{by \eqref{LaplacianOnCurrents}}
    \\&
    = \Angles{\Delta_{\Dim-k}\eta, \star^{-1}_{\Dim-k}\omega}
    &&\text{by \eqref{FormAsCurrentAngleStar}}
    \\&
    = \Angles{\eta, \Delta_{\Dim-k}\star^{-1}_{\Dim-k}\omega}
    &&\text{by \eqref{HodgeLaplacianSelfAdjoint}}
    \\&
    = \Angles{\eta, \star^{-1}_{\Dim-k}\Delta_k\omega}
    &&\text{by \Cref{DeltaAndStar}}
    \\&
    = T_{\Delta_k\omega}(\eta)
    &&\text{by \eqref{FormAsCurrentAngleStar}}
    .
  \end{align*}
  The assertion of \eqref{FormCurrentLaplaciansCompatible} is equivalent to \eqref{FormCurrentLaplaciansCompatible2} by the definitions of $\Delta_{\Dim-k,c}$ and $T_\omega$, see \eqref{LaplacianOnCurrents} and \eqref{FormAsCurrentWedge}.

  Fix $\omega,\eta$ and let $h(s,t)=\int_\manifold e^{s\Delta_{\Dim-k}}\eta\wedge e^{t\Delta_k}\omega$.
  Then
  \begin{align*}
    \frac{\partial h}{\partial s}(s,t) &= \int_\manifold \Delta_{\Dim-k}e^{s\Delta_{\Dim-k}}\eta\wedge e^{t\Delta_k}\omega
    \\&
    = \int_\manifold e^{s\Delta_{\Dim-k}}\eta\wedge \Delta_k e^{t\Delta_k}\omega
    = \frac{\partial h}{\partial t}(s,t)
  \end{align*}
  and it follows that $h(s,t)$ depends only on $s+t$.
  In particular, using \eqref{PropagatorOnCurrentsInside},
  \begin{equation*}
    e^{t\Delta_{\Dim-k,c}}T_\omega(\eta) = T_\omega(e^{t\Delta_{\Dim-k}}\eta) = h(t,0) = h(0,t) = T_{e^{t\Delta_k}\omega}(\eta)
    .
    \qedhere
  \end{equation*}
\end{proof}

\begin{proof}[Proof of \Cref{NormDualityCurrentsForms}]
  The first equality of \ref{item:FormLebesgueDuality}, the duality pairing between $L^r(\Lambda^k)$ and $L^{r'}(\Lambda^k)$, follows from the usual one for scalar functions because, given any scalar function $\tilde{f}\colon\manifold\to\cointerval{0,\infty}$, we have
  \begin{equation*}
    \norm{\omega(p)}_p \tilde{f}(p) = \sup_{\tilde{\omega}\in\Lambda^k\colon \norm{\tilde{\omega}(q)}_q = \tilde{f}(q) \, \forall q\in\manifold} \angles{\omega(p),\tilde{\omega}(p)}_p
  \end{equation*}
  and the maximum is attained by setting $\tilde{\omega}(p) = \tilde{f}(p) \omega(p)/\norm{\omega(p)}_p$ whenever $\omega(p)\neq 0$.
  A standard approximation argument shows that the supremum is unaffected if we add the requirement that $\tilde{\omega}$ is continuous.
  The second equality of \ref{item:FormLebesgueDuality} follows by \eqref{FormAsCurrentAngleStar} and the substitution $\xi=\star_k\tilde{\omega}$, which is a pointwise isometry.

  The statement for the Lorentz norm follows for the same reason, except that the duality pairing between Lorentz norms is only up to constant factors, see \eqref{LorentzDual}.

  For part \ref{item:CurrentDuality}, recall the finite atlas $(x_j,U_j)_{j=1,\dotsc,\maxr}$ and the partition of unity $(\chi_j)_{j=1,\dotsc,\maxr}$.
  Define the currents $\tilde{T}_j$ on $\R^\Dim$ given by $\tilde{T}_j(\tilde{\xi}) = T(\chi_j \cdot (x_j)^\# \tilde{\xi})$.
  Then each $\tilde{T}_j$ satisfies bounds analogous to those for $T$, and we deduce that $\tilde{T}_j$ is given by integration against a Radon measure that is absolutely continuous with respect to Lebesgue measure.
  That is, we deduce that $\tilde{T}_j(\tilde{\xi}) = \int_{x_j(U'_j)} \angles{\smash{\tilde{\xi}}(y), w_j(y)} \,\dd y$ for some $k$-form $w_j$ on $\R^\Dim$ supported in $U'_j$, and furthermore $w_j$ is of class $L^{r'}$ or $L^{r',\infty}$.
  Finally, map back to obtain $\omega=\sum_{j=1}^\maxr x_j^\# w_j$.
\end{proof}

\begin{proof}[Proof of \Cref{CurrentEvolutionAsForm}]
  To establish that $e^{t\Delta_{j,c}}T$ has the form $T_{\omega_t}$, \Cref{NormDualityCurrentsForms} shows that it is sufficient to bound $\abs{e^{t\Delta_{j,c}}T(\xi)}$ in terms of $\norm{\xi}_{L^1}$ for $\xi\in C(\Lambda^j)$.
  To that end, recall the heat kernel $\mathrm{p}_t^j$ and its approximation $\mathrm{p}_t^{j,N}$ from \eqref{k_heat_kernel} and \eqref{k_kernel_approx}.
  The functions $V_{j,i}$ are smooth on the compact manifold $\manifold\times\manifold$ and therefore uniformly bounded, so together with the uniform error bound \eqref{k_kernel_error} we have
  \begin{equation}\label{HeatKernelBoundedByK}
    \bignorm{\mathrm{p}_t^j(p,q)}_{p,q} \lesssim K_t(p,q)
    ,
  \end{equation}
  uniformly over $t\in\ocinterval{0,1}$ and $p,q\in\manifold$.
  That is, we can take the constant $C$ large enough so that
  \begin{equation*}
    \bignorm{\mathrm{p}_t^j(p,q)}_{p,q} \leq C K_t(p,q)
  \end{equation*}
  for all such $t,p,q$.
  Then, applying Minkowski's inequality to the integral \eqref{k_heat_kernel},
  \begin{align}\label{heatpropagator_bound}
    \norm{e^{t\Delta_j}\xi(p)}_p
    &\leq \int_\manifold \bignorm{\mathrm{p}_t^j(p,q)}_{p,q} \norm{\xi(q)}_q \dd V(q)
    \notag\\&
    \leq C\int_\manifold K_t(p,q) \norm{\xi(q)}_q \dd V(q)
    ,
  \end{align}
  so that
  \begin{align}
    \abs{e^{t\Delta_{j,c}}T(\xi)} &= \abs{T(e^{t\Delta_j}\xi)}
    \notag\\
    &\leq \int_\manifold \norm{e^{t\Delta_j}\xi(p)}_p \dd\mu_T(p)
    \notag\\
    &\leq C \int_{\manifold} \int_\manifold K_t(p,q) \norm{\xi(q)}_q \dd V(q) \, \dd\mu_T(p)
    \notag\\
    &= C \int_{\manifold} \norm{\xi(q)}_q \left(\int_\manifold K_t(p,q) \, \dd\mu_T(p)\right) \dd V(q)
    .
    \label{AbsetDeltaTxiBound}
  \end{align}
  In particular, $\abs{e^{t\Delta_{j,c}}T(\xi)} \leq \tilde{C} \norm{\xi}_{L^1}$ for all $\xi\in C(\Lambda^j)$, where $\tilde{C}=\tilde{C}(\manifold,T)<\infty$.
  By \Cref{NormDualityCurrentsForms} we can identify $e^{t\Delta_{j,c}}T=T_{\omega_t}$ for some $\omega_t \in L^\infty(\Lambda^{\Dim-j})$.
  By \Cref{TomegaPropagatorsCompatible} and the semigroup property,
  \begin{align*}
    e^{t\Delta_{j,c}}T &= e^{(t/2)\Delta_{j,c}}e^{(t/2)\Delta_{j,c}}T\\
    &=e^{(t/2)\Delta_{j,c}}T_{\omega_{t/2}}\\
    &= T_{e^{(t/2)\Delta_{\Dim-j}} \omega_{t/2}}
    ,
  \end{align*}
  and since the heat propagator on $(\Dim-j)$-forms is smoothing, this shows that $e^{t\Delta_{j,c}}T$ admits representation as a smooth $(\Dim-j)$-form, necessarily unique.
  We may therefore assume without loss of generality that $\omega_t$ is this smooth $(\Dim-j)$-form, for all $t>0$.

  Finally, recalling \eqref{FormAsCurrentAngleStar} and \eqref{AbsetDeltaTxiBound}, we have
  \begin{align*}
    \abs{\int_\manifold \angles{\xi(p),\star_{\Dim-j}^{-1}\omega_t(p)} \dd V(p)}
    &= \abs{T_{\omega_t}(\xi)} = \abs{e^{t\Delta_{j,c}}T(\xi)}
    \\&
    \leq C \int_{\manifold} \norm{\xi(q)}_q \left(\int_\manifold K_t(p,q) \, \dd\mu_T(p)\right) \dd V(q)
  \end{align*}
  for all $\xi\in C(\Lambda^j)$.
  By Lebesgue's differentiation theorem we deduce
  \begin{align*}
    \norm{\omega_t(p)}_p = \norm{\star^{-1}_{\Dim-j}\omega(p)}_p \leq C\int_\manifold K_t(p,q)\,\dd\mu_T(q)
  \end{align*}
  for almost every $p \in \manifold$.
  Since $\omega_t$ is smooth, this inequality holds for all $p \in \manifold$.
\end{proof}

In \Cref{d1etDeltaxiAsIntegral}, we apply one spatial derivative to an integral against the kernel $\mathrm{p}_t^1$, which we can interpret as yielding a spatial derivative applied to the kernel.
Before proceeding to the proof, we recall the Gaussian-type approximation to $\mathrm{p}_t^1$ in \eqref{k_kernel_approx}, and we note that a spatial derivative of such an expression has scaling comparable to what we would obtain in Euclidean space.
\begin{lemma}\label{DerivOfGaussianTimesSmooth}
  Fix a smooth scalar function $g\colon\manifold\times\manifold\to\R$ with support sufficiently close to the diagonal so that $(p,q)\mapsto d_\manifold(p,q)^2$ is a smooth function on the support of $g$.
  Define smooth scalar functions $f_q\colon\manifold\to\R$ by
  \begin{equation*}
    f_q(p) = \exp\left( -d_\manifold(p,q)^2/(4t) \right) g(p,q)
    .
  \end{equation*}
  Then the derivatives of $f_q$ (expressed as a 1-form) have a uniform bound
  \begin{equation*}
    \norm{\ed f_q(p)}_p \leq c(g,\manifold) \frac{d_\manifold(p,q) + t}{t} \exp\left( -d_\manifold(p,q)^2/(4t) \right)
  \end{equation*}
  over all $p,q\in\manifold$ and $t>0$, where $c(g,\manifold)<\infty$.
\end{lemma}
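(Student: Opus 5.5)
Since $\ed f_q$ is a pointwise $1$-form, I plan to compute it with the product rule and estimate each factor separately:
\begin{equation*}
  \ed f_q(p) = \exp\left(-d_\manifold(p,q)^2/(4t)\right)\left[ \ed_p g(p,q) - \frac{g(p,q)}{4t}\,\ed_p\bigl(d_\manifold(p,q)^2\bigr) \right].
\end{equation*}
By assumption $g$ is smooth with compact support on $\manifold\times\manifold$, so $\norm{\ed_p g(p,q)}_p\leq c_1(g,\manifold)$ and $\abs{g(p,q)}\leq c_0(g)$ uniformly. The first term then contributes at most $c_1 e^{-d^2/(4t)}$.

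For the second term I need $\norm{\ed_p (d_\manifold(p,q)^2)}_p\lesssim d_\manifold(p,q)$ on the support of $g$. On that set $(p,q)\mapsto d_\manifold(p,q)^2$ is smooth. It vanishes, together with its $p$-gradient, on the diagonal, since it attains its minimum $0$ there. A first-order Taylor estimate of the smooth function $\ed_p(d^2)$ along the minimizing geodesic from $q$ to $p$ therefore gives $\norm{\ed_p(d^2)}_p\leq C\,d_\manifold(p,q)$. The constant $C$ is uniform by compactness of the support of $g$, working in the finite atlas of \Cref{ss:FiniteAtlas} so that all derivatives are uniformly bounded. Alternatively, near the diagonal one can use $\norm{\ed_p d^2}=2d\norm{\nabla d}=2d$ directly.

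Combining the two estimates gives
\begin{equation*}
  \norm{\ed f_q(p)}_p\leq e^{-d^2/(4t)}\Bigl(c_1 + \frac{c_0 C\, d}{4t}\Bigr)\leq c(g,\manifold)\,\frac{d+t}{t}\,e^{-d^2/(4t)},
\end{equation*}
because $c_1 = c_1 t/t$. The expected subtle point is justifying the uniform linear vanishing of $\ed_p(d^2)$, which is the only place the support hypothesis is needed. Points $p$ off the support of $g(\cdot,q)$ are trivial, since $f_q$ vanishes in a neighbourhood there.
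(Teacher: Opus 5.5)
Your proposal is correct and is essentially the paper's argument. The paper invokes geodesic normal coordinates centred at $q$, in which $d_\manifold(p,q)^2=\abs{x(p)}^2$; this gives exactly your key bound $\norm{\ed_p\bigl(d_\manifold(p,q)^2\bigr)}_p\lesssim d_\manifold(p,q)$ (your Gauss-lemma remark $\norm{\ed_p d_\manifold^2}_p=2d_\manifold$ is the same fact), and your product-rule bookkeeping is what the paper leaves implicit.

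If you rely on the Taylor version instead, it needs the whole geodesic segment from $q$ to $p$ to stay where $d_\manifold^2$ is smooth. This is automatic when the support of $g$ lies within distance less than the injectivity radius.
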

\begin{proof}
  This follows by using geodesic normal coordinates around $q$, so that $d_\manifold(p,q)^2$ reduces to the sum of squares of the coordinates of $p$, for $p$ sufficiently close to $q$.
\end{proof}

In the proof of \Cref{d1etDeltaxiAsIntegral}, we are effectively using an analogue of \Cref{DerivOfGaussianTimesSmooth} in which the functions are no longer scalar-valued.
For definiteness, however, we will use coordinates to reduce to the scalar case.

\begin{proof}[Proof of \Cref{d1etDeltaxiAsIntegral}]
  We again use the asymptotic series approximation from \eqref{k_kernel_approx}, this time fixing $N\geq (\Dim+1)/2$.
  In the notation of \cite{BerGetVer1992}*{Theorem~2.30 with $\ell=1,k=0$}, we have the bound
  \begin{equation}\label{pt1Approx1norm}
    \bignorm{\mathrm{p}_t^1 - \mathrm{p}_t^{1,N}}_1 \lesssim 1
    ,
  \end{equation}
  where the norm $\norm{\cdot}_1$ means the supremum of values and partial derivatives up to order 1 for a function of class $C^1$, see \cite{BerGetVer1992}*{pp.~70--71}.

  In this case, the functions $\mathrm{p}_t^1,\mathrm{p}_t^{1,N}$ are smooth functions on $\manifold\times\manifold$ with values in $\union_{p,q\in\manifold}\mathrm{Hom}(T_q^*\manifold, T_p^*\manifold)$.
  For an interpretation of \eqref{pt1Approx1norm} concretely in coordinates, recall the coordinate charts $(x_r,U_r)$ and partition of unity $\chi_r$, $r\in\set{1,\dotsc,\maxr}$, from \Cref{ss:FiniteAtlas}, and write $\xi_t=\sum_{i=1}^\Dim \xi_{t,r,i} \ed x_r^i$ with $\xi=\xi_0$.
  Then each mapping $\xi\mapsto \chi_r e^{t\Delta_1}(\chi_{r'}\xi)$ can be represented in coordinates by a smooth kernel, and
  \begin{equation}\label{xiIntegralCoordinates}
    \xi_{t,r,i}(p) = \sum_{r' = 1}^\maxr \sum_{j=1}^\Dim \int_{U_{r'}} w_{t,r,r',i,j}^1(p,q) \xi_{0,r',j}(q) \, \dd V(q)
    \quad\text{for }p\in U_r
    ,
  \end{equation}
  where $w_{t,r,r',i,j}^1$ are smooth scalar functions.
  If $w_{t,r,r',i,j}^{1,N}$ are the corresponding functions for $\mathrm{p}_t^{1,N}$, then \eqref{pt1Approx1norm} implies that
  \begin{equation*}
    \abs{w_{t,r,r',i,j}^1(p,q) - w_{t,r,r',i,j}^{1,N}(p,q)}\lesssim 1
    ,\quad
    \abs{\frac{\partial w_{t,r,r',i,j}^1}{\partial x_r(p)^i}(p,q) - \frac{\partial w_{t,r,r',i,j}^{1,N}}{\partial x_r(p)^i}(p,q)}\lesssim 1
    ,
  \end{equation*}
  where $\frac{\partial}{\partial x_r(p)^i}$ means partial differentiation in $\manifold\times\manifold$ under the coordinate mapping $(p,q)\mapsto (x_r(p),x_{r'}(q))\in\R^{2\Dim}$ on $U_r\times U_{r'}$.
  Moreover the scalar functions $w_{t,r,r',i,j}^{1,N}$ also have representations of the form \eqref{k_kernel_approx} as finite sums of powers of $t$ times functions to which \Cref{DerivOfGaussianTimesSmooth} applies.
  (Note that we can freely shrink the support of the functions $V_{k,i}$ from \eqref{k_kernel_approx} to lie close enough to the diagonal so that \Cref{DerivOfGaussianTimesSmooth} applies, without affecting the error bounds from \cite{BerGetVer1992}*{Theorem~2.30}, and indeed their derivation already performs such shrinking.)
  For $0<t\leq 1$, the most singular such power is $t^{-\Dim/2}$, and we conclude that
  \begin{equation*}
    \abs{\frac{\partial w_{t,r,r',i,j}^1}{\partial x_r(p)^i}(p,q)} \lesssim 1 + t^{-\Dim/2} \frac{d_\manifold(p,q) + t}{t} \exp\left( -d_\manifold(p,q)^2/(4t) \right) = \tilde{K}_t(p,q)
    .
  \end{equation*}

  Now take the partial derivatives inside the integral in \eqref{xiIntegralCoordinates} to find
  \begin{equation*}
    \frac{\partial \xi_{t,r,i}}{\partial x_r^k}(p) = \sum_{r' = 1}^\maxr \sum_{j=1}^\Dim \int_{U_{r'}} \frac{\partial w_{t,r,r',i,j}^1}{\partial x_r^k}(p,q) \xi_{0,r',j}(q) \, \dd V(q)
    .
  \end{equation*}
  The $2$-form $\ed_1\xi_t$ can be expressed in terms of a linear combination of such partial derivatives, and therefore
  \begin{align*}
    \norm{\zeta(p)}_p = \norm{\ed_1 \xi_t(p)}_p
    &\lesssim \sum_{r=1}^R \sum_{i=1}^\Dim \sum_{k=1}^\Dim \int_{U_{r'}} \abs{\frac{\partial w_{t,r,r',i,j}^1}{\partial x_r(p)^i}(p,q)} \abs{\xi_{0,r',j}(q)} \dd V(q)
    \\&
    \lesssim \int_\manifold \tilde{K}_t(p,q) \norm{\xi(q)}_q \dd V(q)
    .
    \qedhere
  \end{align*}
\end{proof}

We remark that the proof above could be used to justify the following more succinct coordinate-free approach.
The mapping $\xi\mapsto \ed_1(e^{t\Delta_1}\xi)$ is given by a kernel $\tilde{\mathrm{p}}^{1\to 2}_t(p,q)$ with values in $\mathrm{Hom}(T_q^*\manifold, \wedge^2(T_p^*\manifold))$, obtained by taking the exterior derivative of $\mathrm{p}^1_t(p,q)$ with respect to the variable $p$.
The error estimate \eqref{pt1Approx1norm} leads to $\bignorm{\tilde{\mathrm{p}}^{1\to 2}_t(p,q) - \tilde{\mathrm{p}}^{1\to 2,N}_t(p,q)}_{p,q}\lesssim 1$, where $\tilde{\mathrm{p}}^{1\to 2,N}_t$ is the analogous quantity obtained from $\mathrm{p}_t^{1,N}$.
The analogue of \Cref{DerivOfGaussianTimesSmooth} applies when taking exterior derivatives instead of partial derivatives.
Then $\bignorm{\tilde{\mathrm{p}}^{1\to 2}_t(p,q)}_{p,q}\lesssim \tilde{K}_t(p,q)$ follows by combining all of these bounds, thus proving \Cref{d1etDeltaxiAsIntegral}.

\section{Besov-Lorentz spaces on a Riemannian manifold}\label{appendix_B}

\subsection{Lorentz Spaces}
Let $(X, \mu)$ be a measure space.
For $f$ a measurable function on $X$, we define
\begin{align*}
  m(f,y):= \mu(\{ |f|>y\})
  .
\end{align*}
The non-increasing function $y\mapsto m(f,y), [0,\infty]\to[0,\infty],$ admits a left-continuous inverse $f^*\colon(0,\infty)\to [0,\infty]$, called the non-increasing rearrangement of $f$.
For $t\in(0,\infty)$ we further define
\begin{align*}
  f^{**}(t):= \frac{1}{t}\int_0^t f^*(u)\, \dd u
  .
\end{align*}

\begin{definition}\label{LorentzDefinition}
  For $1<r<\infty$ and $1\leq s<\infty$, define
  \begin{align}\label{LorentzNorm_sFinite}
    \|f\|_{L^{r,s}(X)} := \left( \int_0^\infty \left[t^{1/r} f^{**}(t)\right]^s\frac{\dd t}{t}\right)^{1/s}
    ,
  \end{align}
  and for $1<r\leq\infty$ and $s=\infty$
  \begin{align}\label{LorentzNorm_sInfinite}
    \|f\|_{L^{r,\infty}(X)} := \sup_{t>0} t^{1/r} f^{**}(t)
    .
  \end{align}
  We also define 
  \begin{equation}\label{LorentzNorm_r1}
    \norm{f}_{L^{1,1}(X)}=\norm{f}_{L^1(X)}=\int_0^\infty f^*(u)\,\dd u
    .
  \end{equation}
  For those values of $(r,s)$, the Lorentz space $L^{r,s}(X)$ is given by
  \begin{align*}
    L^{r,s}(X) := \left\{ f \text{ measurable} \colon \|f\|_{L^{r,s}(X)} <\infty\right\}
    .
  \end{align*}
\end{definition}
For this range of parameters $r,s$, the functionals $\norm{\cdot}_{r,s}$ are norms and the associated spaces $L^{r,s}(X)$ are Banach spaces, see, e.g., \cite{SteinWeissbook}*{Chapter V}.
For these spaces, there is a duality between $L^{r,s}(X)$ and $L^{r',s'}(X)$ for $1<r<\infty$ and $1\leq s < \infty$, see for example \cite{grafakos}*{Theorem 1.4.16}. 
One consequence is that
\begin{equation}\label{LorentzDual}
  \| f\|_{L^{r,s}(X)} \asymp \sup \left\{ \int_{X} fg \,\dd\mu \colon g \in L^{r',s'}(X) \text{ and } \|g \|_{L^{r',s'}(X)}\leq 1\right\}
  ,
\end{equation}
where $a\asymp b$ means $a\lesssim b$ and $b\lesssim a$.

Instead of the norm $\norm{\cdot}_{L^{r,s}(X)}$, it is often easier to deal with
\begin{align}\label{eq:quasinorm}
  \vertiii{f}_{L^{r,s}(X)} \equiv \left(r \int_0^\infty \left(y \mu(\set{p\in X\colon |f(p)|>y })^{1/r}\right)^{s} \frac{\dd y}{y}\right)^{1/s}
\end{align}
for $1\leq r<\infty$ and $1\leq s<\infty$, with
\begin{equation*}
  \vertiii{f}_{L^{r,\infty}(X)}=\sup_{y>0} y \mu(\set{p\in X\colon |f(p)|>y })^{1/r}
  .
\end{equation*}
We note first that while $\vertiii{\cdot}_{L^{r,s}(X)}$ is not a norm in general, it is a quasi-norm equivalent to the norm $\norm{\cdot}_{L^{r,s}(X)}$.

\begin{proposition}[\cite{SteinWeissbook}*{Theorem~3.21 on p.~204}]\label{norm_qnorm_equivalence}
  Let $1<r<\infty$ and $1\leq s \leq \infty$.
  Then
  \begin{align*}
    \vertiii{f}_{L^{r,s}(X)} \leq \|f\|_{L^{r,s}(X)}\leq r' \vertiii{f}_{L^{r,s}(X)}
    .
  \end{align*}
\end{proposition}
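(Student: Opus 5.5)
The plan is to pass through the intermediate quantity
\begin{equation*}
  \norm{f}^*_{L^{r,s}(X)} := \left( \int_0^\infty \left[t^{1/r} f^{*}(t)\right]^s\frac{\dd t}{t}\right)^{1/s}
\end{equation*}
(with the supremum for $s=\infty$). I will show three things:
\begin{enumerate}[label=(\roman*)]
  \item $\vertiii{f}_{L^{r,s}(X)} = \norm{f}^*_{L^{r,s}(X)}$ exactly;
  \item $\norm{f}^*_{L^{r,s}(X)}\leq \norm{f}_{L^{r,s}(X)}$;
  \item $\norm{f}_{L^{r,s}(X)} \leq r'\norm{f}^*_{L^{r,s}(X)}$, by a Hardy-type inequality.
\end{enumerate}
Chaining these gives the proposition.

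For (i), I use that $f^*$ is equimeasurable with $f$: $\abs{\set{t>0\colon f^*(t)>y}} = m(f,y)$ for every $y>0$. For $s=\infty$, the claim $\sup_y y\,m(f,y)^{1/r} = \sup_t t^{1/r}f^*(t)$ follows directly from the inverse relation between $m(f,\cdot)$ and $f^*$. For $s<\infty$, I compute $\int_0^\infty t^{s/r} f^*(t)^s \,\dd t/t$ by the layer-cake formula. Write $f^*(t)^s=\int_0^\infty s y^{s-1}\indicator{f^*(t)>y}\,\dd y$ and apply Fubini. Since $f^*$ is non-increasing, $\set{t\colon f^*(t)>y}$ is an interval $(0,m(f,y))$, so the $t$-integral equals $\int_0^{m(f,y)} t^{s/r-1}\,\dd t = \frac{r}{s} m(f,y)^{s/r}$. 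The result is $r\int_0^\infty y^{s} m(f,y)^{s/r}\,\dd y/y = \vertiii{f}^s_{L^{r,s}(X)}$, which is exactly the normalization in \eqref{eq:quasinorm}. The only care needed is the left-continuity convention for $f^*$, which affects a null set of $t$ at most; this identity is the one step requiring some bookkeeping.

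Step (ii) is immediate. Because $f^*$ is non-increasing, $f^{**}(t)=\frac1t\int_0^t f^*\geq f^*(t)$ pointwise, and the functional is monotone in the integrand.

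For (iii), I rewrite the average by scaling:
\begin{equation*}
  t^{1/r}f^{**}(t)=\int_0^1 t^{1/r}f^*(tu)\,\dd u=\int_0^1 u^{-1/r}\,(tu)^{1/r}f^*(tu)\,\dd u .
\end{equation*}
Apply Minkowski's integral inequality in $L^s(\dd t/t)$, or take the supremum when $s=\infty$. The measure $\dd t/t$ is dilation invariant, so for each fixed $u$ the function $t\mapsto (tu)^{1/r}f^*(tu)$ has $L^s(\dd t/t)$ norm equal to $\norm{f}^*_{L^{r,s}(X)}$. This yields the bound $\left(\int_0^1 u^{-1/r}\,\dd u\right)\norm{f}^*_{L^{r,s}(X)}$. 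The prefactor equals $\frac{1}{1-1/r} = r'$, which is finite precisely because $r>1$; this is where the hypothesis $1<r$ enters. Combining (i)–(iii) gives $\vertiii{f}_{L^{r,s}(X)}\leq\norm{f}_{L^{r,s}(X)}\leq r'\vertiii{f}_{L^{r,s}(X)}$.
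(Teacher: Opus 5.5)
Your proof is correct and complete. The paper gives no argument of its own and simply cites Stein--Weiss, and your proof matches the standard argument in substance: the lower bound comes from $f^*\le f^{**}$, and the upper bound from Hardy's inequality with constant $r'$. Your step (i) supplies the bookkeeping needed to pass from the paper's distribution-function definition of $\vertiii{\cdot}_{L^{r,s}(X)}$ (with its factor $r$) to the rearrangement form, and your dilation--Minkowski argument proves the needed Hardy inequality directly rather than quoting it.
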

\noindent
We remark that the norm $\norm{\cdot}_{L^{r,s}(X)}$, as defined in \eqref{LorentzNorm_sFinite}, becomes unstable when $r\decreasesto 1$ and $s<\infty$, but the quasi-norm $\vertiii{\cdot}_{L^{r,s}(X)}$ does not have this problem and indeed $\vertiii{\cdot}_{L^{r,r}(X)}=\norm{\cdot}_{L^r(X)}$ for all $1\leq r<\infty$.

A version of H\"older's inequality relates norms from different Lorentz spaces: see \cite{oneil}*{Theorem~3.4} and the proof in \cite{MS}*{Theorem~2.6 on p.~883}.
\begin{proposition}\label{holder}
  Let $f \in L^{r_1,s_1}(X)$ and $g \in L^{r_2,s_2}(X)$, where
  \begin{align*}
    \frac{1}{r_1}+\frac{1}{r_2}&=\frac{1}{r}<1
    ,
    \\
    \frac{1}{s_1}+\frac{1}{s_2}&\geq  \frac{1}{s}
    ,
  \end{align*}
  for some $r > 1$.
  Then
  \begin{align*}
    \|fg\|_{L^{r,s}(X)} \leq r'\|f \|_{L^{r_1,s_1}(X)}\|g \|_{L^{r_2,s_2}(X)}
    .
  \end{align*}
\end{proposition}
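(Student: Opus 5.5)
The statement is the Hölder inequality on the Lorentz scale, \Cref{holder}. I would prove it through the rearrangement characterization of the Lorentz norm in \Cref{LorentzDefinition}, using two classical facts about decreasing rearrangements. Throughout I assume finite indices; the cases $s_1=\infty$ or $s_2=\infty$ follow the same scheme with suprema in place of integrals.

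The first fact is the rearrangement inequality $(fg)^*(t)\le f^*(t/2)g^*(t/2)$. It holds because $\{|fg|>f^*(t/2)g^*(t/2)\}\subset\{|f|>f^*(t/2)\}\cup\{|g|>g^*(t/2)\}$, and each set on the right has measure at most $t/2$. The second fact is O'Neil's maximal-function bound
\begin{equation*}
(fg)^{**}(t)\le \frac{1}{t}\int_0^t f^*(u)g^*(u)\,\dd u ,
\end{equation*}
which is the Hardy--Littlewood inequality applied on the set where $|fg|$ is largest. The second version is the convenient one, since it avoids the rescaling constant.

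With these in hand, the plan has three steps.

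\textbf{Step 1.} Bound $(fg)^{**}(t)\le \frac1t\int_0^t f^*g^*\,\dd u$. The integrand satisfies $f^*g^*\le f^{**}g^{**}$, and the function $u\mapsto u^{1/r}f^{**}(u)g^{**}(u)$ is controlled by the product of $u^{1/r_1}f^{**}(u)$ and $u^{1/r_2}g^{**}(u)$. Then apply Hardy's inequality to the averaging operator $h\mapsto \frac1t\int_0^t h$, in the weighted space $L^s(t^{s/r}\,\dd t/t)$. This operator is bounded with constant $(1-1/r)^{-1}=r'$ exactly because $r>1$, and this is where the hypothesis $1/r<1$ enters. It yields
\begin{equation*}
\|fg\|_{L^{r,s}}\le r'\,\Bigl\|t^{1/r}f^*(t)g^*(t)\Bigr\|_{L^s(\dd t/t)} .
\end{equation*}

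\textbf{Step 2.} Split $t^{1/r}f^*g^*=(t^{1/r_1}f^*)(t^{1/r_2}g^*)$ and apply the ordinary Hölder inequality in $L^{s}(\dd t/t)$ with exponents $s_1,s_2$. The condition $1/s_1+1/s_2\ge 1/s$ reduces to the equality case through the monotonicity of Lorentz norms in the second index, $\|h\|_{L^{r,q}}\lesssim\|h\|_{L^{r,p}}$ for $p<q$. One can alternatively enlarge $s$ using the embedding $\ell^p\subset\ell^q$ applied on dyadic blocks.

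\textbf{Step 3.} Replace $f^*,g^*$ by $f^{**},g^{**}$ using $f^*\le f^{**}$. This gives exactly the norms of \eqref{LorentzNorm_sFinite}.

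The main obstacle is bookkeeping the constant so that it comes out exactly $r'$, rather than merely some finite constant. The monotonicity in $s$ may introduce extra constants unless it is handled using the $f^{**}$-norm's own nesting, where the constant is $1$. If that fails, I would cite \cite{oneil}*{Theorem~3.4} for the sharp constant, since the application in \Cref{bbq_t} only needs a finite constant.
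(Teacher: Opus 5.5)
Your Steps 1--3 are the standard O'Neil argument; the paper gives no proof of its own and only cites \cite{oneil} and \cite{MS}. These steps correctly prove the inequality with constant exactly $r'$ when $1/s_1+1/s_2=1/s$.

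The gap is in the reduction of the case $1/s_1+1/s_2>1/s$. For the unnormalized norm of \Cref{LorentzDefinition}, the second index does \emph{not} nest with constant $1$. For $f=\indicatorofset{E}$ one has $f^{**}(t)=\min\{1,\mu(E)/t\}$, so equality holds in \eqref{GeneralLorentzInterpolation}, giving
\[
\|\indicatorofset{E}\|_{L^{r,s}}=\Bigl(\frac{r+r'}{s}\Bigr)^{1/s}\mu(E)^{1/r},
\qquad
\|\indicatorofset{E}\|_{L^{r,\infty}}=\mu(E)^{1/r}.
\]
Hence $\|\indicatorofset{E}\|_{L^{r,\infty}}>\|\indicatorofset{E}\|_{L^{r,s}}$ once $s>r+r'$, and $s\mapsto\|\indicatorofset{E}\|_{L^{r,s}}$ is even increasing for $s>e(r+r')$. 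So your claim that ``the $f^{**}$-norm's own nesting'' has constant $1$ is false. The quasi-norm \eqref{eq:quasinorm} has the same defect, since $\vertiii{\indicatorofset{E}}_{L^{r,s}}=(r/s)^{1/s}\mu(E)^{1/r}$, and the dyadic $\ell^p\subset\ell^q$ route also loses constants. In the strict case your argument therefore only gives $\|fg\|_{L^{r,s}}\le C\,r'\,\|f\|_{L^{r_1,s_1}}\|g\|_{L^{r_2,s_2}}$ with some $C=C(r,s,s_1,s_2)$, and $C>1$ in general. Falling back on \cite{oneil} for that case cites the statement rather than proving it.

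Part of the strict case can be recovered. Set $1/\tilde s=1/s_1+1/s_2$ and $w(t)=t^{1/r}h^{**}(t)$. Concavity of $t\mapsto\int_0^t h^*$ gives, for every $t_0$,
\[
w(t)\ge w(t_0)\min\{(t/t_0)^{1/r},(t/t_0)^{-1/r'}\}.
\]
Hence $\sup w\le(\tilde s/(r+r'))^{1/\tilde s}\|w\|_{L^{\tilde s}(\dd t/t)}$, and therefore
\[
\|h\|_{L^{r,s}}\le\Bigl(\frac{\tilde s}{r+r'}\Bigr)^{1/\tilde s-1/s}\|h\|_{L^{r,\tilde s}},
\]
which is sharp for indicators when $s=\infty$. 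Combined with your equality case, this gives the exact constant $r'$ whenever $1\le\tilde s\le r+r'$, but not beyond. For $\tilde s>r+r'$ you would need a genuinely additional idea. One candidate is to exploit that Hardy's inequality is far from sharp on exactly the indicator-like profiles that make the nesting lossy. The extra constant is harmless for the paper's applications, where only finite constants matter, but it is a gap relative to the statement as written.
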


A version of H\"older's inequality also holds for the $L^1$ norm.
The following statement, whose validity follows from a rearrangement inequality of Hardy and Littlewood, will be sufficient for our purposes.
\begin{proposition}\label{holder_prime}
Suppose 
  \begin{align*}
    \frac{1}{r_1}+\frac{1}{r_2}&=1
  \end{align*}
with $(r_1,r_2)\neq(\infty,1)$.
Then
  \begin{align*}
    \|fg\|_{L^1(X)} \leq  \|f \|_{L^{r_1,1}(X)}\|g \|_{L^{r_2,\infty}(X)}
  \end{align*}
for all $f \in L^{r_1,1}(X)$ and $g \in L^{r_2,\infty}(X)$.
\end{proposition}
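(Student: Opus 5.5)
The plan is to reduce everything to decreasing rearrangements via the Hardy--Littlewood rearrangement inequality, and then compare $g^*$ pointwise with the weak-type norm of $g$.

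First, the Hardy--Littlewood inequality gives
\begin{equation*}
  \|fg\|_{L^1(X)} = \int_X |f||g|\,\dd\mu \leq \int_0^\infty f^*(t)\,g^*(t)\,\dd t .
\end{equation*}
Second, I would bound $g^*$ pointwise. Because $g^*$ is non-increasing, $g^*(t)\leq g^{**}(t)$ for every $t>0$. The definition \eqref{LorentzNorm_sInfinite} then yields, for $1<r_2\leq\infty$,
\begin{equation*}
  g^*(t) \leq g^{**}(t) \leq t^{-1/r_2}\,\|g\|_{L^{r_2,\infty}(X)} .
\end{equation*}
In the endpoint $r_2=\infty$ (so $r_1=1$) this reads $g^*(t)\leq\|g\|_{L^{\infty,\infty}(X)}$, using the convention $t^{0}=1$.

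Third, I would substitute this bound into the rearranged integral and use $1-1/r_2=1/r_1$:
\begin{equation*}
  \int_0^\infty f^*(t)\,g^*(t)\,\dd t \leq \|g\|_{L^{r_2,\infty}(X)} \int_0^\infty t^{1/r_1} f^*(t)\,\frac{\dd t}{t} .
\end{equation*}
If $r_1=1$, the remaining integral is $\int_0^\infty f^*=\|f\|_{L^1(X)}=\|f\|_{L^{1,1}(X)}$ by \eqref{LorentzNorm_r1}, and the proof is complete. If $1<r_1<\infty$, I use $f^*\leq f^{**}$ once more. The integral is then bounded by $\int_0^\infty t^{1/r_1}f^{**}(t)\,\dd t/t=\|f\|_{L^{r_1,1}(X)}$, which is exactly \eqref{LorentzNorm_sFinite} with $s=1$.

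The excluded pair $(r_1,r_2)=(\infty,1)$ is precisely the case in which \Cref{LorentzDefinition} assigns no meaning to $L^{1,\infty}$ through $g^{**}$, so it need not be treated. Since every estimate above is a pointwise comparison of rearrangements, there is no genuine obstacle. The one point needing care is to check that the two endpoint conventions, \eqref{LorentzNorm_r1} for $L^{1,1}$ and the sup-definition for $L^{\infty,\infty}$ (which equals $\|g\|_{L^\infty}$), fit the argument with no extra constant.
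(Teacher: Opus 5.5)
Your proof is correct and follows the route the paper indicates. The paper only remarks that the inequality follows from the Hardy--Littlewood rearrangement inequality; your pointwise bounds $g^{*}\le g^{**}\le t^{-1/r_2}\|g\|_{L^{r_2,\infty}(X)}$ and $f^{*}\le f^{**}$ supply the omitted details with constant $1$, and you handle the endpoint $r_1=1$ separately and correctly.
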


Finally, we give a proof of the basic interpolation inequality asserted in \Cref{Interpolation} and state and prove a similar result for operators.
Both of these can be derived from a general result \cite{grafakos}*{Theorem 1.4.19}, but we give direct proofs for the reader's interest.

\begin{proof}[Proof of \Cref{Interpolation}]
  The inequality is trivial if $\norm{f}_{L^\infty}=0$, so assume $\norm{f}_{L^\infty}>0$.
  Noting that $\int_0^\infty f^*(u)\,\dd u = \norm{f}_{L^1}$, we have $f^{**}(t)\leq \norm{f}_{L^1}/t$ for all $t>0$.
  In addition, $f^*(t)\leq \norm{f}_{L^\infty}$ and consequently $f^{**}(t)\leq\norm{f}_{L^\infty}$ for all $t>0$.
  Write $t_c=\norm{f}_{L^1}/\norm{f}_{L^\infty}$ for the point where these upper bounds coincide.
  Then, for $1<r<\infty$ and $1\leq s<\infty$,
  \begin{align*}
    \|f \|_{L^{r,s}(X)}^s &= \int_0^\infty [t^{1/r}f^{**}(t)]^s \frac{\dd t}{t}
    \\&
    \leq \int_0^{t_c} t^{s/r-1} \norm{f}_{L^\infty}^s \,\dd t + \int_{t_c}^\infty t^{s/r-s-1} \norm{f}_{L^1}^s \dd t
    \\&
    = \frac{r}{s} t_c^{s/r} \norm{f}_{L^\infty}^s + \frac{1}{s-s/r} t_c^{-(s-s/r)} \norm{f}_{L^1}^s
    \\&
    = \left( \frac{r}{s} + \frac{1}{s-s/r} \right) \norm{f}_{L^1}^{s/r} \norm{f}_{L^\infty}^{s-s/r}
    .
  \end{align*}
  We have therefore shown the general inequality
  \begin{equation}\label{GeneralLorentzInterpolation}
    \|f \|_{L^{r,s}(X)} \leq \frac{1}{s^{1/s}} \left( \frac{1}{1/r} + \frac{1}{1-1/r} \right)^{1/s} \norm{f}_{L^1}^{1/r} \norm{f}_{L^\infty}^{1-1/r}
  \end{equation}
  and \Cref{Interpolation} is the special case $s=1$, $1/r = 1-\alpha/\Dim$.
\end{proof}
\noindent
We remark that the corresponding inequality for the quasi-norm can be obtained via H\"older's inequality, similar to the classical proof for the Lebesgue interpolation statement, whereupon \Cref{Interpolation} can be derived using \Cref{norm_qnorm_equivalence}.

For later use, we record a similar interpolation result for operators.

\begin{lemma}\label{GrafakosLorentzOperatorNorms}
  Fix $1\leq r<\infty$.
  Then, uniformly over linear operators $T\colon L^1(X,\mu)+L^\infty(X,\mu) \to L^1(\tilde{X},\tilde{\mu}) + L^\infty(\tilde{X},\tilde{\mu})$,
  \begin{equation}\label{GrafakosLr1Lr1}
    \norm{T}_{L^{r,1}\to L^{r,1}} \lesssim \norm{T}_{L^1\to L^1}^{1/r} \norm{T}_{L^\infty\to L^\infty}^{1-1/r}
    .
  \end{equation}
  More generally, for $1\leq r\leq p<\infty$ fixed,
  \begin{equation}\label{GrafakosLr1Lp1}
    \norm{T}_{L^{r,1}\to L^{p,1}} \lesssim \norm{T}_{L^1\to L^1}^{1/p}  \norm{T}_{L^1\to L^\infty}^{1/r - 1/p} \norm{T}_{L^\infty\to L^\infty}^{1-1/r}
    .
  \end{equation}
\end{lemma}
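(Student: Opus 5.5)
The plan is to prove both estimates by real interpolation of the couple $(L^1,L^\infty)$, in the same way as \InterpolationLemma: work with the rearrangements $(Tf)^*$ and $(Tf)^{**}$ and split each function into a large part and a small part. Write $A_1=\norm{T}_{L^1\to L^1}$, $A_\infty=\norm{T}_{L^\infty\to L^\infty}$ and $B=\norm{T}_{L^1\to L^\infty}$.

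First I would prove the $K$-functional bound
\begin{equation*}
  t(Tf)^{**}(t)\le \inf_{f=g+h}\bigl(A_1\norm{g}_{L^1}+tA_\infty\norm{h}_{L^\infty}\bigr).
\end{equation*}
This follows from $t(Tf)^{**}(t)=\sup_{\tilde\mu(E)\le t}\int_E\abs{Tf}$ together with the triangle inequality. I then take the standard decomposition at level $f^*(s)$, where $g=(\abs f-f^*(s))_+\operatorname{sgn}f$ and $h=f-g$. This gives
\begin{equation*}
  (Tf)^{**}(t)\lesssim \frac{A_1}{t}\int_0^{s}f^*\,\dd u + A_\infty f^*(s).
\end{equation*}
For \eqref{GrafakosLr1Lr1} I choose $s=t\,A_\infty/A_1$. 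I then multiply by $t^{1/r}$, integrate in $\dd t/t$, and apply Hardy's inequality, using $r>1$. The last step is the substitution $t\mapsto s$, which produces the factor $(A_1/A_\infty)^{1/r}A_\infty=A_1^{1/r}A_\infty^{1-1/r}$. The endpoint $r=1$ is trivial, since $L^{1,1}=L^1$.

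For \eqref{GrafakosLr1Lp1} I add the third bound. Each piece $g$ also satisfies $\norm{Tg}_{L^\infty}\le B\norm{g}_{L^1}$, so the large piece contributes at most $\min\bigl(A_1\norm{g}_{L^1}/t,\;B\norm{g}_{L^1}\bigr)$. To handle the mismatch between $r$ and $p$ cleanly, I would instead use the atomic characterisation of $L^{r,1}$. It writes $\abs f\le\sum_j c_j\indicatorofset{E_j}$ with $\sum_j c_j V(E_j)^{1/r}\lesssim\norm{f}_{L^{r,1}}$, a decomposition over the dyadic level sets of $f$. Since $L^{p,1}$ is normed, it then suffices to bound $\norm{T(\sigma\indicatorofset{E})}_{L^{p,1}}$ for a single set $E$ and a unimodular sign function $\sigma$. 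Here $F=T(\sigma\indicatorofset E)$ satisfies
\begin{equation*}
  \norm{F}_{L^1}\le A_1\mu(E),\qquad \norm{F}_{L^\infty}\le\min\bigl(A_\infty,\;B\mu(E)\bigr).
\end{equation*}
The scalar interpolation \eqref{GeneralLorentzInterpolation} with $s=1$ then gives
\begin{equation*}
  \norm{F}_{L^{p,1}}\lesssim (A_1\mu(E))^{1/p}\min\bigl(A_\infty,B\mu(E)\bigr)^{1-1/p}.
\end{equation*}
Finally I bound the minimum by the geometric mean $A_\infty^{\theta}(B\mu(E))^{1-\theta}$ with $1-\theta=\frac{1/r-1/p}{1-1/p}$, which lies in $[0,1]$ because $r\le p$. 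This yields
\begin{equation*}
  \norm{F}_{L^{p,1}}\lesssim A_1^{1/p}B^{1/r-1/p}A_\infty^{1-1/r}\mu(E)^{1/r},
\end{equation*}
and summing over the atoms gives the claim. Taking $p=r$ in this argument also re-proves \eqref{GrafakosLr1Lr1}, so I would present only the atomic argument.

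The main obstacle is the atomic reduction itself. It needs the decomposition $\abs f\le\sum_j 2^{j+1}\indicatorofset{\set{\abs f>2^j}}$ together with the equivalence $\sum_j 2^j\mu(\abs f>2^j)^{1/r}\asymp\vertiii{f}_{L^{r,1}}$, and this equivalence combines with \Cref{norm_qnorm_equivalence} only when $r>1$. It also needs the signs of $f$ to be carried through linearly, and I would handle that by splitting $f$ into real and imaginary, and positive and negative, parts.
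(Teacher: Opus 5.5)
Your proposal is correct and is essentially the paper's proof: decompose $f$ along its level sets, use Minkowski's inequality in the normed space $L^{p,1}$, and bound each piece by \eqref{GeneralLorentzInterpolation} using the $L^1$ bound together with the better of the two $L^\infty$ bounds $\min(\norm{T}_{L^\infty\to L^\infty},\norm{T}_{L^1\to L^\infty}\mu(E))$. The differences are cosmetic. The paper uses the continuous layer-cake formula $f=\int_0^\infty h_y\,\mu(A_y)^{1/r}\,\dd y$ with $h_y=\mu(A_y)^{-1/r}\indicatorofset{A_y}\operatorname{sgn}(f)$, which carries the signs exactly; your dyadic pieces are only dominated by $2^{j+1}\indicatorofset{E_j}$, which is enough because only their $L^1$ and $L^\infty$ norms are used. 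The paper also locates the crossover of the two bounds where you take a geometric mean, and both give the same constant.
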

\noindent 
The first inequality \eqref{GrafakosLr1Lr1} is a special case of \cite{grafakos}*{Theorem 1.4.19}.
The second inequality \eqref{GrafakosLr1Lp1} can be obtained by applying \cite{grafakos}*{Theorem 1.4.19} twice, but we record the following direct proof using \Cref{Interpolation}.

\begin{proof}
  Let $f\colon X\to\R$ be a measurable function and define $A_y=\set{x\in X\colon \abs{f(x)}>y}$ for $y\geq 0$.
  Then by definition
  \begin{equation}\label{Lr1QuasinormmuAy}
    \vertiii{f}_{L^{r,1}} = r \int_0^\infty \mu(A_y)^{1/r} \,\dd y
  \end{equation}
  and 
  \begin{equation}\label{absfxIntegral}
    \abs{f(x)} = \int_0^\infty \indicator{x\in A_y} \,\dd y
    .
  \end{equation}
  To make use of the operator norms $\norm{T}_{L^1\to L^1}$, $\norm{T}_{L^1\to L^\infty}$, and $\norm{T}_{L^\infty\to L^\infty}$, rewrite \eqref{absfxIntegral} to express $f$ as the integral with respect to the measure appearing in \eqref{Lr1QuasinormmuAy} of functions in $L^1\intersect L^\infty$:
  \begin{equation}
    f = \int_0^\infty h_y \, \mu(A_y)^{1/r} \,\dd y
    \quad\text{$\mu$-a.e., where}\quad
    h_y = \frac{\indicatorofset{A_y}}{\mu(A_y)^{1/r}} \operatorname{sgn}(f)
    ,
  \end{equation}
  with $h_y$ interpreted as the zero function (modulo equality $\mu$-a.e.) if $\mu(A_y)=0$.
  By Minkowski's inequality for integrals,
  \begin{equation}\label{TfLp1Integral}
    \norm{Tf}_{L^{p,1}} \leq \int_0^\infty \norm{T h_y}_{L^{p,1}} \mu(A_y)^{1/r} \,\dd y
    .
  \end{equation}
  From the operator norms of $T$ we obtain three bounds
  \begin{subequations}
    \begin{align}
      \norm{T h_y}_{L^1} &\leq \norm{T}_{L^1\to L^1} \norm{h}_{L^1} = \norm{T}_{L^1\to L^1} \mu(A_y)^{1-1/r}
      ,
      \label{ThyL1L1}
      \\
      \norm{T h_y}_{L^\infty} &\leq \norm{T}_{L^1\to L^\infty} \norm{h}_{L^1} = \norm{T}_{L^1\to L^\infty} \mu(A_y)^{1-1/r}
      ,
      \label{ThyL1Linfty}
      \\
      \norm{T h_y}_{L^\infty} &\leq \norm{T}_{L^\infty\to L^\infty} \norm{h}_{L^\infty} = \norm{T}_{L^1\to L^\infty} \mu(A_y)^{-1/r}
      .
      \label{ThyLinftyLinfty}
    \end{align}
  \end{subequations}
  The \InterpolationLemma, or \eqref{GeneralLorentzInterpolation}, applied to \eqref{ThyL1L1} paired with each of \eqref{ThyL1Linfty} and \eqref{ThyLinftyLinfty} yields two complementary bounds for $\norm{T h_y}_{L^{p,1}}$:
  \begin{equation}\label{ThyLp1TwoBounds}
    \norm{T h_y}_{L^{p,1}} \lesssim 
    \left\lbrace 
    \begin{aligned}
      &\norm{T}_{L^1\to L^1}^{1/p} \norm{T}_{L^1\to L^\infty}^{1-1/p} \mu(A_y)^{1-1/r}
      ,
      \\
      &\norm{T}_{L^1\to L^1}^{1/p} \norm{T}_{L^\infty\to L^\infty}^{1-1/p} \mu(A_y)^{-(1/r-1/p)}
      .
    \end{aligned}
    \right.
  \end{equation}
  (If $p=r=1$, \eqref{GeneralLorentzInterpolation} and \Cref{Interpolation} do not apply, but then $\norm{\cdot}_{L^{1,1}}=\norm{\cdot}_{L^1}$ and \eqref{ThyLp1TwoBounds} holds trivially.)
  The first upper bound is non-decreasing in $\mu(A_y)$ for all $r\geq 1$, and the second is non-increasing since $p\geq r$.
  The cross-over occurs at $\mu(A_y)=\norm{T}_{L^\infty\to L^\infty}/\norm{T}_{L^1\to L^\infty}$, and in particular there is a uniform bound
  \begin{equation}\label{supyThyLp1}
    \sup_{y\geq 0} \norm{T h_y}_{L^{p,1}} \leq \norm{T}_{L^1\to L^1}^{1/p}  \norm{T}_{L^1\to L^\infty}^{1/r - 1/p} \norm{T}_{L^\infty\to L^\infty}^{1-1/r}
    .
  \end{equation}
  By \eqref{TfLp1Integral} and \eqref{Lr1QuasinormmuAy},
  \begin{equation*}
    \norm{T f}_{L^{p,1}}
    \leq \sup_y \norm{T h_y}_{L^{p,1}} \int_0^\infty \mu(A_y)^{1/r}\,\dd y
    \lesssim \sup_y \norm{T h_y}_{L^{p,1}} \vertiii{f}_{L^{r,1}}
    ,
  \end{equation*}
  and by \eqref{supyThyLp1} and \Cref{norm_qnorm_equivalence} this proves \eqref{GrafakosLr1Lp1} for $r>1$.
  The case $r=1$ also follows because $\vertiii{f}_{L^{1,1}}=\norm{f}_{L^{1,1}}=\norm{f}_{L^1}$, and \eqref{GrafakosLr1Lr1} is the special case $p=r$.
\end{proof}

We remark that the \InterpolationLemma\ can be derived from \Cref{GrafakosLorentzOperatorNorms}, and thus as a special case of \cite{grafakos}*{Theorem 1.4.19}, by taking $X=\set{x_0}$ with $\mu(X)=1$ and $Tf=f(x_0)F$ for $F\colon\tilde{X}\to\R$, cf.\ \cite{BerghLofstrom1976}*{proof of Theorem 3.9.1 on p.~58}.

\subsection{Besov-Lorentz Spaces}

In \Cref{sss:BesovLorentzDiscussion}, we asserted that our work establishes an embedding of closed or co-closed $k$-forms of finite mass into certain Besov-Lorentz spaces: see \Cref{BesovClosedCoclosedReformulated}.
In the Euclidean setting, Besov spaces have many equivalent definitions, for example in terms of the harmonic extension \cite[p.~152]{Stein1970}, differences, interpolation, frames, local means, harmonic and thermic extension \cite[p.~8, 40, 54, 131, 138, 152]{Triebel1992}.
Besov-Lorentz spaces are less common, with a definition by Peetre \cite{Peetre} and further study by  Seeger and Trebels \cite{ST}.
On a manifold, the various definitions of Besov spaces are somewhat involved, see e.g.\ \cite{Triebel}, \cite[7.3]{Triebel1992}, and \cite[p.~1055]{CKP}, and as far as the authors are aware, a definition of Besov-Lorentz spaces has not appeared in the literature.  
We begin this section by showing the definition of Besov-Lorentz spaces in \cite{ST} agrees with an alternate definition utilizing the heat propagator.
This result is a homogeneous Lorentz extension of the corresponding equivalence catalogued in \cite[p.~152]{Triebel1992} and motivates our adoption of an analogous definition of Besov-Lorentz spaces in the manifold setting.
We then give a self-contained treatment of some properties of these spaces.
It is beyond the scope of this paper to establish whether  this definition would be equivalent to one in the spirit of \cite[7.3]{Triebel1992} or  \cite[p.~1055]{CKP}.

We next outline the contents of the remainder of the appendix.
We show that the traditional Euclidean Besov-Lorentz norm is equivalent to another norm defined in terms of the Euclidean heat propagator: see \Cref{other_direction,other_direction_extended}.
The definition of Besov-Lorentz spaces can therefore be generalized to the setting of a compact Riemannian manifold, and we show that they satisfy the same mapping properties under the Hodge Laplacian as in Euclidean space: see \Cref{Besov_manifold_definition,Besov_homeomorphisms,Besov-nested}.

In Euclidean space, for $\beta \in \mathbb{R}$ and $1\leq r<\infty$, the Besov-Lorentz space $\dot{B}^{\beta,1}_{r,1}(\mathbb{R}^\Dim)$ is traditionally defined as the completion of the smooth compactly supported functions with respect to the norm
\begin{align}\label{BLnorm}
  \|f\|_{\dot{B}^{\beta,1}_{r,1}(\mathbb{R}^\Dim)}:= \sum_{j \in \mathbb{Z}} 2^{\beta j} \|f\ast (\psi_{2^{j+1}}-\psi_{2^j})\|_{L^{r,1}(\mathbb{R}^\Dim)}
  ,
\end{align}
where
\begin{align*}
 \psi_{2^j}(x)= 2^{j\Dim} \psi\left(2^j x\right)
\end{align*}
are dilates of some function $\psi \in \mathcal{S}(\mathbb{R}^\Dim)$ whose Fourier transform $\widehat{\psi}$ satisfies
\begin{align*}
  \operatorname*{supp} \widehat{\psi} &\subset B(0,1)
  ,
  \\
  \widehat{\psi} \equiv 1 &\text{ on }  B(0,1/2)
  .
\end{align*}
It can be shown that this space is equivalent to the quotient space of tempered distributions modulo polynomials $f \in \mathcal{S}'(\mathbb{R}^\Dim) / \mathcal{P}(\mathbb{R}^\Dim)$ such that \eqref{BLnorm} is finite.
We refer the reader to \cite{AH}*{Definition 4.1.2} for a similar definition of Besov-Lebesgue spaces, and Example 6 on p.~57 as well as pp.~106 and 232 in  Peetre's monograph \cite{Peetre} for an explicit treatment in classical literature.
For a modern treatment, we refer the reader to Seeger and Trebels \cite{ST}*{equation (1) on p.~1018}: they deal primarily with an inhomogeneous version denoted by $B^0_1[L^{r,1}]$, in which the sum in \eqref{BLnorm} runs over $j\geq 0$.
However, they comment at the end of the introduction \cite{ST}*{p.~1020} that their results apply also to the space corresponding to \eqref{BLnorm}, which they call homogeneous and denote by $\dot{B}^0_1[L^{r,1}]$.
Some of these definitions use different choices of $\psi$, i.e., different Littlewood-Paley decompositions, and may use the quasi-norm $\vertiii{\cdot}_{L^{r,1}(\R^\Dim)}$ instead of $\norm{\cdot}_{L^{r,1}(\R^\Dim)}$, but are otherwise equivalent.

Before continuing, we note a key property of the Besov spaces: the Laplacian and heat propagator map between different values of the parameter $\beta$.
\begin{proposition}[\cite{BerghLofstrom1976}*{Theorem~6.2.7}]\label{Besov_homeomorphism_Euclidean}
  Let $\beta \in \mathbb{R}$ and $1\leq r<\infty$.
  The mappings
  \begin{align*}
    -\Delta_k &\colon \dot{B}^{\beta,1}_{r,1}(\R^\Dim) \to \dot{B}^{\beta-2,1}_{r,1}(\R^\Dim)
    \\
    \mathcal{I}_{\alpha,k}&\colon \dot{B}^{\beta,1}_{r,1}(\R^\Dim) \to \dot{B}^{\beta+\alpha,1}_{r,1}(\R^\Dim)
  \end{align*}
  are homeomorphisms.
\end{proposition}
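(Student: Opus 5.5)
The plan is to reduce everything to a single multiplier estimate on each Littlewood--Paley block, using that both $-\Delta$ and $\mathcal{I}_{\alpha}$ are Fourier multipliers $|\xi|^{s}$ (up to the constant $(2\pi)^{s}$), with $s=2$ and $s=-\alpha$ respectively. Write $\Delta_j f = f\ast(\psi_{2^{j+1}}-\psi_{2^j})$. Its Fourier transform is supported in the annulus $A_j=\set{2^{j-2}\leq|\xi|\leq 2^{j+1}}$. Convolution operators commute with each other, so $\Delta_j(m(D)f)=m(D)\Delta_j f$ for $m(\xi)=|\xi|^s$. It therefore suffices to prove the blockwise bound
\begin{equation*}
  \norm{m(D)\Delta_j f}_{L^{r,1}(\R^\Dim)} \leq C\, 2^{js}\norm{\Delta_j f}_{L^{r,1}(\R^\Dim)},
\end{equation*}
with $C$ independent of $j$ and $f$. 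Multiplying by $2^{(\beta-s)j}$ and summing over $j$ then gives $\norm{m(D)f}_{\dot{B}^{\beta-s,1}_{r,1}}\lesssim\norm{f}_{\dot{B}^{\beta,1}_{r,1}}$. Taking $s=2$ gives boundedness of $-\Delta\colon\dot{B}^{\beta,1}_{r,1}\to\dot{B}^{\beta-2,1}_{r,1}$, and taking $s=-\alpha$ gives boundedness of $\mathcal{I}_{\alpha}\colon\dot{B}^{\beta,1}_{r,1}\to\dot{B}^{\beta+\alpha,1}_{r,1}$.

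For the blockwise bound, fix $\theta\in C_c^\infty(\R^\Dim\setminus\set{0})$ with $\theta\equiv1$ on $\set{1/4\leq|\xi|\leq 2}$, and set $\theta_j(\xi)=\theta(2^{-j}\xi)$. Then $m(D)\Delta_jf=K_j\ast\Delta_jf$, where
\begin{equation*}
  \widehat{K_j}(\xi)=|\xi|^s\theta_j(\xi)=2^{js}\,(|\cdot|^s\theta)(2^{-j}\xi).
\end{equation*}
Since $|\xi|^s\theta$ is smooth and compactly supported away from the origin, $K_j(x)=2^{js}2^{j\Dim}K(2^jx)$ with $K\in\mathcal{S}(\R^\Dim)$, and hence $\norm{K_j}_{L^1}=2^{js}\norm{K}_{L^1}$. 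The Lorentz norm is rearrangement invariant, so translations are isometries of $L^{r,1}$. Minkowski's inequality for integrals then gives $\norm{K_j\ast g}_{L^{r,1}}\leq\norm{K_j}_{L^1}\norm{g}_{L^{r,1}}$, which is the required bound, including the case $r=1$. Alternatively one can apply \Cref{GrafakosLorentzOperatorNorms} with the $L^1$ and $L^\infty$ operator norms of convolution with $K_j$.

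Bijectivity comes from the inverse relations between the two operators. On the dense class of Schwartz functions whose Fourier transform vanishes near the origin we have $\mathcal{I}_{2}(-\Delta)f=f=(-\Delta)\mathcal{I}_2 f$ and $\mathcal{I}_{\alpha}\,(-\Delta)^{\alpha/2}f=f$. Here $(-\Delta)^{\alpha/2}$ is the multiplier $|\xi|^{\alpha}$, which is bounded $\dot{B}^{\beta+\alpha,1}_{r,1}\to\dot{B}^{\beta,1}_{r,1}$ by the same argument with $s=\alpha$. Consequently each map has a bounded two-sided inverse on a dense subspace, and both operators and their inverses extend continuously to the completions. A bounded linear bijection with bounded inverse is a homeomorphism.

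The main obstacle I expect is the functional-analytic bookkeeping rather than the estimate itself. The space is realised as a completion, or as distributions modulo polynomials, and $|\xi|^{-\alpha}$ is singular at the origin. So one must check two things: that the class of Schwartz functions with Fourier transform supported away from $0$ is dense in $\dot{B}^{\beta,1}_{r,1}$, and that the extended operators are consistent with the quotient by polynomials, i.e.\ that the polynomial ambiguity is annihilated by every $\Delta_j$. I would first prove density by truncating the Littlewood--Paley sum to $\abs{j}\leq N$. The tail of the $\ell^1$-weighted sum tends to zero, which gives convergence in norm, and each truncation has Fourier support in a fixed annulus.
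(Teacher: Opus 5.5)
Your proposal is correct. It is essentially the argument behind the paper's citation of Bergh--L\"ofstr\"om, Theorem~6.2.7: a dyadic-block multiplier bound with constant $2^{js}$ (their Lemma~6.2.1) is summed over $j$, and the multiplier $|\xi|^{-s}$ supplies the inverse. Your Minkowski/Young step $\norm{K_j\ast g}_{L^{r,1}}\leq\norm{K_j}_{L^1}\norm{g}_{L^{r,1}}$ is exactly the ``straightforward adaptation to Lorentz spaces'' that the paper asserts without giving details.
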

\noindent
We note that in the text \cite{BerghLofstrom1976} the conclusion of \Cref{Besov_homeomorphism_Euclidean} is asserted only for the Besov-Lebesgue spaces.
However, both that result and \cite{BerghLofstrom1976}*{Lemma~6.2.1}, on which its proof relies, can be adaptated straightforwardly to Lorentz spaces.

We show next that the Euclidean Besov-Lorentz norm for $\beta<0$ has an equivalent norm defined in terms of the Euclidean heat kernel, similar to the integral in \Cref{BesovClosedCoclosed}.

\begin{theorem}\label{other_direction}
  Let $\alpha>0$ and $1\leq r<\infty$.
  For $f \in \dot{B}^{-\alpha,1}_{r,1}(\mathbb{R}^\Dim)$,
  \begin{equation*}
    \|f\|_{\dot{B}^{-\alpha,1}_{r,1}(\mathbb{R}^\Dim)} \asymp \int_0^\infty t^{\alpha/2-1} \|e^{t\Delta} f\|_{L^{r,1}(\mathbb{R}^\Dim)}\,\dd t
    .
  \end{equation*}
\end{theorem}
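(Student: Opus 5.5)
We prove the two inequalities separately, writing $\Delta_j f := f\ast(\psi_{2^{j+1}}-\psi_{2^j})$ for the Littlewood--Paley pieces in \eqref{BLnorm}, so that $\widehat{\Delta_j f}$ is supported in the annulus $2^{j-1}\le|\xi|\le 2^{j+1}$. Throughout we use two standard facts. First, $L^{r,1}(\R^\Dim)$ is a Banach space for $1\le r<\infty$ (for $r>1$ via \eqref{LorentzNorm_sFinite}, for $r=1$ trivially). Second, convolution with an $L^1$ kernel $k$ is bounded on $L^{r,1}$ with norm at most $\norm{k}_{L^1}$. The second fact follows from \Cref{GrafakosLorentzOperatorNorms}, or directly from Minkowski's inequality, since $L^{r,1}$ norms are translation invariant.

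\emph{Upper bound for the heat integral.} We write $f=\sum_j \Delta_j f$ and apply Minkowski's inequality:
\begin{equation*}
\int_0^\infty t^{\alpha/2-1}\norm{e^{t\Delta}f}_{L^{r,1}}\,\dd t\le \sum_j\int_0^\infty t^{\alpha/2-1}\norm{e^{t\Delta}\Delta_j f}_{L^{r,1}}\,\dd t.
\end{equation*}
Choose $\tilde\psi$ Schwartz with $\widehat{\tilde\psi}=1$ on the annulus $1/2\le|\xi|\le 2$ and supported in $1/4\le|\xi|\le 4$. Then $e^{t\Delta}\Delta_j f = k_{t,j}\ast\Delta_j f$, where $\widehat{k_{t,j}}(\xi)=e^{-t|\xi|^2}\widehat{\tilde\psi}(2^{-j}\xi)$. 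By scaling, $\norm{k_{t,j}}_{L^1}=\norm{\mathcal{F}^{-1}[e^{-t4^j|\eta|^2}\widehat{\tilde\psi}(\eta)]}_{L^1}$. This quantity is $\lesssim \min(1, C_N(t4^j)^{-N})$, because the multiplier is a smooth function on a fixed compact annulus whose $C^{\Dim+1}$ norm decays rapidly in $s=t4^j$. Integrating in $t$ with the substitution $s=t4^j$ gives
\begin{equation*}
\int_0^\infty t^{\alpha/2-1}\min(1,(t4^j)^{-N})\,\dd t = 2^{-\alpha j}\int_0^\infty s^{\alpha/2-1}\min(1,s^{-N})\,\dd s.
\end{equation*}
The last integral is finite since $\alpha>0$. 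Summing over $j$ gives the bound $\lesssim\norm{f}_{\dot B^{-\alpha,1}_{r,1}}$.

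\emph{Lower bound for the heat integral.} For each $j$, factor $\Delta_j f = m_j\ast e^{t\Delta}f$ for $t\in[4^{-j},4^{-j+1}]$, where $\widehat{m_j}(\xi)=e^{t|\xi|^2}(\widehat\psi(2^{-j-1}\xi)-\widehat\psi(2^{-j}\xi))$. After rescaling by $2^j$, this multiplier is smooth and compactly supported in a fixed annulus, and its derivatives are uniformly bounded for $t4^j\in[1,4]$. Hence $\norm{m_j}_{L^1}\lesssim 1$ uniformly in $j$ and in such $t$. It follows that for every $t\in[4^{-j},4^{-j+1}]$,
\begin{equation*}
\norm{\Delta_j f}_{L^{r,1}}\lesssim\norm{e^{t\Delta}f}_{L^{r,1}}.
\end{equation*}
Averaging this over $t$ in that interval with weight $t^{\alpha/2-1}\,\dd t$, whose total mass is $\asymp 2^{-\alpha j}$, gives
\begin{equation*}
2^{-\alpha j}\norm{\Delta_j f}_{L^{r,1}}\lesssim\int_{4^{-j}}^{4^{-j+1}}t^{\alpha/2-1}\norm{e^{t\Delta}f}_{L^{r,1}}\,\dd t.
\end{equation*}
The intervals $[4^{-j},4^{-j+1}]$, $j\in\mathbb{Z}$, tile $(0,\infty)$, so summing over $j$ gives the reverse inequality.

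The main obstacle is the uniform $L^1$ bound on the kernels $k_{t,j}$ and $m_j$, with rapid decay in $t4^j$ for $k_{t,j}$. We obtain it from the standard estimate $\norm{\mathcal{F}^{-1}\mu}_{L^1}\lesssim\sum_{|\beta|\le\Dim+1}\norm{\partial^\beta\mu}_{L^1}$, valid for $\mu$ smooth and supported in a fixed compact set. Each derivative of $e^{-s|\eta|^2}$ on the annulus is bounded by a polynomial in $s$ times $e^{-s/16}$, which supplies the decay.

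A secondary point is convergence. Since the inverse transform of $e^{-t|\xi|^2}$ is an $L^1$ function of norm one, $e^{t\Delta}$ is bounded on $L^{r,1}$. Because $f$ is a distribution modulo polynomials and $\widehat{e^{t\Delta}f}$ vanishes nowhere except possibly at the origin, the series $\sum_j e^{t\Delta}\Delta_j f$ converges in $L^{r,1}$ to $e^{t\Delta}f$. We check this first for $f\in C_c^\infty$, extend by density, and the Minkowski step is then legitimate.
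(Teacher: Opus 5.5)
Your proof is correct, and it reaches the equivalence by a more self-contained route than the paper.

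\textbf{The bound} $\norm{f}_{\dot B^{-\alpha,1}_{r,1}}\lesssim\int_0^\infty t^{\alpha/2-1}\norm{e^{t\Delta}f}_{L^{r,1}}\,\dd t$. The paper does not argue this directly. It quotes equation (2.7) of \cite{HRS}, which bounds $\norm{\mathcal{I}_\alpha f}_{\dot B^{0,1}_{r,1}}$ by the heat integral. It then transfers this to $f$ using that $\mathcal{I}_\alpha\colon \dot B^{-\alpha,1}_{r,1}\to\dot B^{0,1}_{r,1}$ is a homeomorphism (\Cref{Besov_homeomorphism_Euclidean}). That proposition is itself only asserted by adapting Bergh--L\"ofstr\"om from Lebesgue to Lorentz spaces. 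Your factorization $\Delta_jf=m_j\ast e^{t\Delta}f$ for $t\in[4^{-j},4^{-j+1}]$, with $\norm{m_j}_{L^1}\lesssim 1$ uniformly, replaces both external inputs by Young's inequality in $L^{r,1}$.

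\textbf{The opposite bound.} You and the paper use the same skeleton: decompose $f=\sum_j\Delta_jf$ and compare $t$ with $4^{-j}$. The paper treats large times with a single Bernstein step and $\norm{\Delta e^{t\Delta}}_{L^{r,1}\to L^{r,1}}\lesssim t^{-1}$, which only gives decay $(t4^j)^{-1}$. The resulting sum $\sum_{l\le k}2^{(2-\alpha)l}$ then converges only for $\alpha<2$, so for $\alpha\ge 2$ Bernstein would have to be iterated. Your multiplier bound $\norm{k_{t,j}}_{L^1}\lesssim\min(1,(t4^j)^{-N})$ handles every $\alpha>0$ at once. This includes $\alpha=2$, which is what \Cref{other_direction_extended} requires when $\beta$ is an even integer.

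\textbf{One small repair.} The justification in your final paragraph is muddled: $\widehat{e^{t\Delta}f}$ can certainly vanish away from the origin. The clean argument runs as follows. Your kernel bound gives $\sum_j\norm{e^{t\Delta}\Delta_jf}_{L^{r,1}}\lesssim\norm{f}_{\dot B^{-\alpha,1}_{r,1}}\sum_j 2^{\alpha j}\min(1,(t4^j)^{-N})<\infty$ for each $t>0$ once $2N>\alpha$. Hence the series converges in $L^{r,1}$. Its sum agrees with $e^{t\Delta}f$ modulo polynomials, and no nonzero polynomial lies in $L^{r,1}$, so the two coincide.
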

\begin{proof}
In \cite{HRS}*{Equation (2.7) on p.~1935}, it is shown that
\begin{align}\label{Stolyarov_bound}
  \|\mathcal{I}_\alpha f\|_{\dot{B}^{0,1}_{r,1}(\mathbb{R}^\Dim)} &\lesssim \int_0^\infty t^{\alpha/2-1} \|e^{t\Delta} f\|_{L^{r,1}(\mathbb{R}^\Dim)}\,\dd t
  .
\end{align}
(Their statement is for the case $r=\Dim/(\Dim-\alpha)$, but the same proof applies for all $r\geq1$.)
By \Cref{Besov_homeomorphism_Euclidean}, the Riesz potential is a homeomorphism $I_\alpha\colon \dot{B}^{-\alpha ,1}_{r,1}(\mathbb{R}^\Dim) \to \dot{B}^{0,1}_{r,1}(\mathbb{R}^\Dim)$, so \eqref{Stolyarov_bound} is equivalent to the assertion that
\begin{equation}\label{Stolyarov_bound_equivalent}
  \|f\|_{\dot{B}^{-\alpha,1}_{r,1}(\mathbb{R}^\Dim)} \lesssim \int_0^\infty t^{\alpha/2-1} \|e^{t\Delta} f\|_{L^{r,1}(\mathbb{R}^\Dim)}\,\dd t
  .
\end{equation}
It therefore suffices to establish the reverse inequality, and by density it is sufficient to prove such a bound for smooth, compactly supported functions.
Let $f \in C^\infty_c(\mathbb{R}^n)$ and set $f_k = f\ast (\psi_{2^{k+1}} -\psi_{2^k})$.
Using $f = \sum_k f_k$ we have
\begin{align*}
  \int_0^\infty t^{\alpha/2-1} \| e^{t\Delta}f\|_{L^{r,1}(\mathbb{R}^\Dim)} \,\dd t &= \sum_{l\in \mathbb{Z}} \int_{2^{-2l}}^{2^{-2l+2}} t^{\alpha/2-1}  \| e^{t\Delta}f\|_{L^{r,1}(\mathbb{R}^\Dim)} \,\dd t\\
  &\leq  \sum_{l\in \mathbb{Z}} (2^{-2l+2})^{\alpha/2} \int_{2^{-2l}}^{2^{-2l+2}}   \| e^{t\Delta} f\|_{L^{r,1}(\mathbb{R}^\Dim)} \,\frac{\dd t}{t}\\
  &\leq 2^\alpha \sum_{k \in \mathbb{Z}} \sum_{l\in \mathbb{Z}} 2^{-\alpha l} \int_{2^{-2l}}^{2^{-2l+2}}   \| e^{t\Delta}f_k\|_{L^{r,1}(\mathbb{R}^\Dim)} \,\frac{\dd t}{t}\\
  &= 2^\alpha \left(\sum_{k \in \mathbb{Z}}   \sum_{l\leq k} \cdots + \sum_{k \in \mathbb{Z}}  \sum_{l> k} \cdots\right)\\
  &=: I + \mathit{II}
  .
\end{align*}
For $I$, the support condition $\operatorname*{supp} \widehat{f_k} \subset B(0,2^{k+2})\setminus  B(0,2^{k})$ allows us to use Bernstein's inequality
\begin{align*}
  \| e^{t\Delta}f_k\|_{L^{r,1}(\mathbb{R}^\Dim)} \leq 2^{-2k}  \| \Delta e^{t\Delta}f_k\|_{L^{r,1}(\mathbb{R}^\Dim)}
\end{align*}
and $\| \Delta e^{t\Delta}\|_{L^{r,1}(\mathbb{R}^\Dim) \to L^{r,1}(\mathbb{R}^\Dim)}\lesssim t^{-1}$ gives
\begin{align*}
  \int_{2^{-2l}}^{2^{-2l+2}}   \| e^{t\Delta}f_k\|_{L^{r,1}(\mathbb{R}^\Dim)} \,\frac{\dd t}{t} &\leq \int_{2^{-2l}}^{2^{-2l+2}}     2^{-2k} \| \Delta e^{t\Delta} f_k\|_{L^{r,1}(\mathbb{R}^\Dim)}  \,\frac{\dd t}{t} \\
  &\leq \int_{2^{-2l}}^{2^{-2l+2}}    2^{-2k}  \|   f_k\|_{L^{r,1}(\mathbb{R}^\Dim)}   \,\frac{\dd t}{t^2}\\
  &\lesssim  2^{2l-2k} \|  f_k\|_{L^{r,1}(\mathbb{R}^\Dim)}
\end{align*}
and therefore
\begin{align*}
  I &\leq 2^{\alpha+1} \sum_{k \in \mathbb{Z}} 2^{-2k} \|  f_k\|_{L^{r,1}(\mathbb{R}^\Dim)} \sum_{l\leq k}  2^{(2-\alpha) l} \\
  &\lesssim \sum_{k \in \mathbb{Z}} 2^{-\alpha k} \|  f_k\|_{L^{r,1}(\mathbb{R}^\Dim)}
  .
\end{align*}

For $\mathit{II}$,  we use that the heat kernel is a contraction on $L^{r,1}$, $\| e^{t\Delta}f_k\|_{L^{r,1}(\mathbb{R}^\Dim)}\leq \| f_k\|_{L^{r,1}(\mathbb{R}^\Dim)}$, to obtain the bound
\begin{align*}
  \mathit{II} &\lesssim  \sum_{k \in \mathbb{Z}}    \|   f_k\|_{L^{r,1}(\mathbb{R}^\Dim)
  }\sum_{l> k}  2^{-\alpha l} \\
  &\approx \sum_{k \in \mathbb{Z}}  2^{-\alpha k}  \|   f_k\|_{L^{r,1}(\mathbb{R}^\Dim)
  }
  .
  \qedhere
\end{align*}
\end{proof}

\Cref{other_direction} shows that there is an equivalent norm defined without reference to the Fourier transform, and therefore suggests a sensible definition of Besov or Besov-Lorentz spaces for $\beta<0$ in a setting where there is a heat propagator.
For $\beta\geq 0$, we can deduce a similar equivalent norm by appealing to \Cref{Besov_homeomorphism_Euclidean}.

\begin{theorem}\label{other_direction_extended}
  Let $\beta\geq 0$ and set $i=\floor{\beta/2}+1$.
  For $f \in \dot{B}^{\beta,1}_{r,1}(\mathbb{R}^\Dim)$, $1\leq r<\infty$,
  \begin{align*}
    \| f\|_{\dot{B}^{\beta,1}_{r,1}(\mathbb{R}^\Dim)} \asymp \int_0^\infty t^{i-\beta/2-1} \|\Delta^i e^{t\Delta} f\|_{L^{r,1}(\mathbb{R}^\Dim)}\,\dd t
    .
  \end{align*}
\end{theorem}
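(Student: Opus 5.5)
The plan is to reduce \Cref{other_direction_extended} to the case $\beta<0$ already handled in \Cref{other_direction}, using the homeomorphism property of \Cref{Besov_homeomorphism_Euclidean}. Set $i=\floor{\beta/2}+1$, so that $2i>\beta$ and $\beta-2i\in[-2,0)$ is strictly negative. Put $g=(-\Delta)^i f$. Applying \Cref{Besov_homeomorphism_Euclidean} $i$ times, $(-\Delta)^i$ is a homeomorphism from $\dot{B}^{\beta,1}_{r,1}(\R^\Dim)$ onto $\dot{B}^{\beta-2i,1}_{r,1}(\R^\Dim)$. Hence
\begin{equation*}
  \norm{f}_{\dot{B}^{\beta,1}_{r,1}(\R^\Dim)} \asymp \norm{g}_{\dot{B}^{\beta-2i,1}_{r,1}(\R^\Dim)} .
\end{equation*}

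Next I apply \Cref{other_direction} to $g$ with $\alpha=2i-\beta>0$. This gives
\begin{equation*}
  \norm{g}_{\dot{B}^{-(2i-\beta),1}_{r,1}(\R^\Dim)} \asymp \int_0^\infty t^{(2i-\beta)/2-1}\norm{e^{t\Delta}g}_{L^{r,1}(\R^\Dim)}\,\dd t .
\end{equation*}
The exponent $(2i-\beta)/2-1$ equals $i-\beta/2-1$. Since $e^{t\Delta}$ commutes with $\Delta^i$, we have $\norm{e^{t\Delta}g}_{L^{r,1}}=\norm{\Delta^i e^{t\Delta}f}_{L^{r,1}}$, because the sign $(-1)^i$ does not affect the norm. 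Chaining the two equivalences yields the claim.

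The main care point is justifying that \Cref{other_direction} applies to $g$ and that the commutation is legitimate. By density it suffices to work with $f\in C_c^\infty$, or with Schwartz functions whose Fourier transform vanishes near the origin. For such $f$, $g$ is Schwartz and $\Delta^i e^{t\Delta}f=e^{t\Delta}\Delta^i f$ holds classically via the Fourier transform.

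One also needs to check that both sides pass to the completion consistently modulo polynomials. Polynomials of degree less than $2i$ are annihilated by $\Delta^i$, which is compatible with the quotient by $\mathcal{P}$. Both sides are then extended by continuity, using the Fatou-type lower semicontinuity of the right-hand side under convergence in $\mathcal{S}'/\mathcal{P}$. Beyond this, no new estimate is needed.
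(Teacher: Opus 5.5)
Your proposal is correct and is essentially the paper's own argument. You iterate \Cref{Besov_homeomorphism_Euclidean} $i$ times to pass from $f\in\dot{B}^{\beta,1}_{r,1}(\R^\Dim)$ to $\Delta^i f\in\dot{B}^{\beta-2i,1}_{r,1}(\R^\Dim)$, then apply \Cref{other_direction} with $\alpha=2i-\beta>0$ and commute $\Delta^i$ with $e^{t\Delta}$; your added density and completion remarks go beyond what the paper writes, but they are not strictly needed, since \Cref{other_direction} is already stated for every element of $\dot{B}^{-\alpha,1}_{r,1}(\R^\Dim)$.
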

\begin{proof}
  Replacing $f$ by $\Delta f$ and applying \Cref{other_direction} and \Cref{Besov_homeomorphism_Euclidean} gives
  \begin{align*}
    \int_0^\infty t^{\alpha/2-1} \|\Delta e^{t\Delta} f\|_{L^{r,1}(\mathbb{R}^\Dim)}\,\dd t
    \asymp \|\Delta f\|_{\dot{B}^{-\alpha,1}_{r,1}(\mathbb{R}^\Dim)}
    \asymp \| f\|_{\dot{B}^{2-\alpha,1}_{r,1}(\mathbb{R}^\Dim)}
  \end{align*}
  for $\alpha>0$.
  More generally, applying \Cref{Besov_homeomorphism_Euclidean} $i$ times and \Cref{other_direction} with $\alpha=2i-\beta>0$ and $f$ replaced by $\Delta^i f$ yields the result.
\end{proof}

In view of \Cref{other_direction,other_direction_extended}, we can generalize the definition of Besov-Lorentz to an arbitrary Riemannian manifold.

\begin{definition}\label{Besov_manifold_definition}
  Let $\manifold$ be an arbitrary $\Dim$-dimensional Riemannian manifold and let $k\in\set{0,\dotsc,\Dim}$.
  For $\beta\in\R$, let
  \begin{equation*}
    i(\beta) = \max\set{\floor{\beta/2}+1, 0}
    = \begin{cases}
      \floor{\beta/2}+1 &\text{if }\beta\geq 0
      ,\\
      0 &\text{if }\beta<0
      .
    \end{cases}
  \end{equation*}
  Then, for all $r,s$ for which the $L^{r,s}$ norm is defined, 
  \begin{equation*}
    \norm{F}_{\dot{B}^{\beta,1}_{r,s}(\Lambda^k)} = \int_0^\infty t^{i(\beta)-\beta/2-1} \|\Delta_k^{i(\beta)} e^{t\Delta_k} F\|_{L^{r,s}(\Lambda^k)}\,\dd t
  \end{equation*}
  defines a norm on $C_c^\infty\cap\mathcal{H}^\perp(\Lambda^k)$.
  Define the space $\dot{B}^{\beta,1}_{r,s}(\Lambda^k)$ to be the completion of $C_c^\infty\cap\mathcal{H}^\perp(\Lambda^k)$ with respect to this norm.
\end{definition}
Indeed this construction can be carried out for many choices of underlying norm, cf.\ \cite{ST}.
Henceforth we restrict our attention to the Lorentz spaces $L^{r,1}$.

To close the chain of reasoning that led to \Cref{Besov_manifold_definition}, it would be natural to wonder whether the Laplacian and Riesz potentials remain homeomorphisms under this definition.
We show that this holds in the setting of a compact Riemannian manifold.
\begin{theorem}\label{Besov_homeomorphisms}
  Let $\manifold$ be a compact $\Dim$-dimensional Riemannian manifold and let $k\in\set{0,\dotsc,\Dim}$.
  For each $\beta \in \mathbb{R}$, $\alpha>0$, and $1\leq r<\infty$, the mappings
  \begin{align*}
    -\Delta_k &\colon \dot{B}^{\beta,1}_{r,1}(\Lambda^k) \to \dot{B}^{\beta-2,1}_{r,1}(\Lambda^k)
    \\
    \mathcal{I}_{\alpha,k}&\colon \dot{B}^{\beta,1}_{r,1}(\Lambda^k) \to \dot{B}^{\beta+\alpha,1}_{r,1}(\Lambda^k)
  \end{align*}
  are homeomorphisms.
\end{theorem}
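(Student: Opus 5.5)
The approach is to work entirely from \Cref{Besov_manifold_definition}, using only the semigroup property, the commutation of $\Delta_k$ with $e^{t\Delta_k}$, the spectral gap on $\mathcal{H}^\perp(\Lambda^k)$ from \Cref{exp_decay}, and the operator bounds of \Cref{GrafakosLorentzOperatorNorms}. Since $-\Delta_k$ and $\mathcal{I}_{\alpha,k}$ are mutually inverse on $C^\infty\cap\mathcal{H}^\perp(\Lambda^k)$ (this is \eqref{left_inverse} together with the semigroup law \eqref{semigroup}), and the Riesz potentials compose as $\mathcal{I}_{\alpha}\mathcal{I}_{\beta}=\mathcal{I}_{\alpha+\beta}$, it suffices to prove two things. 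First, $\norm{\mathcal{I}_{\alpha,k}F}_{\dot B^{\beta+\alpha,1}_{r,1}}\lesssim\norm{F}_{\dot B^{\beta,1}_{r,1}}$ for all $\alpha>0$ and all $\beta$. Second, $\norm{\Delta_kF}_{\dot B^{\beta-2,1}_{r,1}}\lesssim\norm{F}_{\dot B^{\beta,1}_{r,1}}$. Both bounds would be proved on the dense class and then extended by completion. Bijectivity then follows because each map has the other (or $\mathcal{I}_{\alpha}$ composed with a power of $-\Delta$) as a bounded inverse.

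The key technical tool is a family of \emph{comparison estimates}. For $F\in\mathcal{H}^\perp$, integers $i,m\ge0$, and exponents $a,b>0$ with $a-i=b-m$ (this is the common homogeneity, equal to $-\beta/2$), I want
\[
\int_0^\infty t^{a-1}\norm{\Delta^i e^{t\Delta}F}_{L^{r,1}}\,\dd t\asymp\int_0^\infty t^{b-1}\norm{\Delta^m e^{t\Delta}F}_{L^{r,1}}\,\dd t .
\]
This shows the norm is insensitive to the choice of $i(\beta)$, provided the $t$-power stays positive. To prove it I will write $\Delta^{m}e^{t\Delta}=\Delta^{m-i}e^{(t/2)\Delta}\,\Delta^i e^{(t/2)\Delta}$ when $m\ge i$. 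The analyticity bound $\norm{\Delta^j e^{s\Delta}}_{L^{r,1}\to L^{r,1}}\lesssim s^{-j}$ for $s\le1$ is obtained by interpolating the $L^1$ and $L^\infty$ bounds via \Cref{GrafakosLorentzOperatorNorms}. For the $L^1$ and $L^\infty$ bounds themselves I would use heat-kernel derivative bounds as in \Cref{d1etDeltaxiAsIntegral}, and for $s\ge1$ exponential decay from \Cref{exp_decay}. With these, the direction "$m$-norm $\lesssim$ $i$-norm" follows by a change of variables. For the reverse direction I will use the identity $\Delta^i e^{t\Delta}F=\int_t^\infty(-1)\Delta^{i+1}e^{s\Delta}F\,\dd s$, which is valid because $e^{s\Delta}F\to0$ on $\mathcal{H}^\perp$, iterate it, and then apply Hardy's inequality in $t$. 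Hardy's inequality requires $a>0$, which is exactly why $i(\beta)$ is chosen with $i(\beta)-\beta/2>0$.

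Given the comparison estimates, both mapping properties are short. For the Laplacian, $\norm{\Delta F}_{\dot B^{\beta-2,1}}=\int t^{i'-(\beta-2)/2-1}\norm{\Delta^{i'+1}e^{t\Delta}F}\,\dd t$ with $i'=i(\beta-2)$, and this is a norm of the same homogeneity as $\norm{F}_{\dot B^{\beta}}$ with index $i'+1$. For the Riesz potential, I write $\mathcal{I}_\alpha F=\Gamma(\alpha/2)^{-1}\int_0^\infty s^{\alpha/2-1}e^{s\Delta}F\,\dd s$ and bound
\[
\norm{\Delta^{j}e^{t\Delta}\mathcal{I}_\alpha F}_{L^{r,1}}\le\Gamma(\alpha/2)^{-1}\int_0^\infty s^{\alpha/2-1}\norm{\Delta^{j}e^{(t+s)\Delta}F}_{L^{r,1}}\,\dd s
\]
by Minkowski's inequality. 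Integrating against $t^{j-(\beta+\alpha)/2-1}$ and substituting $u=t+s$ produces the kernel $\int_0^u t^{c-1}(u-s)\ldots$, a Beta integral $B(c,\alpha/2)\,u^{c+\alpha/2-1}$ with $c=j-(\beta+\alpha)/2>0$. This gives the $\dot B^{\beta}$ norm with index $j$, and then the comparison lemma applies.

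The main obstacle I expect is the comparison lemma at large $t$ and, more seriously, the analyticity bound in $L^{r,1}$ at $r=1$ (endpoint $L^1$). The $L^1\to L^1$ bound for $\Delta^j e^{s\Delta}$ needs the off-diagonal heat-kernel derivative estimates for $j$ derivatives. I would first try to obtain these exactly as in \Cref{d1etDeltaxiAsIntegral}, using \cite{BerGetVer1992}*{Theorem~2.30} with enough regularity ($\ell=2j$), combined with \Cref{KernelBounds}.
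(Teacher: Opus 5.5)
Your proposal is correct and follows essentially the paper's route: forward bounds for $-\Delta_k$ and $\mathcal{I}_{\alpha,k}$ via Minkowski's inequality, the Beta-integral substitution, and the analyticity bound $\|\Delta_k e^{t\Delta_k}\|_{L^{r,1}\to L^{r,1}}\lesssim 1/t$, with the reverse bounds coming from the identities $F=\mathcal{I}_{2,k}(-\Delta_k F)$ and $F=\mathcal{I}_{2i-\alpha,k}(-\Delta_k)^i\mathcal{I}_{\alpha,k}F$. The differences are minor. You only need the easy direction of your comparison lemma, since the indices you land on, $i(\beta-2)+1$ and $i(\beta+\alpha)$, are always $\geq i(\beta)$, so the Hardy-type reverse direction is valid but unnecessary. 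The paper gets $\|\Delta_k^j e^{s\Delta_k}\|\lesssim s^{-j}$ simply by iterating the $j=1$ bound through the semigroup property rather than using higher-order kernel derivatives. Finally, the $r=1$ endpoint you flag is harmless because $L^{1,1}=L^1$ and the $L^1$ kernel bound applies directly.
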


To prove \Cref{Besov_homeomorphisms}, we will use bounds on the operator norms of $e^{t\Delta_k}$ as an operator between various Lorentz spaces, which we obtain from $L^1$ and $L^\infty$ operator norms via the first part of \Cref{GrafakosLorentzOperatorNorms}.

\begin{lemma}\label{OperatorBoundsLorentz}
  For $1<r<\infty$ fixed and uniformly over $t>0$,
  \begin{align}
    \|e^{t\Delta_k} \|_{L^{r,1}(\Lambda^k)\to L^{r,1}(\Lambda^k)} &\lesssim 1 \label{propagator}
    \\
    \|\Delta_ke^{t\Delta_k}\|_{L^{r,1}(\Lambda^k)\to L^{r,1}(\Lambda^k)} &\lesssim \frac{1}{t} \label{propagator_prime}
    .
  \end{align}
\end{lemma}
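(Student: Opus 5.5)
The plan is to apply the first part of \Cref{GrafakosLorentzOperatorNorms}, inequality \eqref{GrafakosLr1Lr1}, which reduces both bounds to $L^1\to L^1$ and $L^\infty\to L^\infty$ operator norms. That lemma is stated for scalar functions, but forms are treated through pointwise norms. Either the interpolation argument goes through verbatim, since the level-set decomposition uses only $\abs{f}$ and $\operatorname{sgn}(f)$, which can be replaced by $\norm{F(p)}_p$ and $F/\norm{F}_p$. Or, more simply, I would bound $\norm{e^{t\Delta_k}F(p)}_p$ pointwise by a positive scalar integral operator applied to $\norm{F(\cdot)}$ and interpolate that scalar operator. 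So it suffices to show that $\norm{e^{t\Delta_k}}_{L^1\to L^1}$ and $\norm{e^{t\Delta_k}}_{L^\infty\to L^\infty}$ are $\lesssim 1$, and that the corresponding norms of $\Delta_ke^{t\Delta_k}$ are $\lesssim 1/t$, uniformly over $t>0$.

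For $0<t\le 1$ and \eqref{propagator}, I would use \eqref{HeatKernelBoundedByK}, which gives $\norm{\mathrm{p}_t^k(p,q)}_{p,q}\lesssim K_t(p,q)$. Then \eqref{KdotqL1Bound} gives uniform $L^1$ bounds on $K_t$ in each variable. By Schur's test, both the $L^1\to L^1$ and the $L^\infty\to L^\infty$ norms are $\lesssim1$. For \eqref{propagator_prime} I would need the analogous kernel bound $\norm{\Delta_p \mathrm{p}_t^k(p,q)}\lesssim t^{-1}K_{ct}(p,q)$ with a slightly enlarged Gaussian. This would be derived as in the proof of \Cref{d1etDeltaxiAsIntegral}, now using \cite{BerGetVer1992}*{Theorem~2.30} with two spatial derivatives and an analogue of \Cref{DerivOfGaussianTimesSmooth} for second derivatives. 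Those produce factors $(d^2/t^2+1/t)\,e^{-d^2/4t}\lesssim t^{-1}e^{-d^2/8t}$. The remainder term is $O(1)\le t^{-1}$, and the $L^1$ bound of the kernel in each variable is then $\lesssim t^{-1}$.

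For $t\ge1$, I would write $e^{t\Delta_k}=e^{(1/2)\Delta_k}e^{(t-1)\Delta_k}e^{(1/2)\Delta_k}$ and use $L^1\to L^2$, $L^2\to L^2$ and $L^2\to L^\infty$ bounds as in \Cref{exp_decay}. Since $\manifold$ is compact, $L^\infty\subset L^2\subset L^1$ and $\norm{e^{s\Delta_k}}_{L^2\to L^2}\le1$, so $e^{t\Delta_k}$ is bounded $L^1\to L^\infty$ and hence on both $L^1$ and $L^\infty$ uniformly in $t\ge1$. For $\Delta_ke^{t\Delta_k}$, the spectral bound $\sup_\lambda\lambda e^{-\lambda(t-1)}\lesssim 1/t$ on $L^2$ holds for $t\ge2$ (harmonic eigenvalues contribute $0$). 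Combined with the $L^1\to L^2$ and $L^2\to L^\infty$ smoothing, this gives $\lesssim 1/t$; the range $1\le t\le2$ is covered by the small-time argument via the semigroup property.

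The main obstacle is the small-time second-derivative kernel estimate. The paper only recorded first-derivative control, so I would redo the coordinate argument of \Cref{d1etDeltaxiAsIntegral} with the $C^2$ norm of the remainder, taking $N\ge(\Dim+2)/2$, and check that the Gaussian derivatives are absorbed by widening the exponential. If that became cumbersome, I would instead use analyticity of the semigroup on $L^2$ together with Davies-type Gaussian estimates for time derivatives, $\partial_t\mathrm{p}_t=\Delta\mathrm{p}_t$, cited from the literature.
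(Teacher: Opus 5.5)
Your proposal is correct and follows essentially the paper's own route. You reduce via \eqref{GrafakosLr1Lr1} to $L^1\to L^1$ and $L^\infty\to L^\infty$ bounds, get these for $0<t\le 1$ from the kernel bound by $K_t$ together with a second-derivative analogue of \Cref{DerivOfGaussianTimesSmooth}, and treat large $t$ by factoring through $L^2$; the paper phrases the large-$t$ step as \Cref{exp_decay} on $\mathcal{H}^\perp(\Lambda^k)$ plus boundedness of the harmonic projection, which rests on the same factorization. You are also more careful than the written proof in two places: you justify applying the scalar interpolation lemma to form-valued operators, and your factor $(d^2/t^2+1/t)\,e^{-d^2/(4t)}$, with $d=d_\manifold(p,q)$, correctly keeps the $1/t$ term that the paper's displayed kernel $K^{(2)}_t$ omits (the resulting $L^1$ bounds of order $1/t$ are unaffected).
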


\begin{proof}
  For $0<t\leq 1$, the operator $T=e^{t\Delta_k}$ is given by an integral kernel bounded in terms of the function $K_t$, see \eqref{HeatKernelBoundedByK} and \Cref{CurrentEvolutionAsForm,KernelBounds}, and it follows immediately that
  \begin{equation}
    \norm{e^{t\Delta_k}}_{L^1\to L^1} \lesssim \sup_q \norm{K_t(\cdot, q)}_{L^1}
    ,
    \quad
    \norm{e^{t\Delta_k}}_{L^\infty\to L^\infty} \lesssim \sup_p \norm{K_t(p,\cdot)}_{L^1}
  \end{equation}
  for $0<t\leq 1$.
  Recalling \eqref{KdotqL1Bound}, we therefore have
  \begin{equation}\label{PropagatorOperatorBounds}
    \begin{aligned}
      \norm{e^{t\Delta_k}}_{L^1(\Lambda^k)\to L^1(\Lambda^k)} \lesssim 1
      ,
      \quad
      \norm{e^{t\Delta_k}}_{L^\infty(\Lambda^k)\to L^\infty(\Lambda^k)} \lesssim 1
    \end{aligned}
  \end{equation}
  for all $0<t\leq 1$.
  
  For $t>1$, the composition $e^{t\Delta_k}\vert_{\mathcal{H}^\perp(\Lambda^k)}$ has exponential decay by \Cref{exp_decay}, while $e^{t\Delta_k}\vert_{\mathcal{H}(\Lambda^k)}$ acts as the identity.
  Thus both of these operators satisfy the bounds in \eqref{PropagatorOperatorBounds}.
  Moreover, since $\mathcal{H}(\Lambda^k)$ is a finite-dimensional vector subspace of $L^\infty(\Lambda^k)\subset L^1(\Lambda^k)$, the projections $P_{\mathcal{H}(\Lambda^k)}$ and $P_{\mathcal{H}^\perp(\Lambda^k)}=\mathrm{identity}-P_{\mathcal{H}(\Lambda^k)}$ are bounded as operators on both $L^1$ and $L^\infty$.
  Thus \eqref{PropagatorOperatorBounds} holds for all $t>1$ also.
  By \Cref{GrafakosLorentzOperatorNorms}, \eqref{PropagatorOperatorBounds} implies \eqref{propagator}.
  
  A similar argument applies for $\Delta_k e^{t\Delta_k}$.
  By analogy with \Cref{DerivOfGaussianTimesSmooth} and its proof, this operator is given by an integral kernel that is bounded up to constants by
  \begin{equation*}
    K^{(2)}_t(p,q) = \frac{(d_\manifold(p,q)+t)^2}{t^{\Dim/2 + 2}} \exp(-d_\manifold(p,q)^2/(4t)) + 1
  \end{equation*}
  for $0<t\leq 1$, and it can be readily verified that
  \begin{align*}
    \sup_p \bignorm{K^{(2)}_t(p,\cdot)}_{L^1} \lesssim 1/t
    ,
    \quad
    \sup_q \bignorm{K^{(2)}_t(\cdot,q)}_{L^1} \lesssim 1/t
    .
  \end{align*}
  For $t>1$, we can again use the exponential decay from \Cref{exp_decay} on $\mathcal{H}^\perp(\Lambda^k)$, noting that $\Delta_k e^{t\Delta_k}$ vanishes on $\mathcal{H}(\Lambda^k)$.
  We conclude that $\norm{\Delta_k e^{t\Delta_k}_{L^1\to L^1}}$ and $\norm{\Delta_k e^{t\Delta_k}}_{L^\infty\to L^1\infty}$ are both of order $1/t$, and \eqref{propagator_prime} follows by \Cref{GrafakosLorentzOperatorNorms}.
\end{proof}

\begin{lemma}\label{Besov_Laplacian_bound}
  Let $\beta \in \mathbb{R}$ and $1\leq r<\infty$.
  There exists a constant $C>0$ such that
  \begin{align*}
    \|\Delta_k F\|_{\dot{B}^{\beta-2,1}_{r,1}(\Lambda^k)} \leq C \| F\|_{\dot{B}^{\beta,1}_{r,1}(\Lambda^k)}
  \end{align*}
  for all $F \in \dot{B}^{\beta,1}_{r,1}(\Lambda^k)$.
\end{lemma}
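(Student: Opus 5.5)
We would prove \Cref{Besov_Laplacian_bound} directly from \Cref{Besov_manifold_definition}, by comparing the integrands defining the two norms and splitting into cases according to the sign of $\beta$ and $\beta-2$. Write $i=i(\beta)$ and $i'=i(\beta-2)$. Since $\Delta_k$ commutes with $e^{t\Delta_k}$, we have $\Delta_k^{i'}e^{t\Delta_k}\Delta_k F=\Delta_k^{i'+1}e^{t\Delta_k}F$. The left-hand norm is therefore
\begin{equation*}
  \int_0^\infty t^{i'-(\beta-2)/2-1}\norm{\Delta_k^{i'+1}e^{t\Delta_k}F}_{L^{r,1}}\,\dd t .
\end{equation*}

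\emph{Case $\beta\geq 2$ or $\beta<0$.} When $\beta\geq 2$ we have $i'=\floor{\beta/2}=i-1$, so $i'+1=i$. The exponent $i'-(\beta-2)/2-1$ equals $i-\beta/2-1$, and the two norms coincide exactly. When $\beta<0$ we have $i=i'=0$. The left side is then $\int_0^\infty t^{-\beta/2}\norm{\Delta_k e^{t\Delta_k}F}_{L^{r,1}}\,\dd t$. We write $\Delta_k e^{t\Delta_k}F=\Delta_k e^{(t/2)\Delta_k}e^{(t/2)\Delta_k}F$ and apply \eqref{propagator_prime} from \Cref{OperatorBoundsLorentz}. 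This bounds the integrand by $C t^{-1}\norm{e^{(t/2)\Delta_k}F}_{L^{r,1}}$, and the substitution $t\mapsto 2t$ gives a constant multiple of $\int_0^\infty t^{-\beta/2-1}\norm{e^{t\Delta_k}F}_{L^{r,1}}\,\dd t=\norm{F}_{\dot B^{\beta,1}_{r,1}}$.

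\emph{Case $0\leq\beta<2$.} Here $i=1$ and $i'=0$, and both sides involve the integrand $\norm{\Delta_k e^{t\Delta_k}F}_{L^{r,1}}$. The left side carries the weight $t^{-\beta/2}$ and the right side carries $t^{1-\beta/2-1}=t^{-\beta/2}$, so again they are equal. Thus the only genuinely analytic step is the case $\beta<0$, which rests on the operator bound \eqref{propagator_prime} together with the semigroup splitting.

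The main obstacle is the endpoint $r=1$. \Cref{OperatorBoundsLorentz} is stated for $1<r<\infty$, while the lemma allows $r=1$. For $r=1$ we would instead use the $L^1\to L^1$ bound $\norm{\Delta_k e^{t\Delta_k}}_{L^1\to L^1}\lesssim 1/t$, which was established inside the proof of \Cref{OperatorBoundsLorentz} via the kernel $K^{(2)}_t$ for $t\leq 1$ and exponential decay for $t>1$. The case $\beta<0$ then goes through verbatim. Finally, both sides are first verified on $C^\infty\cap\mathcal H^\perp(\Lambda^k)$, where $\Delta_k F$ remains in $\mathcal H^\perp$, and the inequality extends to the completion by density.
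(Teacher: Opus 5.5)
Your proposal is correct and follows the paper's proof. For $\beta\geq 0$ the identity $i(\beta)=i(\beta-2)+1$ makes the two norms coincide, and your split into $0\le\beta<2$ and $\beta\ge 2$ just checks this identity case by case; for $\beta<0$ you use the semigroup splitting with \eqref{propagator_prime} and the substitution $t=2\tilde t$, exactly as the paper does. Your remark on $r=1$ addresses a point the paper passes over, and your fix via the $L^1\to L^1$ bound is valid (equivalently, \Cref{GrafakosLorentzOperatorNorms} already allows $r=1$).
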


\begin{proof}
  For $\beta\geq 0$, the inequality is trivial and indeed the two norms are equal.
  To see this, note that $i(\beta)=i(\beta-2)+1$ for all $\beta\geq 0$, so that $i(\beta-2)-(\beta-2)/2 = i(\beta)-\beta/2$ and
  \begin{align*}
    \norm{\Delta_k F}_{\dot{B}^{\beta-2,1}_{r,1}(\Lambda^k)} &= \int_0^\infty t^{i(\beta-2)-(\beta-2)/2-1} \norm{ \Delta_k^{i(\beta-2)} e^{t\Delta_k} \Delta_k F}_{L^{r,1}(\Lambda^k)} \dd t \\
    &= \int_0^\infty t^{i(\beta)-\beta/2-1} \norm{ \Delta_k^{i(\beta-2)+1} e^{t\Delta_k} F}_{L^{r,1}(\Lambda^k)} \dd t \\
    &=   \|F\|_{\dot{B}^{\beta,1}_{r,1}(\Lambda^k)}
    .
  \end{align*}
  For $\beta<0$, use \Cref{OperatorBoundsLorentz} and make the substitution $t=2\tilde{t}$ to obtain
  \begin{align}
    \norm{\Delta_k F}_{\dot{B}^{\beta-2,1}_{r,1}(\Lambda^k)} &= \int_0^\infty t^{-(\beta-2)/2-1} \norm{ \Delta_k e^{(t/2)\Delta_k} e^{(t/2)\Delta_k} F }_{L^{r,1}(\Lambda^k)} \dd t 
    \notag\\
    \label{AbsorbDelta}
    &\lesssim  \int_0^\infty (2\tilde{t})^{-\beta/2} \tilde{t}^{-1} \norm{ e^{\tilde{t}\Delta_k} F }_{L^{r,1}(\Lambda^k)} (2 \, \dd \tilde{t})\\
    &\lesssim\| F\|_{\dot{B}^{\beta-2,1}_{r,1}(\Lambda^k)}
    .
    \qedhere
  \end{align}
\end{proof}

\begin{lemma}\label{Besov_potential_bound}
  Fix $\beta \in \mathbb{R}$, $\alpha>0$, and $1\leq r<\infty$.
  Then there exists a constant $C=C(\alpha,\beta,r,\manifold)<\infty$ such that
  \begin{align*}
    \|\mathcal{I}_{\alpha,k}F\|_{\dot{B}^{\beta+\alpha,1}_{r,1}(\Lambda^k)} \leq C \| F\|_{\dot{B}^{\beta,1}_{r,1}(\Lambda^k)}
  \end{align*}
  for all $F \in \dot{B}^{\beta,1}_{r,1}(\Lambda^k)$.
\end{lemma}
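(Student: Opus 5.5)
We first record that, by the semigroup property \eqref{semigroup} and the commutation of $\mathcal{I}_{\alpha,k}$ with $e^{t\Delta_k}$ and $\Delta_k$ (both are functions of $\Delta_k$ on $\mathcal{H}^\perp(\Lambda^k)$), it suffices to work on the dense class $C^\infty\cap\mathcal{H}^\perp(\Lambda^k)$ and then extend by completion. Since $\mathcal{I}_{\alpha,k}=\mathcal{I}_{\alpha/2,k}^{\,2}$ and more generally is a composition of finitely many $\mathcal{I}_{\alpha_j,k}$ with small $\alpha_j$, and since $\mathcal{I}_{2,k}=(-\Delta_k)^{-1}$ on $\mathcal{H}^\perp$, we will reduce to the case $0<\alpha<2$ plus the case $\alpha=2$. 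For $\alpha=2$ the claim is the reverse of \Cref{Besov_Laplacian_bound}, so the main work is a direct estimate for $0<\alpha< 2$ (iterating then handles general $\alpha$).

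For $0<\alpha<2$, write $i=i(\beta)$, $i'=i(\beta+\alpha)$, so $i'\in\set{i,i+1}$. Using the integral formula \eqref{RieszPotential},
\begin{equation*}
  \Delta_k^{i'} e^{t\Delta_k}\mathcal{I}_{\alpha,k}F = \frac{1}{\Gamma(\alpha/2)}\int_0^\infty s^{\alpha/2-1}\Delta_k^{i'}e^{(t+s)\Delta_k}F\,\dd s .
\end{equation*}
Split $e^{(t+s)\Delta_k}=e^{\frac{t+s}{2}\Delta_k}e^{\frac{t+s}{2}\Delta_k}$ and use \Cref{OperatorBoundsLorentz} to absorb $\Delta_k^{i'-i}$ from one factor at the cost $(t+s)^{-(i'-i)}$ (and the rest with bound $\lesssim1$), leaving $\|\Delta_k^i e^{\frac{t+s}{2}\Delta_k}F\|_{L^{r,1}}$. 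Then by Minkowski,
\begin{equation*}
  \|\mathcal{I}_{\alpha,k}F\|_{\dot{B}^{\beta+\alpha,1}_{r,1}}\lesssim \int_0^\infty\!\!\int_0^\infty t^{i'-(\beta+\alpha)/2-1}s^{\alpha/2-1}(t+s)^{-(i'-i)}\,g\bigl(\tfrac{t+s}{2}\bigr)\,\dd s\,\dd t,
\end{equation*}
with $g(u)=\|\Delta_k^i e^{u\Delta_k}F\|_{L^{r,1}}$. Change variables $u=(t+s)/2$ and integrate out the ratio $t/(t+s)$: the inner integral is a Beta integral $\int_0^{2u} t^{a-1}(2u-t)^{\alpha/2-1}\dd t\,(2u)^{-(i'-i)}\asymp u^{a+\alpha/2-1-(i'-i)}$ with $a=i'-(\beta+\alpha)/2>0$ (true by definition of $i'$ when $\beta+\alpha\ge0$, and $a=-(\beta+\alpha)/2>0$ otherwise). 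The resulting power is $u^{i-\beta/2-1}$, which is exactly the weight in $\|F\|_{\dot{B}^{\beta,1}_{r,1}}$. This gives boundedness of $\mathcal{I}_{\alpha,k}$.

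For the reverse direction (needed for homeomorphism), note $-\Delta_k\mathcal{I}_{2,k}=\mathrm{id}$ on $\mathcal{H}^\perp$ by \eqref{left_inverse}; so $\mathcal{I}_{\alpha,k}$ has inverse $(-\Delta_k)^m\mathcal{I}_{2m-\alpha,k}$ for $2m>\alpha$, which is bounded $\dot{B}^{\beta+\alpha}\to\dot{B}^{\beta}$ by \Cref{Besov_Laplacian_bound} and the above; likewise $-\Delta_k$ has inverse $\mathcal{I}_{2,k}$. The main obstacle I expect is the Beta-integral bookkeeping across the sign change of $\beta$ vs.\ $\beta+\alpha$ (where $i(\cdot)$ jumps), ensuring integrability exponents stay positive, and the routine density/completion extension; the large-$t$ regime is harmless because of the exponential decay in \Cref{exp_decay}.
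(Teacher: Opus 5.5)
Your proposal is correct and is essentially the paper's argument. Both apply Minkowski to the integral formula \eqref{RieszPotential}, evaluate a Beta integral in the ratio $t/(t+s)$ (convergent because $i(\beta+\alpha)>(\beta+\alpha)/2$), and absorb the surplus Laplacians via \eqref{propagator_prime}. The one difference is that you restrict to $0<\alpha<2$ and iterate using \eqref{semigroup}, whereas the paper absorbs all $m=i(\beta+\alpha)-i(\beta)$ extra powers at once by writing $e^{\tau\Delta_k}=(e^{\tau\Delta_k/(m+1)})^{m+1}$.

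One correction: the case $\alpha=2$ is not supplied by \Cref{Besov_Laplacian_bound}. That lemma gives the opposite inequality, and the paper in fact derives the reverse Laplacian bound from the present lemma. Your iteration already covers this case, e.g.\ via $\mathcal{I}_{2,k}=\mathcal{I}_{1,k}\mathcal{I}_{1,k}$, so you should simply drop that separate reduction.
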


\begin{proof}
  Apply Minkowski's inequality to the integral representation \eqref{RieszPotential} and make the substitution $\tau=s+t,y=t/(s+t)$:
  \begin{align*}
    &\norm{ \mathcal{I}_{\alpha,k}F }_{\dot{B}^{\beta+\alpha,1}_{r,1}(\Lambda^k)} \\&\quad
    \leq \iint_{(0,\infty)^2} s^{\alpha/2 - 1} t^{i(\beta+\alpha)-(\beta+\alpha)/2-1} \norm{\Delta_k^{i(\beta+\alpha)} e^{t\Delta_k} e^{s\Delta_k} F }_{L^{r,1}(\Lambda^k)} \frac{\,\dd s\,\dd t}{\Gamma(\alpha/2)}
    \\&\quad
    = \int_0^\infty \int_0^1 (\tau(1-y))^{\alpha/2-1} (\tau y)^{i(\beta+\alpha)-(\beta+\alpha)/2-1} \norm{\Delta_k^{i(\beta+\alpha)} e^{\tau\Delta_k} F }_{L^{r,1}(\Lambda^k)} \frac{\tau\,\dd y\,\dd \tau}{\Gamma(\alpha/2)}
    \\&\quad
    = \int_0^\infty \tau^{i(\beta+\alpha)-\beta/2-1} \norm{\Delta_k^{i(\beta+\alpha)} e^{\tau\Delta_k} F }_{L^{r,1}(\Lambda^k)} \dd \tau
    \\&\qquad\cdot
    \int_0^1 (1-y)^{\alpha/2-1} y^{i(\beta+\alpha)-(\beta+\alpha)/2-1}  \frac{\dd y}{\Gamma(\alpha/2)}
    .
  \end{align*}
  Note that $i(\gamma)\geq \floor{\gamma/2}+1 > \gamma/2$ for all $\gamma\in\R$, so the last $y$-integral converges.

  Since $\alpha>0$, we may write $i(\beta+\alpha)=i(\beta)+m$ for some $m\in\set{0,1,\dotsc}$.
  Similar to \eqref{AbsorbDelta}, substitute $\tau=(m+1)\tilde{\tau}$ and apply \eqref{propagator_prime} $m$ times to conclude
  \begin{align*}
    \norm{ \mathcal{I}_{\alpha,k}F }_{\dot{B}^{\beta+\alpha,1}_{r,1}(\Lambda^k)}
    &\lesssim \int_0^\infty \tau^{i(\beta)+m-\beta/2} \norm{\Delta_k^{i(\beta)+m} e^{\tau\Delta_k} F }_{L^{r,1}(\Lambda^k)} \frac{\dd \tau}{\tau}
    \\&
    \lesssim \int_0^\infty [(m+1)\tilde{\tau}]^{i(\beta)+m-\beta/2} \tilde{\tau}^{-m}\norm{\Delta_k^{i(\beta)} e^{\tilde{\tau}\Delta_k} F }_{L^{r,1}(\Lambda^k)} \frac{\dd \tilde{\tau}}{\tilde{\tau}}
    \\&
    \lesssim \norm{F}_{\dot{B}^{\beta,1}_{r,1}(\Lambda^k)}
    .
    \qedhere
  \end{align*}
\end{proof}

\begin{proof}[Proof of \Cref{Besov_homeomorphisms}]
  The mappings are linear so it suffices to show two-sided inequalities up to constants between the relevant norms.
  By density, it suffices to consider $F \in C^\infty\intersect \mathcal{H}^\perp(\Lambda^k)$.
  
  For the Laplacian, the inequality $\norm{\Delta_k F}_{\dot{B}^{\beta-2,1}_{r,1}(\Lambda^k)} \lesssim \norm{F}_{\dot{B}^{\beta,1}_{r,1}(\Lambda^k)}$ is given by \Cref{Besov_Laplacian_bound}.
  The reverse inequality follows from the identity $F=\mathcal{I}_{2,k} (-\Delta_k F)$, see \eqref{left_inverse}, and \Cref{Besov_potential_bound}:
  \begin{align*}
    \| F\|_{\dot{B}^{\beta,1}_{r,1}(\Lambda^k)} &= \| \mathcal{I}_{2,k} (-\Delta_k F)\|_{\dot{B}^{\beta,1}_{r,1}(\Lambda^k)}\\
    &\leq C \|\Delta_k F\|_{\dot{B}^{\beta-2,1}_{r,1}(\Lambda^k)}.
  \end{align*}
  
  Similarly for the Riesz potential, one inequality appears in \Cref{Besov_potential_bound}.
  To show the reverse direction, we begin by showing the commutativity relation
  \begin{equation}\label{DeltaICommuteOnHperp}
    -\Delta_k \mathcal{I}_{\alpha,k} \omega = \mathcal{I}_{\alpha,k}(-\Delta_k)\omega
    \quad\forall\omega\in C^\infty\intersect\mathcal{H}^\perp(\Lambda^k), \,\forall\alpha>0
    .
  \end{equation}
  To see this, note that \Cref{exp_decay} and \eqref{propagator_prime} imply that the Laplacian can be passed through the convergent integral \eqref{RieszPotential}, so that both sides of \eqref{DeltaICommuteOnHperp} reduce to $\int_0^\infty t^{\alpha/2-1} (-\Delta_k)e^{t\Delta_k}\omega \,\dd t$ or equivalently $\int_0^\infty t^{\alpha/2-1} e^{t\Delta_k}(-\Delta_k)\omega \,\dd t$.
  
  Now set $i=\lfloor \alpha/2 \rfloor+1$, so that $2i>\alpha$.
  By \eqref{DeltaICommuteOnHperp} and \eqref{semigroup},
  \begin{equation}
    \mathcal{I}_{2i-\alpha,k} (-\Delta_k)^i \mathcal{I}_{\alpha,k} F = \mathcal{I}_{2i-\alpha,k} \mathcal{I}_{\alpha,k} (-\Delta_k)^i F = \mathcal{I}_{2i,k}(-\Delta_k)^i F = F
    ,
  \end{equation}
  so that the combination of \Cref{Besov_Laplacian_bound} and \Cref{Besov_potential_bound} imply
  \begin{align*}
    \| F\|_{\dot{B}^{\beta,1}_{r,1}(\Lambda^k)} &=  \| \mathcal{I}_{2i-\alpha,k} (-\Delta)^i \mathcal{I}_{\alpha,k}F\|_{\dot{B}^{\beta,1}_{r,1}(\Lambda^k)}\\
    &\lesssim  \|  (-\Delta)^i \mathcal{I}_{\alpha,k}F\|_{\dot{B}^{\beta - 2i +\alpha ,1}_{r,1}(\Lambda^k)}\\
    &\lesssim  \|\mathcal{I}_{\alpha,k}F\|_{\dot{B}^{\beta+\alpha,1}_{r,1}(\Lambda^k)}
    .
    \qedhere
  \end{align*}
\end{proof}

Our definition of Besov-Lorentz spaces on a manifold also gives the usual inclusions between various exponents of differentiability:
\begin{theorem}\label{Besov-nested}
For $0 \leq \alpha \leq \gamma < \Dim$,
\begin{equation}\label{Besov-nested-inclusions}
  \dot{B}^{0,1}_{1,1}(\Lambda^k) \hookrightarrow  \dot{B}^{-\alpha,1}_{\Dim/(\Dim-\alpha),1}(\Lambda^k) \hookrightarrow \dot{B}^{-\gamma,1}_{\Dim/(\Dim-\gamma),1}(\Lambda^k).
\end{equation}
\end{theorem}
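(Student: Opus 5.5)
Unwinding \Cref{Besov_manifold_definition}, both embeddings in \eqref{Besov-nested-inclusions} are statements about $\beta\le 0$. Since $i(0)=1$ and $i(\beta)=0$ for $\beta<0$, the norms are
\begin{equation*}
  \norm{F}_{\dot{B}^{0,1}_{1,1}} = \int_0^\infty \norm{\Delta_k e^{t\Delta_k}F}_{L^1}\,\dd t,
  \qquad
  \norm{F}_{\dot{B}^{-\alpha,1}_{\Dim/(\Dim-\alpha),1}} = \int_0^\infty t^{\alpha/2-1}\norm{e^{t\Delta_k}F}_{L^{\Dim/(\Dim-\alpha),1}}\,\dd t
  .
\end{equation*}
By density it suffices to prove the norm inequalities for $F\in C^\infty\cap\mathcal{H}^\perp(\Lambda^k)$. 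My plan is to write a single smoothing identity and then bound the smoothing operator's $L^{r,1}\to L^{p,1}$ norm using \Cref{GrafakosLorentzOperatorNorms}.

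\emph{Second inclusion.} Take $0<\alpha\le\gamma<\Dim$ with $\alpha<\gamma$; the case $\alpha=\gamma$ is trivial, and $\alpha=0$ will be absorbed into the first inclusion. Set $r=\Dim/(\Dim-\alpha)$ and $p=\Dim/(\Dim-\gamma)$, so $1/r-1/p=(\gamma-\alpha)/\Dim$. Split $e^{2t\Delta_k}=e^{t\Delta_k}e^{t\Delta_k}$ and change variables $t\to 2t$:
\begin{equation*}
  \norm{F}_{\dot B^{-\gamma}}\lesssim\int_0^\infty t^{\gamma/2-1}\norm{e^{t\Delta_k}}_{L^{r,1}\to L^{p,1}}\norm{e^{t\Delta_k}F}_{L^{r,1}}\,\dd t
  .
\end{equation*}
It therefore suffices to show $\norm{e^{t\Delta_k}}_{L^{r,1}\to L^{p,1}}\lesssim t^{-(\gamma-\alpha)/2}$, which gives exactly the weight $t^{\alpha/2-1}$. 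For $0<t\le 1$, \eqref{GrafakosLr1Lp1} applies with the bounds \eqref{PropagatorOperatorBounds} on $\norm{e^{t\Delta_k}}_{L^1\to L^1}$ and $\norm{e^{t\Delta_k}}_{L^\infty\to L^\infty}$, together with $\norm{e^{t\Delta_k}}_{L^1\to L^\infty}\lesssim \sup K_t\lesssim t^{-\Dim/2}$ from \eqref{KLinftyBound} and \eqref{HeatKernelBoundedByK}. This yields $t^{-\frac{\Dim}{2}(1/r-1/p)}=t^{-(\gamma-\alpha)/2}$. For $t>1$ the operator norm is bounded by \Cref{exp_decay} and compactness, and $1\le t^{-(\gamma-\alpha)/2}$ fails there. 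To handle this, restrict to $\mathcal{H}^\perp$ and use that $e^{t\Delta_k}$ then decays exponentially, so I would split $e^{2t\Delta_k}=e^{\Delta_k}e^{(2t-1)\Delta_k}$ instead. A simpler alternative is to note that on the compact manifold $L^{r,1}\hookrightarrow L^{p,1}$ fails since $p>r$, but $L^{r,1}\to L^\infty$ smoothing at fixed time $1$ holds. Either way, for $t\ge1$ the exponential decay $e^{-c t}$ of $\norm{e^{t\Delta_k}}_{L^1\to L^\infty}$ on $\mathcal{H}^\perp$ dominates the polynomial weights, so the large-$t$ part is bounded by $\int_{1/2}^\infty t^{\alpha/2-1}\norm{e^{t\Delta_k}F}_{L^{r,1}}\,\dd t$.

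\emph{First inclusion.} It suffices to prove $\dot B^{0,1}_{1,1}\hookrightarrow \dot B^{-\alpha,1}_{\Dim/(\Dim-\alpha),1}$ for $0<\alpha<\Dim$; the case $\alpha=0$ is the identity of spaces, since $\dot B^{0,1}_{1,1}$ is itself the $\alpha=0$ endpoint. For $F\in\mathcal{H}^\perp$, \eqref{LongTimeHeatPropagator} gives $e^{t\Delta_k}F=-\int_t^\infty \Delta_k e^{s\Delta_k}F\,\dd s$. Writing $\Delta_k e^{s\Delta_k}F=e^{(s/2)\Delta_k}\,\Delta_k e^{(s/2)\Delta_k}F$ and applying \eqref{GrafakosLr1Lp1} with $r=1$, $p=\Dim/(\Dim-\alpha)$, I get
\begin{equation*}
  \norm{e^{t\Delta_k}F}_{L^{p,1}}\lesssim \int_t^\infty \min(s,1)^{-\alpha/2}\norm{\Delta_k e^{(s/2)\Delta_k}F}_{L^1}\,\dd s
  .
\end{equation*}
Multiplying by $t^{\alpha/2-1}$ and applying Fubini, the inner integral $\int_0^s t^{\alpha/2-1}\,\dd t\asymp s^{\alpha/2}$ cancels the factor $s^{-\alpha/2}$ for $s\le1$. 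After rescaling $s/2\to s$, this gives $\lesssim\norm{F}_{\dot B^{0,1}_{1,1}}$.

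The main obstacle is the large-time range $s>1$. There the weight $s^{\alpha/2}$ is not compensated by any decay in the operator bound, so I must use the exponential decay of \Cref{exp_decay} on $\mathcal{H}^\perp$. Concretely, the plan is to bound $\norm{\Delta_k e^{s\Delta_k}F}_{L^1}\lesssim e^{-c(s-1)}\norm{\Delta_k e^{\Delta_k/2}F}_{L^1}$ and to control the latter by $\int_{1/4}^{1/2}\norm{\Delta_k e^{\tau\Delta_k}F}_{L^1}\,\dd\tau$, averaging via the $L^1$ contraction of \eqref{PropagatorOperatorBounds}.
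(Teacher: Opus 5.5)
Your argument is correct. The second inclusion follows the paper's argument: halve the time, bound $\|e^{t\Delta_k}\|_{L^{r,1}\to L^{p,1}}\lesssim t^{-(\gamma-\alpha)/2}$ via \eqref{GrafakosLr1Lp1} and $\|e^{t\Delta_k}\|_{L^1\to L^\infty}\lesssim t^{-\Dim/2}$, and use exponential decay on $\mathcal{H}^\perp$ for large $t$. The paper handles large times more cleanly by working throughout with $e^{t\Delta_k}P_{\mathcal{H}^\perp}$. Its $L^1\to L^\infty$ norm is $\lesssim t^{-\Dim/2}$ for \emph{all} $t>0$, so the $L^{r,1}\to L^{p,1}$ bound holds on all of $(0,\infty)$. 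Your fixed-time split $e^{\Delta_k}e^{(2t-1)\Delta_k}$ would on its own leave an unbounded factor $t^{(\gamma-\alpha)/2}$; only the exponential decay you invoke afterwards closes it.

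The first inclusion is where you genuinely differ. The paper uses \Cref{Besov_homeomorphisms} to pass to $\|\Delta_kF\|_{\dot{B}^{-\alpha-2,1}_{p,1}}=\int_0^\infty t^{\alpha/2}\|\Delta_ke^{t\Delta_k}F\|_{L^{p,1}}\,\dd t$, then applies \eqref{OperatorNormLorentzLorentz} (exponents $0$ and $\alpha$) at each fixed $t$. You instead write $e^{t\Delta_k}F=-\int_t^\infty\Delta_ke^{s\Delta_k}F\,\dd s$, smooth inside the integral, and use Fubini. These package the same identity: $e^{t\Delta_k}$ applied to $F=\mathcal{I}_{2,k}(-\Delta_kF)$, with your $\int_0^s t^{\alpha/2-1}\,\dd t\asymp s^{\alpha/2}$ playing the role of the Beta integral in \Cref{Besov_potential_bound}. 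Yours is self-contained and does not need \Cref{Besov_potential_bound}; the paper's is a three-line computation once that machinery is available.

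The large-$s$ ``obstacle'' you identify is avoidable. Since $\Delta_ke^{(s/2)\Delta_k}F\in\mathcal{H}^\perp$, you may insert $P_{\mathcal{H}^\perp}$, and the $r=1$ case of \eqref{GrafakosLr1Lp1} gives $\|e^{(s/2)\Delta_k}P_{\mathcal{H}^\perp}\|_{L^1\to L^{p,1}}\lesssim s^{-\alpha/2}$ for all $s>0$. The cancellation with $s^{\alpha/2}$ is then exact on $(0,\infty)$ and no separate large-time averaging is needed. Your averaging workaround is also valid once ``$L^1$ contraction'' is replaced by ``uniformly bounded on $L^1$'', which is all \eqref{PropagatorOperatorBounds} provides.
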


\begin{proof}
  We begin with a Lorentz space operator norm bound for $e^{t\Delta_k}$, similar to \Cref{OperatorBoundsLorentz} but now between two different Lorentz spaces.
  Recalling \eqref{KLinftyBound}, 
  \begin{equation}\label{PropagatorL1LinftyOperatorBoundSmallt}
    \norm{e^{t\Delta_k}}_{L^1(\Lambda^k)\to L^\infty(\Lambda^k)} \lesssim \sup_{p,q} K_t(p,q) 
    \lesssim t^{-\Dim/2}
    \quad\text{uniformly over $0<t\leq 1$.}
  \end{equation}
  Because of compactness, this bound fails for $t>1$, and indeed $e^{t\Delta_k}$ behaves as the projection $P_{\mathcal{H}(\Lambda^k)}$ for large $t$.
  However, since $P_{\mathcal{H}^\perp(\Lambda^k)}$ is bounded as an operator on $L^1$ and $L^\infty$, the composition $e^{t\Delta_k}P_{\mathcal{H}^\perp(\Lambda^k)}$ obeys the same bound as \eqref{PropagatorL1LinftyOperatorBoundSmallt} while also decaying exponentially for $t>1$ by \Cref{exp_decay}.
  We conclude that
  \begin{equation}\label{ProjectedPropagatorL1Linfty}
    \norm{ e^{t\Delta_k}P_{\mathcal{H}^\perp(\Lambda^k)} }_{L^1(\Lambda^k)\to L^\infty(\Lambda^k)} \lesssim t^{-\Dim/2}
  \end{equation}
  uniformly over all $t>0$.
  Likewise, the bounds \eqref{PropagatorOperatorBounds} apply to $e^{t\Delta_k}P_{\mathcal{H}^\perp(\Lambda^k)}$, so \Cref{GrafakosLorentzOperatorNorms} yields
  \begin{equation}\label{ProjectedPropagatorLorentzLorentz}
    \norm{ e^{t\Delta_k}P_{\mathcal{H}^\perp(\Lambda^k)} }_{L^{r,1}\to L^{p,1}} \lesssim t^{-\Dim(1/r-1/p)/2}
  \end{equation}
  for $1<r\leq p<\infty$ fixed.
  
  In the parametrization of \eqref{Besov-nested-inclusions}, the operator bound \eqref{ProjectedPropagatorLorentzLorentz} gives
  \begin{equation}\label{OperatorNormLorentzLorentz}
    \|e^{t\Delta_k}F\|_{L^{\Dim/(\Dim-\gamma),1}(\Lambda^k)} \lesssim t^{-(\gamma-\alpha)/2}  \|e^{(t/2)\Delta_k}F\|_{L^{\Dim/(\Dim-\alpha),1}(\Lambda^k)}
  \end{equation}
  for $0\leq\alpha\leq\gamma<\Dim$ fixed and uniformly over $F\in \dot{B}^{-\alpha,1}_{\Dim/(\Dim-\alpha),1}(\Lambda^k)$ and $t>0$.
  The second inclusion in \eqref{Besov-nested-inclusions} follows by multiplying by $t^{\gamma/2-1}$ and integrating.
  
  For the first inclusion, we will apply \eqref{OperatorNormLorentzLorentz} with $\alpha=0$.
  The norm for $\dot{B}^{0,1}_{1,1}(\Lambda^k)$ requires an extra $\Delta_k$, and \Cref{Besov_homeomorphisms} provides one:
  \begin{align*}
    \norm{F}_{\dot{B}^{-\gamma,1}_{\Dim/(\Dim-\gamma),1}(\Lambda^k)} 
    &\lesssim \norm{\Delta_k F}_{\dot{B}^{-\gamma-2,1}_{\Dim/(\Dim-\gamma),1}(\Lambda^k)}
    \\&
    = \int_0^\infty t^{\gamma/2+1-1} \norm{\Delta_k e^{t\Delta_k} F}_{L^{\Dim/(\Dim-\gamma),1}(\Lambda^k)} \,\dd t
    \\&
    \lesssim \int_0^\infty \norm{\Delta_k e^{t\Delta_k} F}_{L^{1,1}(\Lambda^k)} \,\dd t 
    &&\text{by \eqref{OperatorNormLorentzLorentz}}
    \\&
    = \norm{F}_{\dot{B}^{0,1}_{1,1}(\Lambda^k)}
    .
    &&\qedhere
  \end{align*}
\end{proof}


\end{document}